\newtheorem{thm}{Theorem}[section]
\newtheorem{cor}[thm]{Corollary}
\newtheorem{lemma}[thm]{Lemma}
\newtheorem{prop}[thm]{Proposition}
\theoremstyle{definition}
\newtheorem{defn}[thm]{Definition}
\newtheorem{ex}[thm]{Example}
\theoremstyle{remark}
\newtheorem{rem}[thm]{Remark}
\newtheorem*{ack}{Acknowledgment}
\newtheorem*{org}{Organization of the article}
\newcommand{\ZZ}{{\mathbb  Z}}
\newcommand{\RR}{{\mathbb  R}}
\newcommand{\CC}{{\mathbb  C}}
\newcommand{\HH}{{\mathbb  H}}
\newcommand{\OO}{{\mathbb O}}
\newcommand{\E}{{\mathcal E}}
\newcommand{\F}{{\mathcal F}}
\newcommand{\J}{{\mathcal J}}
\newcommand{\X}{{\mathcal X}}
\renewcommand{\H}{{\mathcal H}}
\newcommand{\g}{\mathfrak{g}}
\newcommand{\p}{\mathfrak{p}}
\newcommand{\h}{\mathfrak{h}}
\renewcommand{\r}{\mathfrak{r}}
\renewcommand{\u}{\mathfrak{u}}
\renewcommand{\v}{\mathfrak{v}}
\renewcommand{\k}{\mathfrak{k}}
\newcommand{\m}{\mathfrak{m}}
\newcommand{\gc}{\mathfrak{g}_{\CC}}
\newcommand{\pc}{\mathfrak{p}_{\CC}}
\newcommand{\f}{\mathfrak{f}}
\newcommand{\so}{\mathfrak{so}}
\renewcommand{\sl}{\mathfrak{sl}}
\newcommand{\su}{\mathfrak{su}}
\renewcommand{\sp}{\mathfrak{sp}}
\newcommand{\gl}{\mathfrak{gl}}
\renewcommand{\o}{\mathfrak{o}}
\newcommand{\GL}{\mathop{\rm GL}}
\newcommand{\SL}{\mathop{\rm SL}}
\newcommand{\SO}{\mathop{\rm SO}}
\newcommand{\SU}{\mathop{\rm SU}}
\newcommand{\Sp}{\mathop{\rm Sp}}
\renewcommand{\S}{\mathop{\rm S}}
\newcommand{\U}{\mathop{\rm U}}
\newcommand{\PSL}{\mathop{\rm PSL}}
\newcommand{\PSO}{\mathop{\rm PSO}}
\newcommand{\PSU}{\mathop{\rm PSU}}
\newcommand{\PSp}{\mathop{\rm PSp}}
\newcommand{\Aff}{\mathop{\rm Aff}}
\newcommand{\Diff}{\mathop{\rm Diff}}
\newcommand{\Spin}{\mathop{\rm Spin}}
\newcommand{\Gc}{G_{\CC}}
\newcommand{\Pc}{P_{\CC}}
\newcommand{\G}{\Gamma}
\newcommand{\GV}{\mathop{\rm GV}}
\newcommand{\vol}{\mathop{\rm vol}}
\newcommand{\Ad}{\mathop{\rm Ad}}
\newcommand{\ad}{\mathop{\rm ad}}
\newcommand{\Aut}{\mathop{\rm Aut}}
\newcommand{\id}{\mathop{\rm id}}
\newcommand{\Lie}{\mathop{\rm Lie}}
\newcommand{\Hom}{\mathop{\rm Hom}}
\newcommand{\dev}{\mathop{\rm dev}}
\newcommand{\hol}{\mathop{\rm hol}}
\newcommand{\Graph}{\mathop{\rm Graph}}
\newcommand{\Fol}{\mathop{\rm Fol}}
\newcommand{\tr}{\mathop{\rm tr}}
\newcommand{\ch}{\mathop{\rm ch}}
\newcommand{\Z}{\mathop{\rm Z}}
\newcommand{\sign}{\mathop{\rm sign}}
\newcommand{\bt}{\bullet}
\newcommand{\wt}{\widetilde}
\renewcommand{\theenumi}{\roman{enumi}}
\newdimen\theight
\def\TeXref#1{%
             \leavevmode\vadjust{\setbox0=\hbox{{\tt
                     \quad\quad  {\small \textrm #1}}}%
             \theight=\ht0
             \advance\theight by \lineskip
             \kern -\theight \vbox to
             \theight{\rightline{\rlap{\box0}}%
             \vss}%
             }}%
\title[Characteristic classes of transversely homogeneous foliations]{Secondary characteristic classes of transversely homogeneous foliations}
\author{Jes\'{u}s A. \'{A}lvarez L\'{o}pez}
\thanks{The authors are supported by the Spanish MICINN grants MTM2008-02640 and MTM2011-25656.}
\address{Jes\'{u}s A. \'{A}lvarez L\'{o}pez, Departamento de Xeometr\'{i}a e Topolox\'{i}a, Universidad de Santiago de Compostela, Spain}
\email{jesus.alvarez@usc.es}
\author{Hiraku Nozawa}
\thanks{The second author is partially supported by Research Fellowship of the Canon Foundation in Europe and the EPDI/JSPS/IH\'{E}S Fellowship. This paper was written during the stay of the second author at Institut des Hautes \'{E}tudes Scientifiques, Institut Mittag-Leffler and Centre de Recerca Matem\`{a}tica; he is very grateful for their hospitality.}
\address{Hiraku Nozawa, Department of Mathematical Sciences, Colleges of Science and Engineering, Ritsumeikan University, Nojihigashi 1-1-1, Kusatsu, Shiga, 525-8577, Japan}
\email{hnozawa@fc.ritsumei.ac.jp}
\keywords{Characteristic classes, foliations, transversely homogeneous foliations, Mostow rigidity, superrigidity}
\date{}
\subjclass[2010]{57R30,57R20,53C24}
\begin{document}

\maketitle

\begin{abstract}
Let $G$ be a simple Lie group of real rank one, and $S^{q}_{\infty}$ the ideal boundary of the corresponding hyperbolic symmetric space of noncompact type ($\mathbf{H}_{\RR}^{n}$, $\mathbf{H}_{\CC}^{n}$, $\mathbf{H}_{\HH}^{n}$ or $\mathbf{H}_{\OO}^{2}$). We show the finiteness of the possible values of the secondary characteristic classes of transversely homogeneous foliations on a fixed manifold whose transverse structures are modeled on $(G,S^{q}_{\infty})$, except the case of transversely conformally flat foliations of even codimension $q$. For this exceptional case, we construct examples of foliations which break the finiteness and we show a weaker form of the finiteness. These are generalizations of a finiteness theorem of secondary classes of transversely projective foliations by Brooks-Goldman and Heitsch to other transverse structures. We also show Bott-Thurston-Heitsch type formulas to compute the Godbillon-Vey classes of certain foliated bundles, and then we obtain a rigidity result on transversely homogeneous foliations on the unit tangent sphere bundles of hyperbolic manifolds.
\end{abstract}

\tableofcontents

\section{Introduction}

\subsection{Secondary characteristic classes of foliations and a theorem of Brooks-Goldman-Heitsch}
For a codimension $q$ smooth foliation $\F$ of a smooth manifold $M$, we have the characteristic homomorphism $\Delta_{\F} : H^{\bt}(WO_{q}) \to H^{\bt}(M;\RR)$ (see Section~\ref{sec:secintro}). The cohomology classes in the image of $\Delta_{\F}$ are called the secondary characteristic classes of $\F$. These are cobordism invariants of foliations, which come from the continuous cohomology of the Haefliger's classifying space $B\G^{q}$~\cite{Haefliger1979}. The relation between the dynamics or geometry of foliations and secondary characteristic classes has been one of the main themes in the study of foliations (see the review article~\cite{Hurder2002} or~\cite[Chapter~7]{CandelConlon2003}). Main examples of foliations with nontrivial secondary characteristic classes are quotient of homogeneous foliations on homogeneous spaces by lattices, which have been extensively studied~\cite{KamberTondeur1975a,Yamato1975,Baker1978,Heitsch1978,Pittie1979,Pelletier1983,Asuke2010}. Transversely homogeneous foliations are generalizations of these foliations, whose secondary characteristic classes can be computed in a similar way. These foliations were used in the construction of families of foliations whose characteristic classes nontrivially and continuously vary by Thurston~\cite{Thurston1972b,Bott1978} and Rasmussen~\cite{Rasmussen1980}. Other families with this property, constructed by Heitsch~\cite{Heitsch1978}, are quotient of homogeneous foliations on homogeneous spaces by lattices. Their constructions imply that there are uncountably many foliations which are not mutually cobordant, and certain homology groups with integer coefficients of the classifying space $B\G^{q}$ are uncountable~\cite[Section~6]{Heitsch1978}. 

In spite of the role played by transversely homogeneous foliations in the construction of these examples, Brooks-Goldman and Heitsch showed that transversely projective foliations, a class of transversely homogeneous foliations, satisfy the following remarkable finiteness property of the secondary characteristic classes. Let $G$ be a Lie group and $P$ a closed subgroup of $G$. A $(G,G/P)$-foliation is a foliation whose transverse structure is modeled by the $G$-action on $G/P$ (see Definition~\ref{defn:trhomfol}). When $G = \SL(q+1;\RR)$ and $G/P = S^{q}$, a $(G,G/P)$-foliation is called a {\em transversely projective foliation\/}. Fix a smooth manifold $M$ with finitely presented fundamental group. Let $\Fol(G,G/P)$ be the set of $(G,G/P)$-foliations on $M$, and let 
\[
\Sigma(G,G/P) =\# \{\, \Delta_{\F} \in \Hom(H^{\bullet}(WO_{q}), H^{\bullet}(M;\RR)) \mid \F \in \Fol(G,G/P) \,\}\;.
\]

\begin{thm}[Brooks-Goldman~\cite{BrooksGoldman1984} in the case of $q=1$ and Heitsch~\cite{Heitsch1986} for $q > 1$]\label{thm:BGH}
$\Sigma(\SL(q+1;\RR),S^{q}) < \infty$. 
\end{thm}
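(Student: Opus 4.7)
The plan is to show that $\Delta_{\F}$ is determined by the conjugacy class of the holonomy representation $\rho_{\F}:\pi_1(M)\to G=\SL(q+1;\RR)$, and then to exploit the algebraic structure of the representation variety to bound the number of resulting characteristic homomorphisms.

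First, I would unpack the $(G,S^q)$-structure. A transversely projective foliation $\F$ produces a developing map $\widetilde{M}\to S^q$ and a holonomy representation $\rho_{\F}:\pi_1(M)\to G$ (unique up to $G$-conjugation); the normal bundle of $\F$ with its Bott partial connection is a $P$-reduction of the associated flat principal $G$-bundle $\widetilde{M}\times_{\rho_{\F}}G\to M$. Running the universal Chern--Weil--Simons construction through this flat structure, I would factor
\[
\Delta_{\F} : H^{\bt}(WO_q) \xrightarrow{\;\lambda_{G,P}\;} H^{\bt}(\g,\k;\RR) \xrightarrow{\;\Phi_{\rho_{\F}}\;} H^{\bt}(M;\RR),
\]
where $\k$ is the Lie algebra of a maximal compact subgroup of $P$, the map $\lambda_{G,P}$ is universal (depending only on the model $(G,G/P)$), and $\Phi_{\rho_{\F}}$ is induced by the flat bundle by evaluating invariant forms on cycles in $M$. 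Because $H^{\bt}(\g,\k;\RR)$ is finite-dimensional, each $\Delta_{\F}$ is captured by the finitely many values of $\Phi_{\rho_{\F}}$ on a fixed basis of this space.

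The crux, and the main obstacle, is to prove that $\Phi_{\rho}$ is locally constant as $\rho$ varies in $\Hom(\pi_1(M),G)$. My approach would be to realise the image of $\lambda_{G,P}$ in terms of invariant differential forms pulled back via a Matsushima-type map from the Riemannian symmetric space $G/\SO(q+1)$, and then to express the resulting periods on $M$ as topologically rigid numerical invariants (Chern and Euler numbers of auxiliary flat bundles produced by the transverse projective structure, generalising the Milnor--Wood phenomenon used by Brooks--Goldman for $q=1$). The specific form of $P$ as the stabiliser of an oriented line in $\RR^{q+1}$ is what forces the relevant periods to lie in a discrete subset of $\RR$; this is precisely the feature that fails in Heitsch's examples of deformable secondary classes for other $(G,G/H)$-structures, and it is where the proof truly lives.

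Finally, since $\pi_1(M)$ is finitely presented and $G=\SL(q+1;\RR)$ is a real affine algebraic group, $\Hom(\pi_1(M),G)$ is a real algebraic variety, and hence has only finitely many connected components. Combined with the rigidity of $\Phi_{\rho}$, this yields only finitely many possible $\Phi_{\rho_{\F}}$, and hence only finitely many $\Delta_{\F}$, proving $\Sigma(\SL(q+1;\RR),S^q)<\infty$.
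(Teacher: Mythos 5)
There is a genuine gap, and it sits exactly where you have placed a plausible-looking but unproven assertion: that $\Delta_{\F}$ factors as $\Phi_{\rho_{\F}}\circ\lambda_{G,P}$ with $\Phi_{\rho_{\F}}$ depending only on the holonomy representation. The Benson--Ellis factorization does give a universal map $H^{\bt}(WO_q)\to H^{\bt}(\g,\k)$, but the second map lands naturally in $H^{\bt}(\X_{G}(\F)/K_{P})$ (the total space of the associated bundle), and to reach $H^{\bt}(M)$ you must pull back along the section determined by the developing map. That section is extra data beyond $\rho_{\F}$, and two $(G,G/P)$-foliations with identical holonomy can have non-homotopic sections. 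This is not a pedantic point: Theorem~\ref{thm:infinite} of this paper exhibits, for $(\SO(n+1,1),S^{n}_{\infty})$ with $n$ even, a family $\{\F_{m}\}$ on a fixed manifold with the \emph{same} holonomy homomorphism and $\GV(\F_{m})=m\,\GV(\F_{1})$ pairwise distinct. So the well-definedness of your $\Phi_{\rho}$ is precisely the theorem's content for the pair $(\SL(q+1;\RR),S^{q})$, not a formal consequence of flatness; the paper closes this gap by showing that the relevant classes in $H^{\bt}(\X_{G}(\F)/K_{P})$ lie in the image of $\pi^{*}H^{\bt}(M)$, so that pullback by any section agrees --- via a complexification and Leray--Serre spectral sequence argument for odd $q$ (Proposition~\ref{prop:ext3}), and via the vanishing $\fint\Delta_{\F_{P}}(\sigma)=0$ proved by the antipodal-map computation for even $q$ (Proposition~\ref{prop:Heitsch} with Lemma~\ref{lem:Gysin}).

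Your proposed mechanism for the "crux" --- that the periods are topologically rigid and lie in a discrete subset of $\RR$ --- is also not adequate as stated. Discreteness alone does not give finiteness (for $q=1$ Brooks--Goldman need the Milnor--Wood \emph{bound}, not just integrality of the Euler number), no argument is offered for why discreteness would hold when $q>1$, and in any case local constancy of the characteristic homomorphism along smooth families of foliations is already supplied by the Benson--Ellis/Haefliger rigidity theorem (Theorem~\ref{thm:rigid}); the obstruction is that a path in $\Hom(\pi_{1}M,G)$ need not lift to a path of foliations on $M$, which is why the paper deforms the auxiliary foliation $\widehat{\E}$ on $\X_{G}(\F)/K_{P}$ instead and then needs the section-independence above. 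Your framing of the problem (factor through $H^{\bt}(\g,\k)$, then use finiteness of $\pi_{0}(\Hom(\pi_{1}M,G))$ for the algebraic group $\SL(q+1;\RR)$) matches the paper's outer structure, but the middle step is missing.
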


In this article, we will generalize Theorem~\ref{thm:BGH} for other cases of $(G,G/P)$. We also prove Bott-Thurston-Heitsch type formulas to compute secondary characteristic classes and apply such formulas to obtain certain rigidity of foliations.

\subsection{A sufficient condition for the finiteness of secondary characteristic classes}
We assume that $G$ is connected, linear algebraic and semisimple. Let $\Gc$ be a connected complex semisimple Lie group such that $\Lie(\Gc) = \Lie(G) \otimes \CC$ as a Lie algebra over $\RR$. Our first result is the following. 

\begin{thm}\label{thm:finite}
If the map $H^{\bt}(\Gc/P;\RR) \to H^{\bt}(G/P;\RR)$ is trivial on positive degrees, then $\Sigma(G,G/P) < \infty$.
\end{thm}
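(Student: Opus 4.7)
The plan is to reduce the finiteness of $\Sigma(G, G/P)$ to the fact that the representation variety $\Hom(\pi_{1}(M), G)$ has finitely many connected components, by exhibiting a factorization of $\Delta_{\F}$ through a finite dimensional universal object and then showing that the cohomological hypothesis forces the characteristic image to be locally constant in the holonomy.

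First, following the Kamber--Tondeur approach to transversely homogeneous foliations, for each $\F \in \Fol(G, G/P)$ with holonomy $\rho \colon \pi_{1}(M) \to G$ and developing data, I would exhibit a factorization
$$
\Delta_{\F} \colon H^{\bt}(WO_{q}) \xrightarrow{\ \lambda_{G,P}\ } H^{\bt}(\g, P) \xrightarrow{\ B\rho^{*}\ } H^{\bt}(M; \RR),
$$
where $\lambda_{G,P}$ is a universal map depending only on $(G, P)$ constructed from the Weil algebra of $\g$ and the flat Cartan connection carried by the transverse $P$-reduction of $\F$, and $B\rho^{*}$ is induced by the classifying map of the flat $G$-bundle $E_{\rho} = \wt{M} \times_{\rho} G$. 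Since $H^{\bt}(\g, P)$ is finite dimensional and $\lambda_{G,P}$ is independent of $\F$, the characteristic image $\Delta_{\F}$ is entirely determined by the restriction of $B\rho^{*}$ to $\Image(\lambda_{G,P})$.

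Second, since $G$ is linear algebraic and $\pi_{1}(M)$ is finitely presented, $\Hom(\pi_{1}(M), G)$ is a real algebraic variety, hence a finite union of connected components. Combined with conjugation invariance of $\Delta_{\F}$, it then suffices to prove that $\rho \mapsto B\rho^{*} \circ \lambda_{G,P}$ is locally constant. To this end, given a smooth path $\rho_{t}$ in $\Hom(\pi_{1}(M), G)$ and $\alpha \in \Image(\lambda_{G,P})$, I would compute $\tfrac{d}{dt}\big|_{t=0} B\rho_{t}^{*}(\alpha)$ by a Chern--Simons type transgression formula. The infinitesimal variation lives in $H^{1}(\pi_{1}(M), \g_{\Ad\rho})$, and the derivative is expressed as a cup product pairing with classes obtained by transgressing elements of $\Image(\lambda_{G,P})$ into $H^{\bt}(G/P; \RR)$. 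The decisive structural input is that, by naturality of the Kamber--Tondeur construction with respect to the complexification $G \hookrightarrow \Gc$, these transgression classes factor through the restriction $H^{\bt}(\Gc/P; \RR) \to H^{\bt}(G/P; \RR)$, which vanishes on positive degrees by hypothesis. Hence the derivative is zero and local constancy follows.

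The main obstacle will be the last step: to make precise the factorization of the variational formula through the restriction $H^{\bt}(\Gc/P; \RR) \to H^{\bt}(G/P; \RR)$, one must analyze $\lambda_{G,P}$ carefully enough to see that the transgressed classes paired against the deformation cocycle genuinely arise from the complex flag variety $\Gc/P$, rather than merely from $G/P$ itself; this is where the interplay between the real form of $G$ and its complexification enters essentially.
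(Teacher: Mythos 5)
Your factorization step contains the essential gap. You assert that $\Delta_{\F}$ factors as $B\rho^{*}\circ\lambda_{G,P}$ with $B\rho^{*}$ ``induced by the classifying map of the flat $G$-bundle $E_{\rho}$'', so that $\Delta_{\F}$ is entirely determined by the holonomy $\rho$. This is false in general, and it is exactly the point where the hypothesis on $H^{\bt}(\Gc/P;\RR)\to H^{\bt}(G/P;\RR)$ has to enter. The Benson--Ellis factorization (Theorem~\ref{thm:factrization}) passes through $H^{\bt}(\g,K_{P})$, not $H^{\bt}(\g,P)$, and the second arrow is pull-back by the section $\hat{s}:M\to\X_{G}(\F)/K_{P}$ of the enlarged Haefliger structure, i.e.\ by the developing map; it depends on $\F$ and not only on $\rho$. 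Two $(G,G/P)$-foliations with the same holonomy can have different characteristic classes: in the family $\{\F_{m}\}$ of Theorem~\ref{thm:inf} the holonomy data are independent of $m$, yet $\Delta_{\F_{m}}(\sigma)=m\,\Delta_{\F_{1}}(\sigma)$ because the Euler class of $\nu\F_{m}$ varies. So the reduction to local constancy in $\rho$ is not available at the outset; it is the conclusion one must earn.

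The paper uses the two inputs in roles opposite to yours. Local constancy along deformations of the holonomy is \emph{unconditional} for semisimple $G$: it is the Benson--Ellis rigidity theorem (Theorem~\ref{thm:rigid}), applied not to $\F$ itself but to the foliation $\widehat{\E}=p^{*}\E_{\hol(\F)}$ on the total space $\X_{G}(\F)/K_{P}$, whose homotopy class is determined by $\rho$ alone. The cohomological hypothesis is then used, via complexification and a comparison of the Leray--Serre spectral sequences of $\X_{\Gc}(\F)/(K_{P})_{\CC}\to M$ and $\X_{G}(\F)/K_{P}\to M$ (Proposition~\ref{prop:ext3}), to show that the image of $\Delta_{\widehat{\E}}$ lies in $\pi_{G/K_{P}}^{*}H^{\bt}(M;\RR)$; only then is $s^{*}\Delta_{\widehat{\E}}$ independent of the section $s$, which removes the dependence on the developing map and yields Proposition~\ref{prop:01}. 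Your Chern--Simons variational argument, besides being unsubstantiated (no mechanism is given by which the pairing against $H^{1}(\pi_{1}M,\g_{\Ad\rho})$ would factor through $H^{\bt}(\Gc/P;\RR)\to H^{\bt}(G/P;\RR)$), is aimed at the wrong target: the quantity it would control is already rigid without the hypothesis, while the quantity that actually needs the hypothesis---the section dependence---is never addressed. The finiteness of $\pi_{0}(\Hom(\pi_{1}M,G))$ via the algebraic-variety structure is the one ingredient you share with the paper.
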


When $(G,G/P)=(\SL(q+1;\RR),S^{q})$ for odd $q$, the assumption of Theorem~\ref{thm:finite} on $(G,P)$ is satisfied (see Section~\ref{sec:oddtrproj}). So Theorem~\ref{thm:finite} implies Theorem~\ref{thm:BGH} for odd $q$. The following cases are our examples of $(G,G/P)$: $(\SO_{0}\left(n+1,1\right),S^{n}_{\infty})$, $(\SU\left(n+1,1\right),S^{2n+1}_{\infty})$, $(\Sp\left(n+1,1\right),S^{4n+3}_{\infty})$ and $(F_{4(-20)},S^{15}_{\infty})$, where $S^{n}_{\infty}$, $S^{2n+1}_{\infty}$, $S^{4n+3}_{\infty}$ and $S^{15}_{\infty}$ are the ideal boundaries of the corresponding noncompact symmetric spaces $\mathbf{H}_{\RR}^{n}$, $\mathbf{H}_{\CC}^{n}$, $\mathbf{H}_{\HH}^{n}$ and $\mathbf{H}_{\OO}^{2}$, respectively. According to the case of manifolds, $\left(\SO_{0}\left(n+1,1\right),S^{n}_{\infty}\right)$-foliations are called {\em transversely conformally flat} foliations, and $\left(\SU\left(n+1,1\right),S^{2n+1}_{\infty}\right)$-foliations are called {\em transversely spherical} $CR$ foliations. The unit tangent sphere bundles of hyperbolic manifolds have typical examples of these $(G,G/P)$-foliations (see Example~\ref{ex:mob}). The map $H^{\bt}(\Gc/P;\RR) \to H^{\bt}(G/P;\RR)$ is trivial on positive degrees except in the case of transversely conformally flat foliations of even codimension (see Section~\ref{sec:example}). Thus we get the following. 

\begin{cor}\label{cor:main}
If $(G,G/P)$ is $(\SO_{0}(n+1,1),S^{n}_{\infty})$ for odd $n$, $(\SU(n+1,1),S^{2n+1}_{\infty})$, $(\Sp(n+1,1),S^{4n+3}_{\infty})$ or $\left(F_{4(-20)},S^{15}_{\infty}\right)$, then $\Sigma(G,G/P) < \infty$.
\end{cor}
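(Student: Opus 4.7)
The plan is to derive the corollary from Theorem~\ref{thm:finite} by a simple parity-of-degree argument. For each of the four pairs $(G,G/P)$ listed, I would verify the hypothesis of Theorem~\ref{thm:finite}---triviality in positive degrees of the restriction map $H^{\bt}(\Gc/\Pc;\RR)\to H^{\bt}(G/P;\RR)$ induced by the natural closed inclusion $G/P\hookrightarrow \Gc/\Pc$---by showing that the source has cohomology only in even degrees, while the target has nontrivial positive-degree cohomology only in an odd degree.

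On the source side, $\Gc$ is a complex semisimple Lie group and $\Pc=P_{\CC}$ is a parabolic subgroup, so $\Gc/\Pc$ is a complex flag variety. Its Bruhat decomposition partitions it into Schubert cells, each an affine space of even real dimension, and consequently $H^{\bt}(\Gc/\Pc;\RR)$ is concentrated in even degrees. In the four cases the flag variety can be identified explicitly as the complex quadric hypersurface in $\CC P^{n+1}$, as $\CC P^{n+1}$, as $\CC P^{2n+3}$, and as a parabolic quotient of $F_{4}(\CC)$ respectively, but for the proof only the general Schubert-cell parity is required. On the target side, $G/P$ is in every case an odd-dimensional sphere: $S^{n}$ with $n$ odd, $S^{2n+1}$, $S^{4n+3}$, and $S^{15}$. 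Hence $H^{\bt}(G/P;\RR)$ is zero in every positive even degree.

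Combining the two observations, any positive-degree class of $H^{\bt}(\Gc/\Pc;\RR)$ lies in an even positive degree, where $H^{\bt}(G/P;\RR)$ vanishes, and therefore pulls back to zero; Theorem~\ref{thm:finite} then yields $\Sigma(G,G/P)<\infty$ in each of the four cases. The odd-$n$ hypothesis in case (i) is precisely what prevents the parity argument from failing: for even $n$, $S^{n}$ carries a generator in the even degree $n$ that can be the image of a class from the quadric, and one lands in the exceptional transversely conformally flat situation discussed later in the paper. I do not foresee any serious obstacle to executing this plan; the only bookkeeping is to check that each real parabolic $P$ really does have $\Pc$ parabolic in $\Gc$, which is automatic since in every case $P$ is a real algebraic parabolic subgroup of the real algebraic semisimple group $G$.
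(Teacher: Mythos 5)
There is a genuine gap: you have verified the wrong hypothesis. Theorem~\ref{thm:finite} requires the triviality on positive degrees of $H^{\bt}(\Gc/P;\RR)\to H^{\bt}(G/P;\RR)$, where $P$ is the \emph{real} parabolic subgroup --- not of $H^{\bt}(\Gc/\Pc;\RR)\to H^{\bt}(G/P;\RR)$. The space $\Gc/P$ is homotopy equivalent to $K_{\Gc}/K_{P}$, the total space of a $G/P$-sphere bundle over the compact dual symmetric space $K_{\Gc}/K_{G}$; it is not a complex flag variety and carries no Schubert cell decomposition. In fact $\operatorname{rk}K_{P}<\operatorname{rk}K_{\Gc}$ in these cases, so $\chi(K_{\Gc}/K_{P})=0$ and $H^{\bt}(\Gc/P;\RR)$ necessarily has nonzero odd-degree classes; your claim that the source is concentrated in even degrees is false for the space that actually appears in the theorem. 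Nor can you deduce the needed condition from the one you prove: the restriction factors as $H^{\bt}(\Gc/\Pc)\to H^{\bt}(\Gc/P)\to H^{\bt}(G/P)$, and triviality of the composite says nothing about the second map. The paper explicitly flags this distinction in a remark after the proof of Theorem~\ref{thm:finite}: the $\Gc/\Pc$-condition is strictly weaker, and with it one only recovers (via results of Asuke, and under an extra hypothesis on $c_{1}$) finiteness of the Godbillon--Vey class, not of the whole characteristic homomorphism.

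The paper's actual verification (Section~\ref{sec:example}) runs differently: Proposition~\ref{prop:cri} uses the Gysin sequence to show that the required triviality is equivalent to the nonvanishing of the Euler class of the sphere bundle $K_{G}/K_{P}\to K_{\Gc}/K_{P}\to K_{\Gc}/K_{G}$, and then in each case this bundle is identified with the unit tangent sphere bundle of $\RR P^{n+1}$, $\CC P^{n+1}$, $\HH P^{n+1}$ or $\OO P^{2}$, whose Euler class is $\chi$ times the fundamental class with $\chi=1$, $n+2$, $n+2$, $3$ respectively. This is where the parity of $n$ genuinely enters (for even $n$ the Euler class of an $S^{n}$-bundle vanishes rationally, so the criterion fails), and where the real content lies --- in the $F_{4(-20)}$ case, identifying $K_{P}\cong\Spin(7)$ and the bundle $F_{4}/\Spin(7)\to F_{4}/\Spin(9)$ as the unit tangent bundle of $\OO P^{2}$ takes the better part of Section~\ref{sec:F4}. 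To repair your argument you would need to replace the Schubert-cell computation by this Euler class (or an equivalent Gysin-sequence) computation for $K_{\Gc}/K_{P}$.
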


\begin{rem}
	\begin{enumerate}
	
		\item Since $\SU(1,1) \cong \SL(2;\RR)$ and $\SO_{0}(2,1) \cong \PSL(2;\RR)$, where $\SO_{0}(2,1)$ is the identity component of $\SO(2,1)$, Corollary~\ref{cor:main} for $(G,G/P)=(\SU(1,1),S^{1}_{\infty})$ or $(\SO_{0}(2,1),S^{1}_{\infty})$ is essentially contained in Theorem~\ref{thm:BGH}. Hantout~\cite{Hantout1988} also investigated this type of finiteness results, but his result does not imply this corollary.

		\item Note that the actions of $\SU(n+1,1)$ and $\Sp(n+1,1)$ on spheres may not be effective, depending on $n$, because their stabilizers are equal to the centers. But, by a slight modification of the proof of Theorem~\ref{thm:finite}, we can show the finiteness for the case where $(G,G/P)$ is $(\PSU(n+1,1),S^{2n+1}_{\infty})$ or $(\PSp(n+1,1),S^{4n+3}_{\infty})$ (see Section~\ref{sec:center}).

		\item $\Sigma(G,G/P)$ is bounded in terms of the degree of defining polynomials of a real algebraic variety $\Hom(\pi_{1}M,G)$ by~\cite{Milnor1964}  (see Remark~\ref{rem:fincomp}). Using a product of the same manifolds, it is not difficult to see that, for any $k>0$, there exists a closed manifold $M$ such that $\Sigma(G,G/P) > k$.

		\item It is not difficult to see that every nontrivial secondary characteristic class of $(G,G/P)$-foliations is a multiple of the Godbillon-Vey class for these cases (see Proposition~\ref{prop:onlyGV3}). 
		
	\end{enumerate}
\end{rem}

\subsection{Bott-Thurston-Heitsch type formulas}

The Godbillon-Vey class $\GV(\F)$ of a foliation $\F$ is the secondary characteristic class first discovered in~\cite{GodbillonVey1971}, and it is specially important for transversely homogeneous foliations as suggested by results of Pittie~\cite{Pittie1979}. In the standard notation, $\GV(\F) = (2\pi)^{q+1}\Delta_{\F}(h_{1}c_{1}^{q})$ for a codimension $q$ foliation~\cite[Theorem~7.20]{KamberTondeur1975b}. A typical example of transversely projective foliations is suspension foliations of projective actions; namely, for a manifold $N$ and a homomorphism $\pi_{1}N \to \SL(q+1;\RR)$, we get an $S^{q}$-bundle $p : \widetilde{N} \times_{\pi_{1}N} S^{q} \to N$ foliated by a transversely projective foliation transverse to the fibers of $p$ (Example~\ref{ex:suspfol}). The Bott-Thurston-Heitsch formula for the Godbillon-Vey class of transversely projective foliations computes the Godbillon-Vey class of such foliations.

\begin{thm}[{\cite{Thurston1972b} and~\cite[Appendix by Brooks]{Bott1978} for $q=1$, and Heitsch \cite[Theorem~4.2]{Heitsch1978} and~\cite[Theorem~2.3]{Heitsch1983} for $q>1$}]\label{thm:BT1}
Let $N$ be a manifold and $\hol : \pi_{1}N \to \SL(q+1;\RR)$ a homomorphism. Let $p_{M} : M \to N$ be the corresponding $S^{q}$-bundle with the suspension foliation $\F$. Then, for any orientation on the fibers of $p_{M}$, we have
\begin{equation}\label{eq:BTH1}
\frac{1}{(2\pi)^{q+1}}\fint_{p_{M}} \GV(\F) = (q+1)^{q+1}e(p_M)
\end{equation}
in $H^{q+1}(N;\RR)$, where $e(p_{M})$ is the Euler class of the $S^{q}$-bundle $p_{M}$.
\end{thm}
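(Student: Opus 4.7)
The plan is to exhibit an explicit closed form representative of $\GV(\F)$ on $M$ whose fibre integral along $p_M$ is a Chern--Weil representative of $e(p_M)$.

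First, I would reformulate the sphere bundle in terms of a flat vector bundle. The holonomy $\hol$ produces a flat oriented rank-$(q+1)$ vector bundle $E=\widetilde{N}\times_{\pi_{1}N}\RR^{q+1}\to N$, and since $\SL(q+1;\RR)$ acts on $S^{q}=(\RR^{q+1}\setminus 0)/\RR^{\times}_{+}$ by $x\mapsto Ax/|Ax|$, one has a canonical identification $M=(E\setminus 0)/\RR^{\times}_{+}$. Any auxiliary Euclidean metric on $E$ realises $M\cong S(E)$ as an oriented $S^{q}$-bundle, whence $e(p_M)=e(E)\in H^{q+1}(N;\RR)$. Because $\hol$ acts by orientation-preserving projective diffeomorphisms, the real line bundle $\det N^{*}\F$ is topologically trivial, so one can pick a nowhere-vanishing defining $q$-form $\Omega$ for $\F$ compatible with the fibre orientation; on $\widetilde M=\widetilde N\times S^{q}$ one has $\Omega=f\cdot p_{2}^{*}\nu$ with $\nu$ the round volume on $S^{q}$ and $f>0$ satisfying $\gamma^{*}f=f\cdot|\hol(\gamma)x|^{q+1}$. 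The equation $d\Omega=\eta\wedge\Omega$ then forces $\pi^{*}\eta=d\log f$ modulo the conormal of $\F$; one checks that the leafwise part of $d\log f$ is $\pi_{1}N$-invariant (the extra term $(q+1)\,d\log|\hol(\gamma)x|$ in the transformation law is purely transverse) and descends to the desired $1$-form $\eta$ on $M$. By the Kamber--Tondeur construction \cite[Theorem~7.20]{KamberTondeur1975b}, $\eta\wedge(d\eta)^{q}$ represents $(2\pi)^{-(q+1)}\GV(\F)$.

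Next, I would carry out the fibre integration. In product foliation charts on $\widetilde M$ one computes
\[
 d\eta=\sum_{j,k}(\partial_{x_{k}}\partial_{y_{j}}\log f)\,dx_{k}\wedge dy_{j},
\]
so $(d\eta)^{q}$ is a sum of $q\times q$ minors of the mixed Hessian of $\log f$ and $\eta\wedge(d\eta)^{q}$ has a unique summand of top fibre-degree; integrating this summand over $S^{q}$ yields the Chern--Weil Euler form of a connection on $E$ built from the auxiliary metric and from $\log f$. A standard Chern--Simons transgression then identifies $\fint_{p_{M}}\eta\wedge(d\eta)^{q}$ with $(2\pi)^{q+1}\,e(E)$ in $H^{q+1}(N;\RR)$, which, after division by $(2\pi)^{q+1}$, is the stated formula.

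The hardest step will be this last identification: showing that $\fint_{p_M}\eta\wedge(d\eta)^q$ agrees cohomologically with $(2\pi)^{q+1}e(E)$. One must verify that the auxiliary Euclidean metric contributes only exact forms under fibre integration, so that the cohomology class of the fibre integral depends only on the flat $\SL(q+1;\RR)$-structure on $E$, and then track the factor $(2\pi)^{q+1}$ through the various normalisations of $h_{1}c_{1}^{q}$, of $\eta\wedge(d\eta)^{q}$, and of the $(2\pi i)^{-k}$ conventions in Chern--Weil theory. This transgression/secondary-class computation is the computational core of the proof; the remaining steps are routine bookkeeping once $\eta$ has been identified.
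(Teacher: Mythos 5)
First, note that the paper does not actually prove Theorem~\ref{thm:BT1}: it is quoted from Thurston, Brooks and Heitsch, and the machinery the paper develops for its generalization (Theorem~\ref{thm:BT}) follows a different route from yours. There, one computes $\Delta_{\F_{P}}(h_{1}c_{1}^{q})$ explicitly as a left invariant form on $G/K_{P}$ via Pittie's Bott connection (for $\SL(q+1;\RR)$ this is carried out in Section~\ref{sec:trproj}, formulas~\eqref{eq:GVtrproj1}--\eqref{eq:GVtrproj2}), splits it into a $K_{G}$-basic factor times a fiberwise volume form, fiber-integrates over $K_{G}/K_{P}$ to obtain a multiple of the invariant volume form on $G/K_{G}$ (Proposition~\ref{prop:volGV}), transports this to an arbitrary suspension foliation by naturality of the developing map (Proposition~\ref{prop:r1}), and finally converts the volume class into the Euler class by the Hirzebruch proportionality principle. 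Your plan --- a global defining form $\Omega=f\cdot p_{2}^{*}\nu$, the $1$-form $\eta$ extracted from the leafwise differential of $\log f$, and a direct fibre integration of $\eta\wedge(d\eta)^{q}$ --- is genuinely different and is essentially the original Bott--Thurston--Brooks argument for $q=1$ pushed to all $q$; its advantage is that it avoids the homogeneous model entirely, while the paper's route localizes all the analysis into a finite-dimensional Lie-algebra computation. Your derivation of the transformation law $f\circ\gamma=f\cdot|\hol(\gamma)y|^{q+1}$ and of the descent of the leafwise part of $d\log f$ is correct.

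However, the proposal has a genuine gap at exactly the step you flag: the identification $\fint_{p_{M}}\eta\wedge(d\eta)^{q}=(2\pi)^{q+1}e(E)$ in cohomology is asserted (``a standard Chern--Simons transgression then identifies\ldots'') but not carried out, and this identity \emph{is} the theorem. In particular, the fact that the proportionality constant is exactly $1$ --- rather than the complicated constants $r_{G}$ appearing in Theorem~\ref{thm:BT} for the other groups --- does not fall out of general transgression formalism; it requires an actual evaluation of the spherical integral of your mixed-Hessian expression against the Pfaffian of the curvature of the metric connection on $E$. Until that computation is done the proof is incomplete. Two further points need care. First, normalization: with the paper's conventions $[\eta\wedge(d\eta)^{q}]=\GV(\F)$ itself, not $(2\pi)^{-(q+1)}\GV(\F)$ as you state; taking your two normalization claims literally would leave the final formula off by a factor of $(2\pi)^{q+1}$. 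Second, for $q=1$ your model $M=(E\setminus 0)/\RR^{\times}_{+}$ is the bundle of rays, whose stabilizer is $\Aff_{+}(1;\RR)$, whereas Remark~\ref{rem:aff} insists on the model $\SL(2;\RR)/\Aff(1;\RR)\approx\RR P^{1}$; these two circle bundles differ by a fiberwise double cover, so the constant must be rechecked separately in that case.
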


\begin{rem}\label{rem:aff}
The case $q=1$ is special because there are different choices of $\SL(2;\RR)$-actions on $S^{1}$. To get~\eqref{eq:BTH1}, the $\SL(2;\RR)$-action on the homogeneous space $\SL(2;\RR)/\Aff(1;\RR)\approx S^{1}$ should be used in the construction of the suspension foliation $\F$, where
\[
  \Aff(1;\RR)=\left\{\,\left.\begin{pmatrix} a & b \\ 0 & 1/a \end{pmatrix}\, \right| \; a \in \RR^\times,\ b \in \RR \,\right\}\;.
\]
\end{rem}

This formula is important as one of the few methods to calculate the Godbillon-Vey class explicitly. Heitsch obtained a similar formula for other secondary characteristic classes of transversely projective foliations~\cite[Theorem~4.2]{Heitsch1978}, \cite[Theorem~2.3]{Heitsch1983}. 

We generalize this formula. Note that, for a manifold $N$ and a homomorphism $\pi_{1}N \to G$, we have a suspension foliation of the total space of a $G/P$-bundle over $N$, which naturally admits a structure of a $(G,G/P)$-foliation (Example~\ref{ex:suspfol}). Let $\SO_{0}(n+1,1)$ be the identity component of $\SO(n+1,1)$.

\begin{thm}\label{thm:BT}
Let $(G,G/P)$ be $(\SO_{0}(n+1,1),S^{n}_{\infty})$ for odd $n>1$, $(\SU(n+1,1),S^{2n+1}_{\infty})$ for $n>0$, $(\Sp(n+1,1),S^{4n+3}_{\infty})$ for $n \geq 0$ or $\left(F_{4(-20)},S^{15}_{\infty}\right)$. Let $q=\dim G/P$, $N$ a manifold and $\hol : \pi_{1}N \to G$ a homomorphism. Let $p_{M} : M \to N$ be the corresponding $G/P$-bundle with the suspension foliation $\F$. Then, for any orientation on the fibers of $p_{M}$, we have 
\begin{equation}\label{eq:BT}
\frac{1}{(2\pi)^{q+1}}\fint_{p_{M}} \GV(\F) = r_{G}\, e(p_M)
\end{equation}
in $H^{q+1}(N;\RR)$, where $e(p_{M})$ is the Euler class of the $S^{q}$-bundle $p_{M}$, and $r_{G}$ is a nonzero constant, depending on $(G,G/P)$, given in Table~\ref{table: r_G} in the case of classical groups.
\end{thm}

\renewcommand{\arraystretch}{2.5}
\begin{table}[h]
\[
\begin{array}{|c|c|} 
\hline
(G,G/P) & r_{G} \\ 
\hline 
(\SO_{0}(n+1,1),S^{n}_{\infty}) & \displaystyle n^{n+1}\\
(\SU(n+1,1),S^{2n+1}_{\infty}) & \displaystyle \frac{(n+1)^{2n+2}}{2^{n-1}(n+2)} \cdot \frac{(2n+1)!}{n! (n+1)!} \\
(\Sp(n+1,1),S^{4n+3}_{\infty}) & \displaystyle \frac{(2n+3)^{4n+3}}{2^{n-2}(n+2)} \cdot \frac{(4n+3)!}{(2n+1)!(2n+2)!} \\ 
\hline
\end{array}
\]
\caption{The constant $r_G$.}
\label{table: r_G}
\end{table}
\renewcommand{\arraystretch}{1}

Note that it is not difficult to see that both sides of~\eqref{eq:BT} are equal up to a nonzero constant factor like in the case of the original Bott-Thurston formula for the codimension one case (see~\cite[Section~3]{BrooksGoldman1984}). This relation was already pointed out in the case of $(\SO_{0}(n+1,1),S^{n}_{\infty})$ by Reznikov~\cite[Section~5.16]{Reznikov1996}.  

\begin{rem}
	\begin{enumerate}
	
		\item Rasmussen~\cite[Theorem~5.1]{Rasmussen1980} also obtained a similar formula for the case of $(\SO_{0}(3,1),S^{2}_{\infty})$. The codimension one case (excluded from Theorem~\ref{thm:BT}), where $(G,G/P)$ is either of $(\SO_{0}(2,1),S_{\infty}^{1})$ or $(\SU(1,1),S_{\infty}^{1})$, corresponds to the original Bott-Thurston formula (Theorem~\ref{thm:BT1} for $q=1$).
		
		\item Note that, in the case of $(\SO_{0}(n+1,1),S^{n}_{\infty})$ for even $n$, the Euler classes of $S^{n}$-bundles are trivial with real coefficients. So this type of formulas is not true in that case. But we will show a similar formula with the volume of the holonomy homomorphism (see Proposition~\ref{prop:r1}).

		\item Theorem~\ref{thm:BT1} for $q=1$ was used by Brooks-Goldman~\cite{BrooksGoldman1984} to prove Theorem~\ref{thm:BGH} for $q=1$. Heitsch~\cite{Heitsch1986} used Theorem~\ref{thm:BT1} and its generalization to other secondary characteristic classes to prove Theorem~\ref{thm:BGH}. Based on a calculation similar to the proof of Theorem~\ref{thm:BT}, we can give an alternative proof of Theorem~\ref{thm:BGH} for even $q$ (see Remarks~\ref{rem:simpler} and~\ref{rem:alt}). This alternative proof is slightly simpler than the original proof due to Heitsch~\cite{Heitsch1986}. 
		\item We can compute the constant $r_{F_{4(-20)}}$ based on Lie algebra cohomology like in the case of classical groups. But it requieres a number of special techniques on $F_{4}$. So we do not include the computation in this article.
	\end{enumerate}
\end{rem}

\subsection{The case of $G/P = S^{q}$ for even $q$}
In this case, it is easy to see that the assumption of Theorem~\ref{thm:finite} on the triviality of $H^{\bt}(\Gc/P;\RR) \to H^{\bt}(G/P;\RR)$ for positive degrees is never satisfied (see Proposition~\ref{prop:cri}). In fact, by using a Bott-Thurston-Heitsch type formula in Proposition~\ref{prop:r1} for the Godbillon-Vey class of transversely conformally flat foliation of even codimension, we get the following infiniteness result.

\begin{thm}\label{thm:infinite}
For each even $q$, there exists a connected noncompact smooth manifold $X$ with finitely presented fundamental group and a family $\{\F_{m}\}_{m \in \ZZ}$ of codimension $q$ transversely conformally flat foliations of $X$ such that $\GV(\F_{m}) \neq \GV(\F_{m'})$ if $m \neq m'$. 
\end{thm}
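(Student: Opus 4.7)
The plan is to invoke Proposition~\ref{prop:r1}, our Bott-Thurston-Heitsch type formula in the even codimension transversely conformally flat case, which asserts that for the suspension foliation $\F_{\rho}$ associated to a homomorphism $\rho\colon\pi_{1}N\to\SO_{0}(q+1,1)$, one has $\fint_{p_{M}}\GV(\F_{\rho})=r\cdot\rho^{*}[\omega_{\mathrm{vol}}]\in H^{q+1}(N;\RR)$, with $r$ a nonzero universal constant and $[\omega_{\mathrm{vol}}]$ the hyperbolic volume class of $\mathbf{H}_{\RR}^{q+1}$. Hence, to break finiteness, it is enough to construct a fixed noncompact $X$ carrying suspension foliations whose holonomy representations have pairwise distinct volume classes.

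First, choose a closed oriented hyperbolic $(q+1)$-manifold $N_{0}$ containing a separating totally geodesic hypersurface $\Sigma$. Such arithmetic $N_{0}$ exist for every odd $q+1\geq 3$ by the constructions of Millson and Gromov-Piatetski-Shapiro. Let $\rho_{0}\colon \pi_{1}N_{0}\to\SO_{0}(q+1,1)$ be its discrete faithful holonomy. Johnson-Millson bending along $\Sigma$ yields a real-analytic family $\{\rho_{t}\}_{t\in(-\varepsilon,\varepsilon)}$ of pairwise non-conjugate representations of $\pi_{1}N_{0}$, all lying in the same arc-component of $\Hom(\pi_{1}N_{0},\SO_{0}(q+1,1))$ as $\rho_{0}$. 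A Schl\"afli-type computation shows that $\vol(\rho_{t})\in H^{q+1}(N_{0};\RR)\cong\RR$ is non-constant in $t$, so one can extract a $\ZZ$-indexed subsequence $\{\rho_{t_{m}}\}_{m\in\ZZ}$ with pairwise distinct volumes.

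Let $p_{M_{0}}\colon M_{0}\to N_{0}$ be the flat $S^{q}$-bundle associated to $\rho_{0}$, and set $N=N_{0}\times\RR$, $X=M_{0}\times\RR$. Then $X$ is a connected noncompact smooth $(2q+2)$-manifold with finitely presented fundamental group $\pi_{1}X\cong\pi_{1}N_{0}$. Precomposing each $\rho_{t_{m}}$ with the isomorphism $\pi_{1}N\cong\pi_{1}N_{0}$, one obtains a suspension $(\SO_{0}(q+1,1),S^{q}_{\infty})$-foliation whose total space is the flat $S^{q}$-bundle over $N$; since the $\rho_{t_{m}}$ are all in the same arc-component, these bundles are smoothly isomorphic to the fixed $X$. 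Denote the resulting codimension $q$ transversely conformally flat foliation of $X$ by $\F_{m}$. By Proposition~\ref{prop:r1} and naturality of the Godbillon-Vey class under the homotopy equivalence $X\simeq M_{0}$,
\[
\fint_{p_{M_{0}}}\GV(\F_{m})=r\cdot\vol(\rho_{t_{m}})\in H^{q+1}(N_{0};\RR)\cong\RR,
\]
and these real numbers are pairwise distinct by construction; hence so are the classes $\GV(\F_{m})\in H^{2q+1}(X;\RR)$.

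The main obstacle is producing the bending family $\{\rho_{t}\}$ with non-constant volume for every even $q\geq 2$. Mostow rigidity rules out deformations of the discrete faithful representation of a closed hyperbolic $(q+1)$-manifold as an abstract group, so one must exploit the totally geodesic hypersurface $\Sigma$ and use the Johnson-Millson bending construction, together with a Schl\"afli-type derivative computation to verify non-constancy of the volume under bending. For $q=2$ the argument simplifies and can be replaced by a direct appeal to Thurston's hyperbolic Dehn surgery theory.
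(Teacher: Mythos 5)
Your strategy cannot succeed, because it tries to produce infinitely many values of $\GV$ by varying only the holonomy representation, and the volume $\vol(h)=\Xi_{h}(\vol_{G/K_{G}})$ is a \emph{rigid} invariant of $h$. Concretely: a smooth path $t\mapsto\rho_{t}$ in $\Hom(\pi_{1}N_{0},G)$ gives a smooth family of suspension $(G,G/P)$-foliations on the fixed flat $S^{q}$-bundle $M_{0}$, so $\Delta_{\F_{\rho_{t}}}$ is locally constant by Theorem~\ref{thm:rigid}, and then Proposition~\ref{prop:r1} (with $c_{G}\neq 0$) forces $\vol(\rho_{t})$ to be locally constant; the underlying mechanism is the vanishing $H^{\bt}(\g,K_{G};\g)=0$ for semisimple $\g$, which kills the derivative of $\Xi_{\rho_{t}}$. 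Since $\pi_{0}(\Hom(\pi_{1}N_{0},G))$ is finite (Remark~\ref{rem:fincomp}), the volume takes only finitely many values on all of $\Hom(\pi_{1}N_{0},G)$, so no choice of holonomies can break finiteness through the volume term alone. Your claimed ``Schl\"afli-type computation showing $\vol(\rho_{t})$ is non-constant'' is therefore false for a closed base; the Schl\"afli formula produces nonzero derivatives only through boundary or cusp terms, which are absent here. There is also an independent obstruction one step earlier: the bending family you invoke does not exist in your setting. Johnson--Millson bending deforms a lattice of $\SO_{0}(q+1,1)$ inside a strictly \emph{larger} group; the centralizer of $\pi_{1}\Sigma$ (Zariski dense in the stabilizer $\SO_{0}(q,1)$ of the geodesic hyperplane) inside $\SO_{0}(q+1,1)$ itself is finite, and Calabi--Weil local rigidity forbids any nontrivial deformation of the holonomy of a closed hyperbolic $(q+1)$-manifold, $q+1\geq 3$, within $\SO_{0}(q+1,1)$.

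The paper's proof takes the opposite route: it keeps the holonomy essentially fixed and varies the \emph{Euler class of the normal bundle}. By Propositions~\ref{prop:volGV},~\ref{prop:fibint} and~\ref{prop:Z} one has $\GV(\F)=c\,\chi(\nu\F)\,\vol(\hol(\F))$ in even codimension; since $q$ is even, the sphere bundle $\G\backslash G/K_{P}\to\G\backslash G/K_{G}$ has trivial real Euler class, hence a section, hence fiberwise self-maps of arbitrary degree $m$ (Lemma~\ref{lem:inf}). Pulling back $\F_{\G}$ by suitable degree-$m$ bundle maps $\Psi_{m}$ from a fixed noncompact flat principal $G$-bundle $X$ multiplies $\chi(\nu\F)$ by $m$ while leaving the volume factor unchanged, giving $\Delta_{\F_{m}}(\sigma)=m\,\Delta_{\F_{1}}(\sigma)$ with $\Delta_{\F_{1}}(\sigma)\neq 0$ (Theorem~\ref{thm:inf}). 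This is the ingredient your argument is missing: a mechanism that changes $\chi(\nu\F)$ rather than the holonomy.
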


As far as we know, this is the first example of a family of transversely conformal foliations on a connected manifold whose Godbillon-Vey classes take infinitely many different values. We do not know compact examples. Asuke~\cite{Asuke2010} constructed finite families of transversely holomorphic foliations on compact homogeneous spaces whose Godbillon-Vey classes take different values. (Note that complex codimension one transversely holomorphic foliations are real codimension two transversely conformal foliations.)

\begin{rem}
Asuke~\cite{Asuke2010} proved that the Godbillon-Vey class does not change nontrivially for smooth families of transversely holomorphic foliations. As pointed out by Morita~\cite{Morita1979}, it is not known if there exists a smooth family of transversely conformal foliations of codimension greater than two whose Godbillon-Vey classes vary continuously and nontrivially. 
\end{rem}

We will show the finiteness of secondary characteristic classes in a weaker form in this case. Let $\chi(\nu \F)$ be the Euler class of the normal bundle $\nu \F$ of $\F$.  Let
  \begin{align*}
    \Sigma(G,G/P,z)&=\#\{\,\Delta_{\F}\mid\F\in\Fol(G, G/P),\ \chi(\nu \F)=z\,\}
  \end{align*}
for any fixed $z\in H^{q}(M;\RR)$. We get the following.

\begin{thm}\label{thm:even}
If $G/P = S^{q}$ for even $q$, then $\Sigma(G,G/P,z) < \infty$ for each $z\in H^{q}(M;\RR)$.
\end{thm}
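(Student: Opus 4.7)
The plan is to refine the strategy behind Theorem~\ref{thm:finite}, tracking the one-dimensional obstruction that survives in top degree and identifying it with the Euler class of the normal bundle.

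Recall that the proof of Theorem~\ref{thm:finite} combines the complexification of the characteristic homomorphism with a spectral-sequence argument to show that, for $(G,G/P)$-foliations, $\Delta_\F$ is determined up to a kernel controlled by the map $H^\bt(\Gc/\Pc;\RR)\to H^\bt(G/P;\RR)$; when this map vanishes in positive degrees, finiteness of $\Sigma(G,G/P)$ follows. In the present situation, $G=\SO(q+1,1)$, $G/P=S^{q}$, and $q$ is even, so by Proposition~\ref{prop:cri} (see also the discussion in Section~\ref{sec:example}) this map vanishes in every positive degree \emph{except} the top degree $q$, where its image is one-dimensional and generated by the fundamental class of $S^{q}$. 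I would therefore first rerun the proof of Theorem~\ref{thm:finite} verbatim in this setting to conclude that $\Delta_\F$ is determined, as $\F$ varies over $\Fol(G,G/P)$, by finitely many discrete choices together with the choice of a single class $\alpha(\F)\in H^{q}(M;\RR)$ coming from the surviving top-degree direction.

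The next step is to identify $\alpha(\F)$ with $\chi(\nu\F)$ up to a nonzero universal constant. The surviving generator of $H^{q}(\Gc/\Pc;\RR)$ restricts, along the inclusion $S^{q}=G/P\hookrightarrow\Gc/\Pc$ (the complex quadric $Q^{q}$), to the Euler class of $S^{q}$. The characteristic class of a $(G,G/P)$-foliation corresponding to this generator is computed by Chern--Weil theory applied to the isotropy representation of $P$ on $\g/\p$, and, by the very construction of the normal bundle of a $(G,G/P)$-foliation, the resulting class in $H^{q}(M;\RR)$ is exactly $\chi(\nu\F)$ up to a nonzero numerical factor arising from the normalization of invariant polynomials.

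Combining these two steps yields the theorem: fixing $\chi(\nu\F)=z$ fixes $\alpha(\F)$, which pins down the residual degree-$q$ ambiguity, so that only finitely many $\Delta_\F$ remain possible, giving $\Sigma(G,G/P,z)<\infty$. The principal technical obstacle is the Chern--Weil identification in the second step: one must verify not only that the surviving class is proportional to $\chi(\nu\F)$ but also that the proportionality constant is nonzero, so that the ``finitely many values modulo the Euler class'' statement produced by the proof of Theorem~\ref{thm:finite} really does lift to ``finitely many values once $\chi(\nu\F)$ is fixed.'' Once this normalization check is done, no further work is needed.
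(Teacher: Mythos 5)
Your proposal is correct and follows essentially the same route as the paper's: your ``surviving top-degree direction'' is what the paper packages as the module isomorphism $H^{\bt}(\g,K_{P})\cong H^{\bt}(\g,K_{G})\otimes\RR[\chi]/(\chi^{2})$ (Proposition~\ref{prop:Z}), your normalization check is exactly Proposition~\ref{prop:Euler} (namely $\fint\chi(\nu\F_{P})=2\neq 0$, coming from $\nu\F_{P}=\phi_{P}^{*}TS^{q}$), and the conclusion is then drawn, as you indicate, from the finiteness of $\pi_{0}(\Hom(\pi_{1}M,G))$ together with the Benson--Ellis rigidity (Theorem~\ref{thm:rigid}) via Propositions~\ref{prop:fibint} and~\ref{prop:fixedEulerclass}. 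Two cosmetic remarks that do not affect your argument: the fibre appearing in the complexified picture of Theorem~\ref{thm:finite} is $\Gc/P\simeq K_{\Gc}/K_{P}$ rather than the quadric $\Gc/\Pc$, and the statement (and the paper's proof) is not special to $G=\SO(q+1,1)$ but applies to any $G$ with $G/P=S^{q}$.
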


\subsection{Transversely conformal foliations}

By a theorem of Zhukova~\cite[Corollary 2]{Zhukova2012}, a complete transversely conformal foliation of codimension $q > 2$ is Riemannian or $(\PSO(q+1,1),S^{q}_{\infty})$ on each connected component of $M$. For transversely real analytic conformal foliations on compact manifolds, it was proved by Tarquini~\cite[Th\'{e}or\`{e}me~0.0.1]{Tarquini2004}. Let $\Fol_{\mathcal{C}}^{q}$ be the set of codimension $q$ complete transversely conformal foliations on $M$. Let 
\[
	\Sigma_{\mathcal{C}}^{q} = \# \{\, \Delta_{\F} \mid \F \in \textstyle{\Fol_{\mathcal{C}}^{q}} \,\}\;, \quad
	\Sigma_{\mathcal{C}}^{q}(z) = \# \{\, \Delta_{\F} \mid \F \in \textstyle{\Fol_{\mathcal{C}}^{q}},\ \chi(\nu \F)=z \,\}
\]
for $z$ in $H^{q}(M;\RR)$. Since the secondary characteristic classes of Riemannian foliations are trivial (see~\cite[Section~4.48 and Theorem~4.52]{KamberTondeur1975b}), we get the following corollary of Theorem~\ref{thm:finite}.

\begin{cor}
\begin{enumerate}

\item $\Sigma_{\mathcal{C}}^{q} < \infty$ for odd $q > 1$.

\item $\Sigma_{\mathcal{C}}^{q}(z) < \infty$ for each $z\in H^{q}(M;\RR)$ and even $q > 2$. 

\end{enumerate}
\end{cor}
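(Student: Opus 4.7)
The plan is to combine Tarquini's dichotomy for transversely real analytic conformal foliations with the finiteness results already proved and with the vanishing of secondary classes of Riemannian foliations. Since $M$ is compact it has finitely many connected components $M_{1},\dots,M_{k}$, each with finitely presented fundamental group. Any $\F\in\Fol_{\mathcal{C}}^{q,\omega}$ is determined by its restrictions $\F_{j}:=\F|_{M_{j}}$, and both $\Delta_{\F}$ and (in case (ii)) $\chi(\nu\F)$ are determined componentwise via the canonical decomposition $H^{\bt}(M;\RR)=\bigoplus_{j}H^{\bt}(M_{j};\RR)$. It therefore suffices to bound, for each $j$ separately, the number of possible $\Delta_{\F_{j}}$ (with the restriction $\chi(\nu\F_{j})=z|_{M_{j}}$ prescribed in case (ii)), and then take the product over the finitely many components.

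Next, since $q>2$, Tarquini's theorem asserts that each $\F_{j}$ is either Riemannian or a $(\PSO(q+1,1),S^{q}_{\infty})$-foliation. In the Riemannian case, the cited result of Kamber--Tondeur gives $\Delta_{\F_{j}}=0$, so this alternative contributes only the single value $0$ to the count on $M_{j}$. In the remaining case, the $(\PSO(q+1,1),S^{q}_{\infty})$-foliations are handled through the isogeny $\SO_{0}(q+1,1)\to\PSO(q+1,1)$, exactly in parallel with the $\PSU/\PSp$ adjustment indicated in the remark following Corollary~\ref{cor:main}; this reduces the count for such foliations on $M_{j}$ to one for $(\SO_{0}(q+1,1),S^{q}_{\infty})$-foliations.

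For part (i), with odd $q>1$, Corollary~\ref{cor:main} applied to $(\SO_{0}(q+1,1),S^{q}_{\infty})$ gives a uniform bound on the number of $\Delta_{\F_{j}}$, and multiplying these finite counts over $j=1,\dots,k$ yields $\Sigma_{\mathcal{C}}^{q}<\infty$. For part (ii), with even $q>2$, Theorem~\ref{thm:even} applied to $(G,G/P)=(\SO_{0}(q+1,1),S^{q})$ gives a uniform bound on $\Delta_{\F_{j}}$ once the restricted Euler class $\chi(\nu\F_{j})$ is fixed, and taking the product over $j$ yields $\Sigma_{\mathcal{C}}^{q}(z)<\infty$.

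The main obstacle I expect is purely bureaucratic rather than mathematical: one has to verify that the Riemannian case really contributes only the value $0$ (supplied by the Kamber--Tondeur reference) and that the passage $\SO_{0}(q+1,1)\to\PSO(q+1,1)$ does not enlarge the set of possible characteristic homomorphisms beyond those counted in Corollary~\ref{cor:main} and Theorem~\ref{thm:even}. Both checks mirror the $\PSU/\PSp$ discussion already present in the paper, so no cohomological input beyond that of those two results is required.
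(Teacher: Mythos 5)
Your proof is correct and follows the paper's (largely implicit) argument: Tarquini's componentwise dichotomy, the Kamber--Tondeur vanishing for the Riemannian alternative, and Corollary~\ref{cor:main} (resp.\ Theorem~\ref{thm:even}) for the $(\PSO(q+1,1),S^{q}_{\infty})$ alternative, with the center handled as in Section~\ref{sec:center} (and note that for odd $q$ the center of $\SO(q+1,1)$ is already trivial, so no adjustment is needed there). You merely make explicit the componentwise bookkeeping that the paper leaves unsaid.
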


\subsection{Rigidity of transversely homogeneous foliations with nontrivial secondary invariants}

Let $(G,G/P)$ be one of $(\SO_{0}(n+1,1),S^{n}_{\infty})$, $(\SU(n+1,1),S^{2n+1}_{\infty})$, $(\Sp(n+1,1),S^{4n+3}_{\infty})$ or $\left(F_{4(-20)},S^{15}_{\infty}\right)$. Let $\F_{\Gamma}$ be the standard homogeneous $(G,G/P)$-foliation on $M = \G \backslash G / K_{P}$, where $\G$ is a torsion-free uniform lattice of $G$ and $K_{P}$ is a maximal compact subgroup of $P$ (Example~\ref{ex:mob}). Here $\GV(\F_{\Gamma})$ is nontrivial as computed in Corollary~\ref{cor:FGGV}. Recall that $\deg \GV(\F_{\Gamma}) = \dim M$. Fix an orientation of $M$ so that $\int_{M} \GV(\F_{\Gamma}) > 0$. Then we show the following.

\begin{thm}\label{thm:superrigidity}
\begin{enumerate}

\item\label{i: F is smoothly conjugate to F_Gamma} If $(G,G/P)$ is one of $(\SO_{0}(n+1,1),S^{n}_{\infty})$ for odd $n > 1$, $(\SU(n+1,1),S^{2n+1}_{\infty})$ for $n \geq 1$, $(\Sp(n+1,1),S^{4n+3}_{\infty})$ for $n \geq 0$ or $(F_{4(-20)},S^{15}_{\infty})$, then $\F$ is smoothly conjugate to $\F_{\Gamma}$.

\item\label{i: int_M GV(F) leq int_M GV(F_Gamma} If $(G,G/P)$ is $(\SO_{0}(n+1,1),S^{n}_{\infty})$ for even $n$, then any $(G,G/P)$-foliation $\F$ of $M$ satisfies $\int_{M} \GV(\F) \leq \int_{M} \GV(\F_{\Gamma})$. Moreover the equality holds if and only if $\F$ is smoothly conjugate to $\F_{\Gamma}$.

\end{enumerate}
\end{thm}

The essential part of the proof is to generalize the Bott-Thurston-Heitsch type formulas to foliations which may not be transverse to the fibers (Lemma~\ref{lem:r2}). It allows us to apply the rigidity theory of representations of lattices; in particular, the generalized Mostow rigidity~\cite{Corlette1991,Dunfield1999,FrancavigliaKlaff2006} for lattices of $\PSO(n+1,1)$ or $\PSU(n+1,1)$ and the superrigidity~\cite{Corlette1992} of lattices of $\Sp(n+1,1)$ or $F_{4(-20)}$.

In the codimension one case, we will show the following.

\begin{thm}\label{thm:cod1}
If $(G,G/P)$ is one of $(\SO_{0}(2,1),S^{1}_{\infty})$ or $(\SU(1,1),S^{1}_{\infty})$, then any $(G,G/P)$-foliation $\F$ of $M$ satisfies $\GV(\F) = \GV(\F_{\Gamma})$ or $\GV(\F) = 0$. Moreover the former case holds if and only if $\F$ is smoothly conjugate to $\F_{\Gamma}$.
\end{thm}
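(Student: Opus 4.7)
The plan is to combine the generalized Bott--Thurston--Heitsch formula (Lemma~\ref{lem:r2}) with the Milnor--Wood inequality and Goldman's rigidity theorem for maximal representations, via a topological obstruction that restricts the possible Euler numbers of the holonomy.

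For $G=\SO_{0}(2,1)$ or $\SU(1,1)$, the group $G$ is (up to isogeny) $\PSL(2,\RR)$, and $M=\G\backslash G/K_{P}$ is (finitely covered by) the unit tangent bundle $p_{M}:M\to\Sigma$ of the closed hyperbolic surface $\Sigma=\G\backslash G/K$; this is an $S^{1}$-bundle of Euler number $\chi(\Sigma)$, and $\F_{\G}$ is the weak stable/unstable foliation of the geodesic flow. A $(G,G/P)$-foliation $\F$ on $M$ is given by a pair $(\dev_{\F},\hol_{\F})$ with $\hol_{\F}:\pi_{1}M\to G$. Since $G$ is virtually centerless and the fiber of $p_{M}$ generates the center of $\pi_{1}M$, either $\hol_{\F}$ descends (after a finite cover) to $\overline{\hol}_{\F}:\pi_{1}\Sigma\to G$, or its image lies in a proper subgroup, in which case $\F$ is transversely Riemannian and $\GV(\F)=0$.

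Assume $\hol_{\F}$ descends. First I would apply Lemma~\ref{lem:r2} to obtain
\[
\int_{M}\GV(\F)=4\pi^{2}\,e(\overline{\hol}_{\F}),
\]
where $e(\overline{\hol}_{\F})\in\ZZ$ is the Euler number of $\overline{\hol}_{\F}$. Next, the developing submersion $\dev_{\F}$ provides a smooth section of the flat $S^{1}$-bundle $E_{\F}=\widetilde{M}\times_{\hol_{\F}}S^{1}\to M$, so its topological Euler class vanishes in $H^{2}(M;\ZZ)$. Writing $E_{\F}=p_{M}^{*}(\widetilde{\Sigma}\times_{\overline{\hol}_{\F}}S^{1})$, whose Euler class on $\Sigma$ is $e(\overline{\hol}_{\F})\,[\Sigma]^{\vee}$, and using that the Gysin sequence of $p_{M}$ identifies $\ker\bigl(p_{M}^{*}:H^{2}(\Sigma;\ZZ)\to H^{2}(M;\ZZ)\bigr)=\chi(\Sigma)\ZZ\cdot[\Sigma]^{\vee}$, I get $e(\overline{\hol}_{\F})\in\chi(\Sigma)\ZZ$. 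Combined with the Milnor--Wood inequality $|e(\overline{\hol}_{\F})|\leq|\chi(\Sigma)|$, this yields $e(\overline{\hol}_{\F})\in\{0,\pm\chi(\Sigma)\}$, hence $\GV(\F)\in\{0,\pm\GV(\F_{\G})\}$.

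For the extremal case, Goldman's theorem identifies a representation with $|e(\overline{\hol}_{\F})|=|\chi(\Sigma)|$ as Fuchsian, necessarily conjugate in $G$ to the standard embedding $\G\hookrightarrow G$; the uniqueness of the developing submersion up to post-composition with an element of $G$ then lifts this conjugacy to a smooth diffeomorphism of $M$ mapping $\F$ to $\F_{\G}$, the two signs $\pm\chi(\Sigma)$ corresponding to conjugate choices of orientation of the fibers. The hard part will be the obstruction step: identifying the topological Euler class of $E_{\F}$ with $p_{M}^{*}\bigl(e(\overline{\hol}_{\F})[\Sigma]^{\vee}\bigr)$ and extracting divisibility by $\chi(\Sigma)$ from the Gysin sequence, since it is this purely topological input that, combined with Milnor--Wood, forces the discrete trichotomy characteristic of this theorem rather than merely a finiteness statement as in Theorem~\ref{thm:BGH}.
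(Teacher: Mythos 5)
There is a genuine gap at the central step of your argument. You write ``First I would apply Lemma~\ref{lem:r2} to obtain $\int_{M}\GV(\F)=4\pi^{2}e(\overline{\hol}_{\F})$,'' but Lemma~\ref{lem:r2} is stated and proved only for codimension $q>1$, and the paper explicitly warns at the start of its codimension-one discussion that the lemma ``is not true in general because of $\pi_{1}S^{1}\cong\ZZ$.'' The proof of Lemma~\ref{lem:r2} hinges on the fact that the fiberwise covering map $\psi|_{p_{M}^{-1}(x)}$ has degree one because $\pi_{1}(S^{q})=1$; for $q=1$ this covering can have arbitrary degree, so the would-be formula acquires an uncontrolled integer factor. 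Without that formula, your (otherwise classical and correct) obstruction argument --- vanishing of the Euler class of the flat $S^{1}$-bundle $\X_{G/P}(\F)$ because it has a section, divisibility of $e(\overline{\hol}_{\F})$ by $\chi(\Sigma)$ via the Gysin sequence, and Milnor--Wood --- only constrains the Euler number of the descended holonomy; it says nothing about $\GV(\F)$ itself, and in the extremal case it does not by itself produce the conjugacy of $\F$ with $\F_{\G}$, since two $(G,G/P)$-foliations with conjugate holonomy need not be conjugate unless both are known to be suspensions.

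The paper closes exactly this gap with two inputs you do not use: the minimality theorem of Chihi--ben Ramdane (Theorem~\ref{thm:CR}), which shows that $\GV(\F)\neq0$ forces $\F$ to have no compact leaves, and the Thurston--Levitt theorem (Theorem~\ref{thm:TL}), which then isotopes $\F$ to a foliation transverse to the fibers of $p$. At that point $\F$ \emph{is} a suspension of a surface-group representation whose Euler number equals that of the unit tangent bundle, i.e.\ $\chi(\Sigma)$, so the ordinary Bott--Thurston formula and maximality of the volume apply, and Goldman's rigidity (Theorem~\ref{thm:Mostow}) plus the fact that suspensions are classified by the conjugacy class of their holonomy finishes the proof. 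A secondary flaw in your write-up: the dichotomy ``$\hol_{\F}$ descends, or its image lies in a proper subgroup, in which case $\F$ is transversely Riemannian'' is too quick --- if the central fiber class maps to a nontrivial hyperbolic or parabolic element, the image of $\hol_{\F}$ lies in its (noncompact, abelian) centralizer, which preserves no metric on $S^{1}$; one still gets $\GV(\F)=0$ there, but not for the reason you give.
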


To prove Theorem~\ref{thm:cod1}, we will apply a minimality theorem of Chihi-ben Ramdane~\cite{ChihiRamdane2008} and theorems of Thurston~\cite{Thurston1972a} and Levitt~\cite{Levitt1978} to isotope $(G,G/P)$-foliations with nontrivial Godbillon-Vey classes so that they are transverse to the fibers of $\G \backslash G / K_{P} \to \G \backslash G / K_{G}$, where $K_{G}$ is a maximal compact subgroup of $G$. Then we can apply generalized Mostow rigidity~\cite{Goldman1988} for surface group representations.

\begin{rem}
Theorem~\ref{thm:cod1} improves a result of Brooks-Goldman~\cite[Theorem~5]{BrooksGoldman1984}. Theorem~\ref{thm:cod1} is also related to Mitsumatsu defect formula~\cite{Mitsumatsu1985} for the $C^{2}$ stable foliations of the geodesic flows of hyperbolic surfaces, and its generalization with weaker regularity assumption by Hurder-Katok~\cite[Theorem~3.11]{HurderKatok1990}.
\end{rem}

\begin{org}
Sections~\ref{sec:1} and~\ref{sec:2} are devoted to recall fundamental notions in this article, as indicated in the table of contents. In Section~\ref{sec:3}, the complexification of secondary characteristic classes of transversely homogeneous foliations is explained, which will be used in Section~\ref{sec:proof} to prove Theorem~\ref{thm:finite}. Section~\ref{sec:example} is devoted to present the examples of the application of Theorem~\ref{thm:finite}. In Section~\ref{sec:ex}, first, the characteristic classes of homogeneous foliations on homogeneous spaces are calculated in terms of Lie algebra cohomology, and then the Bott-Thurston-Heitsch type formulas of Theorem~\ref{thm:BT} are deduced. Theorems~\ref{thm:even} and~\ref{thm:infinite} are proved in Section~\ref{sec:even}. (Note that the computation in Section~\ref{sec:ex} is used in Section~\ref{sec:even}, but it is not necessary for the proof of Theorems~\ref{thm:even} and~\ref{thm:infinite}.) In Section~\ref{sec:rigidity}, Theorems~\ref{thm:superrigidity} and~\ref{thm:cod1} are proved by applying the modification of the Bott-Thurston-Heitsch type formulas of Theorem~\ref{thm:BT}.
\end{org}

\begin{ack}
We thank Juan Francisco Torres Lopera, Takashi Tsuboi, Bertrand Deroin, Masayuki Asaoka, and MathOverFlow users Tilman and Andr\'{e} Henriques for helpful discussions about the contents of this paper. We are grateful to Michelle Bucher because she taught the second author the application of the Hirzebruch proportionality principle and the proof of the generalized Milnor-Wood inequality. We are indebted to Corey Bregman for pointing out an error in Theorem~\ref{thm:BT1} in an old version of this article.
\end{ack}

\section{Secondary characteristic classes of foliations}\label{sec:1}

\subsection{Fundamentals of secondary characteristic classes}\label{sec:secintro}

Consider the Weil algebra $W(\gl(q;\RR)) = \bigwedge \gl(q;\RR)^{*} \otimes S \gl(q;\RR)^{*}$ of $\gl(q;\RR)$, and its $O(q)$-basic subalgebra,
\[
W(\gl(q;\RR))_{O(q)} \\
= \{\, \beta \in W(\gl(q;\RR)) \mid \iota_{X}\beta=0\ \forall X \in \o(q),\ \Ad(g)^{*}\beta = \beta\ \forall g \in O(q) \,\}\;.
\]
For a principal $\GL(q;\RR)$-bundle $E$ over a smooth manifold $M$ with a  $\GL(q;\RR)$-connection $\nabla^{E}$, the Chern-Weil construction yields a homomorphism of differential graded algebras, $\widehat{\Delta}_{E} : W(\gl(q;\RR)) \to \Omega^{\bt}(E)$. Since the image of $W(\gl(q;\RR))_{O(q)}$ under $\widehat{\Delta}_{E} $ is contained in the image of the pull-back map $\pi^{*} : \Omega^{\bt}(E/O(q)) \to \Omega^{\bt}(E)$ by the $O(q)$-basicness, we get a differential map
\[
\Delta_{E} : W(\gl(q;\RR))_{O(q)} \longrightarrow \Omega^{\bt}(E/O(q))\;.
\]
By the contractibility of the fibers of $E/O(q) \to M$, there exists a section $s : M \to E/O(q)$. Thus we get a differential map given by the composite
\[
\xymatrix{ W(\gl(q;\RR))_{O(q)} \ar[r]^<<<<<{\Delta_{E}} & \Omega^{\bt}(E/O(q)) \ar[r]^<<<<<{s^{*}} & \Omega^{\bt}(M)\;.} 
\]
It is known that 
\[
W(\gl(q;\RR))_{O(q)} = \bigwedge[h_{1}, h_{3}, \ldots, h_{[q]}] \otimes \RR [c_{1}, c_{2}, \ldots, c_{q}]
\]
as a differential graded algebra, where $[q]$ is the maximal odd number less than $q+1$. Its grading is given by $\deg h_{i} = 2i-1$ and $\deg c_{i} = 2i$, and its differential map is determined by $dh_{i} = c_{i}$ and $dc_{i} = 0$. Here, $c_{i}$ is the $i$-th Chern polynomial given by $\det (I_{q} + \frac{t}{2\pi}A) = \sum_{j=0}^{q}c_{j}(A)\,t^{j}$~\cite[pp.~138 and~139]{KamberTondeur1975b}. (Note that these Chern polynomials differ from the usual one by $\sqrt{-1}$-factors.) This construction yields nothing for a general $\GL(q;\RR)$-connection because $H^{\bt}(W(\gl(q;\RR))_{O(q)}) = 0$. The normal bundle $\nu \F = TM/T\F$ of a foliated manifold $(M,\F)$ has a special $\gl(q;\RR)$-connection called a {\em Bott connection}~\cite{Bott1972}. For a Bott connection $\nabla$ on $\nu \F$, the frame bundle $\mathcal{P}(\nu \F)$ with the principal $\GL(q;\RR)$-connection associated to $\nabla$ satisfies $\Delta_{\mathcal{P}(\nu \F)}(c_{i}) = 0$ for $i > q$ by Bott vanishing theorem. Thus, letting
\[
WO_{q} = \bigwedge[h_{1}, h_{3}, \ldots, h_{[q]}] \otimes \RR [c_{1}, c_{2}, \ldots, c_{q}]/\mathcal{I}_{q}\;,
\]
where $\mathcal{I}_{q}$ is the ideal of $\RR [c_{1}, c_{2}, \ldots, c_{q}]$ generated by the elements of degree greater than $2q$, we get a differential map $\Delta_{\F}:WO_{q} \to \Omega^{\bt}(M)$. The map induced on cohomology,
\[
\Delta_{\F} : H^{\bt}(WO_{q}) \longrightarrow H^{\bt}(M; \RR)\;,
\]
depends only on $\F$ and is denoted with the same symbol. The cohomology $H^{\bt}(WO_{q})$ is nontrivial, $\Delta_{\F}$ is called the {\em characteristic homomorphism} of $\F$, and the elements of its image are the {\em secondary characteristic classes} of $\F$. For $I = \{i_{1}, \ldots, i_{k}\} \subseteq \{1,3,\cdots,[q]\}$ and $J = \{j_{1}, \ldots, j_{l}\}$, where $1 \leq j_{m} \leq q$, let $h_{I} c_{J} = h_{i_{1}} \cdots h_{i_{k}} c_{j_{1}} \cdots c_{j_{l}}$. Vey showed that the union of Pontryagin classes $\{\, c_{J} \mid \text{$j$ is even}\ \forall j \in J \,\}$ and exotic classes 
\[
\{\, h_{I} c_{J} \mid i_{1} + |J| \geq q+1,\ i_{1} \leq j\ \text{for any odd}\ j \in J\,\}
\]
 is a basis of $H^{\bt}(WO_{q})$ as an $\RR$-vector space, where $i_{1} = \min I$~\cite[Theorem~2]{Heitsch1973}.


\begin{ex}\label{ex:GV}
Let $\F$ be a codimention $q$ foliation on $M$ defined by the kernel of a $q$-form $\omega$. By the Frobenius theorem, we have $d\omega = \eta \wedge \omega$ for some $1$-form $\eta$. Then $\eta \wedge (d\eta)^{q}$ is a closed $(2q+1)$-form on $M$, which is equal to $(2\pi)^{q+1}[\Delta_{\F}(h_{1}c_{1}^{q})]$~\cite[Theorem~7.20]{KamberTondeur1975b}. This characteristic class $(2\pi)^{q+1}[\Delta_{\F}(h_{1}c_{1}^{q})]$ is called the {\em Godbillon-Vey class} of $\F$~\cite{GodbillonVey1971}. The notation $\GV(\F) = (2\pi)^{q+1}[\Delta_{\F}(h_{1}c_{1}^{q})]$ is standard and used many times in this article.
\end{ex}

\subsection{Examples of foliations with nontrivial characteristic classes}

Quotient of homogeneous foliations on homogeneous spaces by lattices are the main examples of foliations with nontrivial secondary characteristic classes.

\begin{ex}[Roussarie's example~\cite{GodbillonVey1971}]\label{ex:R}
Let $\Gamma$ be a torsion-free uniform lattice of $\SL(2;\RR)$. Let $\pi : \SL(2;\RR) \to  \SL(2;\RR)/\Aff (1;\RR)$ be the canonical projection, where $\Aff (1;\RR)$ is the subgroup of $\SL(2;\RR)$ given in Remark~\ref{rem:aff}. Then the fibers of $\pi$ induce a codimension one foliation on $M=\Gamma \backslash \SL(2;\RR)$. Let $\{\omega, \eta, \theta\}$ be a basis of $\sl(2;\RR)^{*}$ so that the fibers of $\pi$ are defined by $\ker \omega$ and
\[
d\omega  = \eta \wedge \omega\;, \quad
d\eta  = \omega \wedge \theta\;, \quad
d\theta  = -\eta \wedge \theta\;.
\]
By their left invariance, the $1$-forms $\omega$, $\eta$ and $\theta$ on $\SL(2;\RR)$ induce $1$-forms on $M$, which are denoted with the same symbols. Let $\F$ be the foliation on $M$ defined by the kernel of $\omega$. By the definition of $\GV(\F)$, we get
\[
\GV(\F) = [\eta \wedge d\eta] = [\eta \wedge \omega \wedge \theta]\;.
\]
Since $\eta \wedge \omega \wedge \theta$ is a volume form on $M$, it follows that $\GV(\F) \neq 0$. In fact, by the Bott-Thurston formula (Theorem~\ref{thm:BT} for $q=1$), we get 
\[
\int_{M} \GV(\F)=16\pi^{2}e\;,
\] 
where $e$ is the Euler number of the surface $\Gamma \backslash \SL(2;\RR)/\SO(2)$. Note that, in the original setting of the Bott-Thurston formula (\cite{Thurston1972b} and~\cite[Appendix by Brooks]{Bott1978}), they considered the $S^{1}$-bundle $\PSL(2;\RR) \to \PSL(2;\RR)/\Aff (1;\RR)$ and hence the formula is $\int_{M} \GV(\F)=4\pi^{2}e$.
\end{ex}

\begin{ex}\label{ex:mob}
The following example is a generalization of the last example to higher dimensions. Let $G$ be $\SO_{0}(n+1,1)$, $\SU(n+1,1)$, $\Sp(n+1,1)$ or $F_{4(-20)}$, and consider $G/P$ as the ideal boundary of the corresponding hyperbolic symmetric space $G/K_{G}$:
\begin{align*}
\mathbf{H}_{\RR}^{n} & = {\SO}_{0}(n+1,1)/\SO(n+1)\;, \\
\mathbf{H}_{\CC}^{n} & = \SU(n+1,1)/\S(\U(n+1)\U(1))\;, \\
\mathbf{H}_{\HH}^{n} & = \Sp(n+1,1)/\Sp(n+1)\Sp(1)\;, \\
\mathbf{H}_{\OO}^{2} & = F_{4(-20)}/\Spin(9)\;.
\end{align*}
Let $K_{G}$ be a maximal compact subgroup of $G$, and take the maximal compact subgroup $K_{P} = K_{G} \cap P$ of $P$. The ideal boundary of $G/K_{G}$ is a sphere of real dimension $n$, $2n+1$, $4n+3$ and $15$, respectively. $\G \backslash G/ K_{P}$ admits a foliation $\F_{\G}$ whose lift to $G/ K_{P}$ is defined by the fibers of $G/ K_{P} \to G/P$. Here, $\G \backslash G/ K_{G}$ is a real, complex, quaternionic or octonionic hyperbolic manifold, and $\G \backslash G/ K_{P} \to \G \backslash G/ K_{G}$ is its unit tangent sphere bundle (see Section~\ref{sec:example}), depending on the choice of $G$. Later, we will compute $\GV(\F_{\G})$ (Proposition~\ref{prop:volGV}), and this Godbillon-Vey class is essentially the unique nontrivial secondary characteristic class for these foliations (Section~\ref{sec:GVspan}). Yamato~\cite{Yamato1975} studied the secondary characteristic classes of $\F_{\G}$ in the case where $G=\SO_{0}(n+1,1)$.
\end{ex}

\begin{ex}\label{ex:hom2}
This is a further generalization of the last example. Let $G$ be a Lie group and $P$ a closed subgroup of $G$. Let $K$ be a closed subgroup of $P$. Let $\G$ be a torsion-free uniform lattice of $G$. Then the fibers of the canonical projection $G/K \to G/P$ define a foliation $\F_{\G}$ on a closed manifold $\G\backslash G/K$. The characteristic classes of this type of foliations were extensively studied and calculated by Kamber-Tondeur~\cite{KamberTondeur1975a}, Baker~\cite{Baker1978}, Heitsch~\cite{Heitsch1978}, Pittie~\cite{Pittie1979}, Pelletier~\cite{Pelletier1983} and Asuke~\cite{Asuke2010}.
\end{ex}

\section{Transversely homogeneous foliations}\label{sec:2}

\subsection{Definition of $(G,G/P)$-foliations}

Let $(M,\F)$ be a foliated manifold. Let $G$ be a Lie group and $P$ a closed subgroup of $G$. When the group $G$ is endowed with the discrete topology, it is denoted by $G^{\delta}$. We denote the $G$-action on $G/P$ by $(g,xP) \mapsto g \cdot xP$.

\renewcommand{\theenumi}{\arabic{enumi}}

\begin{defn}\label{defn:trhomfol}
A ({\em Haefliger\/}) {\em cocycle\/} with values in $(G,G/P)$, defining $\F$, is a triple $(\{U_{i}\},\{\pi_{i}\},\{\gamma_{ij}\})$, where:
\begin{enumerate}

\item $\{U_{i}\}$ is an open covering of $M$,

\item each $\pi_{i}$ is a submersion $U_{i} \to G/P$ such that the leaves of $\F|_{U_{i}}$ are the fibers of $\pi_{i}$, and

\item each $\gamma_{ij}$ is a continuous map $U_{i} \cap U_{j} \to G^{\delta}$ such that $ \pi_{i}(x) = \gamma_{ij}(x) \cdot \pi_{j}(x)$ for any $x\in U_{i} \cap U_{j}$.

\end{enumerate}
Two cocycles with values in $(G,G/P)$, defining $\F$, are called {\em equivalent\/} when their union is contained in some cocycle with values in $(G,G/P)$, defining $\F$. When $\F$ is endowed with an equivalence class of cocycles (or a maximal cocycle) with values in $(G,G/P)$, defining $\F$, it is called a {\em $(G,G/P)$-foliation\/}.
\end{defn}

\renewcommand{\theenumi}{\roman{enumi}}

Cocycles valued in $(G,G/P)$ are examples of $1$-cocycles valued in groupoids defined by Haefliger~\cite{Haefliger1958}. Transversely homogeneous foliations are natural generalizations of quotients of homogeneous foliations on homogeneous spaces in terms of $1$-cocycles with values in groupoids.

\begin{ex}
Example~\ref{ex:R} is an $(\SL(2;\RR),S^{1})$-foliation, and Example~\ref{ex:hom2} is a $(G,G/P)$-foliation. Example~\ref{ex:mob} is a special case of Example~\ref{ex:hom2}, where $(G,G/P)$ is $(\SO_{0}(n+1,1), S^{n}_{\infty})$, $(\SU(n+1,1), S^{2n+1}_{\infty})$, $(\Sp(n+1,1), S^{4n+3}_{\infty})$ or $(F_{4(-20)}, S^{15}_{\infty})$, and where $S^{n}_{\infty}$, $S^{2n+1}_{\infty}$, $S^{4n+3}_{\infty}$ or $S^{15}_{\infty}$ are the ideal boundaries of the corresponding hyperbolic symmetric spaces. 
\end{ex}

\begin{ex}[Suspension foliations]\label{ex:suspfol} Let $N$ be a smooth manifold and $h : \pi_{1}N \to G$ a homomorphism. A $\pi_{1}N$-action on $G/P$ is defined by $\pi_{1}N \to G \to \Diff(G/P)$, where the second homomorphism is the $G$-action on $G/P$. Then the quotient space $\wt{N} \times_{\pi_{1}N} G/P$ of the diagonal $\pi_{1}N$-action on $\wt{N} \times G/P$ has a foliation $\F$ induced by the horizontal foliation $\wt{N} \times G/P = \bigsqcup_{x \in G/P} \wt{N} \times \{x\}$. Here, it is easy to see that $\F$ naturally admits a structure of $(G,G/P)$-foliation by definition. (One can also apply Proposition~\ref{prop:h1} below.)
\end{ex}

\begin{ex}
Let $(M_{i},\F_{i})$ be a smooth manifold with a $(G,G/P)$-foliation for $i\in\{0,1\}$. Assume that we have a closed transversal $S_{i}$ of $(M_{i},\F_{i})$ such that $S_{0}$ is diffeomorphic to $S_{1}$ as $(G,G/P)$-manifolds. Let $U_{i}$ be an open tubular neighborhood of $S_{i}$ such that the leaves of $\F_{i}|_{U_{i}}$ are the fibers of a normal bundle of $S_{i}$. We can paste $U_{0} \setminus S_{0}$ and $U_{1} \setminus S_{1}$ to construct another manifold with a $(G,G/P)$-foliation. Chihi and ben Ramdane~\cite{ChihiRamdane2008} used this method to construct manifolds with $(\SL(2;\RR),S^{1})$-foliations with nontrivial Godbillon-Vey classes and dense holonomy groups in $\SL(2;\RR)$.
\end{ex}

\begin{ex}
Let $(M,\F)$ be a smooth manifold with a $(G,G/P)$-foliation. If we have a smooth map $f : M' \to M$ which is transverse to $\F$, we can pull back $\F$ to $M'$ as a $(G,G/P)$-foliation. This construction can be used when $f$ is a branched covering whose branch locus is transverse to $\F$.
\end{ex}

\begin{ex}
Thurston~\cite{Thurston1972b} constructed examples of codimension one foliations on Seifert fibered $3$-manifolds whose Godbillon-Vey class varies nontrivially by making surgery to Example~\ref{ex:R}. Rasmussen~\cite{Rasmussen1980} generalized this construction to the case of codimension two. Thurston also constructed families of suspension foliations on the total spaces of $S^{1}$-bundles over closed surfaces of genus two whose secondary characteristic classes vary nontrivially. These examples are constructed by pasting two transversely projective foliations of the total space of $S^{1}$-bundles over punctured tori~\cite[Section~4]{Bott1978}. Heitsch~\cite{Heitsch1978} constructed families of $( \prod_{i=1}^{k} \SL(n_{i};\RR), S^{(\sum_i n_{i})-1} )$-foliations whose characteristic classes vary by deforming $\prod_{i=1}^{k} \SL(n_{i};\RR)$-actions on $S^{(\sum_i n_{i})-1}$.
\end{ex}

\subsection{Haefliger type description of transversely homogeneous foliations}

\subsubsection{Flat principal $G$-bundle associated to $\F$ and the holonomy homomorphism}
Let $(M,\F)$ be a $(G,G/P)$-foliation defined by a cocycle $(\{U_{i}\},\{\pi_{i}\},\{\gamma_{ij}\})$ valued in $(G,G/P)$. The condition $\pi_{i} = \gamma_{ij} \cdot \pi_{j}$ implies the $1$-cocycle condition $\gamma_{ik} = \gamma_{ij} \cdot \gamma_{jk}$. Thus $\{\gamma_{ij}\}$ is a $1$-cocycle valued in $G^\delta$, which defines a flat principal $G$-bundle $\pi_{G} : \X_{G}(\F) \to M$. Recall that 
\[
\X_{G}(\F) = \Big(\bigsqcup_{i} U_{i} \times G \Big)\Big/ (x,y) \sim (x,\gamma_{ij}(x)(y))\;,
\]
and the projection $\pi_{G}$ is induced by the first factor projections $U_{i} \times G \to U_{i}$. The holonomy homomorphism $\pi_{1}M \to G$ of this flat $G$-bundle is called the {\em holonomy homomorphism} of $\F$ and denoted by $\hol (\F)$.

\subsubsection{The Haefliger structure of $\F$}\label{sec:Haefliger}

We recall the description of $(G,G/P)$-foliations in terms of a $G/P$-bundle over $M$, which is a special case of the Haefliger structures of general foliations. It was studied by Blumenthal~\cite{Blumenthal1979} and used by Brooks-Goldman~\cite{BrooksGoldman1984} and Heitsch~\cite{Heitsch1986} to prove Theorem~\ref{thm:BGH}. 

\begin{prop}\label{prop:h1}
A $(G,G/P)$-foliation $\F$ on $M$ is determined by any of the following data:
\begin{enumerate}

\item A flat principal $G$-bundle $\X_{G} \to M$ and a section $s$ of $\X_{G}/P \to M$ such that $s$ is transverse to the foliation $\E$ of $\X_{G}/P$ defined by the flat $G$-connection.

\item A group homomorphism $\hol : \pi_{1}M \to G$ and a submersion $\dev : \widetilde{M} \to G/P$ such that $\dev (\gamma \cdot x) = \hol(\gamma) \cdot \dev(x)$ for all $x\in\wt{M}$ and $\gamma\in\pi_{1}M$. 

\end{enumerate}
\end{prop}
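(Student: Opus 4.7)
The plan is to construct a cycle of natural correspondences among three equivalent sets of data: equivalence classes of Haefliger cocycles with values in $(G,G/P)$ defining $\F$, the data in (i), and the data in (ii). The passage between (i) and (ii) will be the standard monodromy correspondence between flat principal $G$-bundles on $M$ and conjugacy classes of homomorphisms $\pi_{1}M\to G$, enriched by the fact that sections of the associated bundle $\X_{G}/P\to M$ correspond to $\pi_{1}M$-equivariant maps $\wt{M}\to G/P$, and by the observation that the foliation $\E$ defined by the flat connection lifts to the horizontal foliation $\bigsqcup_{gP}\wt{M}\times\{gP\}$ on $\wt{M}\times G/P$.

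Starting from a cocycle $(\{U_{i}\},\{\pi_{i}\},\{\gamma_{ij}\})$, I would form $\X_{G}=\X_{G}(\F)$ as in Section~\ref{sec:Haefliger}; locally one has $\X_{G}/P|_{U_{i}}\cong U_{i}\times G/P$, and the foliation $\E$ reads locally as the foliation whose leaves are the fibers of the second projection. Define $s:M\to\X_{G}/P$ by $s(x)=[x,\pi_{i}(x)P]$ on each $U_{i}$, which is well defined by the cocycle condition $\pi_{i}(x)=\gamma_{ij}(x)\cdot\pi_{j}(x)$. Transversality of $s$ to $\E$ amounts locally to $\pi_{i}$ being a submersion, which is part of the definition of the cocycle. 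To pass from (i) to (ii), pull $\X_{G}$ back to the universal cover $\wt{M}$; the resulting flat bundle is trivializable, giving an identification $\X_{G}\cong\wt{M}\times_{\pi_{1}M}G$ in which $\pi_{1}M$ acts by $\gamma\cdot(x,g)=(\gamma x,\hol(\gamma)g)$ for a holonomy representation $\hol:\pi_{1}M\to G$. A section $s$ then lifts to a $\pi_{1}M$-equivariant map $\dev:\wt{M}\to G/P$ satisfying $\dev(\gamma x)=\hol(\gamma)\cdot\dev(x)$, and transversality of $s$ to $\E$ translates verbatim into $\dev$ being a submersion.

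To close the cycle, starting from (ii), choose an open cover $\{U_{i}\}$ of $M$ that is evenly covered by the projection $\pi:\wt{M}\to M$ with chosen lifts $\sigma_{i}:U_{i}\to\wt{M}$, set $\pi_{i}=\dev\circ\sigma_{i}$, and let $\gamma_{ij}:U_{i}\cap U_{j}\to G^{\delta}$ be the locally constant map $\hol(\delta_{ij})$, where $\delta_{ij}\in\pi_{1}M$ is the unique deck transformation with $\sigma_{i}=\delta_{ij}\sigma_{j}$. The equivariance of $\dev$ yields the cocycle relation $\pi_{i}=\gamma_{ij}\cdot\pi_{j}$, while $\pi_{i}$ is a submersion because $\dev$ is. The main obstacle will be the bookkeeping to check that all three passages descend to well-defined bijections on the stated equivalence classes: refining the cover, altering the lifts $\sigma_{i}$, or changing the trivialization of $\pi^{*}\X_{G}$ must produce equivalent Haefliger cocycles, equivalent pairs $(\X_{G},s)$, and conjugate pairs $(\hol,\dev)$ respectively, and the compositions of the three passages must recover the original data up to these equivalences. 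Once this verification is organized, the one substantive input is the classical monodromy equivalence between flat principal $G$-bundles on $M$ and conjugacy classes of homomorphisms $\pi_{1}M\to G$.
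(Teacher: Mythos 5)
Your proposal is correct and follows essentially the same route as the paper: Section~\ref{sec:Haefliger} builds the flat bundle $\X_{G}(\F)$ from the cocycle $\{\gamma_{ij}\}$, takes the section of $\X_{G}(\F)/P$ given by the graphs of the $\pi_{i}$, identifies transversality to $\E$ with the $\pi_{i}$ being submersions, and passes to the universal cover to convert the section into the equivariant developing map $\dev$. The only difference is that you spell out the reverse passage from (ii) to a Haefliger cocycle and the equivalence bookkeeping, which the paper leaves implicit.
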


Let $\overline{\gamma}_{ij}(x) : G/P \to G/P$ be the diffeomorphism induced by the left product of $\gamma_{ij}(x)$. Here, $\{\overline{\gamma}_{ij}\}$ is a $1$-cocycle valued in $\Diff(G/P)^{\delta}$, which defines a $G/P$-bundle $\pi_{G/P} : \X_{G/P}(\F) \to M$ with a flat $G$-connection whose holonomy homomorphism is equal to $\hol (\F)$. Recall that 
\[
\X_{G/P}(\F) = \Big(\bigsqcup_{i} U_{i} \times G/P \Big)\Big/ (x,y) \sim (x,\overline{\gamma}_{ij}(x)(y)) = \X_{G}(\F)/P\;,
\]
and the projection $\pi_{G/P}$ is induced by the first factor projections $U_{i} \times G/P \to U_{i}$. The graphs of the maps $\pi_{i}$,
\[
\Graph(\pi_{i}) = \{ \,(x,\pi_{i}(x)) \mid x\in U_{i}\, \} \subset U_{i} \times G/P\;,
\]
define a subset of $\X_{G/P}(\F)$, which gives a global section $s$ of $\X_{G/P}(\F) \to M$. By construction, $\F$ is obtained as the pull-back by $s$ of the foliation of $\X_{G/P}(\F)$ defined by the flat connection. Summarizing, $\F$ determines a flat $G/P$-bundle $\pi_{G/P} : \X_{G/P}(\F) \to M$ with a section $s$, which in turn determines $\F$.

Let $\wt{M}$ be the universal cover of $M$. The pull-back of $\X_{G}(\F)/P \to M$ to $\wt{M}$ is a trivial flat $G/P$-bundle. A section $s$ of $\X_{G}(\F)/P \to M$ yields a section $\wt{s}$ of this trivial $G/P$-bundle over $\wt{M}$ by pull-back. In an obvious way, giving $\wt{s}$ is equivalent to giving a submersion $\dev : \widetilde{M} \to G/P$ that is $\pi_{1}M$-equivariant with respect to $\hol (\F) : \pi_{1}M \to G$; i.e., $\dev (\gamma \cdot x) = \hol (\F)(\gamma) \cdot \dev(x)$ for $x\in\wt{M}$ and $\gamma\in\pi_{1}M$. 

\subsubsection{Enlargement of the Haefliger structure of $\F$}

We will use a bundle larger than the one described in the last section, which was used by Benson-Ellis~\cite{BensonEllis1985}. Let $K_{P}$ be a maximal compact subgroup of $P$. We consider a $G/K_{P}$-bundle $\pi_{G/K_{P}} : \X_{G}(\F)/K_{P} \to M$ with a flat $G$-connection constructed by a $1$-cocycle valued in $\Diff(G/K_{P})^{\delta}$ in a way analogous to $\pi_{G/P}$ in the last section. There is also a $P/K_{P}$-bundle $p : \X_{G}(\F)/K_{P} \to \X_{G}(\F)/P$. Since $P/K_{P}$ is contractible, there is a section $s'$ of $p$, which is unique up to homotopy. We get a section $\hat{s}$ of $\pi_{G/K_{P}}$ defined by the composite 
\[
\xymatrix{ M \ar[r]^<<<<<{s} & \X_{G}(\F)/P \ar[r]^<<<<<{s'} & \X_{G}(\F)/K_{P}\;. }
\]
Clearly, $\hat{s}$ is transverse to the foliation $p^{*}\E_{\hol (\F)}$ of $\X_{G}(\F)/K_{P}$, where $\E_{\hol (\F)}$ is the foliation of $\X_{G}(\F)/P$ defined by the flat $G$-connection. Thus we get the following. 

\begin{prop}\label{prop:h2}
A $(G,G/P)$-foliation $\F$ on $M$ is determined by any of the following data:
\begin{enumerate}

\item\label{i: X_G} A flat principal $G$-bundle $\X_{G} \to M$ and a section $\hat{s}$ of $\X_{G}/K_{P} \to M$ such that $\hat{s}$ is transverse to the foliation $p^{*}\E$ of $\X_{G}/K_{P}$, where $p : \X_{G}/K_{P} \to \X_{G}/P$ is the canonical projection and $\E$ is the foliation of $\X_{G}/P$ defined by the flat $G$-connection.

\item\label{i: hol and widehat dev} A homomorphism $\hol : \pi_{1}M \to G$ and a smooth map $\widehat{\dev} : \widetilde{M} \to G/K_{P}$ such that $\widehat{\dev}$ is transverse to the foliation defined by the fibers of $G/K_{P} \to G/P$ and $\widehat{\dev} (\gamma \cdot x) = \hol(\gamma) \cdot \widehat{\dev}(x)$ for all $x\in \wt{M}$ and $\gamma\in \pi_{1}M$. 

\end{enumerate}
\end{prop}

\section{Characteristic classes of transversely homogeneous foliations}\label{sec:3}

\subsection{Bott connections on the $P/K_{P}$-coset foliation of $G/K_{P}$}

Assume that $G$ is a connected semisimple Lie group and $P$ is a parabolic  subgroup of $G$. Recall that $K_{P}$ is a maximal compact subgroup of $P$. In this section, we will recall the well known construction of a left invariant Bott connection on the normal bundle of the right $P/K_{P}$-coset foliation $\F_{P}$ on $G/K_{P}$, originally due to Kamber-Tondeur~\cite[Theorem~3.7]{KamberTondeur1975a} (announced in~\cite{KamberTondeur1974}).

Let $\g$ and $\p$ denote the Lie algebras of $G$ and $P$, respectively. Let $\sigma : \g/\p \to \g$ be a splitting of the exact sequence 
\[
\xymatrix{0 \ar[r] & \p \ar[r] & \g \ar[r]^<<<<<{\pi} & \g/\p \ar[r] & 0\;. } 
\]
Then consider the connection $\wt{\nabla}$ on the normal bundle $\nu \mathcal{G}_{P}$ of the right $P$-coset foliation $\mathcal{G}_{P}$ on $G$ determined by
\[
\wt{\nabla}_{X} Y = \pi \big([(\textstyle{\id_{\g}} - \sigma \pi)X, \sigma(Y)]\big)
\]
for $X\in\g$ and $Y\in\g/\p$. Observe that $\wt{\nabla}$ is left invariant. For $X\in\p$, we get $\wt{\nabla}_{X}Y = \ad(X)(Y)$. This fact implies that $\wt{\nabla}$ is a Bott connection on $\nu \mathcal{G}_{P}$. If we take an $\ad K_{P}$-equivariant section $\sigma$, then $\wt{\nabla}$ induces a left invariant Bott connection $\nabla$ on $\nu\F_{P}$.

Let $(\bigwedge \g^*)_{K_{P}}$ be the $K_{P}$-basic subalgebra of $\bigwedge \g^*$; namely, 
\[
\left(\bigwedge\g^{*}\right)_{K_{P}} = \left\{\, \beta \in \bigwedge\g^{*} \, \Big| \, \iota_{X}\beta=0\ \forall X \in \Lie (K_{P}),\ \Ad(g)^{*}\beta = \beta\ \forall g \in K_{P} \,\right\}\;,
\]
which is identified to the algebra of left invariant differential forms on $G/K_{P}$. By the left invariance of $\nabla$, we get $\Delta_{\F_{P}} : WO_{q} \to \left(\bigwedge\g^{*}\right)_{K_{P}}$. 

Let $K_{P} = P \cap K_{G}$, which is a maximal compact subgroup of $P$. Since $P$ is parabolic, we have a finite subgroup $F$ of $K_{P}$ such that $P = FP_{0}$, where $P_{0}$ is the identity component of $P$ (see~\cite[Proposition 7.82]{Knapp1996}). Let $\Gc$ be a connected complex Lie group with $\Lie (\Gc) = \g \otimes \CC$. Let $K'$ be the connected Lie subgroup of $\Gc$ such that $\Lie(K') = \Lie (K_{P}) \otimes \CC$. Let $(K_{P})_{\CC} = FK'$. Note that $K_{P} \subset (K_{P})_{\CC}$. Let $\gc^{*} = \Hom_{\CC}(\g \otimes \CC,\CC)$. By complexifying $\nabla$, we get a complex connection $\nabla^{\CC}$ on the complexified normal bundle of the right $\Pc$-coset foliation $\F_{\Pc}$ on $\Gc/(K_{P})_{\CC}$, obtaining the characteristic homomorphism $\Delta_{\F_{\Pc}} : WO_{q} \otimes \CC \to \left(\bigwedge\gc^{*}\right)_{(K_{P})_{\CC}}$. Thus we get that the following diagram commutes:
\begin{equation}\label{eq:ext1}
\xymatrix{                 && (\bigwedge\gc^{*})_{(K_{P})_{\CC}} \ar[d]  \\
WO_{q} \otimes \CC \ar[rru]^{\Delta_{\F_{\Pc}}} \ar[rr]_{\Delta_{\F_{P}}} && (\bigwedge\g^{*})_{K_{P}} \otimes \CC\;,}
\end{equation}
where the vertical arrow is canonical.

\subsection{Complexification of the enlargement of Haefliger structures}

We use the notation on Lie groups and Lie algebras introduced in the last section. Let $\F$ be a $(G,G/P)$-foliation of a manifold $M$. Let $\pi_{G/K_{P}} : \X_{G}(\F)/K_{P} \to M$ be the enlargement of the Haefliger structure considered in Proposition~\ref{prop:h2}. We construct the fiberwise complexification of $\pi_{G/K_{P}}$ as follows. Let  $\hol (\F)_{\CC}$ denote the composite 
\[
\xymatrix{\pi_{1}M \ar[r]^<<<<<{\hol (\F)} & G \ar[r] & \Gc\;.}
\]
Let $\X_{\Gc}(\F)$ be the quotient of $\widetilde{M} \times \Gc$ by the diagonal action of $\pi_{1}M$, obtaining a flat principal $\Gc$-bundle $\pi_{\Gc} : \X_{\Gc}(\F) \to M$ whose holonomy homomorphism is $\hol (\F)_{\CC}$. Then we get a canonical map $\X_{G}(\F)/K_{P} \to \X_{\Gc}(\F)/(K_{P})_{\CC}$, which is a complexification map $G/K_{P} \to \Gc/(K_{P})_{\CC}$ on each fiber. Thus a section $s$ of $\X_{G}(\F)/K_{P}\to M$ gives a section $s_{\CC}$ of $\X_{\Gc}(\F)/(K_{P})_{\CC} \to M$.

The universal covers of $\X_{G}(\F)/K_{P}$ and $\X_{\Gc}(\F)/(K_{P})_{\CC}$ are the products $\widetilde{M} \times G/K_{P}$ and $\widetilde{M} \times \Gc/(K_{P})_{\CC}$, respectively. Consider the diagram
\[
\xymatrix{  (\bigwedge\gc^{*})_{(K_{P})_{\CC}} \ar[rr] \ar[d]  && \Omega^{\bt}(\widetilde{M} \times \Gc/(K_{P})_{\CC}; \CC) \ar[d] \\
               (\bigwedge\g^{*})_{K_{P}} \otimes \CC \ar[rr] && \Omega^{\bt}(\widetilde{M} \times G/K_{P}; \CC)\;, }
\]
where the horizontal arrows are the pull-back homomorphisms  by the second projections and the vertical arrows are the canonical maps defined by complexification. Since $\pi_{1}M$ acts on $G/K_{P}$ and $\Gc/(K_{P})_{\CC}$ by the left product of $G$, left invariant forms on $G/K_{P}$ and $\Gc/(K_{P})_{\CC}$ descend to $\X_{G}(\F)/K_{P}$ and $\X_{\Gc}(\F)/(K_{P})_{\CC}$. Thus we get the commutative diagram
\begin{equation}\label{eq:XXc}
\xymatrix{  (\bigwedge\gc^{*})_{(K_{P})_{\CC}} \ar[rr] \ar[d]  && \Omega^{\bt}(\X_{\Gc}(\F)/(K_{P})_{\CC}; \CC)  \ar[d] \\
               (\bigwedge\g^{*})_{K_{P}} \otimes \CC \ar[rr] && \Omega^{\bt}(\X_{G}(\F)/K_{P}; \CC)\;. }
\end{equation}

Combining the diagrams~\eqref{eq:ext1} and~\eqref{eq:XXc}, we get the following.

\begin{prop}\label{prop:ext2}
The following diagram is commutative:
\[
\xymatrix{                    && H^{\bt}(\X_{\Gc}(\F)/(K_{P})_{\CC}; \CC) \ar[d] \\
H^{\bt}(WO_{q} \otimes \CC) \ar[rru]^{\Delta_{\widehat{\E}_{\CC}}} \ar[rr]_{\Delta_{\widehat{\E}}} && H^{\bt}(\X_{G}(\F)/K_{P}; \CC)\;, }
\]
where $\widehat{\E}$ is the pull-back of the $(G,G/P)$-foliation of $\X_{G}(\F)/P$ by the projection $\X_{G}(\F)/K_{P} \to \X_{G}(\F)/P$, and $\widehat{\E}_{\CC}$ is the pull-back of the $(\Gc,\Gc/\Pc)$-foliation of $\X_{\Gc}(\F)/P_{\CC}$ by the projection $\X_{\Gc}(\F)/(K_{P})_{\CC} \to \X_{\Gc}(\F)/P_{\CC}$.

\end{prop}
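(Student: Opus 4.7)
The plan is to paste the two commutative squares already established in this section on top of each other and pass to de~Rham cohomology. Diagram~\eqref{eq:ext1} records that the characteristic homomorphism produced from the Kamber--Tondeur left-invariant Bott connection is compatible with complexification at the level of left-invariant forms on $G/K_{P}$ and $\Gc/(K_{P})_{\CC}$; diagram~\eqref{eq:XXc} records that the descent of left-invariant forms to the flat bundles $\X_{G}(\F)/K_{P}$ and $\X_{\Gc}(\F)/(K_{P})_{\CC}$ is also compatible with complexification. What is missing is to identify the lower composite with $\Delta_{\widehat{\E}}$ and the upper composite with $\Delta_{\widehat{\E}_{\CC}}$ before taking cohomology.

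For this, I would first note that the structure group of $\pi_{G}:\X_{G}(\F)\to M$ is the discrete group $G^{\delta}$, acting on $G/K_{P}$ by left translations. Since the Bott connection $\nabla$ on $\nu\F_{P}$ constructed in the preceding subsection is left invariant, it is preserved by the transition functions $\overline{\gamma}_{ij}$ of $\X_{G}(\F)/K_{P}\to M$, and therefore descends to a well-defined connection on $\nu\widehat{\E}$. The local trivializations of $\pi_{G/K_{P}}$ identify the pair $(\X_{G}(\F)/K_{P},\widehat{\E})$ over any small open set $U\subset M$ with the product $U\times(G/K_{P},\F_{P})$, so the descended connection is still a Bott connection for $\widehat{\E}$ and its Chern--Weil construction coincides on such a chart with the one for $\F_{P}$. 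By naturality of Chern--Weil, the resulting characteristic map factors as
\[
\Delta_{\widehat{\E}}\colon WO_{q}\xrightarrow{\ \Delta_{\F_{P}}\ }\Bigl(\bigwedge\g^{*}\Bigr)_{K_{P}}\longrightarrow\Omega^{\bt}\bigl(\X_{G}(\F)/K_{P}\bigr)\;,
\]
where the second arrow is the one appearing in the bottom row of~\eqref{eq:XXc}. Exactly the same argument, applied to $\Gc$, $\Pc$, $(K_{P})_{\CC}$ and the flat bundle $\X_{\Gc}(\F)/(K_{P})_{\CC}\to M$, yields the analogous factorization of $\Delta_{\widehat{\E}_{\CC}}$ through the top rows of~\eqref{eq:ext1} and~\eqref{eq:XXc}.

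Stacking~\eqref{eq:ext1} on top of~\eqref{eq:XXc} then assembles these factorizations into one commutative diagram at the level of differential forms (complexified on the bottom), and passing to cohomology yields the statement. I expect the only genuinely delicate point to be the global verification that $\nabla$ descends and that the descended connection is still a Bott connection for $\widehat{\E}$; once one observes that the flat local trivializations of $\pi_{G/K_{P}}$ intertwine $\widehat{\E}$ with $\F_{P}$ and that left invariance of $\nabla$ is exactly the compatibility needed for the descent to be well defined, this reduces to the fact that Bott-ness is a local condition.
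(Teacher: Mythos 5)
Your proposal is correct and follows essentially the same route as the paper, which simply combines diagrams~\eqref{eq:ext1} and~\eqref{eq:XXc}; the extra work you do in identifying the two composites with $\Delta_{\widehat{\E}}$ and $\Delta_{\widehat{\E}_{\CC}}$ (via descent of the left-invariant Bott connection through the locally constant transition functions) is exactly the step the paper leaves implicit, and your justification of it is sound.
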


The following simple observation is the unique new idea in our proof of Theorem~\ref{thm:finite}.
\begin{prop}\label{prop:ext3}
Assume that $H^{\bt}(\Gc/P;\RR) \to H^{\bt}(G/P;\RR)$ is trivial on positive degrees. Then the image of $\Delta_{\widehat{\E}} : H^{\bt}(WO_{q}) \to H^{\bt}(\X_{G}(\F)/K_{P};\RR)$ is contained in the image of $\pi_{G/K_{P}}^{*} : H^{\bt}(M;\RR) \to H^{\bt}(\X_{G}(\F)/K_{P};\RR)$.
\end{prop}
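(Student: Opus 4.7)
The plan is to combine the complexification factorization given by Proposition~\ref{prop:ext2} with a comparison of Leray--Serre spectral sequences.

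First I use Proposition~\ref{prop:ext2} to reduce the problem to proving the stronger statement that the entire image of
\[
c_{\F}^{*}\colon H^{\bt}(\X_{\Gc}(\F)/(K_{P})_{\CC};\CC)\longrightarrow H^{\bt}(\X_{G}(\F)/K_{P};\CC)
\]
lies in the image of $\pi_{G/K_{P}}^{*}$, where $c_{\F}$ is the fiberwise complexification. This will suffice, because every $\Delta_{\widehat{\E}}(\alpha)$ then lies in $\pi_{G/K_{P}}^{*}H^{\bt}(M;\CC)$, and reality of $\Delta_{\widehat{\E}}(\alpha)$ together with averaging a complex preimage with its conjugate produces a preimage in $H^{\bt}(M;\RR)$.

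The heart of the argument is a fiberwise vanishing. On each fiber, $c_{\F}$ restricts to the complexification $c\colon G/K_{P}\to\Gc/(K_{P})_{\CC}$. I fit $c$ into the commutative square
\[
\xymatrix{G/K_{P}\ar[r]^{c}\ar[d]&\Gc/(K_{P})_{\CC}\ar[d]\\ G/P\ar[r]^{c'}&\Gc/\Pc}
\]
whose vertical arrows are the natural projections. Since $K_{P}$ is maximal compact in $P$, the fiber $P/K_{P}$ of the left vertical is contractible, so the Leray--Serre spectral sequence of that fibration is concentrated in the row $q=0$ and its edge yields $H^{\bt}(G/K_{P};\CC)\cong H^{\bt}(G/P;\CC)$. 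Comparing to the spectral sequence of the right vertical fibration, the map on $E_{2}$-pages sends $H^{p}(\Gc/\Pc;H^{q}(\Pc/(K_{P})_{\CC};\CC))\to H^{p}(G/P;H^{q}(P/K_{P};\CC))$; the target vanishes for $q>0$, and for $q=0$ the map reduces to $(c')^{*}\colon H^{p}(\Gc/\Pc;\CC)\to H^{p}(G/P;\CC)$, which is zero in positive $p$ by hypothesis. Hence $c^{*}$ vanishes on $H^{>0}(\Gc/(K_{P})_{\CC};\CC)$.

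Finally I run the Leray--Serre comparison for the flat bundles $\pi_{G/K_{P}}$ and $\pi_{\Gc/(K_{P})_{\CC}}$ with the morphism induced by $c_{\F}$. The map between local coefficient systems on $M$ is fiberwise $c^{*}$, just shown to vanish for $q>0$, so the induced $E_{2}(c_{\F})^{p,q}=0$ for $q>0$, and this persists to $E_{\infty}(c_{\F})^{p,q}=0$ for $q>0$. Iterating through the filtration, every $\gamma\in H^{n}(\X_{\Gc}(\F)/(K_{P})_{\CC};\CC)$ has $c_{\F}^{*}\gamma\in F^{n}H^{n}(\X_{G}(\F)/K_{P};\CC)$, and since $G/K_{P}$ is connected, $F^{n}H^{n}=E_{\infty}^{n,0}$ coincides with the image of the edge map $\pi_{G/K_{P}}^{*}\colon H^{n}(M;\CC)\to H^{n}(\X_{G}(\F)/K_{P};\CC)$, as required. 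The main obstacle is the fiberwise step: the hypothesis directly concerns $H^{\bt}(\Gc/\Pc;\RR)\to H^{\bt}(G/P;\RR)$, not the map on $H^{\bt}(\Gc/(K_{P})_{\CC};\CC)\to H^{\bt}(G/K_{P};\CC)$, and in general $\Pc/(K_{P})_{\CC}$ is not contractible (already for $(G,G/P)=(\SO(n+1,1),S^{n}_{\infty})$ it is homotopy equivalent to $S^{1}$); the asymmetric contractibility of the two vertical fibers in the square above is exactly what lets the hypothesis travel across it.
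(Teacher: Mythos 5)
Your proposal follows essentially the same route as the paper's proof: Proposition~\ref{prop:ext2} reduces everything to showing that the image of the fiberwise-complexification comparison map lands in the image of $\pi_{G/K_{P}}^{*}$, and this is attacked by comparing the Leray--Serre spectral sequences of the two bundles over $M$. The one place you genuinely deviate is the fiberwise step. The paper observes that $(K_{P})_{\CC}$ and $K_{P}$ are both homotopy equivalent to $P$, so the fiber inclusion $G/K_{P}\to\Gc/(K_{P})_{\CC}$ is, up to homotopy, the inclusion $G/P\to\Gc/P$ with the \emph{real} $P$, and the hypothesis applies verbatim; you instead push the problem down to $G/P\to\Gc/\Pc$ via a second spectral-sequence comparison. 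Note that you have misquoted the hypothesis: it concerns $H^{\bt}(\Gc/P;\RR)\to H^{\bt}(G/P;\RR)$, not $H^{\bt}(\Gc/\Pc;\RR)\to H^{\bt}(G/P;\RR)$. Your argument survives because $G/P\to\Gc/\Pc$ factors through $\Gc/P$, so the actual hypothesis forces $(c')^{*}$ to vanish in positive degrees---but that one line is missing. As a byproduct, your version only uses the triviality of $H^{\bt}(\Gc/\Pc;\RR)\to H^{\bt}(G/P;\RR)$ on positive degrees, which the paper remarks (after the proof of Theorem~\ref{thm:finite}) is a strictly weaker condition; so your route would establish the proposition under a weaker assumption, a mild but genuine gain.

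One caveat, which applies equally to the paper's own write-up and is therefore not a point where you diverge from it: the final ``iterating through the filtration'' step overreaches. Vanishing of the induced map on $E_{\infty}^{p,q}$ for $q>0$ only gives $c_{\F}^{*}(F^{p}H^{n})\subseteq F^{p+1}H^{n}$; to iterate beyond one step you would need the \emph{source} class to climb the filtration, which it need not do, so a priori you only obtain $c_{\F}^{*}\gamma\in F^{1}H^{n}$ rather than $F^{n}H^{n}$. The abstract principle ``zero on positive-degree fiber cohomology implies the image factors through the base'' is false: the bundle map $T^{2}\to S^{1}\times S^{2}$ over $S^{1}$ obtained by collapsing the $1$-skeleton fiberwise-compatibly kills $H^{>0}$ of the fibers yet is nonzero on $H^{2}$, whose image cannot lie in $\pi^{*}H^{2}(S^{1})=0$. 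Closing this step requires extra input specific to the situation---for instance the fiber-integration criterion of Lemma~\ref{lem:Gysin} together with Proposition~\ref{prop:fibint}, which is how Section~\ref{sec:even} handles the analogous question. Since the paper's proof makes the same leap without comment, I record this as a shared imprecision rather than a defect of your proposal.
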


\begin{proof}
By Proposition~\ref{prop:ext2}, the image of $\Delta_{\widehat{\E}}$ is contained in the image of 
\[
H^{\bt}(\X_{\Gc}(\F)/(K_{P})_{\CC};\RR) \longrightarrow H^{\bt}(\X_{G}(\F)/K_{P};\RR)\;.
\]
Consider the Leray-Serre spectral sequences associated to the fiber bundles $\X_{\Gc}(\F)/(K_{P})_{\CC} \to M$ and $\X_{G}(\F)/K_{P} \to M$. Since $(K_{P})_{\CC}$ and $K_{P}$ are homotopy equivalent to $P$, it follows that $\X_{\Gc}(\F)/(K_{P})_{\CC}$ and $\X_{G}(\F)/K_{P}$ are homotopy equivalent to $\X_{\Gc}(\F)/P$ and $\X_{G}(\F)/P$, respectively. Thus the restriction map between the $E_{2}$-terms is given by
\[
r : H^{\bt}\big(M, \H^{\bt}(\Gc/P) \big) \longrightarrow H^{\bt}\big(M, \H^{\bt}(G/P)\big)\;,
\]
where $\H^{\bt}(\Gc/P)$ and $\H^{\bt}(G/P)$ are the corresponding local systems associated to $\X_{\Gc}(\F)/P$ and $\X_{G}(\F)/P$, respectively. By the assumption of triviality of $H^{\bt}(\Gc/P;\RR) \to H^{\bt}(G/P;\RR)$ on positive degrees, it follows that the image of $r$ is contained in $H^{\bt}(M;\RR)$.
\end{proof}

\subsection{Two results of Benson-Ellis}

Let $H^{\bt}(\g,K_{P}) = H^{\bt}((\bigwedge\g^{*})_{K_{P}})$. Let $\F$ be a $(G,G/P)$-foliation of a manifold $M$. Assume that $G$ is semisimple. 

\begin{thm}[Benson-Ellis~\cite{BensonEllis1985}]\label{thm:factrization}
The following diagram commutes:
\begin{equation*}
\xymatrix{  & H^{\bt}(\g,K_{P}) \ar[rd]  &             \\
H^{\bt}(WO_{q}) \ar[ru]^{\Delta_{\F_{P}}} \ar[rr]_{\Delta_{\F}} & & H^{\bt}(M;\RR)\;,}
\end{equation*}
\end{thm}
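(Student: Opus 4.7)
The plan is to reduce, via the enlargement of the Haefliger structure from Proposition~\ref{prop:h2}, the computation of $\Delta_{\F}$ to that of the model foliation $\F_{P}$ on $G/K_{P}$. Let $\pi_{G/K_{P}}:\X_{G}(\F)/K_{P}\to M$ be the flat $G/K_{P}$-bundle associated with $\F$ and $\hat{s}:M\to\X_{G}(\F)/K_{P}$ the section provided by Proposition~\ref{prop:h2}, transverse to the foliation $\widehat{\E}=p^{*}\E$ of $\X_{G}(\F)/K_{P}$, so that $\F=\hat{s}^{*}\widehat{\E}$. Since a Bott connection pulls back under a transverse map to a Bott connection, the naturality of the Chern-Weil construction gives $\Delta_{\F}=\hat{s}^{*}\circ\Delta_{\widehat{\E}}$. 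It therefore suffices to factor $\Delta_{\widehat{\E}}$ through $H^{\bt}(\g,K_{P})$ in a way compatible with $\Delta_{\F_{P}}$.

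For this, observe that $\widehat{\E}$ is, locally on $M$, the pull-back of the right $P/K_{P}$-coset foliation $\F_{P}$ on $G/K_{P}$, with transition functions in $G^{\delta}$ acting by left multiplication. Left translation by $G$ preserves both $\F_{P}$ and the left invariant Bott connection $\nabla$ on $\nu\F_{P}$ of Kamber-Tondeur recalled before~\eqref{eq:ext1}, and fixes every element of $\left(\bigwedge\g^{*}\right)_{K_{P}}$, viewed as a left invariant differential form on $G/K_{P}$. Consequently, $\nabla$ patches to a globally defined Bott connection $\widehat{\nabla}$ on $\nu\widehat{\E}$, and the left invariant forms patch to a chain map
\[
j:\left(\bigwedge\g^{*}\right)_{K_{P}}\longrightarrow\Omega^{\bt}\bigl(\X_{G}(\F)/K_{P}\bigr)\;.
\]
If $\Delta_{\widehat{\E}}$ is computed at the cochain level using $\widehat{\nabla}$, then the functoriality of the Chern-Weil construction with respect to the local identifications with $G/K_{P}$ yields $\Delta_{\widehat{\E}}=j\circ\Delta_{\F_{P}}$ at the cochain level.

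Composing with $\hat{s}^{*}$ gives $\Delta_{\F}=(\hat{s}^{*}\circ j)\circ\Delta_{\F_{P}}$, and passing to cohomology defines the required map $H^{\bt}(\g,K_{P})\to H^{\bt}(M;\RR)$ making the triangle commute. The only point requiring care is the patching step in the previous paragraph: one must verify that the $G^{\delta}$-cocycle defining $\X_{G}(\F)/K_{P}$ preserves simultaneously $\F_{P}$, $\nabla$ and the subspace $\left(\bigwedge\g^{*}\right)_{K_{P}}\subset\Omega^{\bt}(G/K_{P})$. This is immediate from left invariance once $\sigma:\g/\p\to\g$ is chosen to be $\ad K_{P}$-equivariant, and it is the only place where the left invariance of $\nabla$ is used. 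An essentially equivalent route, using Proposition~\ref{prop:h2}(ii), is to pull $\nabla$ back along $\widehat{\dev}:\widetilde{M}\to G/K_{P}$ to a Bott connection on $\nu\widetilde{\F}$ and descend it together with the Chern-Weil forms via the $\pi_{1}M$-equivariance of $\widehat{\dev}$; the resulting factorization coincides with the one above.
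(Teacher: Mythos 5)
Your argument is correct and is essentially the proof the paper has in mind: the theorem is quoted from Benson--Ellis, but the paper explicitly remarks that the construction of Section~\ref{sec:3} --- descending the left invariant Bott connection and the left invariant forms $\left(\bigwedge\g^{*}\right)_{K_{P}}$ to $\X_{G}(\F)/K_{P}$ (the real part of diagram~\eqref{eq:XXc}) and then pulling back by the section $\hat{s}$ of Proposition~\ref{prop:h2} --- yields an alternative proof, and that is precisely your factorization $\Delta_{\F}=(\hat{s}^{*}\circ j)\circ\Delta_{\F_{P}}$. Your attention to the $\ad K_{P}$-equivariance of $\sigma$ and to the locally constant $G^{\delta}$-cocycle acting by left translations covers the only delicate points, so nothing is missing.
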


Note that the argument in the last section gives an alternative proof of Theorem~\ref{thm:factrization}. 

Let $U$ be an open subset of $\RR^{\ell}$.

\begin{thm}[{Benson-Ellis~\cite{BensonEllis1985}, see also Haefliger~\cite[Theorem in Section~6]{Heitsch1986}}]\label{thm:rigid} 
For a smooth family $\{\F_{t}\}_{t \in U}$ of $(G,G/P)$-foliations of $M$, the family $\{\Delta_{\F_{t}}\}_{t \in U}$ is locally constant in $\Hom(H^{\bt}(WO_{q}),H^{\bt}(M;\RR))$.
\end{thm}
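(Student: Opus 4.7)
My plan is to assemble the smooth family $\{\F_t\}_{t \in U}$ into a single $(G,G/P)$-foliation $\tilde\F$ on the product $M \times U$ of codimension $q$, and then to deduce local constancy from naturality and the homotopy invariance of the slice inclusions.

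First I would realize $\tilde\F$ as follows. By the Haefliger description of Proposition~\ref{prop:h1} (or~\ref{prop:h2}), each $\F_t$ is encoded by its holonomy $\hol_t : \pi_{1}M \to G$ and its equivariant developing map $\dev_t : \wt{M} \to G/P$. Shrink $U$ to a small simply connected neighborhood $V$ of any base point $t_{0}$, so that $\pi_{1}(M \times V) \cong \pi_{1}M$. The essential rigidity is that, because the transition functions of any Haefliger cocycle defining $\F_t$ take values in the discrete group $G^{\delta}$, the smoothness of the family forces $\hol_t$ to be independent of $t \in V$: a Haefliger cocycle for the candidate total foliation on $M \times V$ must have transition functions locally constant on the open sets of $M \times V$, and after identifying $\pi_{1}(M \times V)$ with $\pi_{1}M$ these collapse all of the $\hol_t$ to a single homomorphism $\tilde{\hol}$. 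The developing maps then assemble into a single $\tilde{\hol}$-equivariant submersion $\tilde{\dev} : \wt{M} \times V \to G/P$ defined by $\tilde{\dev}(x,t) = \dev_t(x)$, and Proposition~\ref{prop:h1} applied to $M \times V$ yields the desired codimension $q$ $(G,G/P)$-foliation $\tilde\F$ on $M \times V$ with $\tilde\F|_{M \times \{t\}} = \F_t$.

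Second, by the naturality of the characteristic homomorphism under pull-back along the slice inclusion $i_t : M \hookrightarrow M \times V$, $x \mapsto (x,t)$, one has
\[
\Delta_{\F_t} = i_{t}^{*} \circ \Delta_{\tilde\F}
\]
in $\Hom(H^{\bt}(WO_{q}), H^{\bt}(M;\RR))$ for every $t \in V$. Since $V$ is path-connected, all the inclusions $i_t$ are smoothly homotopic, so they induce the same pull-back $i_t^{*} : H^{\bt}(M \times V;\RR) \to H^{\bt}(M;\RR)$. Hence $\Delta_{\F_t}$ is independent of $t \in V$, which is the claimed local constancy.

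The main obstacle is precisely the assembly step, namely verifying that a smooth family of $(G,G/P)$-foliations genuinely produces a single $(G,G/P)$-foliation on $M \times V$ of the same codimension $q$. This is the content of the discreteness of $G^{\delta}$ and is what distinguishes the transversely homogeneous setting from the case of general foliations; for the latter, smooth families can exhibit continuously varying characteristic classes (as in Thurston's construction of families with varying Godbillon-Vey class), and no analogous codimension $q$ total foliation exists on $M \times U$.
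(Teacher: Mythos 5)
The second half of your argument (naturality of $\Delta$ under the transverse slice inclusions $i_t$, together with the fact that all $i_t^*$ agree on cohomology) is sound, but the assembly step on which everything rests contains a false claim: smoothness of the family does \emph{not} force $\hol_t$ to be constant near $t_0$. The condition that the transition functions $\gamma_{ij}$ take values in $G^{\delta}$ means they are locally constant in the $M$-direction \emph{for each fixed $t$}; the cocycle itself (cover, submersions $\pi_i$, and hence the $\gamma_{ij}$) is allowed to vary smoothly with $t$, and a function $(x,t)\mapsto\gamma_{ij}^{t}(x)$ that is locally constant in $x$ for each $t$ need not be locally constant in $(x,t)$. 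Concretely, a smooth path $\{h_t\}$ of pairwise non-conjugate discrete faithful representations of a surface group into $\PSL(2;\RR)$ (a path in Teichm\"uller space) gives, via suspension, a smooth family of $(G,G/P)$-foliations on a fixed circle bundle $M$ whose holonomies are not even conjugate for distinct $t$. Note also that the paper applies Theorem~\ref{thm:rigid} (in the proof of Proposition~\ref{prop:01}) precisely to families interpolating between foliations with \emph{homotopic but distinct} holonomy homomorphisms inside a positive-dimensional component of $\Hom(\pi_1M,G)$; if smooth families had constant holonomy, the theorem would be nearly vacuous and that argument would collapse.

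Once $\hol_t$ varies, your candidate $\wt{\dev}(x,t)=\dev_t(x)$ only satisfies $\wt{\dev}(\gamma\cdot x,t)=\hol_t(\gamma)\cdot\wt{\dev}(x,t)$, which is not equivariance with respect to any single homomorphism $\pi_1(M\times V)\to G$, so Proposition~\ref{prop:h1} does not produce a codimension $q$ foliation of $M\times V$; the union of the slice foliations is only a foliation of codimension $q+\dim V$, for which your homotopy argument says nothing. The actual proof (Benson--Ellis, and Haefliger as reported in Section~6 of~\cite{Heitsch1986}) treats the variation of the holonomy by a different mechanism: the characteristic homomorphism factors through $H^{\bt}(\g,K_P)$ (Theorem~\ref{thm:factrization}), the $t$-derivative of the resulting classes is identified with a class built from a $1$-cocycle of $\g$ with values in a module over the semisimple Lie algebra, and Whitehead-type vanishing of the relevant Lie algebra cohomology forces that derivative to be zero. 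Semisimplicity of $G$ enters essentially there; your argument never uses it, which is itself a warning sign, since the statement fails for general transverse structures.
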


This rigidity comes from the vanishing results of cohomology of representations of semisimple Lie algebras.

\section{A sufficient topological condition for the finiteness of secondary classes}
\label{sec:proof}

Like in the proof of Theorem~\ref{thm:BGH} by Brooks-Goldman and Heitsch, the unique essential part of the proof of the finiteness theorem (Theorem~\ref{thm:finite}) is the following proposition.
 
\begin{prop}\label{prop:01}
If the holonomy homomorphisms of two $(G,G/P)$-foliations, $\F_{0}$ and $\F_{1}$, on $M$ are homotopic, then $\Delta_{\F_{0}} = \Delta_{\F_{1}}$.
\end{prop}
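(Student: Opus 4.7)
The plan is to combine Proposition~\ref{prop:ext3} with Benson--Ellis rigidity (Theorem~\ref{thm:rigid}) to show that $\Delta_{\F}$ depends on $\F$ only through the flat $G$-bundle $\X_{G}(\F)\to M$, and then to deduce invariance under deformations of the flat structure from the rigidity.

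First I would isolate the role of the section. By Proposition~\ref{prop:h2} and the description in Section~\ref{sec:Haefliger}, $\F$ is the pull-back of the $(G,G/P)$-foliation $\widehat{\E}$ on $\X_{G}(\F)/K_{P}$ by the section $\hat{s}$, so by naturality of the characteristic homomorphism, $\Delta_{\F}=\hat{s}^{*}\circ\Delta_{\widehat{\E}}$. Under the hypothesis of Theorem~\ref{thm:finite}, Proposition~\ref{prop:ext3} gives that the image of $\Delta_{\widehat{\E}}$ is contained in $\pi_{G/K_{P}}^{*}H^{\bt}(M;\RR)$. The identity $\pi_{G/K_{P}}\circ\hat{s}=\id_{M}$ implies that $\pi_{G/K_{P}}^{*}$ is injective and admits $\hat{s}^{*}$ as a left inverse, so $\hat{s}^{*}$ coincides with the canonical inverse of $\pi_{G/K_{P}}^{*}$ on its image. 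Therefore $\Delta_{\F}=(\pi_{G/K_{P}}^{*})^{-1}\circ\Delta_{\widehat{\E}}$, and this formula depends on $\F$ only through the flat $G$-bundle and the flat foliation $\widehat{\E}$, not on the section.

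Second, given a homotopy $\{\hol_{t}\}_{t\in[0,1]}$ in $\Hom(\pi_{1}M,G)$ between $\hol(\F_{0})$ and $\hol(\F_{1})$, I would smooth the path and use that homotopic representations classify the same topological principal $G$-bundle. Fixing a topological bundle $X\to M$ and transporting the flat connections via a smooth family of bundle isomorphisms yields a smooth one-parameter family of flat structures on $X$, hence a smooth family of $(G,G/P)$-foliations $\widehat{\E}_{t}$ on the fixed ambient manifold $X/K_{P}$, each being the pull-back to $X/K_{P}$ of the flat foliation on $X/P$ for the corresponding flat connection. By Theorem~\ref{thm:rigid}, $\Delta_{\widehat{\E}_{t}}:H^{\bt}(WO_{q})\to H^{\bt}(X/K_{P};\RR)$ is locally constant in $t$, hence constant on $[0,1]$. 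Composing with the section-independent inverse $(\pi^{*})^{-1}$ supplied by the first step identifies this map with $\Delta_{\F_{t}}$ at the endpoints, yielding $\Delta_{\F_{0}}=\Delta_{\F_{1}}$.

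The main obstacle is the geometric bookkeeping in the second step: smoothing the homotopy of representations, simultaneously trivializing the family of flat bundles $\X_{G}^{t}\to M$ topologically in the $t$-direction, and verifying that the resulting collection $\{\widehat{\E}_{t}\}$ constitutes a genuine smooth family of $(G,G/P)$-foliations on a fixed ambient manifold to which Benson--Ellis rigidity can be applied.
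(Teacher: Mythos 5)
Your proposal is correct and follows essentially the same route as the paper's proof: the paper likewise combines Proposition~\ref{prop:ext3} (to see that $\Delta_{\widehat{\E}}$ lands in $\pi_{G/K_{P}}^{*}H^{\bt}(M;\RR)$, hence is insensitive to the choice of section) with Theorem~\ref{thm:rigid} applied to the family of foliations $\widehat{\E}_{t}$ on the fixed total space obtained from the homotopy of holonomy homomorphisms. The ``geometric bookkeeping'' you flag is passed over in the paper with the single assertion that the homotopy class of $(\X_{G}(\F_{i})/K_{P},\widehat{\E}_{i})$ as a $(G,G/P)$-foliation is determined by the homotopy class of $\hol(\F_{i})$.
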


Theorem~\ref{thm:finite} follows from Proposition~\ref{prop:01} with the arguments of~\cite[Lemma~2]{BrooksGoldman1984}.

\begin{proof}[Proof of Theorem~\ref{thm:finite} using Proposition~\ref{prop:01}]
Recall that we assume that $\pi_{1}M$ is finitely presented. It is well known that $\pi_{0}(\Hom(\pi_{1}M,G))$ is finite (see Remark~\ref{rem:fincomp} at the end of this section). Thus there exist a finite number of $(G,G/P)$-foliations $\F_1,\dots,\F_m$ of $M$ such that, for any $(G,G/P)$-foliation $\F$ of $M$, its holonomy homomorphism is in the same connected component of $\Hom(\pi_{1}M,G)$ as the holonomy homomorphism of some $\F_{i}$. Thus Proposition~\ref{prop:01} implies Theorem~\ref{thm:finite}.
\end{proof}

\begin{proof}[Proof of Proposition~\ref{prop:01}]
Let $\X_{G}(\F_{i})/K_{P} \to M$ be the enlargement of the Haefliger structure of $\F_{i}$ considered in Proposition~\ref{prop:h2} for $i\in\{0,1\}$. Recall that a section $s_{i} : M \to \X_{G}(\F_{i})/K_{P}$ is associated to $\F_{i}$. Consider the foliation $\widehat{\E}_{i} = p^{*}_{i}\E_{\hol (\F_{i})}$ of $\X_{G}(\F_{i})/K_{P}$, where $p_{i} : \X_{G}(\F_{i})/K_{P} \to \X_{G}(\F_{i})/P$ is the canonical projection and $\E_{\hol (\F_{i})}$ is the foliation of $\X_{G}(\F_{i})/P$ defined by the flat $G$-connection. 

The homotopy class of $(\X_{G}(\F_{i})/K_{P},\widehat{\E}_{i})$ as a $(G,G/P)$-foliation is determined by the homotopy class of the holonomy homomorphism of $\F_{i}$. Thus, by assumption and Theorem~\ref{thm:rigid}, we get $\Delta_{\widehat{\E}_{0}} = \Delta_{\widehat{\E}_{1}}$.

By Proposition~\ref{prop:ext3}, the image of $\Delta_{\widehat{\E}_{0}}$ is contained in the image of $p^{*} : H^{\bt}(M;\RR) \to H^{\bt}(\X_{G}(\F_{0})/K_{P};\RR)$. Thus $s_{0}^{*}\Delta_{\widehat{\E}_{0}} =  s_{1}^{*}\Delta_{\widehat{\E}_{0}}$ on $H^{\bt}(WO_{q})$, and therefore
\[
\Delta_{\F_{0}} = s_{0}^{*}\Delta_{\widehat{\E}_{0}} =  s_{1}^{*}\Delta_{\widehat{\E}_{0}} = s_{1}^{*}\Delta_{\widehat{\E}_{1}} = \Delta_{\F_{1}}\;,
\]
as desired.
\end{proof}

For later reference, note the following fact shown in the proof of Proposition~\ref{prop:01}.

\begin{prop}\label{prop:eachsigma}
Let $\sigma\in H^{\bt}(WO_{q})$. If $\Delta_{\widehat{\E}}(\sigma)$ belongs to the image of $p^{*} : H^{\bt}(M;\RR) \to H^{\bt}(\X_{G}(\F_{0})/K_{P};\RR)$, then 
\[
\# \{\, \Delta_{\F}(\sigma) \in H^{\bt}(M;\RR) \mid \F \in \Fol(G,G/P) \,\} < \infty\;.
\]
\end{prop}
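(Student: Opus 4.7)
The plan is to replay the argument in the proof of Proposition~\ref{prop:01} one class at a time. Two ingredients of that proof are available here under the weaker hypothesis: (a) by Theorem~\ref{thm:rigid}, the enlarged characteristic map $\Delta_{\widehat{\E}}$ depends only on the homotopy class of $\hol(\F)$; and (b) for any cohomology class in $\Image(p^{*})$ its pull-back along a section $s$ of $\pi_{G/K_{P}}$ is independent of the choice of $s$, because $p\circ s=\id_{M}$ forces $s^{*}p^{*}=\id$ on $H^{\bt}(M;\RR)$. Point (b) now applies to the single class $\Delta_{\widehat{\E}}(\sigma)$, which is exactly what the hypothesis provides.

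Concretely, first invoke the finiteness of $\pi_{0}(\Hom(\pi_{1}M,G))$ (Remark~\ref{rem:fincomp}) to select finitely many $(G,G/P)$-foliations $\F_{1},\dots,\F_{m}$ of $M$ such that the holonomy of every $\F\in\Fol(G,G/P)$ lies in the same component of $\Hom(\pi_{1}M,G)$ as $\hol(\F_{i})$ for some $i$. Second, given such an $\F$, identify the flat bundles $\X_{G}(\F)/K_{P}$ and $\X_{G}(\F_{i})/K_{P}$ along a path of representations connecting $\hol(\F)$ and $\hol(\F_{i})$, and use Theorem~\ref{thm:rigid} to deduce $\Delta_{\widehat{\E}}=\Delta_{\widehat{\E}_{i}}$ under this identification; since $\Image(p^{*})$ is preserved under the induced cohomology isomorphism, the hypothesis on $\sigma$ transports from one bundle to the other. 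Third, writing $\Delta_{\widehat{\E}}(\sigma)=p^{*}\beta$ for some $\beta\in H^{\bt}(M;\RR)$, compute
\[
\Delta_{\F}(\sigma)\;=\;s^{*}\Delta_{\widehat{\E}}(\sigma)\;=\;s^{*}p^{*}\beta\;=\;\beta\;=\;s_{i}^{*}p^{*}\beta\;=\;\Delta_{\F_{i}}(\sigma),
\]
so the set of possible values of $\Delta_{\F}(\sigma)$ is contained in $\{\Delta_{\F_{1}}(\sigma),\dots,\Delta_{\F_{m}}(\sigma)\}$ and is therefore finite.

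No conceptual obstacle is anticipated; the argument is a pointwise restriction of the proof of Proposition~\ref{prop:01} to the single class $\sigma$, replacing the uniform containment $\Image\Delta_{\widehat{\E}}\subset\Image p^{*}$ supplied there by Proposition~\ref{prop:ext3} with the hypothesized containment for $\sigma$ alone. The only mild point requiring care is verifying that the hypothesis, nominally phrased for a particular $\F_{0}$, makes sense uniformly for the representatives $\F_{i}$ above; this is automatic from Theorem~\ref{thm:rigid}, which matches the enlarged characteristic maps across any given component of $\Hom(\pi_{1}M,G)$.
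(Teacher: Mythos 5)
Your proposal is correct and follows the paper's own route: the paper obtains this proposition by simply restricting the proof of Proposition~\ref{prop:01} (rigidity via Theorem~\ref{thm:rigid}, finiteness of $\pi_{0}(\Hom(\pi_{1}M,G))$, and the identity $s^{*}p^{*}=\id$ on classes in $\Image(p^{*})$) to the single class $\sigma$, exactly as you do. The caveat you raise about the hypothesis being stated only for a particular $\F_{0}$ is an imprecision already present in the paper's statement; in the paper's actual applications (Proposition~\ref{prop:suff}) the containment is verified at the Lie-algebra level and hence holds uniformly over all components of $\Hom(\pi_{1}M,G)$.
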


\begin{rem}\label{rem:fincomp}
	\begin{enumerate}
	
		\item For a finitely presented group $S$ with $k$ generators, $\Hom(S,\GL(n;\RR))$ is endowed with the structure of a real algebraic variety via a tautological embedding $j : \Hom(S,\GL(n;\RR)) \to \GL(n;\RR)^{k}$ (this is an observation of Lusztig as written in~\cite[Footnote of p.~186]{Sullivan1976}). For an algebraic group $G$ of $\GL(n;\RR)$, observe that 
			\[
				\Hom(S,G) = j\big(\Hom(S,\GL(n;\RR))\big) \cap G^{k}
			\]
is also a real algebraic variety. Thus $\pi_{0}\big(\Hom(S,G)\big)$ is finite by a theorem of Whitney~\cite{Whitney1957}, and its cardinality is bounded in terms of the degrees of the defining polynomials by a theorem of Milnor~\cite{Milnor1964}.

		\item We indicate an alternative way to prove the finiteness of the Godbillon-Vey class by using the complexification of the Haefliger structure of $\F$ under the assumption of the triviality of $H^{\bt}(\Gc/\Pc;\RR) \to H^{\bt}(G/P;\RR)$ on positive degrees. Note that this assumption is weaker than the assumption of the triviality of $H^{\bt}(\Gc/P;\RR) \to H^{\bt}(G/P;\RR)$ on positive degrees. Consider a $\Gc/\Pc$-bundle $\X_{\Gc}(\F)/\Pc \to M$, which is regarded as the complexification of the Haefliger structure $\X_{G}(\F)/P \to M$ of $\F$. Assume that $c_{1}(\E^{\CC}_{\hol (\F)})$ is trivial if $\dim G/P$ is even. By results of Asuke~\cite[Corollary~1.9 and Proposition~2.2]{Asuke2003}, the Godbillon-Vey class extends to $\X_{\Gc}(\F)/\Pc$. So, if $H^{\bt}(\Gc/\Pc;\RR) \to H^{\bt}(G/P;\RR)$ is trivial on positive degrees, then we get the finiteness of the Godbillon-Vey class like in  the above proof of Theorem~\ref{thm:finite}. 

		\item We can show the triviality of  $H^{\bt}(\Gc/\Pc;\RR) \to H^{\bt}(G/P;\RR)$ on positive degrees by using the Schubert cell decomposition of $\Gc/\Pc$ if $\Gc/\Pc$ is a generalized Bott tower; namely, the total space of consecutive complex projective space bundles and $G/P$ is the total space of the corresponding consecutive real projective space bundles. The Schubert cell decomposition of $\Gc/\Pc$ is a cell decomposition whose cells are orbits of the action of a Borel subgroup of $\Gc$. This cell decomposition induces a cell decomposition of $G/P$. In the case of generalized Bott towers, we can contract the inclusion $G/P \to \Gc/\Pc$ cell by cell to a constant map. 
		
	\end{enumerate}
\end{rem}

\section{Examples}\label{sec:example}
\addtocontents{toc}{\protect\setcounter{tocdepth}{1}}

\subsection{The Euler class of the bundle $\Gc/P \to \Gc/G$}

Let us consider the case of $G/P = S^{q}$. We characterize the assumption of Theorem~\ref{thm:finite} by the nontriviality of the Euler class of the sphere bundle 
\[
\xymatrix{ G/P \ar[r] & \Gc/P \ar[r] & \Gc/G\;,}
\]
which is homotopy equivalent to
\begin{equation}\label{eq:Kseq}
\xymatrix{ K_{G}/K_{P} \ar[r] & K_{\Gc}/K_{P} \ar[r]^<<<<<{\varphi} & K_{\Gc}/K_{G}\;.}
\end{equation}

\begin{prop}\label{prop:cri}
$H^{\bt}(\Gc/P;\RR) \to H^{\bt}(G/P;\RR)$ is trivial on positive degrees if and only if the Euler class $e$ of $\varphi$ is nontrivial in $H^{q+1}(\Gc/G;\RR)$.
\end{prop}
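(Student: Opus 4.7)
The plan is to reduce the statement to a single cohomological degree and then read it off the Gysin sequence of the oriented $S^{q}$-bundle~\eqref{eq:Kseq}. Since $G/P=S^{q}$, we have $H^{k}(G/P;\RR)=0$ for every $k$ with $0<k<q$ or $k>q$, so in those degrees the restriction $H^{k}(\Gc/P;\RR)\to H^{k}(G/P;\RR)$ is automatically zero. The assertion therefore reduces to showing that the degree-$q$ restriction $i^{*}\colon H^{q}(\Gc/P;\RR)\to H^{q}(G/P;\RR)\cong\RR$ vanishes if and only if $e\neq 0$ in $H^{q+1}(\Gc/G;\RR)$.

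Next, I would pass to the homotopy equivalent compact model $\varphi$ in~\eqref{eq:Kseq}, whose structure group reduces to the connected compact Lie group $K_{G}$; this furnishes an orientation on the fiber $K_{G}/K_{P}\simeq S^{q}$ and a trivial local system, so the Gysin sequence of $\varphi$ with $\RR$-coefficients is available. In the relevant range it reads
\[
0\longrightarrow H^{q}(\Gc/G;\RR)\xrightarrow{\;\varphi^{*}\;} H^{q}(\Gc/P;\RR)\xrightarrow{\;\varphi_{*}\;} H^{0}(\Gc/G;\RR)\xrightarrow{\;\cup\, e\;} H^{q+1}(\Gc/G;\RR)\;,
\]
where $\varphi_{*}$ denotes integration along the fiber.

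Then I would identify the map under scrutiny. After the isomorphisms $H^{q}(S^{q};\RR)\cong\RR\cong H^{0}(\Gc/G;\RR)$, the fiber restriction $i^{*}$ and the fiber integration $\varphi_{*}$ coincide in degree $q$, since both amount to evaluating a class on a single fiber. By exactness, $\Image(\varphi_{*})=\ker(\cup e)$; and since $\Gc/G$ is connected ($\Gc$ is connected and $G$ is closed), $H^{0}(\Gc/G;\RR)=\RR\cdot 1$ is one-dimensional and $\cup e$ sends $1\mapsto e$. Hence $\ker(\cup e)=0$ precisely when $e\neq 0$, which gives $i^{*}=0$ if and only if $e\neq 0$, as desired.

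The only delicate point, and the one I expect to be the main checkpoint, is the orientability of~\eqref{eq:Kseq} together with the triviality of its monodromy on $H^{\bt}(G/P;\RR)$. Both follow from the structure group being the connected compact Lie group $K_{G}$ acting on $K_{G}/K_{P}\simeq S^{q}$: a connected group acts trivially on the degree-$q$ cohomology of a connected orientable $q$-manifold, hence orientations on a single fiber extend globally. Once this is in place, the proposition is just an exactness manipulation.
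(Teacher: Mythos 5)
Your proof is essentially the paper's. The published proof consists of exactly the two steps you describe: the Gysin sequence of $\varphi$ in the range $H^{q}(\Gc/P;\RR)\xrightarrow{\fint_{\varphi}}H^{0}(\Gc/G;\RR)\xrightarrow{\wedge e}H^{q+1}(\Gc/G;\RR)$, together with the observation that the image of $\fint_{\varphi}$ vanishes exactly when the restriction $H^{q}(\Gc/P;\RR)\to H^{q}(G/P;\RR)$ does; your reduction to the single degree $q$ and your explicit identification of restriction-to-a-fiber with fiber integration merely spell out what the paper leaves implicit.

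The one point where you add something to the paper --- orientability of the sphere bundle, which is indeed needed for the untwisted Gysin sequence --- is also the one point where your justification is wrong: $K_{G}$ is not always connected. For $G=\SO(n+1,1)$ one has $K_{G}=\S(\O(n+1)\times\{\pm 1\})\cong\O(n+1)$, and the isotropy action of its non-identity component on the fiber $K_{G}/K_{P}\cong S^{n}$ has degree $(-1)^{n}$; since $\pi_{1}(K_{\Gc}/K_{G})$ surjects onto $\pi_{0}(K_{G})$, the bundle $\zeta_{\SO}$ is a non-orientable sphere bundle for odd $n$ (as it must be, being the unit tangent sphere bundle of $\RR P^{n+1}$ with $n+1$ even). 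In that case the untwisted Gysin sequence is not available: the Euler class has to be taken with twisted coefficients, and the restriction map is automatically trivial because its image lies in the monodromy invariants of $H^{q}(S^{q};\RR)$, which vanish. This gap is shared by the paper's own proof, which writes down the same untwisted Gysin sequence without comment, so it does not separate your argument from the paper's; but the appeal to connectedness of $K_{G}$ should be dropped or replaced by a case-by-case check of the monodromy.
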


\begin{proof}
From the Gysin sequence of $\varphi$, we get an exact sequence
\[
\xymatrix{ H^{q}(\Gc/P;\RR) \ar[r]^{\fint_{\varphi}} & H^{0}(\Gc/G;\RR) \ar[r]^<<<<<{\wedge e} & H^{q+1}(\Gc/G;\RR)\;.}
\]
Thus $e$ is nontrivial if and only if the image of $\fint_{\varphi}$ is trivial. In turn, the image of $\fint_{\varphi}$ is trivial if and only if the restriction map $H^{q}(\Gc/P) \to H^{q}(G/P)$ is trivial.
\end{proof}

\subsection{The case of transversely projective foliations of odd codimension}\label{sec:oddtrproj}

In this case, $(G,G/P) = (\SL(q+1;\RR), S^{q})$ for odd $q$. Let $q = 2k-1$ and $Y_{\ell} = \SU(\ell)/\SO(\ell)$. Now, the sphere bundle~\eqref{eq:Kseq} is 
\begin{equation}\label{eq:Kseq1}
\xymatrix{\SO(2k)/\SO(2k-1) \ar[r] & \SU(2k)/\SO(2k-1) \ar[r]^<<<<<{p_{k}} & Y_{2k}\;.}
\end{equation}
We show that the nontriviality of the Euler class of~\eqref{eq:Kseq1} follows from the Borel's computation of the Betti numbers of homogeneous spaces~\cite{Borel1953}.

\begin{lemma}
The Euler class of~\eqref{eq:Kseq1} is nontrivial in $H^{2k}(Y_{2k})$.
\end{lemma}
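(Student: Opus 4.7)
The plan is to combine the Gysin sequence of~\eqref{eq:Kseq1} with Borel's~\cite{Borel1953} computation of the Poincar\'e polynomials of $Y_\ell=\SU(\ell)/\SO(\ell)$. Borel's formulas give
\[
P(Y_{2k};t)=(1+t^{2k})\prod_{i=1}^{k-1}(1+t^{4i+1}),\qquad P(Y_{2k-1};t)=\prod_{i=1}^{k-1}(1+t^{4i+1}).
\]
Since the extra factor $(1+t^{2k})$ contributes only in degrees $0$ and $2k$, both Poincar\'e polynomials agree in all degrees strictly less than $2k$ and in particular
\[
b_{2k-1}(Y_{2k})=b_{2k-1}(Y_{2k-1}).
\]

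To compute $b_{2k-1}$ of the total space, I would use the fibration
\[
Y_{2k-1}=\SU(2k-1)/\SO(2k-1)\longrightarrow \SU(2k)/\SO(2k-1)\longrightarrow \SU(2k)/\SU(2k-1)=S^{4k-1}
\]
coming from the inclusion of $\SU(2k-1)$ as the stabilizer of a unit vector in $\CC^{2k}$. Because the base is $S^{4k-1}$, the Leray--Serre spectral sequence is concentrated in columns $p=0$ and $p=4k-1$, so its only possibly nonzero differential is $d_{4k-1}$, which automatically vanishes on $E_{4k-1}^{0,q}$ whenever $q<4k-2$. Consequently, $H^{q}(\SU(2k)/\SO(2k-1);\RR)\cong H^{q}(Y_{2k-1};\RR)$ for every $q<4k-1$, and specializing to $q=2k-1$ gives
\[
b_{2k-1}\bigl(\SU(2k)/\SO(2k-1)\bigr)=b_{2k-1}(Y_{2k-1})=b_{2k-1}(Y_{2k}).
\]

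Finally, I would feed this equality into the Gysin sequence of the sphere bundle~\eqref{eq:Kseq1},
\[
0\longrightarrow H^{2k-1}(Y_{2k})\xrightarrow{p_k^{*}}H^{2k-1}\bigl(\SU(2k)/\SO(2k-1)\bigr)\xrightarrow{\fint_{p_k}}H^{0}(Y_{2k})\xrightarrow{\wedge e}H^{2k}(Y_{2k}).
\]
The leading zero (forced by $H^{-1}(Y_{2k})=0$) makes $p_k^{*}$ injective, and the Betti number equality above upgrades it to an isomorphism. Hence $\fint_{p_k}$ vanishes, and so $\wedge e\colon H^{0}(Y_{2k})=\RR\to H^{2k}(Y_{2k})$ is injective; in particular $e\neq 0$. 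The only genuine content of the argument is the Betti number equality, which relies on correctly invoking Borel's formulas; everything else is routine bookkeeping with the Gysin and Leray--Serre spectral sequences, so that is the step I would expect to have to verify most carefully.
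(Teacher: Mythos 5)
Your argument is correct, and it reaches the conclusion by a route that is recognizably parallel to, but not the same as, the paper's. Both proofs rest on Borel's computation of $H^{\bt}(Y_{\ell})$ and on playing the two fibrations $S^{2k-1}\to \SU(2k)/\SO(2k-1)\xrightarrow{p_{k}} Y_{2k}$ and $Y_{2k-1}\to \SU(2k)/\SO(2k-1)\to S^{4k-1}$ against each other. The paper argues by contradiction with \emph{total} Betti numbers: assuming $e=0$, Leray--Hirsch applied to each of the two fibrations yields two incompatible values of $\dim H^{\bt}(\SU(2k)/\SO(2k-1))$, given the two facts it extracts from Borel (surjectivity of $H^{\bt}(Y_{2k})\to H^{\bt}(Y_{2k-1})$ and the doubling $\dim H^{\bt}(Y_{2k})=2\dim H^{\bt}(Y_{2k-1})$). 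You instead work directly in the single degree $2k-1$: the explicit Poincar\'e polynomials (which are the correct ones for $\SU(\ell)/\SO(\ell)$) give $b_{2k-1}(Y_{2k})=b_{2k-1}(Y_{2k-1})$, the Serre spectral sequence over $S^{4k-1}$ identifies $b_{2k-1}$ of the total space with $b_{2k-1}(Y_{2k-1})$, and the Gysin sequence then forces $\cup\,e$ to be injective on $H^{0}$. What your version buys is a direct, non-contradiction proof that isolates exactly one Betti number; what it costs is that you need the finer form of Borel's result (the full Poincar\'e polynomials rather than the two qualitative statements the paper quotes), and the surjectivity input that the paper uses to run Leray--Hirsch on the second fibration is replaced by your spectral-sequence degree count. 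One cosmetic slip: your vanishing of $d_{4k-1}$ only justifies $H^{q}(\SU(2k)/\SO(2k-1))\cong H^{q}(Y_{2k-1})$ for $q<4k-2$, not for all $q<4k-1$ as stated; since you only use $q=2k-1<4k-2$, this does not affect the proof.
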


\begin{proof}
According to the computation of $H^{\bt}(Y_{\ell})$ by Borel~\cite[Proposition~31.4]{Borel1953}, we get that
\begin{equation}\label{eq:surj}
H^{\bt}(Y_{2k}) \longrightarrow H^{\bt}(Y_{2k-1}) 
\end{equation}
is surjective and
\begin{equation}\label{eq:Y}
\dim H^{\bt}(Y_{2k}) = 2 \dim H^{\bt}(Y_{2k-1})\;.
\end{equation}
Consider also the fibration
\[
\xymatrix{Y_{2k-1} \ar[r]^<<<<<{\iota} & \SU(2k)/\SO(2k-1) \ar[r] & \SU(2k)/\SU(2k-1) \approx S^{4k-1}\;.}
\]
Assume that the Euler class of $p_{k}$ is trivial. Then
\begin{equation}\label{eq:Betti1}
\dim H^{\bt}(\SU(2k)/\SO(2k-1)) = \dim H^{\bt}(S^{2k-1})\cdot \dim H^{\bt}(Y_{2k})\;;
\end{equation}
in particular, $p_{k}^{*} : H^{\bt}(Y_{2k}) \to H^{\bt}(\SU(2k)/\SO(2k-1))$ is injective. By the surjectivity of~\eqref{eq:surj}, we get the surjectivity of $\iota^{*} : H^{\bt}(\SU(2k)/\SO(2k-1)) \to H^{\bt}(Y_{2k-1})$. Thus, by the Leray-Hirsch theorem, we obtain
\begin{equation}\label{eq:Betti2}
\dim H^{\bt}(\SU(2k)/\SO(2k-1)) = \dim H^{\bt}(Y_{2k-1})\cdot \dim H^{\bt}(S^{4k-1})\;.
\end{equation}
But~\eqref{eq:Betti1} and~\eqref{eq:Betti2} contradict~\eqref{eq:Y}. Thus the Euler class of $p_{k}$ is nontrivial.
\end{proof}

So $H^{\bt}(K_{\Gc}/K_{P};\RR) \to H^{\bt}(K_{G}/K_{P};\RR)$ is trivial on positive degrees. Thus Theorem~\ref{thm:finite} gives an alternative proof of Theorem~\ref{thm:BGH} for the case of odd codimension.

\subsection{The case of transversely conformally flat foliations}

Now, let $(G,G/P) = (\SO_{0}(n+1,1),S^{n}_{\infty})$. So $\Gc = \SO(n+2;\CC)$, and
\[
K_{\Gc}  = \SO(n+2)\;, \quad K_{G}  = \SO(n+1) \;, \quad K_{P}  =  \SO(n)\;.
\]
Thus the sphere bundle~\eqref{eq:Kseq} is 
\[
\xymatrix{ \SO(n+1) /\SO(n) \ar[r] & \SO(n+2)/\SO(n)  \ar[d]^<<<<<{\zeta_{\SO}} \\
 & \SO(n+2)/\SO(n+1) \;.}
\]
The isotropy group of the $\SO(n+2)$-action on the unit tangent sphere bundle of $\SO(n+2)/\SO(n+1)$ is $\SO(n)$. So $\zeta_{\SO}$ is the unit tangent sphere bundle of $\SO(n+2)/\SO(n+1) \cong S^{n+1}$. Hence the Euler class of $\zeta_{\SO}$ is equal to the fundamental class of $S^{n+1}$ if $n$ is odd. Thus, by Proposition~\ref{prop:cri}, the assumption of Theorem~\ref{thm:finite} is satisfied in this case. 



\subsection{The case of transversely spherical CR foliations}

Now, let $(G,G/P) = (\SU(n+1,1),S^{2n+1}_{\infty})$, where the codimension $q=2n+1$ is odd. In this case, $\Gc = \SL(n+2;\CC)$ and
\[
K_{\Gc}  = \SU(n+2)\;, \quad
K_{G}  = \S(\U(n+1) \U(1))\;, \quad
K_{P}  = \S(\U(n) \U(1))\;.
\]
Thus the sphere bundle~\eqref{eq:Kseq} is 
\begin{equation*}
\xymatrix{ \S(\U(n+1) \U(1))/\S(\U(n) \U(1)) \ar[r] & \SU(n+2)/\S(\U(n) \U(1)) \ar[d]^<<<<<{\zeta_{\SU}} \\
 & \SU(n+2)/\S(\U(n+1) \U(1))\;.}
\end{equation*}
The isotropy group of the $\SU(n+2)$-action on the unit tangent sphere bundle of $\SU(n+2)/\S(\U(n+1) \U(1))$ is $\S(\U(n) \U(1))$. So $\zeta_{\SU}$ is the unit tangent sphere bundle of $\SU(n+2)/\S(\U(n+1) \U(1)) \cong \CC P^{n+1}$. Thus the Euler class of $\zeta_{\SU}$ is equal to $n+2$ times the fundamental class of $\CC P^{n+1}$. By Proposition~\ref{prop:cri}, the assumption of Theorem~\ref{thm:finite} is satisfied in this case. 

\subsection{The case $(G,G/P) = (\Sp(n+1,1),S^{4n+3}_{\infty})$}

Note that the codimension is always odd in this case. We get $\Gc = \Sp(n+2;\CC)$ and
\[
K_{\Gc}  = \Sp(n+2)\;, \quad
K_{G}  = \Sp(n+1) \Sp(1)\;, \quad
K_{P}  = \Sp(n) \Sp(1)\;.
\]
Thus the sphere bundle~\eqref{eq:Kseq} is 
\[
\xymatrix{ \Sp(n+1) \Sp(1)/\Sp(n) \Sp(1) \ar[r] & \Sp(n+2)/\Sp(n) \Sp(1) \ar[d]^<<<<<{\zeta_{\Sp}} \\
& \Sp(n+2)/\Sp(n+1) \Sp(1)\;.}
\]
The isotropy group of the $\Sp(n+2)$-action on the unit tangent sphere bundle of $\Sp(n+2)/\Sp(n+1) \Sp(1)$ is $\Sp(n)$. Thus $\zeta_{\Sp}$ is the unit tangent sphere bundle of $\Sp(n+2)/\Sp(n+1) \Sp(1) \cong \HH P^{n+1}$. Hence the Euler class of $\zeta_{\Sp}$ is equal to $n+2$ times the fundamental class of $\HH P^{n+1}$. By Proposition~\ref{prop:cri}, the assumption of Theorem~\ref{thm:finite} is satisfied in this case. 

\subsection{The case $(G,G/P) = (F_{4(-20)},S^{15}_{\infty})$}\label{sec:F4}

We recall the explicit presentation of $F_{4(-20)}$, $F_{4}$ and $F_{4}^{\CC}$ as automorphism groups of Jordan algebras due to Freudenthal~\cite{Freudenthal1985} and Yokota~\cite{Yokota1975}. We follow Yokota~\cite{Yokota2009}. Let $\OO$ be the Cayley algebra over $\RR$. Let $M(3;\OO)$ be the $3 \times 3$ matrix group with coefficients in $\OO$. Let $X^{*} = {}^{t}\overline{X}$, where the bar denotes conjugation in $\OO$. Let $I''_{1} = \left(\begin{smallmatrix} -1 & 0 & 0 \\ 0 & 1 & 0 \\ 0 & 0 & 1 \end{smallmatrix}\right)$, 
\begin{align*}
\J(1,2) & = \{\, X \in M(3;\OO) \mid I''_{1}X^{*}I''_{1}  = X \,\}\;, \\
\J & = \{\, X \in M(3;\OO) \mid X^{*}  = X \,\}\;, 
\end{align*}
and $\J^{\CC} = \J \otimes \CC$. A product $\circ$ is defined on these $\RR$-vector spaces by $X \circ Y = \frac{1}{2}(XY + YX)$. Endowed with this product, $\J(1,2)$, $\J$ and $\J^{\CC}$ are called {\em Jordan algebras}. $\J$ can be written as follows:
\[
\J = \left\{\, \begin{pmatrix} \xi_{1} & x_{3} & \overline{x}_{2} \\
                                     \overline{x}_{3} & \xi_{2} & x_{1} \\
                                     x_{2} & \overline{x}_{1} & \xi_{3} \end{pmatrix} \in M(3;\OO) \, \Bigg| \, \xi_{i} \in \RR,\ x_{i} \in \OO \,\right\}\;.
\]
Here, $F_{4(-20)}$, $F_{4}$ and $F_{4}^{\CC}$ are defined as the automorphism groups of these Jordan algebras:
\begin{align*}
F_{4(-20)} & = \{\, \sigma \in {\Aut}_{\RR}(\J(1,2)) \mid \sigma(x \circ y) = \sigma(x) \circ \sigma(y) \,\}\;, \\
F_{4} & = \{\, \sigma \in {\Aut}_{\RR}(\J) \mid \sigma(x \circ y) = \sigma(x) \circ \sigma(y) \,\}\;, \\
F_{4}^{\CC} & = \{\, \sigma \in {\Aut}_{\CC}(\J^{\CC}) \mid \sigma(x \circ y) = \sigma(x) \circ \sigma(y) \,\}\;.
\end{align*}
It is well known that $\Gc = F_{4}^{\CC}$ and $K_{\Gc}  = F_{4}$. We will get an explicit form of the parabolic subgroup $P$.

\begin{lemma}[Announced by Borel~\cite{Borel1950} and proved by Matsushima~\cite{Matsushima1952}]\label{lem:isot}
The isotropy group of  the $F_{4}$-action on $\J$ at $E_{11} = \left(\begin{smallmatrix} 1 & 0 & 0 \\ 0 & 0 & 0 \\ 0 & 0 & 0 \end{smallmatrix}\right)$ is $\Spin(9)$. Thus the orbit of $E_{11}$ under the $F_{4}$-action is the octonionic projective plane $\OO P^{2}=F_{4}/\Spin(9)$.
\end{lemma}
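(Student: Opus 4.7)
My plan would be to exploit the Peirce decomposition of $\J$ with respect to the primitive idempotent $E_{11}$. Because $E_{11}\circ E_{11}=E_{11}$, the multiplication operator $L_{E_{11}}(X)=E_{11}\circ X$ has spectrum $\{0,\tfrac{1}{2},1\}$, giving $\J=\J_{1}\oplus\J_{1/2}\oplus\J_{0}$. Direct inspection of the explicit presentation of $\J$ yields $\J_{1}=\RR E_{11}$ (dimension $1$); $\J_{0}$ is the subalgebra of matrices supported in the lower-right $2\times 2$ block, which is a copy of the rank-two Jordan algebra of $2\times 2$ Hermitian octonionic matrices of real dimension $10$; and $\J_{1/2}$ is the subspace of matrices with entries only in the $(1,2),(1,3),(2,1),(3,1)$ positions, which is $\OO^{2}\cong\RR^{16}$. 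Any $\sigma\in F_{4}$ fixing $E_{11}$ satisfies $L_{\sigma(E_{11})}=\sigma L_{E_{11}}\sigma^{-1}$, hence commutes with $L_{E_{11}}$ and preserves each Peirce summand; so the stabilizer $H$ acts on $\J_{0}$ by Jordan algebra automorphisms and compatibly on $\J_{1/2}$.

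Next I would analyze these two actions. The algebra $\J_{0}$ decomposes as $\RR\cdot\mathbf{1}\oplus\J_{0}^{\circ}$, where $\J_{0}^{\circ}$ is the trace-zero part of real dimension $9$. The induced $H$-action on $\J_{0}^{\circ}$ preserves the quadratic form given by the trace of the Jordan square, so it yields a homomorphism $\rho\colon H\to\SO(9)$. The key step is to show that compatibility of the Jordan product between $\J_{0}$ and $\J_{1/2}$ forces the $H$-action on the $16$-dimensional space $\J_{1/2}$ to intertwine with the standard $\SO(9)$-action on $\J_{0}^{\circ}$ via a Clifford-type relation; the only real $16$-dimensional representation of a compact group covering $\SO(9)$ with this property is the spin representation of $\Spin(9)$. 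Hence $\rho$ lifts to an injection $\tilde{\rho}\colon\Spin(9)\hookrightarrow H$, and the spin representation realizes the $\J_{1/2}$-action.

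To close the argument I would run a dimension count. The derivation algebra $\f_{4}=\Lie(F_{4})$ acts on $\J$, and any derivation $D$ satisfies $D(E_{11})=D(E_{11}\circ E_{11})=2E_{11}\circ D(E_{11})$, so $D(E_{11})\in\J_{1/2}$. Checking that the infinitesimal orbit map $\f_{4}\to\J_{1/2}$, $D\mapsto D(E_{11})$, is surjective (which can be done by producing enough explicit derivations sending $E_{11}$ to a basis of $\J_{1/2}$) gives $\dim F_{4}\cdot E_{11}=16$. Combined with $\dim F_{4}=52=16+36=\dim\J_{1/2}+\dim\Spin(9)$ and the injection $\tilde{\rho}$, this forces $H\cong\Spin(9)$ and $F_{4}\cdot E_{11}\cong F_{4}/\Spin(9)=\OO P^{2}$.

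The main obstacle is the identification of the $H$-action on $\J_{1/2}$ with the spin representation of $\Spin(9)$; this cannot be read off from dimensions alone, since a priori several $16$-dimensional representations are candidates. Matsushima carries it out by an intricate computation inside $\f_{4}$, while a more conceptual route uses the triality principle for $\Spin(8)$ acting on $\OO$, extended to $\Spin(9)$ acting on $\OO^{2}\cong\J_{1/2}$, and verifies directly that this extension preserves the Jordan multiplication of $\J$ and fixes $E_{11}$.
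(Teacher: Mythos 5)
The paper does not actually prove this lemma: it is quoted as a classical fact, with the proof delegated to the references (announced by Borel, proved by Matsushima), and Section~6 only uses the statement itself. Your proposal therefore takes a genuinely different route, namely the standard modern argument: the Peirce decomposition $\J=\RR E_{11}\oplus\J_{1/2}\oplus\J_{0}$ with dimensions $1+16+10$ (which checks out against the explicit presentation of $\J$ in the paper), the fact that the stabilizer $H$ preserves each summand and acts orthogonally on the $9$-dimensional trace-free part $\J_{0}^{\circ}$, the identification of $\J_{1/2}\cong\OO^{2}$ as the irreducible Clifford module over $\J_{0}^{\circ}$ --- hence the $16$-dimensional spin representation of $\Spin(9)$ --- and the count $52=36+16$. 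This is correct in outline, and what it buys over the citation is a self-contained, conceptual identification of the pair (stabilizer, isotropy representation), whereas Matsushima's proof is a direct computation in $\f_{4}$. Two points deserve more care than your sketch gives them: (a) the dimension count together with the injection $\tilde{\rho}$ only identifies the \emph{identity component} of $H$ with $\Spin(9)$, so you still need connectedness of $H$ --- for instance by showing $\rho(H)\subseteq\SO(9)$ and that any $\sigma\in\ker\rho$ acts on the irreducible Clifford module $\J_{1/2}$ as $\pm1$, both of which already lie in the image of $\tilde{\rho}$; and (b) the surjectivity of $D\mapsto D(E_{11})$ onto $\J_{1/2}$ genuinely requires exhibiting derivations (or can be bypassed by invoking the maximality of $\Spin(9)$ in $F_{4}$). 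Neither point is a fatal gap, but both must be closed for a complete proof.
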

Here, $\OO P^{2}$ is given by the following formula~\cite{Yokota1975}:
\[
\OO P^{2} = \{\, X \in M(3;\OO) \mid X^{2}=X,\ \tr X = 1 \,\}\;.
\]
There is a left $G$-action on $\OO P^{2}$ defined by $(g,X) \mapsto \frac{gX}{\tr (gX)}$. The orbit of $E_{11}$ under this $G$-action is the octonionic hyperbolic plane $\mathbf{H}_{\OO}^{2}=F_{4(-20)}/\Spin(9)$, and the boundary $\partial \mathbf{H}_{\OO}^{2}$ of $\mathbf{H}_{\OO}^{2}$ in $\OO P^{2}$ is given by 
\[
\partial \mathbf{H}_{\OO}^{2} = \{\, X \in \OO P^{2} \mid \tr (X \circ I''_{1}X)=0 \,\}\;.
\]
Since $\OO P^{2}$ consists of the matrices
\[
  X=
    \begin{pmatrix} 
      \xi_{1} & x_{3} & \overline{x}_{2} \\
      \overline{x}_{3} & \xi_{2} & x_{1} \\
      x_{2} & \overline{x}_{1} & \xi_{3} 
    \end{pmatrix}
  \in\J 
\]
such that
\begin{gather*}
  \begin{alignedat}{3}
    \xi_{2}\xi_{3} &= |x_{1}|^{2}\;,\quad \xi_{3}\xi_{1} &= |x_{2}|^{2}\;,\quad \xi_{1}\xi_{2} &= |x_{3}|^{2}\;, \\ 
    x_{2}x_{3} &= \xi_{1}\overline{x}_{1}\;,\quad x_{3}x_{1} &= \xi_{2}\overline{x}_{2}\;,\quad 
    x_{1}x_{2} &= \xi_{3}\overline{x}_{3}\;,
  \end{alignedat} \\
  \xi_{1} + \xi_{2} + \xi_{3} = 1\;,
\end{gather*}
a simple calculation shows that $\tr (X \circ I''_{1}X)=0$ is equivalent to $\xi_{1}=\frac{1}{2}$ for points $X\in\OO P^{2}$ as above, obtaining a diffeomorphism 
\[
\partial \mathbf{H}_{\OO}^{2}\approx\{\, (x_{2},x_{3}) \in \OO^{2} \mid |x_{2}|^{2} + |x_{3}|^{2} = 1/4 \,\}\;;
\]
in particular, $\partial \mathbf{H}_{\OO}^{2} \approx S^{15}$. Then $P$ is the isotropy group of the $G$-action on $\partial \mathbf{H}_{\OO}^{2}$ at $X_{0} = \left(\begin{smallmatrix} 1/2 & 0 & 1/2 \\ 0 & 0 & 0 \\ 1/2 & 0 & 1/2 \end{smallmatrix}\right)$.

We determine the sphere bundle~\eqref{eq:Kseq} in this case. Let $K_{G}$ denote the isotropy group of the $F_{4}$-action at $E_{11}= \left(\begin{smallmatrix} 1 & 0 & 0 \\ 0 & 0 & 0 \\ 0 & 0 & 0 \end{smallmatrix}\right)$, which is a maximal compact subgroup of $G$ isomorphic to $\Spin(9)$ by Lemma~\ref{lem:isot}. A maximal compact subgroup $K_{P}$ of $P$ is given by $K_{P} = K_{G} \cap P$. Since $X_{0} + \frac{1}{2}E_{33} = (X_{0} - \frac{1}{2}E_{11}) \circ (X_{0} - \frac{1}{2}E_{11})$,  we get, for any $\sigma \in K_{P}$, 
\begin{multline*} \textstyle
\sigma (X_{0} + \frac{1}{2}E_{33}) = \sigma \left( (X_{0} - \frac{1}{2}E_{11}) \circ (X_{0} - \frac{1}{2}E_{11}) \right) = \\
\textstyle \sigma (X_{0} - \frac{1}{2}E_{11}) \circ \sigma (X_{0} - \frac{1}{2}E_{11}) = (X_{0} - \frac{1}{2}E_{11}) \circ (X_{0} - \frac{1}{2}E_{11}) = X_{0} + \frac{1}{2}E_{33},
\end{multline*}
which implies that $\sigma$ fixes $E_{33}$. Since the $F_{4}$-action on $\J$ fixes the identity matrix~\cite[Lemma~2.2.4]{Yokota2009}, \cite[Lemma~2.3-(1)]{Yokota1975}, $K_{P}$ is equal to the isotropy group of the $\Spin(9)$-action on $\J$ at $\left(\begin{smallmatrix} 0 & 0 & 0 \\ 0 & 0 & 1 \\ 0 & 1 & 0 \end{smallmatrix}\right)$, which is isomorphic to $\Spin(7)$~\cite[Proof of Theorem~2.7.5]{Yokota2009}, \cite[Remark~6.3]{Yokota1975}. Thus the sphere bundle~\eqref{eq:Kseq} is
\begin{equation*}
\xymatrix{ S^{15} \approx \Spin(9)/\Spin(7) \ar[r] & F_{4}/\Spin(7) \ar[r]^{\zeta_{F_{4}}} & F_{4}/\Spin(9)\;.}
\end{equation*}
We will show the following.

\begin{lemma}\label{lem:sb}
$\zeta_{F_{4}}$ is diffeomorphic to the unit tangent sphere bundle of $F_{4}/\Spin(9)$.
\end{lemma}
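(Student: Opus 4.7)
The plan is to present both bundles as $F_{4}$-equivariant $S^{15}$-bundles over $F_{4}/\Spin(9)$ and check that their fiber isotropies agree up to conjugacy in $\Spin(9)$.

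For the unit tangent sphere bundle of $F_{4}/\Spin(9)$: the base $F_{4}/\Spin(9)$ is the octonionic projective plane $\OO P^{2}$, a rank-one compact Riemannian symmetric space, and the isotropy representation of $K_{G}=\Spin(9)$ on the tangent space $T_{E_{22}}(F_{4}/\Spin(9))\cong\f_{4}/\spin(9)\cong\RR^{16}$ is the real $16$-dimensional spin representation (this is the standard identification for the symmetric pair $(\f_{4},\spin(9))$). Since $\Spin(9)$ acts transitively on $S^{15}\subset\RR^{16}$ via the spin representation, with stabilizer $H\cong\Spin(7)$, the unit tangent sphere bundle of $F_{4}/\Spin(9)$ is $F_{4}\times_{\Spin(9)}S^{15}\cong F_{4}/H$ as an $F_{4}$-equivariant $S^{15}$-bundle over $F_{4}/\Spin(9)$.

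It remains to verify that $K_{P}$ is conjugate to $H$ inside $\Spin(9)$. The Iwasawa decomposition of $G=F_{4(-20)}$ yields a $K_{G}$-equivariant diffeomorphism $K_{G}/K_{P}\to G/P$; combined with the identification $G/P\approx\partial\mathbf{H}_{\OO}^{2}\approx S^{15}$ established earlier in this section, this gives a transitive action of $\Spin(9)=K_{G}$ on $S^{15}$ with isotropy $K_{P}$. By the classical classification of transitive actions of compact connected Lie groups on spheres, the only such action of $\Spin(9)$ on $S^{15}$ up to conjugation is the spin action, so $K_{P}$ is conjugate to $H$ in $\Spin(9)$; consequently $\zeta_{F_{4}}=F_{4}/K_{P}\to F_{4}/\Spin(9)$ coincides with the unit tangent sphere bundle as an $F_{4}$-equivariant bundle. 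The two non-trivial inputs — the identification of the isotropy representation of $\OO P^{2}$ with the spin representation, and the uniqueness of the transitive $\Spin(9)$-action on $S^{15}$ — are both classical, so no real obstacle is expected.
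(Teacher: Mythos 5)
Your proof is correct, but it takes a genuinely different route from the paper's. The paper argues by explicit computation in the Jordan algebra model: having already identified $K_{P}$ as the isotropy group of the $\Spin(9)$-action on $\J$ at the concrete matrix $\left(\begin{smallmatrix} 0 & 0 & 1 \\ 0 & 0 & 0 \\ 1 & 0 & 0 \end{smallmatrix}\right)$, it proves Lemma~\ref{lem:tang}, which exhibits the tangent space of $\OO P^{2}=F_{4}/\Spin(9)$ inside $\J$ as the $\sigma$-antiinvariant part $(\J)_{-\sigma}$ for the involution $\sigma=\operatorname{diag}(1,-1,-1)$ with $\Spin(9)=(F_{4})^{\sigma}$; the lemma then follows because $K_{P}$ is \emph{literally} the stabilizer of a tangent vector, with no conjugation or classification theorem needed. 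You instead invoke two classical structural facts: that the isotropy representation of the symmetric pair $(\f_{4},\spin(9))$ is the $16$-dimensional spin representation, and that the transitive $\Spin(9)$-action on $S^{15}$ is unique up to equivalence, so that the isotropy $K_{P}$ of the transitive action $K_{G}\curvearrowright G/P\approx S^{15}$ (coming from $G=K_{G}P$) must be conjugate to the spin stabilizer $\Spin(7)$. This is cleaner and avoids the matrix computations, though two points deserve a word: (a) the classification identifies actions only up to an automorphism of the acting group, so you should note that $\operatorname{Out}(\Spin(9))$ is trivial (the Dynkin diagram $B_{4}$ has no symmetries), whence ``equivalent'' does give conjugacy of isotropy groups by an \emph{inner} automorphism, which is what you need to match the two bundles over $F_{4}/\Spin(9)$; and (b) the explicit description of $T_{E_{11}}\mathcal{K}$ that the paper obtains is not wasted effort — it is reused later (e.g.\ in the norm computations of Section~\ref{sec:BTHhom} via the decomposition $\so(9)_{\CC}=\so(8)_{\CC}\oplus\m_{1}$), so your shortcut saves work here but would have to be supplemented there.
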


The orbit $\mathcal{K}$ of $E_{11}$ under the $F_{4}$-action on $\J$ is $\OO P^{2} = F_{4}/\Spin(9)$ by Lemma~\ref{lem:isot}. Let us describe the tangent space $T_{E_{11}} \mathcal{K}$ of $\mathcal{K}$ at $E_{11}$.

\begin{lemma}\label{lem:tang}
We have 
\begin{equation}\label{eq:tangsp}
T_{E_{11}} \mathcal{K} = \left\{\, \begin{pmatrix} 0 & x_{3} & \overline{x}_{2} \\
 \overline{x}_{3} & 0 & 0 \\
 x_{2} & 0 & 0 \end{pmatrix} \in M(3;\OO) \; \Bigg| \; x_{2}, x_{3} \in \OO \,\right\}\;.
\end{equation}
\end{lemma}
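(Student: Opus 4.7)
The plan is to read $T_{E_{11}}\mathcal{K}$ off as the zero locus of the linearizations of the defining equations of $\OO P^{2}$ at $E_{11}$, and then conclude by a dimension count. Write a general element of $\J$ in the standard form
\[
Y=\begin{pmatrix} \eta_{1} & y_{3} & \overline{y}_{2} \\ \overline{y}_{3} & \eta_{2} & y_{1} \\ y_{2} & \overline{y}_{1} & \eta_{3} \end{pmatrix}\;,
\]
consider the straight line $X(t)=E_{11}+tY$ in $\J$, and expand each relation listed just before the lemma to first order in $t$.

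At $E_{11}$ we have $\xi_{1}(0)=1$, $\xi_{2}(0)=\xi_{3}(0)=0$ and $x_{i}(0)=0$, so three of the quadratic relations, namely $\xi_{2}\xi_{3}=|x_{1}|^{2}$, $x_{3}x_{1}=\xi_{2}\overline{x}_{2}$ and $x_{1}x_{2}=\xi_{3}\overline{x}_{3}$, become $O(t^{2})$ on both sides and impose no first-order constraint. The remaining quadratic relations do contribute: $\xi_{3}\xi_{1}=|x_{2}|^{2}$ forces $\eta_{3}=0$, $\xi_{1}\xi_{2}=|x_{3}|^{2}$ forces $\eta_{2}=0$, and $x_{2}x_{3}=\xi_{1}\overline{x}_{1}$ (whose left-hand side is $O(t^{2})$ while whose right-hand side has the $O(t)$ term $\overline{y}_{1}\,t$) forces $y_{1}=0$. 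Finally the affine normalization $\xi_{1}+\xi_{2}+\xi_{3}=1$ yields $\eta_{1}+\eta_{2}+\eta_{3}=0$, hence $\eta_{1}=0$. Together these show that $T_{E_{11}}\mathcal{K}$ is contained in the subspace on the right-hand side of \eqref{eq:tangsp}.

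For the reverse inclusion I would invoke a dimension count: the subspace on the right-hand side of \eqref{eq:tangsp} has real dimension $8+8=16$, which by Lemma~\ref{lem:isot} agrees with $\dim_{\RR}\OO P^{2}=\dim_{\RR}\mathcal{K}$. Since $\mathcal{K}$ is a smooth embedded submanifold of $\J$, equality of the two subspaces follows. No genuine obstacle is expected; the only care required is tracking which of the six quadratic relations actually produce linear constraints at $E_{11}$, which is the routine bookkeeping carried out above.
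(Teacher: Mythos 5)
Your proof is correct, but it takes a genuinely different route from the paper. The paper argues Lie-theoretically: it considers the infinitesimal $\f_{4}$-action $\rho$ at $E_{11}$, introduces the involution $\sigma(X)=\sigma X\sigma$ with $\sigma=\operatorname{diag}(1,-1,-1)$, uses the identification $\Spin(9)=(F_{4})^{\sigma}$ (cited from Yokota) to kill $\rho((\f_{4})_{\sigma})$, shows that $\rho((\f_{4})_{-\sigma})$ lands in the anti-invariant part $(\J)_{-\sigma}$ of $\J$, and then concludes by the same dimension count you use, since $(\J)_{-\sigma}$ is visibly the right-hand side of~\eqref{eq:tangsp}. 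You instead linearize the explicit quadratic and affine relations defining $\OO P^{2}\subset\J$ at $E_{11}$; your bookkeeping of which relations contribute first-order constraints is accurate (only $\xi_{3}\xi_{1}=|x_{2}|^{2}$, $\xi_{1}\xi_{2}=|x_{3}|^{2}$, $x_{2}x_{3}=\xi_{1}\overline{x}_{1}$ and the trace condition do, yielding $\eta_{2}=\eta_{3}=0$, $y_{1}=0$, $\eta_{1}=0$), and the closing dimension count $16=\dim F_{4}/\Spin(9)$ is the same as the paper's. Your argument is more elementary and self-contained in that it only uses the defining equations already displayed in Section~\ref{sec:F4}, whereas the paper's argument additionally exhibits $T_{E_{11}}\mathcal{K}$ as the $(-1)$-eigenspace $(\J)_{-\sigma}$ of the involution, a structural identification that the text immediately reuses when it describes $K_{P}$ as the isotropy group of the adjoint $K_{G}$-action on $(\J)_{-\sigma}$; since the two subspaces coincide as subsets of $\J$, your version serves the downstream application equally well.
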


\begin{proof}
Let $\mathfrak{f}_{4} = \Lie(F_{4})$. Consider the infinitesimal $\mathfrak{f}_{4}$-action $\rho : \mathfrak{f}_{4} \to T_{E_{11}} \mathcal{K}$ at $E_{11}$. We get $\rho(\mathfrak{f}_{4}) = T_{E_{11}} \mathcal{K}$ by definition. Let $\sigma = \left(\begin{smallmatrix} 1 & 0 & 0 \\ 0 & -1 & 0 \\ 0 & 0 & -1 \end{smallmatrix}\right)$. Since $\sigma^{2} = 1$, we obtain an involution $\sigma : \mathfrak{f}_{4} \to \mathfrak{f}_{4}$ given by $\sigma(X) = \sigma X \sigma$. Then we get a decomposition $\mathfrak{f}_{4} = (\mathfrak{f}_{4})_{\sigma} \oplus (\mathfrak{f}_{4})_{-\sigma}$, where $(\mathfrak{f}_{4})_{\sigma}$ is the $\sigma$-invariant part and $(\mathfrak{f}_{4})_{-\sigma}$ is the $\sigma$-antiinvariant part. By~\cite[Theorem~2.9.1]{Yokota2009}, \cite[Theorem~2.4.4]{Yokota1990}, we get $\Spin(9) = (F_{4})^{\sigma}$. By Lemma~\ref{lem:isot}, it follows that $\
 \rho((\mathfrak{f}_{4})_{\sigma}) = 0$. On the other hand, for $X\in(\mathfrak{f}_{4})_{-\sigma}$, we get $\sigma(X)E_{11}=\sigma X \sigma E_{11} = -E_{11}$. Thus $\rho(\mathfrak{f}_{4}) = T_{E_{11}} \mathcal{K}$ is contained in the $\sigma$-antiinvariant part $(\J)_{-\sigma}$ of $\J$. Since it is easy to see that $(\J)_{-\sigma}$ is equal to the right hand side of~\eqref{eq:tangsp} and $\dim (\J)_{-\sigma} = \dim \mathcal{K}$, we get the equality~\eqref{eq:tangsp}.
\end{proof}

We saw that $K_{P}$ is the isotropy group of the adjoint $K_{G}$-action on
\[
  (\J)_{-\sigma} = \left\{\, 
    \begin{pmatrix} 
      0 & x_{3} & \overline{x}_{2} \\ 
      \overline{x}_{3} & 0 & 0 \\ 
      x_{2} & 0 & 0 
    \end{pmatrix} 
  \, \Bigg| \; x_{2}, x_{3} \in \OO \,\right\}
\]
at $\left(\begin{smallmatrix} 0 & 0 & 1 \\ 0 & 0 & 0 \\ 1 & 0 & 0 \end{smallmatrix}\right)$. Thus Lemma~\ref{lem:tang} implies that $K_{P}$ is the isotropy group of the $K_{G}$-action on $T_{E_{11}} \mathcal{K}$. This proves Lemma~\ref{lem:sb}. Hence, according to~\cite{Hirsch1949} or~\cite{Yokota1955}, the Euler class of $\zeta_{F_{4}}$ is equal to $3$ times the fundamental class of $\OO P^{2}$ by the cell decomposition of $\mathbb{O}P^{2}$. So the assumption of Theorem~\ref{thm:finite} is satisfied in this case. 

\subsection{A remark on the center}\label{sec:center}
The $G$-actions on $G/P$ are not effective for some of the pairs $(G,G/P)$ considered in Corollary~\ref{cor:main}. In fact, in the case where $G$ is either $(\SU(n+1,1),S^{2n+1}_{\infty})$ or $(\Sp(n+1,1),S^{4n+3}_{\infty})$ for even $n$, the stabilizers of the $G$-action on $G/P$ are given by $\{ \, cI_{n+2}\mid c \in \CC^{\times}, c^{n+2} = 1 \}$ and $\{\pm I_{n+2}\}$, respectively, where they are equal to the centers $\Z(G)$ of $G$. In the other cases considered in Corollary~\ref{cor:main}, the $G$-actions on $G/P$ are effective. The quotient of $\SU(n+1,1)$ and $\Sp(n+1,1)$ by the centers are denoted by $\PSU(n+1,1)$ and $\PSp(n+1,1)$.

The finiteness of $\Sigma(\PSU(n+1,1),S^{2n+1}_{\infty})$ and $\Sigma(\PSp(n+1,1),S^{4n+3}_{\infty})$ is proved like in the cases $\Sigma(\SU(n+1,1),S^{2n+1}_{\infty})$ and $\Sigma(\Sp(n+1,1),S^{4n+3}_{\infty})$ of Theorem~\ref{thm:finite}. We only need to notice the following two facts. By the discreteness of $\Z(G)$, there is no difference when we consider their Lie algebras. Since $\Z(G)$ is contained in $\Z(\Gc)$ and $K_{P}$ in both cases, the canonical embedding $G/K_{P} \to \Gc/(K_{P})_{\CC}$ is not changed by taking quotient by $\Z(G)$.

\section{Bott-Thurston-Heitsch type formulas}\label{sec:ex}
\addtocontents{toc}{\protect\setcounter{tocdepth}{2}}

\subsection{Pittie's Bott connections}\label{sec:root}

The purpose of Section~\ref{sec:ex} is to prove Bott-Thurston-Heitsch type formulas (Theorem~\ref{thm:BT}). Section~\ref{sec:root} is devoted to recall the Pittie's construction of a Bott connection for the $P/K_{P}$-coset foliation $\F_{P}$ of $G/K_{P}$, where $G$ is semisimple and $P$ is parabolic. It will be used to calculate the Godbillon-Vey class of $\F_{P}$ in Lie algebra cohomology in Section~\ref{sec:Liecoh}. Since $(G,G/P)$-foliations are classified by $\F_{P}$ in the sense of Proposition~\ref{prop:h2}-\eqref{i: hol and widehat dev}, this computation can be applied to $(G,G/P)$-foliations (Section~\ref{sec:BT}). By using the computation in Section~\ref{sec:Liecoh}, we will also show that the Godbillon-Vey class is the essentially unique nontrivial secondary characteristic class for $(G,G/P)$-foliations in Section~\ref{sec:GVspan}.

Take a Cartan subgroup $H$ and a Borel subgroup $B$ of $G$ which contains $H$. By a classical result of Borel and Tits~\cite[Section 4]{BorelTits1965}, any parabolic subgroup of $G$ is conjugated to a standard one, which contains $B$. Thus we may assume that $P$ contains $B$. First we recall the decompositions of the semisimple $\gc$ and parabolic $\pc$ in this case. Let $\h=\Lie(H)$ and
\[
\gc = \h \oplus \bigoplus_{\alpha \in \Upsilon} (\gc)_{\alpha}
\]
be the root-space decomposition of $\gc$, where $\Upsilon$ is the set of roots. Fix a set $\Pi$ of simple roots which additively generate $\Upsilon$, and let $\Upsilon^{+}$ be the set of corresponding positive roots. Since $\pc$ contains $\Lie(B)=\bigoplus_{\alpha \in \Upsilon^{+}}(\gc)_{\alpha}$, there exists a subset $\Phi$ of $\Upsilon^{+}$ such that 
\begin{equation}\label{eq:Levi}
\pc =  \bigoplus_{\alpha \in -\Phi} (\gc)_{\alpha} \oplus \h \oplus \bigoplus_{\alpha \in \Upsilon^{+}} (\gc)_{\alpha}\;.
\end{equation}
Thus, with 
\[
\r = \hspace{-6pt} \bigoplus_{\alpha \in \Phi \cup (-\Phi)} (\gc)_{\alpha} \oplus \h\;, \quad \u = \hspace{-6pt} \bigoplus_{\alpha \in \Upsilon^{+} \setminus \Phi} (\gc)_{\alpha}\;, \quad \v = \hspace{-6pt} \bigoplus_{\alpha \in \Upsilon^{+} \setminus \Phi} (\gc)_{-\alpha}\;,
\]
we get a decomposition
\begin{equation}\label{eq:Lang}
\gc = \pc + \v = \r + \u + \v\;.
\end{equation}
Here, $\r$ is a reductive subalgebra of $\gc$ called the {\em Levi part\/} of $\pc$. Note that $\u$ and $\v$ are $\ad \r$-invariant and nilpotent. 

Let $\widehat{\F}_{\Pc}$ the right $\Pc$-coset foliation of $\Gc$. Left invariant complex connections on the normal bundle $\nu \widehat{\F}_{\Pc}$ of $\widehat{\F}_{\Pc}$ are in one-to-one correspondence with $\CC$-linear maps $\gc \to \gl(\gc/\pc;\CC)$. Let $\sigma : \gc \to \pc$ be the projection with respect to the decomposition~\eqref{eq:Lang}. Consider the connection $\wt{\nabla}^{\CC}$ on $\nu \widehat{\F}_{\Pc}$ determined by
\begin{equation}\label{eq:Pittieconn}
\wt{\nabla}^{\CC}_{X} Y = \pi \big([({\id}_{\g} - \sigma \pi)X, \sigma(Y)]\big)
\end{equation}
for $X\in\gc$ and $Y\in\gc/\pc$. The connection form $\Theta$ of $\wt{\nabla}^{\CC}$ is regarded as an element of $\gc^{*} \otimes \gl(\gc/\pc;\CC)$. Pittie observed that, if we identify $\gc/\pc$ to $\v$ via the canonical projection, then the connection form $\Theta$ of the connection given by~\eqref{eq:Pittieconn} is the Maurer-Cartan form of the adjoint action of $\pc$ on $\v$, which is given by
\begin{equation}\label{eq:Pittie1}
\theta_{ij}(X) = \eta_{i}([X,Y_{j}])  
\end{equation}
for $1 \leq i \leq q$, $1 \leq j \leq q$, where $\{Y_{j}\}$ is a basis of $\v$ and $\{\eta_{j}\}$ is the basis of $\v^{*}$ dual to $\{Y_{j}\}$. Let $p_{\u^{*} \wedge \v^{*}}$ denote the composite 
\[
\xymatrix{\bigwedge^{2} \gc^{*} = \bigwedge^{2} \pc^{*} \oplus \pc^{*} \wedge \v^{*} \oplus \bigwedge^{2} \v^{*} \ar[r] & \pc^{*} \wedge \v^{*} \ar[r] & \u^{*} \wedge \v^{*}} 
\]
of the projections with respect to the decompositions~\eqref{eq:Levi} and~\eqref{eq:Lang}. Let us denote the composite 
\begin{equation}\label{eq:widehatd}
\xymatrix{\gc^{*} \ar[r]^<<<<<{d} & \bigwedge^{2} \gc^{*} \ar[r]^{p_{\u^{*} \wedge \v^{*}}} & \u^{*} \wedge \v^{*}} 
\end{equation}
by $\hat{d}$. The curvature form $\Omega$ of $\Theta$ is the element of $\bigwedge^{2} \gc^{*} \otimes \gl(\gc/\pc;\CC)$ given by $\Omega = d\Theta - \Theta \wedge \Theta$. We will use the following observation of Pittie. 

\begin{prop}[{\cite[Proposition~2.1]{Pittie1979}}]\label{prop:PittieOmega}
$\hat{d}\Theta = \Omega$.
\end{prop}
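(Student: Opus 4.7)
The plan is to exploit the decomposition $\bigwedge^{2}\gc^{*}=\bigwedge^{2}\pc^{*}\oplus(\r^{*}\wedge\v^{*})\oplus(\u^{*}\wedge\v^{*})\oplus\bigwedge^{2}\v^{*}$ induced by~\eqref{eq:Lang}, and to prove two complementary facts: first, that $\Theta\wedge\Theta$ has no $\u^{*}\wedge\v^{*}$-component, so that applying $p_{\u^{*}\wedge\v^{*}}$ to the defining equation $\Omega=d\Theta-\Theta\wedge\Theta$ produces $p_{\u^{*}\wedge\v^{*}}(\Omega)=\hat{d}\Theta$; and second, that $\Omega$ itself already lies in $\u^{*}\wedge\v^{*}\otimes\gl(\gc/\pc;\CC)$, so that $p_{\u^{*}\wedge\v^{*}}(\Omega)=\Omega$. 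Combining these two facts gives the identity $\hat{d}\Theta=\Omega$.

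The key observation for the first fact is that $\Theta$, a priori an element of $\gc^{*}\otimes\gl(\gc/\pc;\CC)$, actually lies in $\pc^{*}\otimes\gl(\gc/\pc;\CC)$. This is immediate from~\eqref{eq:Pittieconn}: for $X\in\v$ one has $(\id_{\gc}-\sigma\pi)X=0$, hence $\wt{\nabla}_{X}=0$ and $\theta_{ij}(X)=0$. Consequently $\Theta\wedge\Theta$ lies in $\bigwedge^{2}\pc^{*}\otimes\gl(\gc/\pc;\CC)$, which has zero intersection with $\u^{*}\wedge\v^{*}$ inside $\bigwedge^{2}\gc^{*}$; this yields $p_{\u^{*}\wedge\v^{*}}(\Theta\wedge\Theta)=0$, as required.

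For the second fact, I would verify the vanishing of $\Omega(A,B)$ on each of the three summands complementary to $\u^{*}\wedge\v^{*}$. When $A,B\in\pc$, Bott vanishing for the Bott connection $\wt{\nabla}$ on $\nu\widehat{\F}_{\Pc}$ gives $\Omega(A,B)=0$. When $A,B\in\v$, or when $A\in\r$ and $B\in\v$, the quadratic term $(\Theta\wedge\Theta)(A,B)$ vanishes because $\Theta$ annihilates $\v$, and left-invariance of $\theta_{ij}$ reduces $d\theta_{ij}(A,B)$ to $-\theta_{ij}([A,B])$; since $\v$ is both a nilpotent subalgebra and $\ad\r$-invariant (facts noted in the excerpt immediately after~\eqref{eq:Lang}), $[A,B]\in\v$, so $\theta_{ij}([A,B])=0$ and hence $\Omega(A,B)=0$.

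The entire argument hinges on the single elementary observation that the Pittie connection form is supported on $\pc$; everything else is bookkeeping with the splitting~\eqref{eq:Lang} and structural facts already recorded in the excerpt. I do not anticipate a serious obstacle; the only point requiring any care is confirming that the case analysis really covers all of $\bigwedge^{2}\pc^{*}\oplus(\r^{*}\wedge\v^{*})\oplus\bigwedge^{2}\v^{*}$, and that the two vanishing statements combine in the right direction to identify $\Omega$ with its $\u^{*}\wedge\v^{*}$-projection.
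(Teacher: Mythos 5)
Your proof is correct and is essentially the argument the paper leaves implicit: the paper offers no proof of Proposition~\ref{prop:PittieOmega} beyond citing Pittie and the remark that it is ``a consequence of the $(\ad \r)$-invariance of $\u$ and $\v$'', and your case analysis supplies precisely those details, with the $\ad\r$-invariance of $\v$ doing the work on $\r^{*}\wedge\v^{*}$, the fact that $\v$ is a subalgebra on $\bigwedge^{2}\v^{*}$, and the leafwise flatness of Bott connections (the lemma underlying Bott vanishing, which is what you actually need rather than Bott vanishing itself) on $\bigwedge^{2}\pc^{*}$. The one point worth making explicit is $\Theta|_{\v}=0$, which you correctly extract from~\eqref{eq:Pittieconn} and which is confirmed by the paper's later explicit matrices for $\Theta$; it is what makes both halves of your argument run.
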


This formula is a consequence of the $(\ad \r)$-invariance of $\u$ and $\v$.

Let $\Upsilon^{+} \setminus \Phi = \{\alpha_{i}\}_{1 \leq i \leq q}$. We will also use the following direct consequence of the formula~\eqref{eq:Pittie1}, observed by Pittie. 

\begin{prop}[{\cite[Theorem~2.3]{Pittie1979}}]\label{prop:Pittieh1}
$\Delta_{\F_{P}}(h_{1}) = -\frac{1}{2\pi}\sum_{i = 1}^{q} \alpha_{i}$.
\end{prop}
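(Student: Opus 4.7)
The plan is to compute $\Delta_{\F_{P}}(h_{1})$ directly from the explicit Pittie connection form~\eqref{eq:Pittie1}. Recall that under the Chern--Weil construction the generator $h_{1}\in WO_{q}$ transgresses $c_{1}$, so in the normalization of the paper (where $c_{1}=\frac{1}{2\pi}\tr$ by the formula $\det(I_{q}+\frac{t}{2\pi}A)=\sum c_{j}(A)t^{j}$) the form $\Delta_{\F_{P}}(h_{1})$ is, up to a sign coming from the transgression convention, equal to $\frac{1}{2\pi}\tr\Theta$, viewed as a left-invariant $1$-form on $G/K_{P}$, i.e.\ as an element of $\left(\bigwedge\g^{*}\right)_{K_{P}}\otimes\CC\subset\gc^{*}$. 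So the statement reduces to computing the trace of $\Theta$ as an element of $\gc^{*}$.

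First I would choose the basis $\{Y_{j}\}_{1\leq j\leq q}$ of $\v$ compatibly with the root-space decomposition: put $Y_{j}\in(\gc)_{-\alpha_{j}}$ (nonzero) for each $\alpha_{j}\in\Upsilon^{+}\setminus\Phi$, and let $\{\eta_{j}\}$ be the dual basis of $\v^{*}$. Then, using Pittie's formula~\eqref{eq:Pittie1}, for $H\in\h$ we have $[H,Y_{j}]=-\alpha_{j}(H)Y_{j}$, hence $\theta_{jj}(H)=\eta_{j}([H,Y_{j}])=-\alpha_{j}(H)$. For $X\in(\gc)_{\beta}$ with $\beta\neq 0$ the bracket $[X,Y_{j}]$ lies in $(\gc)_{\beta-\alpha_{j}}$, which has trivial intersection with $\CC Y_{j}=(\gc)_{-\alpha_{j}}$; thus $\eta_{j}([X,Y_{j}])=0$. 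Summing the diagonal gives
\begin{equation*}
\tr\Theta=\sum_{j=1}^{q}\theta_{jj}=-\sum_{j=1}^{q}\alpha_{j}
\end{equation*}
in $\gc^{*}$, where each $\alpha_{j}\in\h^{*}$ is extended by zero on the root spaces (which is the convention implicit in writing $\sum\alpha_{i}\in\gc^{*}$).

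Second, I would verify that this left-invariant $1$-form on $\Gc/\Pc$ descends to the one on $G/K_{P}$ representing $\Delta_{\F_{P}}(h_{1})$; this is automatic from the commutative diagram~\eqref{eq:ext1} together with the construction of $\nabla$ as the restriction of Pittie's $\wt\nabla$ (taking any $\ad K_{P}$-equivariant section $\sigma$ of $\g\to\g/\p$, which exists by compactness of $K_{P}$, so that the connection is well-defined on $\nu\F_{P}$). Combining with the normalization $h_{1}\mapsto\frac{1}{2\pi}\tr\Theta$ then yields
\begin{equation*}
\Delta_{\F_{P}}(h_{1})=-\frac{1}{2\pi}\sum_{i=1}^{q}\alpha_{i},
\end{equation*}
as claimed.

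The only real obstacle is pinning down the sign and factor in the normalization $h_{1}\leftrightarrow\frac{1}{2\pi}\tr\Theta$; this is a bookkeeping issue about the transgression convention in the Weil algebra $W(\gl(q;\RR))_{O(q)}$ as set up in Section~\ref{sec:secintro}, rather than any substantive computation, and can be fixed by comparing with the explicit formula $\GV(\F)=[\eta\wedge d\eta]=(2\pi)^{q+1}\Delta_{\F}(h_{1}c_{1}^{q})$ recalled after the Frobenius example. Everything else is a one-line application of the root-space action on $\v$.
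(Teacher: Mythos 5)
Your proposal is correct and follows exactly the route the paper intends: the paper offers no written proof beyond remarking that the proposition is "a direct consequence of the formula~\eqref{eq:Pittie1}," and your argument is precisely that consequence, namely that with a root-vector basis of $\v$ the diagonal entries of the Maurer--Cartan form satisfy $\theta_{jj}(H)=-\alpha_{j}(H)$ on $\h$ and vanish on every root space, so $\tr\Theta=-\sum_{j}\alpha_{j}$. Your normalization $\Delta_{\F_{P}}(h_{1})=\frac{1}{2\pi}\tr\Theta$ is also the one the paper uses explicitly in its $\SL(q+1;\RR)$ computation, so the sign you pin down is consistent with the text.
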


Pittie observed that $-\Delta_{\F_{P}}(c_{1})$ is a K\"{a}hler form of $\Gc/\Pc$ under the identification of $\bigwedge\u^{*} \otimes \bigwedge\v^{*}$ with the left invariant de Rham complex of $\Gc/\Pc$ in a standard way. By using the Lefschetz decomposition of the cohomology of K\"{a}hler manifolds, Pittie showed the following.

\begin{thm}[{\cite[Theorem~3.1]{Pittie1979}}]\label{thm:Pittie}
$\Delta_{\F_{P}}(H^{\bt}(WO_{q}))$ is linearly spanned by the Pontryagin classes and 
$\big\{\, \Delta_{\F_{P}}(h_{1}h_{I}c_{1}^{q}) \, \mid \, I \subseteq \{3,5, \ldots, [q] \} \,\big\}$.
\end{thm}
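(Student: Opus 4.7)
The plan is to combine the Vey basis of $H^\bt(WO_q)$ with Pittie's observation that $\omega := -\Delta_{\F_P}(c_1)$ is a Kähler form on $\Gc/\Pc$, and exploit Hard Lefschetz on Kähler cohomology. By the Vey basis, it suffices to show that each exotic class $\Delta_{\F_P}(h_I c_J)$ from~\eqref{eq:Vey2} lies in the linear span of Pontryagin classes and the $\Delta_{\F_P}(h_1 h_{I'} c_1^q)$ with $I' \subseteq \{3, 5, \ldots, [q]\}$.

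First, I would pin down the Kähler sub-complex. By Proposition~\ref{prop:PittieOmega}, $\Omega = \hat d \Theta$ has entries in $\u^* \wedge \v^*$; hence each Chern form $\Delta_{\F_P}(c_i)$ lies in $\bigwedge^i \u^* \otimes \bigwedge^i \v^*$, and under the standard identification of $\bigwedge \u^* \otimes \bigwedge \v^*$ with the left-invariant de Rham complex of the compact complex Kähler homogeneous manifold $\Gc/\Pc$ of complex dimension $q$, it is of Hodge type $(i,i)$. Moreover $\omega$ is the Kähler $(1,1)$-form and $\omega^q$ is a nonzero multiple of the volume form.

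Second, I would apply the Hard Lefschetz theorem and the primitive decomposition on $H^\bt(\Gc/\Pc; \CC)$. Each Chern class splits as $\Delta_{\F_P}(c_i) = \lambda_i \omega^i + \pi_i$ with $\pi_i$ Lefschetz-primitive of type $(i,i)$. Expanding $\Delta_{\F_P}(c_J)$ yields a main term $\mu_J c_1^d$ (where $d = j_1 + \cdots + j_l$) plus contributions involving the primitive $\pi_i$'s. Using Proposition~\ref{prop:Pittieh1} and its extensions for the factor $\Delta_{\F_P}(h_I)$---which lies in a distinct Weil-subalgebra part---together with the Vey conditions $i_1 + |J| \geq q+1$ and $|J| \leq q$, I would argue by degree count and bidegree analysis that the primitive contributions either push the result beyond the real dimension of $\Gc/\Pc$ (and so vanish) or land in a Hodge-forbidden bidegree. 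This reduces $\Delta_{\F_P}(h_I c_J)$ to a scalar multiple of $\Delta_{\F_P}(h_I c_1^d)$. The Vey inequality then forces the non-trivial residual classes into the top power $d = q$, and the condition $i_1 \leq j$ for every odd $j$---now reduced to $i_1 \leq 1$---forces $i_1 = 1$, so $I = \{1\} \cup I'$ with $I' \subseteq \{3, 5, \ldots, [q]\}$, yielding exactly the claimed spanning set.

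The main obstacle is the vanishing of the Lefschetz-primitive contributions in the second step, i.e., proving that for every primitive summand $\pi$ of $\Delta_{\F_P}(c_J)$ the product $\Delta_{\F_P}(h_I) \wedge \pi$ vanishes in $H^\bt(\g, K_P)$. This calls for Kostant's description of $H^\bt(\u)$ as a $K_P$-module, which pins down precisely which bidegrees of primitive cohomology of $\Gc/\Pc$ occur and rules out the unwanted pairings.
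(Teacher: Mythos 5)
The first thing to note is that the paper contains no proof of this statement: it is imported verbatim from Pittie's paper, and the only indication of method given is the preceding sentence, namely that $-\Delta_{\F_{P}}(c_{1})$ is a K\"ahler form on $\Gc/\Pc$ and that Pittie argued via the Lefschetz decomposition. Your overall strategy therefore coincides with the one the paper attributes to Pittie, and the easy half of it is sound: since $\dim_{\CC}\u=\dim_{\CC}\v=q$, the space $\bigwedge^{q}\u^{*}\otimes\bigwedge^{q}\v^{*}$ is one-dimensional, so for $|J|=q$ one has $\Delta_{\F_{P}}(c_{J})=\lambda_{J}\,\Delta_{\F_{P}}(c_{1}^{q})$ already at the level of forms, with no primitive correction needed; by multiplicativity of $\Delta_{\F_{P}}$ this disposes of every Vey class $h_{1}h_{I'}c_{J}$ containing $h_{1}$, for which the Vey inequality forces $|J|=q$.

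The genuine gap is the step you yourself flag as the ``main obstacle'': showing that for the remaining Vey classes $h_{I}c_{J}$ with $\min I\geq 3$ (hence possibly $|J|<q$) the primitive contributions die and the class falls into the asserted span. The two mechanisms you propose for this --- that $\Delta_{\F_{P}}(h_{I})\wedge\pi$ exceeds the real dimension of $\Gc/\Pc$, or sits in a Hodge-forbidden bidegree --- do not apply as stated, because that product is not a form on $\Gc/\Pc$. By Proposition~\ref{prop:Pittieh1}, $\Delta_{\F_{P}}(h_{1})$ lies in $\h^{*}$, and the transgression forms $\Delta_{\F_{P}}(h_{i})$ in general have components along the Levi directions $\r^{*}$ (the connection form $\Theta$ takes values in $\pc^{*}$, not in $\u^{*}\oplus\v^{*}$); the product therefore lives in $H^{\bt}(\g,K_{P})$, where neither the complex dimension of $\Gc/\Pc$ nor its Hodge bigrading imposes any constraint. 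Whatever vanishing is needed must be established inside $H^{\bt}(\g,K_{P})$ --- this is exactly where Kostant's theorem, or Pittie's actual argument, would have to do real work --- so as written the proposal assumes the substantive part of the theorem rather than proving it. A smaller imprecision: the Lefschetz decomposition of an $(i,i)$-class is $\sum_{k}\omega^{k}\pi_{i-k}$ with each $\pi_{i-k}$ primitive of type $(i-k,i-k)$, not merely $\lambda_{i}\omega^{i}+\pi_{i}$.
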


\subsection{Computation in Lie algebra cohomology}\label{sec:Liecoh}

\subsubsection{The case $(G,G/P) = (\SL(q+1;\RR),S^{q})$}\label{sec:trproj}

In this case, $\pc$ and $\v$ are the subalgebras of $\gc = \sl( q+1 ;\CC)$ consisting of the matrices of the form
\[
  \begin{pmatrix} 
    * & * & \cdots & * \\
    0 & * & \cdots & * \\
    \vdots & \vdots & \ddots & \vdots \\
    0 & * & \cdots & * 
  \end{pmatrix}
  \quad\text{and}\quad
  \begin{pmatrix} 
    0 & 0 & \cdots & 0 \\
    * & 0 & \cdots & 0 \\
    \vdots & \vdots & \ddots & \vdots \\
    * & 0 & \cdots & 0 
  \end{pmatrix}\;,
\]
respectively. Let $E_{ij}$ be the element of $\gc$ with $1$ at the $(i,j)$-th entry and $0$ at the other entries. Let $E^{\vee}_{ij}$ be the dual of $E_{ij}$. In this case, $\{E_{1j}\}_{2 \leq j \leq  q+1} $ is a basis of $\v$. Let $\Theta = (\theta_{ij})_{2 \leq i,j \leq  q+1} $ be the matrix presentation of $\Theta$ with respect to $\{E_{1j}\}_{2 \leq j \leq  q+1} $. From $[E_{kh},E_{lj}] = \delta_{hl} E_{kj} - \delta_{jk} E_{lh}$ and $E^{\vee}_{ji}(E_{kh}) = \delta_{jk} \delta_{ih}$, we get
\[
\theta_{ij}(E_{kh}) = E^{\vee}_{1i}([E_{kh},E_{1j}]) = \delta_{h1} \delta_{1k} \delta_{ij} - \delta_{jk} \delta_{ih}\;.
\]
Then
	\[
		\Theta = (\theta_{ij})_{2 \leq i,j \leq  q+1}= 
			\begin{pmatrix} 
				E^{\vee}_{22} - E^{\vee}_{11} & E^{\vee}_{32}  & \cdots & E^{\vee}_{q+1\,  2} \\
                              	E^{\vee}_{23} & E^{\vee}_{33} - E^{\vee}_{11} & \cdots & E^{\vee}_{q+1\,  3} \\
                              	\vdots & \vdots & \ddots & \vdots \\
                              	E^{\vee}_{2\, q+1}  & E^{\vee}_{3\, q+1}  & \cdots & E^{\vee}_{q+1\, q+1}  - E^{\vee}_{11} 
         			\end{pmatrix}\;.
	\]
By observing that $\sum_{i=1}^{q+1 }  E^{\vee}_{ii} = 0$ on $\gc^{*}$, we get
\begin{gather*}
\Delta_{\F_{P}}(h_{1}) = \frac{1}{2\pi} \tr \Theta = \frac{1}{2\pi} \sum_{i=2}^{q+1}  (E^{\vee}_{ii} - E^{\vee}_{11}) = - \frac{q+1 } {2\pi} E^{\vee}_{11}\;,\\
\Delta_{\F_{P}}(c_{1}) = d\Delta_{\F_{P}}(h_{1}) = \frac{q+1}{2\pi} \sum_{k=2}^{q+1}  E^{\vee}_{1k} \wedge E^{\vee}_{k1}\;.
\end{gather*}
Note that $\Theta$ equals the Maurer-Cartan form $\Theta_{MC} = (E^{\vee}_{ij})_{2 \leq i,j \leq  q+1} $ of $\sl(q;\CC)$ modulo $\Delta_{\F_{P}}(h_{1})$. Thus
\begin{align}
\GV(\F_{P}) = (2\pi)^{q+1} \Delta_{\F_{P}}(h_{1}c_{1}^{q}) & = -(q+1)^{q+1} (q+1)! \, E^{\vee}_{11} \wedge \bigwedge_{k=2}^{q+1}  E^{\vee}_{1k} \wedge E^{\vee}_{k1} \;,\label{eq:GVtrproj1}\\
 \Delta_{\F_{P}}(h_{1}h_{I}c_{1}^{q})  & = \frac{1}{(2\pi)^{q+1}} \GV(\F_{P})\,h_{I}(\Theta_{MC})\;. \label{eq:GVtrproj2}
\end{align}
We will use these formulas to give an alternative proof of Theorem~\ref{thm:BGH}. Heitsch obtained more general formulas of this type for secondary characteristic classes of the form $h_{I}c_{J}$ by the application of his residues formulas~\cite[Theorem~4.2]{Heitsch1978}, \cite[Theorem~2.3]{Heitsch1983}.

\subsubsection{The case $(G,G/P) = (\SO_{0}(n+1,1),S^{n}_{\infty})$}\label{sec:trconflie}

Yamato~\cite{Yamato1975} also computed the characteristic classes of this case in a different way. Let $n'=n+1$ and $n''=n+2$. Let
\begin{equation}\label{eq:Iq2}
I'_{n''} =  \begin{pmatrix} 0  & 0 & -1 \\
                                  0 & I_{n} & 0 \\
                                  -1 & 0 & 0 \end{pmatrix} \in \gl(n'';\RR)\;, 
\end{equation}
where $I_{n}$ is the $n \times n$ identity matrix. We use the following description of $\g=\so(n',1)$: 
\begin{align*}
\g & = \left\{\, X \in \gl(n'';\RR) \mid \text{}^{t}X I'_{n''} +  I'_{n''} X = 0 \,\right\} \\
 & = \left\{\, \begin{pmatrix} a  & u & 0 \\
                              {}^{t}v & A & {}^{t}u \\
                              0 & v & -a \end{pmatrix} \in \gl(n'';\RR) \, \Bigg| \, a \in \RR,\ A \in \so(n;\RR),\ u,v \in \RR^{n} \,\right\}\;.
\end{align*}
Since $e_{1}=\left(\begin{smallmatrix} 1 \\ 0 \\ \vdots \\ 0 \end{smallmatrix}\right)$ is a vector in the light cone, we get
\begin{align*}
\p & = \left\{\, X \in \g \mid \exists a \in \RR\ \text{so that}\ X e_{1} = ae_{1} \,\right\} \\
 & = \left\{\, \begin{pmatrix} a  & u & 0 \\
                              0 & A & {}^{t}u \\
                              0 & 0 & -a \end{pmatrix} \in \gl(n'';\RR) \, \Bigg| \, a \in \RR,\ A \in \so(n;\RR),\ u \in \RR^{n} \,\right\}\;.
\end{align*}
Then
\begin{align*}
\gc &= \left\{\, \begin{pmatrix} a  & u & 0 \\
                              {}^{t}v & A & {}^{t}u \\
                              0 & v & -a \end{pmatrix} \in \gl(n'';\CC) \, \Bigg| \, a \in \CC,\ A \in \so(n;\CC),\ u,v \in \CC^{n} \,\right\}\;,\\
\pc &= \left\{\, \begin{pmatrix} a  & u & 0 \\
                              0 & A & {}^{t}u \\
                              0 & 0 & -a \end{pmatrix} \in \gl(n'';\CC) \, \Bigg| \, a \in \CC,\ A \in \so(n;\CC),\ u \in \CC^{n} \,\right\}\;,\\
\v &= \left\{\, \begin{pmatrix} 0 & 0 & 0 \\
                              {}^{t}v & 0 & 0 \\
                              0 & v & 0 \end{pmatrix} \in \gl(n'';\CC) \, \Bigg| \, v \in \CC^{n} \,\right\}\;.
\end{align*}
Let 
\[
a = E_{11} - E_{n''n''}\;, \quad v_{j} = E_{j1} + E_{n''j}\;, \quad A_{kh} = E_{kh} -E_{hk}\;.
\]
Then we get a basis 
\begin{equation}\label{eq:basis}
\{v_{j}\}_{2 \leq j \leq n'} \cup \{a\} \cup \{ {}^{t}v_{j}\}_{2 \leq j \leq n'} \cup \{A_{kh}\}_{2 \leq k < h \leq n'}
\end{equation}
of $\gc$. Here $\{v_{j}\}_{2 \leq j \leq n'}$ is a basis of $\v$ and $\{a\} \cup \{ {}^{t}v_{j}\}_{2 \leq j \leq n'} \cup \{A_{kh}\}_{2 \leq k < h \leq n'}$ is a basis of $\pc$. We get
\[
[ a, v_{j}] = -v_{j}\;,\quad [ \text{}^{t}v_{i}, v_{j}] = \delta_{ij} a\;,\quad
[ A_{kh}, v_{j}] = \delta_{jh} v_{k} - \delta_{jk} v_{h}\;.
\]
For $z\in\gc$, let $z^{\vee}\in \gc^{*}$ denote the dual of $z$ with respect to the basis~\eqref{eq:basis}. Since $\theta_{ij}(X) = v^{\vee}_{i}([X, v_{j}])$ for $X\in\pc$, it follows that
  \[
    \theta_{ij}(a) = -\delta_{ij}\;, \quad \theta_{ij}( {}^{t}v_{l}) = 0\;, \quad
    \theta_{ij}(A_{kh}) = \delta_{jh} \delta_{ik} - \delta_{jk} \delta_{ih}\;.
  \]
Thus 
\begin{equation*}
\Theta = (\theta_{ij}) = 
\begin{pmatrix} -a^{\vee} & A^{\vee}_{32} & \cdots & A^{\vee}_{n' 2} \\
                    A^{\vee}_{23}     & -a^{\vee} &  & \vdots \\
                    \vdots       &      &  \ddots & A^{\vee}_{n' n} \\
                    A^{\vee}_{2 n'}              & \cdots      & A^{\vee}_{n n'}          & -a^{\vee} \end{pmatrix}\;.
\end{equation*}
Since $\hat{d}(a^{\vee}) = -\sum_{k=2}^{n} {}^{t}v_{k}^{\vee} \wedge v^{\vee}_{k}$ and $\hat{d}A^{\vee}_{kh} = 0$ (see~\eqref{eq:widehatd} for the definition of $\hat{d}$), Proposition~\ref{prop:PittieOmega} implies
\begin{equation*}
\Omega = 
\begin{pmatrix} \sum_{k=2}^{n'} {}^{t}v_{k}^{\vee} \wedge v^{\vee}_{k}  & 0 & \cdots & 0 \\
                    0     & \sum_{k=2}^{n'} {}^{t}v_{k}^{\vee} \wedge v^{\vee}_{k} &  & \vdots \\
                    \vdots       &       &  \ddots & 0 \\
                    0              & \cdots      & 0          & \sum_{k=2}^{n'} {}^{t}v_{k}^{\vee} \wedge v^{\vee}_{k} \end{pmatrix}\;.
\end{equation*}
We get
\begin{gather}
\Delta_{\F_{P}}(h_{1}) =  -n a^{\vee}\;, \label{eq:SOh1}\\
\GV(\F_{P}) = (2\pi)^{n+1} \Delta_{\F_{P}}(h_{1}c_{1}^{n}) = -n^{n+1} n!\, a^{\vee} \wedge \bigwedge_{k=2}^{n+1} {}^{t}v_{k}^{\vee} \wedge v^{\vee}_{k}\;.\label{eq:GVtrconf2}
\end{gather}
Later, in Proposition~\ref{prop:onlyGV3}, we will show that any other nontrivial secondary characteristic class is a multiple of the Godbillon-Vey class by using~\eqref{eq:GVtrconf2}. 

Let $K_{G}$ be the Lie subgroup of $G$ consisting of all orthogonal matrices in $G$. Here $K_{G}$ is a maximal compact subgroup of $G$. Let $\k_{G} = \Lie(K_{G})$. To be used in the proof of Theorem~\ref{thm:BT} later, we will compute the integration of $\GV(\F_{P})$ along the fiber of $G/K_{P} \to G/K_{G}$. Letting
\[
Q_{1} = (-1)^{n+1} a^{\vee} \wedge \bigwedge_{k=2}^{n+1} ({}^{t}v_{k}^{\vee} + v^{\vee}_{k})\;, \quad
Q_{2} = \bigwedge_{k=2}^{n+1} (v^{\vee}_{k} - {}^{t}v_{k}^{\vee})\;,
\]
we have the following direct consequence of~\eqref{eq:GVtrconf2}:
\begin{equation}\label{eq:GVtrconf3} 
\GV(\F_{P}) = \frac{(-1)^{\frac{n(n+1)}{2}}n^{n+1} n!}{2^{n}}\, Q_{1} \wedge Q_{2}\;.
\end{equation}
To derive~\eqref{eq:GVtrconf3} from~\eqref{eq:GVtrconf2}, we note that
\begin{equation}\label{eq:sign}
\sign \begin{pmatrix} 1 & 2 & 3 & \cdots & n     & n+1 & n+2 & \cdots & 2n-1 & 2n \\
                           1 & 3 & 5 & \cdots & 2n-1 & 2 & 4 & \cdots & 2n-2 & 2n \end{pmatrix} 
                            = (-1)^{\frac{n(n-1)}{2}}\;.
\end{equation}
Note that $Q_{1}$ is a wedge product of symmetric matrices and $Q_{2}$ is a wedge product of anti-symmetric matrices. By
\begin{align*}
\k_{G} & = \{\, A \in \gl (n+2;\RR) \mid A \in \g, A = -{}^{t}A \,\} \notag \\ 
         & = \left\{\, \begin{pmatrix} 0  & -v & 0 \\
                              {}^{t}v & A & -{}^{t}v \\
                              0 & v & 0 \end{pmatrix} \in \gl(n+2;\RR) \, \Bigg| \, A \in \so(n;\RR),\ v \in \RR^{n} \,\right\}\;, \label{eq:kg} 
\end{align*}
it is easy to see that $Q_{1}$ is $K_{G}$-basic; namely, since $\deg Q_{1}= \dim G/K_{G}$, $Q_{1}$ is the pull-back of a volume form on $G/K_{G}$ by the projection $\phi_{K_{G}} : G/K_{P} \to G/K_{G}$. Since $Q_{1} \wedge Q_{2}$ is a volume form on $G/K_{P}$, by using the exact sequence 
\[
\xymatrix{ 0 \ar[r] & \k_{G}/\k_{P} \ar[r] & \g/\k_{P} \ar[r] & \g/\k_{G} \ar[r] & 0}\;,
\]
we see that $Q_{2}$ restricted to each fiber of $\phi_{K_{G}}$ is a volume form. We orient the fibers of $\phi_{K_{G}}$ with this volume form. We will prove \eqref{eq:lat} by computing the integration $\int_{K_{G}/K_{P}} Q_{2}$ and the norm of $Q_{1}$ with respect to the metric on $\g/\k_{G}$ induced from the Killing form of $\g$.

First we compute the norm of $Q_{1}$. By identifying $\g/\k_{G}$ with the space of symmetric matrices in $\g$, the norm $\|\cdot\|$. on $\g/\k_{G}$ induced from the Killing form is given by $\|X\| = \sqrt{n \tr (X^{2})}$. Then we have
\begin{alignat*}{2}
\|a\| & =\sqrt{2n}\;, &\quad \|{}^{t}v_{k} + v_{k}\| & =2\sqrt{n}\;. 
\intertext{It follows that}
\|a^{\vee}\| & =\frac{1}{\sqrt{2n}}\;, &\quad \|{}^{t}v_{k}^{\vee} + v^{\vee}_{k}\| & =\frac{1}{\sqrt{n}}\;,
\end{alignat*}
where the norm on $(\g/\k_{G})^{*}$ induced by the dual metric is denoted by the same symbol $\|\cdot\|$. Thus, letting $\omega_{G/K_{G}}$ be the volume form on $G/K_{G}$  of norm one defining the same orientation as $Q_{1}$, we get 
\begin{equation}\label{eq:intt}
Q_{1} = \frac{1}{\sqrt{2}\,n^{\frac{n+1}{2}}}\, \phi_{K_{G}}^{*} \omega_{G/K_{G}}\;.
\end{equation}
Let $\k_{G}=\k_{P} \oplus \m$ be the orthogonal decomposition with respect to the Killing form. Then $\m$ is realized as 
\[
\m = \left\{\, \begin{pmatrix} 0 & -v & 0 \\ {}^{t}v & 0 & -{}^{t}v \\ 0 & v & 0 \end{pmatrix} \in \gl(n+2;\RR) \, \Bigg| \, v \in \RR^{n} \,\right\}\;.
\]
Here $K_{G}/K_{P} \approx S^{n}$, and $\m$ is naturally identified with the tangent space of $K_{G}/K_{P}$ at $eK_{P}$. Since $\left(\begin{smallmatrix} 0 & -v & 0 \\ {}^{t}v & 0 & -{}^{t}v \\ 0 & v & 0 \end{smallmatrix}\right)$ in $\m$ is conjugated to $\left(\begin{smallmatrix} 0 & 0 & 0 \\ 0 & 0 & -\sqrt{2}{}^{t}v \\ 0 & \sqrt{2}v & 0 \end{smallmatrix}\right)$ by $\left(\begin{smallmatrix} 1/\sqrt{2} & 0 & 1/\sqrt{2} \\ 0 & I_{n} & 0 \\ -1/\sqrt{2} & 0 & 1/\sqrt{2} \end{smallmatrix}\right)$, we can compare $Q_{2}$ with the volume form of norm one on the unit sphere of $\RR^{n+1}$. Then we have 
\[
\int_{K_{G}/K_{P}} Q_{2} =2^{n/2}\vol(S^{n})\;.
\]
 Then, combining~\eqref{eq:GVtrconf3} and~\eqref{eq:intt} with this computation, we get
\begin{equation}\label{eq:inttt}
\fint_{\phi_{K_{G}}} \GV(\F_{P}) = (-1)^{\frac{n(n+1)}{2}}\frac{n^{\frac{n+1}{2}}\vol (S^{n})}{2^{\frac{n+1}{2}}}\, \omega_{G/K_{G}}\;.
\end{equation}

\subsubsection{The case $(G,G/P) = (\SU(n+1,1),S^{2n+1}_{\infty})$}\label{sec:trCRlie}

Let $n'=n+1$ and $n''=n+2$. Let $I'_{n''}$ be the matrix given by~\eqref{eq:Iq2}. We use the following description of $\g=\su(n',1)$:
\begin{align}
  \g & = \left\{\, X \in \sl(n'';\CC) \, \mid \, {}^{t}\overline{X} I'_{n''} +  I'_{n''} X = 0 \,\right\} \label{eq:susl} \\
  & = \left\{\, 
    \begin{pmatrix} 
      a  & u &  \sqrt{-1}c \\
      {}^{t}\overline{v} & A & {}^{t}\overline{u} \\
      \sqrt{-1}g & v & -\overline{a} 
    \end{pmatrix} 
  \in \sl(n'';\CC) \, \Bigg| \, 
    \begin{matrix}
      a \in \CC,\ c,g \in \RR,\\
      A \in \u(n),\ u,v \in \CC^{n}
    \end{matrix}
  \,\right\}\;. \notag
\end{align}
Since $e_{1}=\left(\begin{smallmatrix} 1 \\ 0 \\ \vdots \\ 0 \end{smallmatrix}\right)$ is a vector in the light cone, we get
\begin{align*}
  \p & = \left\{\, X \in \g \mid \exists a \in \CC\ \text{so that}\ Xe_{1} = ae_{1}\, \right\} \\
  & = \Biggl\{\, 
    \begin{pmatrix} 
      a  & u &  \sqrt{-1}c \\
      0 & A & {}^{t}\overline{u} \\
      0 & 0 & -\overline{a} 
    \end{pmatrix} 
  \in \sl(n'';\CC) \, \Bigg| \,
    \begin{matrix}
      a \in \CC,\ c \in \RR,\\
      A \in \u(n),\ u \in \CC^{n}
    \end{matrix}
  \,\Biggr\}\;.
\end{align*}
Then $\gc = \sl(n'';\CC)$, and
\begin{align*}
  \pc & = \left\{\, 
    \begin{pmatrix} 
      a_{1}  & u_{1} &  c \\
      0 & A & {}^{t}u_{2} \\
      0 & 0 & a_{2} 
    \end{pmatrix} 
  \in \sl(n'';\CC) \, \Bigg| \, 
    \begin{matrix}
      a_{1}, a_{2}, c \in \CC,\ u_{1}, u_{2} \in \CC^{n},\\
      A \in \gl(n;\CC)
    \end{matrix}
  \,\right\}\;,\\
\v & = \left\{\, \begin{pmatrix} 0 & 0 & 0 \\
                              {}^{t}v_{1} & 0 & 0 \\
                              g & v_{2} & 0 \end{pmatrix} \in \sl(n'';\CC) \, \Bigg| \, g \in \CC,\ v_{1}, v_{2} \in \CC^{n} \,\right\}\;. 
\end{align*}
We can compute $\Theta$ and $\Omega$ like in the last case. But here we compute only the Godbillon-Vey class of $\F_{P}$. By using the computation, we will see that any other nontrivial secondary characteristic classes are multiples of the Godbillon-Vey class (Proposition~\ref{prop:onlyGV3}). We will apply Proposition~\ref{prop:Pittieh1} to compute $\Delta_{\F_{P}}(h_{1})$. As a Cartan subalgebra $\h$, we take the Lie subalgebra of $\gc$ consisting of diagonal matrices. As a basis of $\v$ consisting of root vectors, we can take $\{E_{k1}\}_{2 \leq k \leq n'} \cup \{E_{n''1}\} \cup \{E_{n''k}\}_{2 \leq k \leq n'}$. For a root vector $z\in\gc$, let $z^{\vee}\in \gc^{*}$ be the element such that $z^{\vee}(z)=1$ and $z^{\vee}(z')=0$ for any $z' \in \h$ and any root vector $z'$ which is linearly independent of $z$. The root of $E_{ij}$ is given by $E^{\vee}_{ii} - E^{\vee}_{jj}$. Thus Proposition~\ref{prop:Pittieh1} implies
\begin{align}
\Delta_{\F_{P}}(h_{1}) &= \frac{1}{2\pi}\left( E^{\vee}_{n''n''} - E^{\vee}_{11} + \sum_{k = 2}^{n'} (E^{\vee}_{kk} - E^{\vee}_{11}) + \sum_{k = 2}^{n'} (E^{\vee}_{n''n''} - E^{\vee}_{kk})\right) \notag \\
&= -\frac{n'}{2\pi}(E^{\vee}_{11} - E^{\vee}_{n''n''})\;. \label{eq:SUh1}
\end{align}
So
\[
\Delta_{\F_{P}}(c_{1}) = d\big(\Delta_{\F_{P}}(h_{1})\big) =  \frac{n'}{2\pi}\left(2E^{\vee}_{1n''} \wedge E^{\vee}_{n''1} + \sum_{k=2}^{n'} E^{\vee}_{1k} \wedge E^{\vee}_{k1} + \sum_{k=2}^{n'} E^{\vee}_{kn''} \wedge E^{\vee}_{n''k}\right)\;.
\]
Thus we get the following formula:
\begin{multline}\label{eq:GVSU2}
\GV(\F_{P}) = (2\pi)^{2n+2} \Delta_{\F_{P}}(h_{1}c_{1}^{2n+1})= -2 (n')^{2n+2} (2n+1)! \\
\times(E^{\vee}_{11} - E^{\vee}_{n''n''}) \wedge \bigwedge_{k=2}^{n''} (E^{\vee}_{1k} \wedge E^{\vee}_{k1}) \wedge \bigwedge_{k=2}^{n'} (E^{\vee}_{kn''} \wedge E^{\vee}_{n''k})\;.
\end{multline}
Later, in Proposition~\ref{prop:onlyGV3}, we will show that any other nontrivial secondary characteristic class of transversely spherical CR foliations is a multiple of the Godbillon-Vey class by using~\eqref{eq:GVSU2}. 

Let $K_{G}$ be the Lie subgroup of $G$ consisting of unitary matrices, which is a maximal compact subgroup of $G$. Let $\k_{G} = \Lie(K_{G})$. To be used later for the proof of Theorem~\ref{thm:BT}, we will compute the integration of $\GV(\F_{P})$ along the fiber of the canonical projection $\phi_{K_{G}} : G/K_{P} \to G/K_{G}$. Letting 
\begin{multline*}
L_{1} = (E^{\vee}_{11} - E^{\vee}_{n''n''}) 
\wedge \sqrt{-1} (E^{\vee}_{n''1} - E^{\vee}_{1n''}) \wedge \text{} \\ 
\bigwedge_{k=2}^{n'} \sqrt{-1} (E_{k1}^{\vee} - E_{1k}^{\vee} + E_{n''k}^{\vee} - E_{kn''}^{\vee}) \wedge \bigwedge_{k=2}^{n'} (E_{1k}^{\vee} + E_{k1}^{\vee} + E_{n''k}^{\vee} + E_{kn''}^{\vee})\;, 
\end{multline*}
\begin{multline*}
L_{2} =  \sqrt{-1} (E^{\vee}_{n''1} + E^{\vee}_{1n''}) \wedge \text{} \\ 
\bigwedge_{k=2}^{n'} (E_{k1}^{\vee} - E_{1k}^{\vee} - E_{n''k}^{\vee} + E_{kn''}^{\vee}) \wedge \bigwedge_{k=2}^{n'} \sqrt{-1} (E_{1k}^{\vee} + E_{k1}^{\vee} - E_{n''k}^{\vee} - E_{kn''}^{\vee})\;,
\end{multline*}
we have the following direct consequence of~\eqref{eq:GVSU2}: 
\begin{equation}\label{eq:GVSU3}
\GV(\F_{P})  = (-1)^{n+1}\frac{(n')^{2n+2} (2n+1)!}{2^{4n}}\, L_{1} \wedge L_{2}\;.
\end{equation}
Here $L_{1}$ is a wedge product of Hermitian matrices and $L_{2}$ is a wedge product of skew-Hermitian matrices. Since $\k_{G} = \{\, A \in \g \mid A = -{}^{t}\overline{A} \,\}$, it is easy to see that $L_{1}$ is $K_{G}$-basic, and hence $L_{1}$ is the pull-back of a volume form on $G/K_{G}$ by the projection $\phi_{K_{G}} : G/K_{P} \to G/K_{G}$. Identifying $\g/\k_{G}$ with the space of Hermitian matrices in $\g$, the norm $\|\cdot\|$ on $\g/\k_{G}$ induced from the Killing form of $\g$ is given by $\|X\|=\sqrt{2(n+2)\tr (X^{2})}$. Thus, denoting the norm on $(\g/\k_{G})^{*}$ of the dual metric by the same symbol $\|\cdot\|$, we have that 
\begin{align*}
\|E_{11}^{\vee} - E_{n''n''}^{\vee}\| = \|\sqrt{-1}(E_{1n''}^{\vee} - E_{n''1}^{\vee})\| & = 1/\sqrt{n+2}\;, \\
\|\sqrt{-1}(E_{1k}^{\vee} - E_{k1}^{\vee} + E_{n''k}^{\vee} - E_{kn''}^{\vee})\| & = \sqrt{2/(n+2)}\;, \\
\|E_{1k}^{\vee} + E_{k1}^{\vee} + E_{n''k}^{\vee} + E_{kn''}^{\vee}\| & = \sqrt{2/(n+2)}\;.
\end{align*}
Thus, letting $\omega_{G/K_{G}}$ be the oriented volume form on $G/K_{G}$ of norm one defining the same orientation as $L_{1}$, we get
\begin{equation}\label{eq:SUcg0}
L_{1} = \frac{2^{n}}{(n+2)^{n+1}}\, \phi_{K_{G}}^{*} \omega_{G/K_{G}}\;.
\end{equation}
Since $L_{1} \wedge L_{2}$ is a volume form on $G/K_{P}$, the restriction of $L_{2}$ to each fiber of $\phi_{K_{G}}$ is a volume form, which is used to orient it. Let $\k_{G}=\k_{P} \oplus \m$ be the orthogonal decomposition with respect to the Killing form. Here $\m$ is realized as 
\[
\m = \left\{\, \begin{pmatrix} 0 & -v & \sqrt{-1}g \\ {}^{t}\overline{v} & 0 & -{}^{t}\overline{v} \\ \sqrt{-1}g & v & 0 \end{pmatrix} \in \gl(n'';\CC) \, \Bigg| \, g \in \RR, v \in \CC^{n} \,\right\}\;,
\]
identified with the tangent space of $K_{G}/K_{P}$ at $eK_{P}$. Since $\left(\begin{smallmatrix} 
0 & -v & \sqrt{-1}g \\ 
{}^{t}\overline{v} & 0 & -{}^{t}\overline{v} \\ 
\sqrt{-1}g & v & 0 \end{smallmatrix}
\right)$ is conjugated to $\left(\begin{smallmatrix} 
\sqrt{-1}g & 0 & 0 \\ 
0 & 0 & -\sqrt{2}{}^{t}\overline{v} \\ 
0 & \sqrt{2}v & -\sqrt{-1}g 
\end{smallmatrix}\right)$ by $\left(\begin{smallmatrix} 1/\sqrt{2} & 0 & 1/\sqrt{2} \\ 0 & I_{n} & 0 \\ -1/\sqrt{2} & 0 & 1/\sqrt{2} \end{smallmatrix}\right)$ , we may compare $L_{2}$ with the volume form of norm one on the unit sphere of $\CC^{n+1}$ to get
\begin{equation}\label{eq:SUcg1}
\int_{K_{G}/K_{P}} L_{2} = 2^{3n+1} \vol(S^{2n+1})\;.
\end{equation}
By~\eqref{eq:GVSU3},~\eqref{eq:SUcg0} and~\eqref{eq:SUcg1}, we obtain
\begin{equation}\label{eq:SUcg}
\fint_{\phi_{K_{G}}} \GV(\F_{P}) =  (-1)^{n+1}\frac{2(n')^{2n+2} (2n+1)!\vol (S^{2n+1})}{(n+2)^{n+1}} \, \omega_{G/K_{G}}\;.
\end{equation}

\subsubsection{The case $(G,G/P) = (\Sp(n+1,1),S^{4n+3}_{\infty})$}\label{sec:trSplie}

Let $n'=n+1$ and $n''=n+2$. Let
\begin{equation*}
J' =  \begin{pmatrix} 0      & I_{n''}' \\
                           -I_{n''}' & 0    \end{pmatrix}\;,
\end{equation*}
where $I'_{n''}$ is the matrix given by~\eqref{eq:Iq2}. We use the following description of $\g=\sp(n',1)$:
\begin{equation}\label{eq:spsl}
  \g = \left\{\, X = 
    \begin{pmatrix} 
      Z_{1} & Z_{2} \\
      -\overline{Z}_{2} & \overline{Z}_{1}
    \end{pmatrix} 
  \in \gl(2n'';\CC) \, \Bigg| \, 
    \begin{matrix}
      Z_{1},\ Z_{2} \in \gl(n'';\CC),\\
      {}^{t}X J' + J'X = 0
    \end{matrix}
  \,\right\}\;.
\end{equation}
Note that, for a matrix of the form $X = 
    \left(\begin{smallmatrix} 
      Z_{1} & Z_{2} \\
      -\overline{Z}_{2} & \overline{Z}_{1}
    \end{smallmatrix}\right)$, where $Z_{1}$, $Z_{2}\in \gl(n'';\CC)$, the condition ${}^{t}X J' + J'X = 0$ is equivalent to ${}^{t}\overline{X} J'' + J''X = 0$, where $J'' =  \left(\begin{smallmatrix} I_{n''}' & 0 \\
                            0 & I_{n''}' \end{smallmatrix}\right)$. Hence the above presentation of $\sp(n',1)$ is equivalent to the well known presentation. Here,
\[
\p = \{\, X \in \g \mid \exists s, t \in \CC\ \text{so that}\ Xe_{1} = se_{1} + te_{n''+1} \,\}\;,
\]
where $e_{i}$ is the $i$-th standard unit vector of $\CC^{2n''}$. Thus $\p$ consists of the matrices of the form: 
\begin{equation*}
 \begin{pmatrix} a  & b & \sqrt{-1}\,c & d & f & g \\
                        0   & A & {}^{t}\overline{b}  &  0  & B  & -{}^{t}f \\
                          0 & 0  & -\overline{a}  & 0 & 0 & d \\
                          -\overline{d}  & -\overline{f} & -\overline{g} & \overline{a}  & \overline{b} & -\sqrt{-1}\,c \\
                          0 & -\overline{B}   & {}^{t}\overline{f}     & 0      & \overline{A} & {}^{t}b \\
                          0 & 0    & -\overline{d}             & 0       & 0      & -a 
\end{pmatrix}\;,
\end{equation*}
where $c \in \RR$, $a,d,g \in \CC$, $b,f \in \CC^{n}$, $A \in \u(n)$, and $B \in \gl(n;\CC)$ with $B={}^{t}B$. We get
\[
\gc  = \sp (n'';\CC) = \left\{\, X \in \gl(2n'';\CC) \, \mid \, {}^{t}X J' + J'X = 0 \,\right\} \;,
\]
which consists of the matrices of the form
\[
X = \begin{pmatrix} Z_{1} & Z_{2} \\
                                          Z_{3} & Z_{4}    \end{pmatrix} \in \gl(2n'';\CC)
\]
such that
\[
{}^{t}Z_{1} I'_{n''} + I'_{n''} Z_{4}=
- {}^{t}Z_{3} I'_{n''} + I'_{n''} Z_{3}=
- {}^{t}Z_{2} I'_{n''} + I'_{n''} Z_{2}=0\;. 
\]
Then $\pc$ is the subalgebra of $\gc$ consisting of the matrices 
\begin{equation*}
 \begin{pmatrix} a_{1}  & b_{1} & c & d_{1}  & f_{1} & g_{1} \\
                         0  & A & {}^{t}b_{2} &  0 & B_{1}  & -{}^{t}f_{1} \\
                          0 & 0  & a_{2} & 0  & 0  & d_{1} \\
                          d_{2} & f_{2}  & g_{2} & -a_{2}  & b_{2} & -c \\
                          0 & B_{2}  & -{}^{t}f_{2} & 0 & -{}^{t}A & {}^{t}b_{1} \\
                          0 & 0    & d_{2} & 0  & 0      & -a_{1} 
\end{pmatrix}\;,
\end{equation*}
where $a_{1},a_{2},c,d_{1},d_{2},g_{1},g_{2} \in \CC$, $b_{1},b_{2},f_{1},f_{2} \in \CC^{n}$, $A \in \u(n)$, and $B_{1},B_{2} \in  \gl(n;\CC)$ with $B_{1}={}^{t}B_{1}$ and $B_{2}={}^{t}B_{2}$. Let $\h$ be a Cartan subalgebra of $\gc$ with the following basis:
\begin{equation}\label{eq:bas}
\{E_{11} - E_{2n''\, 2n''}\} \cup \{E_{kk} - E_{n''+k\, n''+k}\}_{2 \leq k \leq n'} \cup \{E_{n''\,n''} - E_{n''+1\, n''+1}\}\;. 
\end{equation} 
Thus $\v$ consists of the matrices
\begin{equation*}
 \begin{pmatrix}  0  & 0 & 0  & 0 & 0 & 0 \\
                       {}^{t}u_{1} & 0 & 0  & -{}^{t}x_{1} & 0 & 0 \\
                       v & u_{2} & 0  & y_{1} & x_{1} & 0 \\
                       0 & 0 & 0 & 0 & 0 & 0 \\
                       -{}^{t}x_{2} & 0 & 0 & {}^{t}u_{2} & 0 & 0 \\
                       y_{2} & x_{2} & 0 & -v & u_{1} & 0  
\end{pmatrix}\;,
\end{equation*}
where $v,y_{1},y_{2} \in \CC$ and $u_{1},u_{2},x_{1},x_{2} \in \CC^{n}$.

Here, we compute the Godbillon-Vey class like in the last example by using Proposition~\ref{prop:Pittieh1}. Later, by using the computation, we will see that any other nontrivial secondary characteristic class is a multiple of the Godbillon-Vey class (see Proposition~\ref{prop:onlyGV3}). As a basis of $\v$ consisting of root vectors, take
\begin{alignat*}{2}
u_{1,k} & = E_{k\, 1} + E_{2n''\, n''+k}\;, \quad &2 \leq k \leq n'\;, \\
u_{2,k} & = E_{n''\, k} + E_{n''+k\, n''+1}\;, \quad &2 \leq k \leq n'\;, \\
v & = E_{n''\, 1} - E_{2n''\, n''+1}\;, &\\
x_{1,k} & = -E_{k\, n''+1} + E_{n''\, n''+k}\;, \quad &2 \leq k \leq n'\;, \\
y_{1} & = E_{n''\, n''+1}\;, & \\
x_{2,k} & = -E_{n''+k\, 1} + E_{2n''\, k}\;, \quad &2 \leq k \leq n'\;, \\
y_{2} & = E_{2n''\, 1}\;. &
\end{alignat*}
Let $\{\gamma_{i}\}_{1 \leq i \leq n''}$ be the basis of $\h^{*}$ dual to~\eqref{eq:bas}. The roots corresponding to these vectors are given in Table~\ref{table: roots}. Thus, by Proposition~\ref{prop:Pittieh1},
\begin{equation}\label{eq:h1Sp}
\Delta_{\F_{P}}(h_{1}) = -\frac{2n+3}{2\pi}\,(\gamma_{1} - \gamma_{n''})\;.
\end{equation}

\begin{table}[h]
\[
\begin{array}{|c|c|c|c|c|c|c|} \hline
u_{1,k} & u_{2,k}  & v & x_{1,k} & y_{1} & x_{2,k} & y_{2} \\ \hline
-\gamma_{1} + \gamma_{k} & -\gamma_{k} + \gamma_{n''} & -\gamma_{1} + \gamma_{n''} & \gamma_{k} + \gamma_{n''} & 2\gamma_{n''} & -\gamma_{1}-\gamma_{k} & -2\gamma_{1} \\ \hline
\end{array}
\]
\caption{Roots of root vectors in $\v$, where $2 \leq k \leq n'$.}
\label{table: roots}
\end{table}

For a root vector $z\in\gc$, let $z^{\vee}\in\gc^{*}$ be determined by $z^{\vee}(z)=1$ and $z^{\vee}(z')=0$ for any $z' \in \h$ and any root vector $z'$ which is linearly independent of $z$. We have
\begin{align*}
\hat{d}\gamma_{1} & =  -\sum_{k=2}^{n'} ({}^{t}u_{1,k}^{\vee} \wedge u_{1,k}^{\vee}) - ({}^{t}v^{\vee} \wedge v^{\vee}) -\sum_{k=2}^{n'} ({}^{t}x_{2,k}^{\vee} \wedge x_{2,k}^{\vee}) - ({}^{t}y_{2}^{\vee} \wedge y_{2}^{\vee})\;, \\
\hat{d}\gamma_{n''} & = ({}^{t}v^{\vee} \wedge v^{\vee}) + \sum_{k=2}^{n'} ({}^{t}u_{2,k}^{\vee} \wedge u_{2,k}^{\vee}) + ({}^{t}y_{1}^{\vee} \wedge y_{1}^{\vee}) +\sum_{k=2}^{n'} ({}^{t}x_{1,k}^{\vee} \wedge x_{1,k}^{\vee})
\end{align*}
(see~\eqref{eq:widehatd} for the definition of $\hat{d}$). Let $\zeta$ be a symplectic form on $\u \oplus \v$ defined by
\begin{align*}
\zeta & = \sum_{k=2}^{n'} ({}^{t}u_{1,k}^{\vee} \wedge u_{1,k}^{\vee}) + 2({}^{t}v^{\vee} \wedge v^{\vee}) + \sum_{k=2}^{n'} ({}^{t}x_{2,k}^{\vee} \wedge x_{2,k}^{\vee}) + ({}^{t}y_{2}^{\vee} \wedge y_{2}^{\vee}) \\
&\phantom{=\text{}\text{} + {}^{t}v^{\vee}} + \sum_{k=2}^{n'} ({}^{t}u_{2,k}^{\vee} \wedge u_{2,k}^{\vee}) + ({}^{t}y_{1}^{\vee} \wedge y_{1}^{\vee}) + \sum_{k=2}^{n'} ({}^{t}x_{1,k}^{\vee} \wedge x_{1,k}^{\vee})\;.
\end{align*}
Then
\begin{equation}\label{eq:c1Sp}
\Delta_{\F_{P}}(c_{1}) = d\big(\Delta_{\F_{P}}(h_{1})\big) = \frac{2n+3}{2\pi}\,\zeta\;.
\end{equation}
By~\eqref{eq:h1Sp} and~\eqref{eq:c1Sp}, we obtain the following formula of the Godbillon-Vey class:
\begin{equation}\label{eq:GVSp2}
\GV(\F_{P}) = (2\pi)^{4n+4} \Delta_{\F_{P}}(h_{1}c_{1}^{4n+3}) = -(2n+3)^{4n+4} \,(\gamma_{1} - \gamma_{n''})\wedge \zeta^{4n+3}\;.
\end{equation}
Later, in Proposition~\ref{prop:onlyGV3}, we will show that any other nontrivial secondary characteristic class of $(\Sp(n+1,1),S^{4n+3}_{\infty})$-foliations is a multiple of the Godbillon-Vey class by using~\eqref{eq:GVSp2}. To be used below, we also state the following direct consequence of~\eqref{eq:GVSp2}:

\begin{equation}\label{eq:GVSp3}
\GV(\F_{P}) = \frac{(2n+3)^{4n+4}(4n+3)!}{2^{8n+4}}\, W_{1} \wedge W_{2}\;,
\end{equation}
where 
\begin{align}
W_{1} &= (\gamma_{1} - \gamma_{n''}) \wedge \sqrt{-1} (v^{\vee} - {}^{t}v^{\vee}) \notag \\
& \phantom{=\text{}}\text{} \wedge (y^{\vee}_{1} + {}^{t}y^{\vee}_{1} - y^{\vee}_{2} - {}^{t}y^{\vee}_{2}) \wedge \sqrt{-1}  (y^{\vee}_{1} - {}^{t}y^{\vee}_{1} + y^{\vee}_{2} - {}^{t}y^{\vee}_{2}) \notag \\
& \phantom{=\text{}}\text{} \wedge \bigwedge_{k=2}^{n'} \sqrt{-1} (u^{\vee}_{1,k} - {}^{t}u^{\vee}_{1,k} - u^{\vee}_{2,k} + {}^{t}u^{\vee}_{2,k}) \wedge \bigwedge_{k=2}^{n'} (u^{\vee}_{1,k} + {}^{t}u^{\vee}_{1,k} + u^{\vee}_{2,k} + {}^{t}u^{\vee}_{2,k}) \notag \\
& \phantom{=\text{}}\wedge \bigwedge_{k=2}^{n'} (x^{\vee}_{1,k} + {}^{t}x^{\vee}_{1,k} - x^{\vee}_{2,k} - {}^{t}x^{\vee}_{2,k}) \wedge \bigwedge_{k=2}^{n'} (x^{\vee}_{1,k} + {}^{t}x^{\vee}_{1,k} + x^{\vee}_{2,k} + {}^{t}x^{\vee}_{2,k})\;, \label{eq:Omega1} \\
W_{2} &= \sqrt{-1} (v^{\vee} + {}^{t}v^{\vee}) \wedge (y^{\vee}_{1} - {}^{t}y^{\vee}_{1} - y^{\vee}_{2} + {}^{t}y^{\vee}_{2}) \wedge \sqrt{-1}  (y^{\vee}_{1} + {}^{t}y^{\vee}_{1} + y^{\vee}_{2} + {}^{t}y^{\vee}_{2}) \notag \\
& \phantom{=\text{}}\text{} \wedge \bigwedge_{k=2}^{n'} \sqrt{-1} (u^{\vee}_{1,k} + {}^{t}u^{\vee}_{1,k} - u^{\vee}_{2,k} - {}^{t}u^{\vee}_{2,k}) \wedge \bigwedge_{k=2}^{n'} (u^{\vee}_{1,k} - {}^{t}u^{\vee}_{1,k} + u^{\vee}_{2,k} - {}^{t}u^{\vee}_{2,k}) \notag \\
& \phantom{=\text{}}\text{}\wedge \bigwedge_{k=2}^{n'} (x^{\vee}_{1,k} - {}^{t}x^{\vee}_{1,k} + x^{\vee}_{2,k} - {}^{t}x^{\vee}_{2,k}) \wedge \bigwedge_{k=2}^{n'} (x^{\vee}_{1,k} - {}^{t}x^{\vee}_{1,k} - x^{\vee}_{2,k} + {}^{t}x^{\vee}_{2,k})\;. \label{eq:Omega2}
\end{align}

Let $K_{G}$ be the Lie subgroup of $G$ consisting of unitary matrices, which is a maximal compact subgroup of $G$. Let $\k_{G} = \Lie(K_{G})$. To be used later for the proof of Theorem~\ref{thm:BT}, we will compute the integration of $\GV(\F_{P})$ along the fibers of the canonical projection $\phi_{K_{G}} : G/K_{P} \to G/K_{G}$. Here $W_{1}$ is a wedge product of Hermitian matrices and $W_{2}$ is a wedge product of skew-Hermitian matrices. By $\k_{G} = \{\, A \in \g \mid A = -{}^{t}\overline{A} \,\}$, it is easy to see that $W_{1}$ is $K_{G}$-basic, and hence $W_{1}$ is the pull-back of a volume form on $G/K_{G}$ by the projection $\phi_{K_{G}} : G/K_{P} \to G/K_{G}$. Identifying $\g/\k_{G}$ with the space of Hermitian matrices in $\g$, the norm $\|\cdot\|$ of $\g/\k_{G}$ induced by the Killing form of $\g$ is given by $\|X\|=\sqrt{2(n+3) \tr (X^{2})}$. Thus, denoting the norm on $(\g/\k_{G})^{*}$ induced by the dual metric by the same symbol $\|\cdot\|$, we have that
\begin{align*}
\|\gamma_{1} - \gamma_{n''}\| = \|\sqrt{-1}(v^{\vee}-{}^{t}v^{\vee})\| & = 1/\sqrt{2(n+3)}\;, \\
\|y_{1}^{\vee} \pm {}^{t}y_{1}^{\vee} \pm y_{2}^{\vee} \pm {}^{t}y_{2}^{\vee}\| & = \sqrt{2/(n+3)}\;, \\
\|z_{1}^{\vee} + {}^{t}z_{1}^{\vee} + z_{2}^{\vee} + {}^{t}z_{2}^{\vee}\| & = 1/\sqrt{n+3}\;,
\end{align*}
for any
\begin{multline*}
(z_{1},z_{2})\in\left\{(\pm u_{1,k}, \pm u_{2,k}), \left(\pm \sqrt{-1}\,u_{1,k}, \pm \sqrt{-1}\,u_{2,k}\right), \right.\\
\left.(\pm x_{1,k}, \pm x_{2,k}), \left(\pm \sqrt{-1}\,x_{1,k}, \pm \sqrt{-1}\,x_{2,k}\right)\right\}_{2 \leq k \leq n'}
\end{multline*}
such that $z_{1}^{\vee} + {}^{t}z_{1}^{\vee} + z_{2}^{\vee} + {}^{t}z_{2}^{\vee}$ is Hermitian. Thus, letting $\omega_{G/K_{G}}$ be the oriented volume form on $G/K_{G}$ of norm $1$, we get
\begin{equation}\label{eq:Spcg0}
W_{1}  = \frac{1}{(n+3)^{2n+2}} \,\phi_{K_{G}}^{*} \omega_{G/K_{G}}\;.
\end{equation}
Consider the decomposition $\k_{G}=\k_{P} \oplus \m$. We have
\begin{equation} 
\m = \left\{\,  \left.\begin{pmatrix} 
0  & -u & \sqrt{-1}\,v & 0 & \overline{x} & -\overline{y} \\
{}^{t}\overline{u}  & 0 & -{}^{t}\overline{u}  & {}^{t}\overline{x} & 0 & -{}^{t}\overline{x}  \\
\sqrt{-1}\,v & u  & 0  & -\overline{y} & -\overline{x} & 0 \\
0  & -x & y & 0  & -\overline{u} & -\sqrt{-1}\,v  \\
-{}^{t}x & 0  & {}^{t}x  & {}^{t}u  & 0 & -{}^{t}u \\
y & x  & 0 & -\sqrt{-1}\,v & \overline{u} & 0 
\end{pmatrix} \, \right| \, 
\begin{matrix}
v \in \RR,\ y \in \CC, \\
u, x \in \CC^{n}
\end{matrix}
 \,\right\}
\end{equation}
in $\gl(2n'';\CC)$. Since the matrices 
\[
\begin{pmatrix} 
0  & -u & \sqrt{-1}\,v & 0 & \overline{x} & -\overline{y} \\
{}^{t}\overline{u}  & 0 & -{}^{t}\overline{u}  & {}^{t}\overline{x} & 0 & -{}^{t}\overline{x}  \\
\sqrt{-1}\,v & u  & 0  & -\overline{y} & -\overline{x} & 0 \\
0  & -x & y & 0  & -\overline{u} & -\sqrt{-1}\,v  \\
-{}^{t}x & 0  & {}^{t}x  & {}^{t}u  & 0 & -{}^{t}u \\
y & x  & 0 & -\sqrt{-1}\,v & \overline{u} & 0 
\end{pmatrix}
\]
and
\[
\begin{pmatrix} 
\sqrt{-1}v  & 0 & 0 & -\overline{y} & 0 & 0 \\
0  & 0 & -\sqrt{2}{}^{t}\overline{u}  & 0 & 0 & \sqrt{2}{}^{t}\overline{x}  \\
0 & -\sqrt{2}u  & -\sqrt{-1}v & 0 & \sqrt{2}\overline{x} & \overline{y} \\
y  & 0 & 0 & -\sqrt{-1}v  & 0 & 0  \\
0 & 0  & -\sqrt{2}{}^{t}x  & 0  & 0 & -\sqrt{2}{}^{t}u \\
0 & -\sqrt{2}x  & -y & 0 & -\sqrt{2}\overline{u} & \sqrt{-1}v 
\end{pmatrix}
\]
are conjugated by $\left(\begin{smallmatrix} A & 0 \\ 0 & A \end{smallmatrix}\right)$, where $A=\left(\begin{smallmatrix} 1/\sqrt{2} & 0 & 1/\sqrt{2} \\ 0 & I_{n} & 0 \\ -1/\sqrt{2} & 0 & 1/\sqrt{2} \end{smallmatrix}\right)$, we compare $W_{2}$ with the volume form of norm one of the unit sphere of $\HH^{n+1}$ to get
\begin{equation}\label{eq:Spcg1}
\int_{K_{G}/K_{P}} W_{2} = 2^{5n+3}\, \vol (S^{4n+3})\;.
\end{equation}
By~\eqref{eq:GVSp3},~\eqref{eq:Spcg0} and~\eqref{eq:Spcg1}, we obtain
\begin{equation}\label{eq:Spcg}
\fint_{\phi_{K_{G}}} \GV(\F_{P}) = \frac{(2n+3)^{4n+4}(4n+3)! \vol(S^{4n+3})}{2^{3n+1}(n+3)^{2n+2}} \, \omega_{G/K_{G}}\;.
\end{equation}

\subsubsection{The case $(G,G/P) = (F_{4(-20)},S^{15}_{\infty})$}\label{sec:f4}

Here, we refer to~\cite[Section~2.6]{Yokota2009} for the structure of $\f_{4}^{\CC} = \Lie(F_{4}^{\CC})$. For a standard choice of a Cartan subalgebra $\h = \bigoplus_{i=0}^{3} \CC H_{i}$, a system of simple roots $\{\alpha_{i}\}_{0 \leq i \leq 3}$ is given by
\[
\alpha_{0} = \lambda_{0} - \lambda_{1}\;, \quad \alpha_{1} = \lambda_{1} - \lambda_{2}\;, \quad \alpha_{2}=\lambda_{2}\;, \quad \alpha_{3}=\frac{1}{2}(-\lambda_{0} -\lambda_{1}-\lambda_{2} + \lambda_{3})\;,
\]
where $\lambda_i=B(\cdot,H_i)$ with respect to the Killing form $B$ of $\f_{4}^{\CC}$. One may choose positive roots of $\f_{4}^{\CC}$ listed in Table~\ref{table: positive roots} for this simple root system.

\begin{table}[h]
\begin{gather*}
  \begin{alignedat}{3}
    \lambda_{0} & = \phantom{2\text{}}\alpha_{0} + \phantom{2\text{}}\alpha_{1} +\phantom{2\text{}}\alpha_{2}\;,\phantom{\text{}+2\alpha_4}&\quad
    \lambda_{1} & = \phantom{\alpha_1+2\text{}}\alpha_{1} + \phantom{2\text{}}\alpha_{2}\;,\phantom{\text{}+2\alpha_4} \\
    \lambda_{2} & =\phantom{2\alpha_1+2\alpha_2+2\text{}}\alpha_{2}\;,\phantom{\text{}+2\alpha_4}&\quad
    \lambda_{3} & = \alpha_{0} + 2\alpha_{1} + 3\alpha_{2} + 2\alpha_{3}\;,  \\
    \lambda_{0} - \lambda_{1} & = \phantom{2\text{}}\alpha_{0}\;,\phantom{\text{}+2\alpha_2+2\alpha_3+2\alpha_4}&\quad 
    \lambda_{0} - \lambda_{2} & = \alpha_{0} + \phantom{2\text{}}\alpha_{1}\;,\phantom{\text{}+2\alpha_3+2\alpha_4}\; \\
    -\lambda_{0} + \lambda_{3} & = \phantom{2\alpha_1+2\text{}}\alpha_{1} + 2\alpha_{2} + 2\alpha_{3}\;, &\quad
    \lambda_{1} - \lambda_{2} & = \phantom{\alpha_1+2\text{}}\alpha_{1}\;,\phantom{\text{}+2\alpha_3+2\alpha_4}  \\
    -\lambda_{1} + \lambda_{3} & = \phantom{2\text{}}\alpha_1+ \phantom{2\text{}}\alpha_{1} + 2\alpha_{2} + 2\alpha_{3}\;, &\quad
    -\lambda_{2} + \lambda_{3} & = \alpha_1+ 2\alpha_{1} + 2\alpha_{2} + 2\alpha_{3}\;, \\
    \lambda_{0} + \lambda_{1} & = \phantom{2\text{}}\alpha_{0} + 2\alpha_{1} + 2\alpha_{2}\;,\phantom{\text{}+2\alpha_4}&\quad 
    \lambda_{0} + \lambda_{2} & = \alpha_{0} + \phantom{2\text{}}\alpha_{1} + 2\alpha_{2}\;,\phantom{\text{}+2\alpha_4} \\
    \lambda_{0} + \lambda_{3} & = 2\alpha_{0} + 3\alpha_{1} + 4\alpha_{2} + 2\alpha_{3}\;, &\quad
    \lambda_{1} + \lambda_{2} & = \phantom{\alpha_1+2\text{}}\alpha_{1} + 2\alpha_{2}\;,\phantom{\text{}+2\alpha_4}  \\
    \lambda_{1} + \lambda_{3} & = \phantom{2\text{}}\alpha_{0}+3\alpha_{1}+4\alpha_{2}+2\alpha_{3}\;,&\quad
    \lambda_{2} + \lambda_{3} & = \alpha_{0} + 2\alpha_{1} + 4\alpha_{2} + 2\alpha_{3}\;, 
  \end{alignedat} \\
  \begin{aligned}
    \textstyle\frac{1}{2}(\lambda_{0} + \lambda_{1} + \lambda_{2} + \lambda_{3}) & = \alpha_{0} 
    + 2\alpha_{1} + 3\alpha_{2} + \alpha_{3}\;, \\
    \textstyle\frac{1}{2}(-\lambda_{0} - \lambda_{1} - \lambda_{2} + \lambda_{3}) & =  \phantom{\alpha_{0} + 2\alpha_{1} + 3\alpha_{2}} + \alpha_{3}\;, \\
    \textstyle\frac{1}{2}(\lambda_{0} + \lambda_{1} - \lambda_{2} + \lambda_{3}) & = \alpha_{0} 
    + 2\alpha_{1} + 2\alpha_{2} + \alpha_{3}\;, \\
    \textstyle\frac{1}{2}(\lambda_{0} - \lambda_{1} + \lambda_{2} + \lambda_{3}) & =  \alpha_{0} 
    + \phantom{2\text{}}\alpha_{1} + 2\alpha_{2} + \alpha_{3}\;, \\
    \textstyle\frac{1}{2}(-\lambda_{0} - \lambda_{1} + \lambda_{2} + \lambda_{3}) & = \phantom{\alpha_1+2\alpha_2+2\text{}} \alpha_{2} + \alpha_{3}\;, \\
    \textstyle\frac{1}{2}(-\lambda_{0} + \lambda_{1} - \lambda_{2} + \lambda_{3}) & = \phantom{\alpha_1+2\text{}}\alpha_{1} 
    + \phantom{2\text{}}\alpha_{2} + \alpha_{3}\;, \\
    \textstyle\frac{1}{2}(\lambda_{0} - \lambda_{1} - \lambda_{2} + \lambda_{3}) & =  \alpha_{0} 
    + \phantom{2\text{}} \alpha_{1} + \phantom{2\text{}} \alpha_{2} + \alpha_{3}\;, \\
    \textstyle\frac{1}{2}(-\lambda_{0} + \lambda_{1} + \lambda_{2} + \lambda_{3}) & = \phantom{\alpha_1+2\text{}} \alpha_{1} 
    + 2\alpha_{2} + \alpha_{3}\;.
  \end{aligned}
\end{gather*}
\caption{Positive roots of $\f_{4}^{\CC}$}
\label{table: positive roots}
\end{table}

Here, $\alpha_{3}$ is the unique noncompact simple root of $\g$. Thus, the unique standard parabolic subalgebra $\pc$ of $\gc$ is of the form $\bigoplus_{\alpha \in -\Phi} (\gc)_{\alpha} \oplus \h \oplus \bigoplus_{\alpha \in \Upsilon^{+}} (\gc)_{\alpha}$, where $\Phi$ is a subset of $\Upsilon^{+}$ generated by $\{\alpha_{0}, \alpha_{1}, \alpha_{2}\}$. Then $\v=\bigoplus_{\alpha \in \Upsilon^{+} \setminus \Phi} (\gc)_{-\alpha}$ is spanned by the root vectors of the $15$ negative roots in
\[
\Lambda_{1} = \Big\{\frac{1}{2}(\pm \lambda_{0} \pm \lambda_{1} \pm \lambda_{2} - \lambda_{3})\Big\}
\quad\text{and}\quad
\Lambda_{2} =  \{-\lambda_{3}, \pm \lambda_{0} - \lambda_{3}, \pm \lambda_{1} - \lambda_{3}, \pm \lambda_{2} - \lambda_{3}\}\;,
\]
which are not generated by $\{-\alpha_{0}, -\alpha_{1}, -\alpha_{2}\}$, whose sum is $-11\lambda_{3}$. By Proposition~\ref{prop:Pittieh1}, we get $\Delta_{\F_{P}}(h_{1}) = -\frac{11}{2\pi}\lambda_{3}$. Take a set of root vectors $\{E_{\alpha}\}_{\alpha \in \Upsilon}$ so that $B(E_{\alpha},E_{-\alpha})=1$ for the Killing form $B$ of $\f_{4}^{\CC}$. For $\alpha\in \Upsilon$, let $H_{\alpha}$ be the element of $\h$ such that $B(H,H_{\alpha}) = \alpha(H)$ for any $H\in\h$. By Proposition~\ref{prop:PittieOmega} and since 
\[
d\lambda_{3}(E_{-\alpha},E_{\alpha}) = -B(E_{-\alpha},E_{\alpha})\, \lambda_{3}(H_{-\alpha})=-\lambda_{3}(H_{-\alpha})\;,
\] 
we get
\begin{equation*}
\Delta_{\F_{P}}(c_{1}) = d\Delta_{\F_{P}}(h_{1}) = \frac{11}{2\pi} \sum_{\alpha \in \Lambda_{1} \cup \Lambda_{2}} \lambda_{3}(H_{-\alpha}) \, E^{\vee}_{-\alpha} \wedge E^{\vee}_{\alpha}\;.
\end{equation*}
By using $B(\sum_{i=0}^{3} \lambda_{i}H_{i}, \sum_{j=0}^{3} \lambda'_{j}H_{j}) = 18 \sum_{i=0}^{3} \lambda_{i} \lambda_{i}'$, we obtain 
\[
\lambda_{3}(H_{-\alpha}) = \begin{cases}
9\;, & \alpha \in \Lambda_{1}\;, \\
18\;, & \alpha \in \Lambda_{2}\;.
\end{cases}
\]
Then we get the following formula of the Godbillon-Vey class:
\begin{equation}\label{eq:GVF42}
\GV(\F_{P}) = (2\pi)^{16} \Delta_{\F_{P}}(h_{1}c_{1}^{15}) = -2^{7} 3^{30} 11^{16} 15! \, \lambda_{3} \wedge \bigwedge_{\alpha \in \Lambda_{1} \cup \Lambda_{2}} E_{-\alpha}^{\vee} \wedge E_{\alpha}^{\vee}\;.
\end{equation}
By integrating $\GV(\F_{P})$ along the fibers of the canonical projection $\phi_{K_{G}} : G/K_{P} \to G/K_{G}$, we obtain
\begin{equation}\label{eq:F4cg}
\fint_{\phi_{K_{G}}} \GV(\F_{P}) = c_{G}\, \omega_{G/K_{G}}
\end{equation}
for a nonzero constant $c_{G}$.

\subsection{The Godbillon-Vey class spans the secondary characteristic classes}\label{sec:GVspan}

We assume that $(G,G/P)$ is equal to $(\SO_{0}(n+1,1),S^{n}_{\infty})$, $(\SU(n+1,1),S^{2n+1}_{\infty})$, $(\Sp(n+1,1),S^{4n+3}_{\infty})$ or $(F_{4(-20)},S^{15}_{\infty})$. In the last section, we saw that the Godbillon-Vey class of $\F_{P}$ is nontrivial, being given by a volume form on $G/K_{P}$. By using the computation, we will prove the following result in this section. 

\begin{prop}\label{prop:onlyGV3}
$\Delta_{\F}(H^{\bt}(WO_{q}))$ is spanned by the Godbillon-Vey class $\Delta_{\F}(h_{1}c_{1}^{q})$ for any $(G,G/P)$-foliation $\F$ of $M$.
\end{prop}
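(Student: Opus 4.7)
The plan is to factor $\Delta_{\F}$ through Lie algebra cohomology and then combine Pittie's reduction with a dimension argument and the explicit calculations of Section~\ref{sec:Liecoh}. By Theorem~\ref{thm:factrization} the characteristic homomorphism factors through $\Delta_{\F_P}\colon H^{\bt}(WO_q)\to H^{\bt}(\g,K_P)$, so it suffices to prove the analogous statement for $\Delta_{\F_P}$. By Pittie's Theorem~\ref{thm:Pittie} the image $\Delta_{\F_P}(H^{\bt}(WO_q))$ is already spanned by the Pontryagin classes $\Delta_{\F_P}(c_J)$ (with all $j\in J$ even) and the exotic classes $\Delta_{\F_P}(h_1h_Ic_1^q)$ for $I\subseteq\{3,5,\ldots,[q]\}$, so the task reduces to showing that every such generator other than $\Delta_{\F_P}(h_1c_1^q)$ vanishes in $H^{\bt}(\g,K_P)$.

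The vanishing of $\Delta_{\F_P}(h_1h_Ic_1^q)$ for nonempty $I\subseteq\{3,5,\ldots,[q]\}$ is purely dimensional. In each of the four cases, $G/K_P$ is (as in Example~\ref{ex:mob}) the unit tangent sphere bundle of the associated rank-one hyperbolic symmetric space, which yields $\dim(G/K_P)=2q+1$. Hence the basic cochain complex $(\bigwedge\g^{*})_{K_P}$ is zero in degrees exceeding $2q+1$. Since $\min I\geq 3$ one has $\deg(h_1h_Ic_1^q)\geq 1+5+2q=2q+6>2q+1$, so the cochain $\Delta_{\F_P}(h_1h_Ic_1^q)$ vanishes already at the cochain level, and a fortiori in cohomology.

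For the Pontryagin classes, the formulas for $\Delta_{\F_P}(h_1)$ in Section~\ref{sec:Liecoh} exhibit $\Delta_{\F_P}(h_1)$ as a $K_P$-basic $1$-form in each case, so $\Delta_{\F_P}(c_1)=d\Delta_{\F_P}(h_1)$ is exact in $(\bigwedge\g^{*})_{K_P}$, and consequently every $\Delta_{\F_P}(c_1^j)$ with $j\geq 1$ is exact. In the $(\SO(n+1,1),S^{n}_{\infty})$ case the Pittie--Bott curvature matrix is a scalar multiple of the identity (Section~\ref{sec:trconflie}), so each $\Delta_{\F_P}(c_j)$ equals $\binom{n}{j}(\Delta_{\F_P}(c_1)/n)^j$ and is therefore exact, making every $\Delta_{\F_P}(c_J)$ with $J\neq\emptyset$ exact. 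For the $\SU$, $\Sp$, and $F_{4(-20)}$ cases the curvature is no longer scalar, and one uses Pittie's formula $\hat d\Theta=\Omega$ (Proposition~\ref{prop:PittieOmega}) together with the root-space descriptions in Sections~\ref{sec:trCRlie}, \ref{sec:trSplie}, and~\ref{sec:f4} to verify that each higher Chern form $\Delta_{\F_P}(c_j)$ is likewise exact in $(\bigwedge\g^{*})_{K_P}$; then every Pontryagin class $\Delta_{\F_P}(c_J)$ with $J\neq\emptyset$ is exact as a wedge containing an exact factor.

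The main obstacle is this final step for the $\SU$, $\Sp$, and $F_{4(-20)}$ cases, where the vanishing of Pontryagin classes is not delivered by a one-line scalar-curvature observation as in the $\SO$ case but rather requires case-by-case inspection of the curvature in the root-space basis of each $(G,G/P)$. Once this is in hand, Pittie's reduction together with the dimensional step leaves $\Delta_{\F_P}(h_1c_1^q)$ as the only possible nontrivial generator of the image in $H^{\bt}(\g,K_P)$, and pushing forward via Theorem~\ref{thm:factrization} yields the proposition.
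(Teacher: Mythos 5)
Your treatment of the exotic classes is correct and close in spirit to the paper's Lemma~\ref{lem:onlyGV}: after factoring through $H^{\bt}(\g,K_{P})$ via Theorem~\ref{thm:factrization}, the degree bound $\deg h_{I}c_{J}\geq 2q+1=\dim G/K_{P}$ (whether reached through Pittie's Theorem~\ref{thm:Pittie} as you do, or directly from the Vey basis as the paper does) kills everything above the top degree, and the explicit volume-form formulas of Section~\ref{sec:Liecoh} identify the degree-$(2q+1)$ survivors with multiples of $\Delta_{\F_{P}}(h_{1}c_{1}^{q})$.

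The gap is in the Pontryagin step. For the $\SU$, $\Sp$ and $F_{4(-20)}$ cases you prove nothing: you assert that one ``verifies'' exactness of each $\Delta_{\F_{P}}(c_{j})$ in $(\bigwedge\g^{*})_{K_{P}}$ from the root-space descriptions, and you yourself flag this as the main obstacle. That verification is not routine, and it is not even clear a priori that it can succeed by inspection: the relevant curvature matrices are genuinely non-scalar, and the invariant complex has room for a nonzero class in the degree of $p_{1}$ (for instance $H^{4}(\g,K_{P})\otimes\CC\cong H^{4}(K_{\Gc}/K_{P})\neq 0$ in the quaternionic case by the Gysin sequence of the unit tangent bundle of $\HH P^{n+1}$), so no dimension count can rescue you. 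The paper sidesteps this entirely: Lemma~\ref{lem:onlyGV2} proves the vanishing of the Pontryagin classes of $\nu\F$ in $H^{\bt}(M;\RR)$ --- not in $H^{\bt}(\g,K_{P})$ --- by exhibiting $\nu\F\oplus\RR_{M}$ as a bundle $E_{M}$ pulled back, via the equivariant developing map, from $E=(\ker\rho_{*})|_{G/K_{P}}$, which carries a flat connection; the Whitney product formula then gives $p(\nu\F)=p(E_{M})=0$ with no curvature computation at all. If you insist on staying at the Lie-algebra level you would need to convert that stable-flatness into an invariant Chern--Simons transgression between the two $G$-invariant connections on $\nu\F_{P}\oplus\RR_{\gamma}$; as written, your proposal leaves the decisive step unproved in three of the four cases.
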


Recall that the secondary characteristic classes of the form $\Delta_{\F_{P}}(h_{I}c_{J})$ with nonempty $I$ are called {\em exotic}. First, we observe the following.

\begin{lemma}\label{lem:onlyGV}
Every nontrivial exotic secondary characteristic class of $\F_{P}$ is a multiple of the Godbillon-Vey class $\Delta_{\F_{P}}(h_{1}c_{1}^{q})$ in $H^{\bt}(\g,K_{P})$.
\end{lemma}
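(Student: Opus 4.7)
The plan is to combine an elementary dimension count on $G/K_P$ with a direct inspection of Vey's basis. The key observation, uniform across all four cases, is that $\dim G/K_P = 2q+1$, which equals the degree of the Godbillon-Vey class. Since $(\bigwedge^{\bullet}\g^*)_{K_P}$ is isomorphic to the algebra of left-invariant differential forms on $G/K_P$, it vanishes in degrees greater than $2q+1$, so any $\Delta_{\F_P}(\sigma)$ with $\deg \sigma > 2q+1$ is automatically zero in $H^{\bullet}(\g,K_P)$.

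First, I would verify the dimension count $\dim G/K_P = 2q+1$ case by case, using the descriptions of $G$, $K_G$, and $K_P$ recalled in Section~\ref{sec:example}. Conceptually, $G/K_P \to G/K_G$ is a sphere bundle with fiber $S^q$, and the rank-one noncompact symmetric space $G/K_G$ (real, complex, quaternionic, or octonionic hyperbolic) has real dimension $q+1$; the arithmetic for each Lie group gives $\dim G - \dim K_P = 2q+1$ in every case.

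Second, I would analyze Vey's basis~\eqref{eq:Vey2} to isolate the exotic generators of low degree. For an exotic basis element $h_I c_J$ with $i_1 = \min I$, the constraints $|J| \geq q+1 - i_1$ and each $j_m \geq 1$ yield
\[
\deg(h_I c_J) \;=\; \sum_{i \in I}(2i-1) + \sum_{j \in J} 2j \;\geq\; (2i_1 - 1) + 2(q+1-i_1) \;=\; 2q+1\;,
\]
with equality forcing $|I|=1$, $|J| = q+1-i_1$, and $j_m = 1$ for every $m$. Since $1$ is odd, the remaining condition $i_1 \leq j_m = 1$ then forces $i_1 = 1$, giving $\sigma = h_1 c_1^q$. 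Thus the unique exotic basis element of degree $\leq 2q+1$ is the Godbillon-Vey generator itself, and every other exotic basis element has degree strictly greater than $\dim G/K_P$.

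Combining the two steps: for any exotic Vey basis element $\sigma \neq h_1 c_1^q$, the form $\Delta_{\F_P}(\sigma)$ vanishes for purely dimensional reasons, so the subspace of $H^{\bullet}(\g,K_P)$ spanned by images of exotic generators is at most one-dimensional, generated by $\Delta_{\F_P}(h_1 c_1^q)$. No appeal to Pittie's Theorem~\ref{thm:Pittie} is needed; the main point is simply the numerical coincidence $\dim G/K_P = 2q+1$, which is the real obstacle to finding other nontrivial exotic classes and is what makes all four cases behave uniformly.
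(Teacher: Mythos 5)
Your first step is fine: $\dim G/K_{P}=2q+1$ in all four cases, so $\Delta_{\F_{P}}(\sigma)=0$ for every $\sigma$ of degree greater than $2q+1$, and this is exactly where the paper's proof also starts. The gap is in your second step: you have misread Vey's condition in~\eqref{eq:Vey2}. There $|J|$ denotes the weighted degree $j_{1}+\cdots+j_{l}$ (so that $\deg c_{J}=2|J|$, and $i_{1}+|J|\geq q+1$ is precisely the condition making $h_{I}c_{J}$ closed in $WO_{q}$), not the cardinality of $J$. With the correct reading, equality $\deg h_{I}c_{J}=2q+1$ forces $I=\{i_{1}\}$ and $|J|=q+1-i_{1}$, but it does \emph{not} force each $j_{m}$ to equal $1$. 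Consequently $h_{1}c_{1}^{q}$ is far from the unique exotic Vey basis element of degree $2q+1$: for example $h_{1}c_{q}$, $h_{1}c_{1}^{q-2}c_{2}$, and (for odd $q\geq 5$) $h_{3}c_{3}c_{2}^{(q-5)/2}$ all satisfy the conditions of~\eqref{eq:Vey2} and have degree exactly $2q+1$. None of these can be dismissed on dimensional grounds, so your argument simply does not cover them.

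The missing ingredient is precisely the input you declare unnecessary. To handle the remaining degree-$(2q+1)$ classes one observes that $\bigl(\bigwedge^{2q+1}\g^{*}\bigr)_{K_{P}}$ is one-dimensional, spanned by the invariant volume form of $G/K_{P}$, so every such class is a scalar multiple of that volume form; and then --- crucially --- that $\Delta_{\F_{P}}(h_{1}c_{1}^{q})$ is a \emph{nonzero} multiple of the volume form, which is read off from the explicit computations~\eqref{eq:GVtrconf2}, \eqref{eq:GVSU2}, \eqref{eq:GVSp2} and~\eqref{eq:GVF42} of Section~\ref{sec:Liecoh}. Without this nonvanishing the conclusion could fail: if the Godbillon--Vey class happened to vanish while, say, $\Delta_{\F_{P}}(h_{1}c_{q})$ did not, the latter would not be a multiple of the former. (You are right that Pittie's Theorem~\ref{thm:Pittie} is not needed here --- the paper does not invoke it in this proof either --- but the Lie-algebra computation of the Godbillon--Vey form is needed, and your proof cannot dispense with it.)
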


\begin{proof}
Note that $\deg h_{I}c_{J} \geq 2q+1$ for any $h_{I}c_{J}$ in $WO_{q}$ with nonempty $I$. Since $(G,G/P)$ is $(\SO_{0}(n+1,1),S^{n}_{\infty})$, $(\SU(n+1,1),S^{2n+1}_{\infty})$, $(\Sp(n+1,1),S^{4n+3}_{\infty})$ or $(F_{4(-20)},S^{15}_{\infty})$, we have $\dim G/K_{P} = 1 + 2 \dim G/P$. Then $\Delta_{\F_{P}}(h_{I}c_{J})=0$ for any $h_{I}c_{J}$ in $WO_{q}$ with $\deg h_{I}c_{J} > 2q+1$, and $\Delta_{\F}(h_{I}c_{J})$ is a multiple of a volume form on $G/K_{P}$ for any $h_{I}c_{J}$ in $WO_{q}$ with $\deg h_{I}c_{J} = 2q+1$. Since the Godbillon-Vey class is represented by a volume form on $G/K_{P}$ by~\eqref{eq:GVtrconf2},~\eqref{eq:GVSU2},~\eqref{eq:GVSp2} and~\eqref{eq:GVF42}, the proof is concluded.
\end{proof}

For the Pontryagin classes, an argument similar to Heitsch~\cite[Section~4]{Heitsch1986} for transversely projective foliations implies the following.

\begin{lemma}\label{lem:onlyGV2}
For any $(G,G/P)$-foliation $\F$ of $M$, the Pontryagin classes of $\nu \F$ are zero in $H^{\bt}(M)$.
\end{lemma}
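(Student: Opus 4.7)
The plan is to use Theorem~\ref{thm:factrization} to reduce the vanishing of Pontryagin classes to a Chevalley--Eilenberg computation, and then to exhibit explicit $K_P$-basic primitives for the Chern forms built from Pittie's Bott connection on the model foliation $\F_P$.

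By Theorem~\ref{thm:factrization}, $\Delta_\F$ factors through $\Delta_{\F_P}\colon H^\bt(WO_q)\to H^\bt(\g,K_P)$. Since the Pontryagin subalgebra is generated by the classes $c_{2k}$ and $\Delta_{\F_P}$ is an algebra map on the form level, it suffices to exhibit each $\Delta_{\F_P}(c_{2k})$ as an exact $K_P$-basic form for $k\ge 1$. Let $a\in\p$ generate the $A$-factor of the Langlands decomposition $P=K_PAN$, and let $a^\vee\in\g^*$ be its dual with respect to a basis adapted to the $\ad a$-weight grading. Then $a^\vee\in(\bigwedge^1\g^*)_{K_P}$: it annihilates $\Lie K_P$ and is $K_P$-invariant because $K_P$ centralizes $a$. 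In each of the four cases, the weight decomposition $\v=\v_1\oplus\v_2$ under $\ad a$ has at most two (negative) weights, and the explicit bracket computations already performed for $\Delta_{\F_P}(c_1)$ in Sections~\ref{sec:trconflie}--\ref{sec:f4} show that $\tr_\v\Theta$ is a nonzero scalar multiple of $a^\vee$; hence $\tr_\v\Omega=d(\tr_\v\Theta)$ is exact in $(\bigwedge^\bt\g^*)_{K_P}$.

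For the $\SO$ case, a direct bracket calculation using $[v_i,{}^tv_j]=-\delta_{ij}a$ gives $\Omega=\omega\cdot I_n$ with $\omega=-d a^\vee$, where $\omega=\sum_k{}^tv_k^\vee\wedge v_k^\vee$ is $K_P$-basic since it is $\O(n)$-invariant and vanishes on $\so(n)$. Then $c_k(\Omega)=\binom{n}{k}(\omega/2\pi)^k$, and the Leibniz identity $\omega^k=-d(a^\vee\wedge\omega^{k-1})$ for $k\ge 1$ exhibits every Chern form---and hence every Pontryagin class---as exact. For the two-step cases, $\Omega$ is block-triangular along $\v=\v_1\oplus\v_2$, so its total Chern polynomial factors as $\prod_i\det(I+t\Omega_{ii}/2\pi)$; each diagonal block $\Omega_{ii}$ splits as a scalar $a^\vee$-part plus a traceless $K_P$-Levi contribution, and expressing the $c_{2k}(\Omega)$ as polynomials in $a^\vee$ and the standard $2$-forms $\omega_i=\sum_{z\in\v_i}z^\vee\wedge{}^tz^\vee$ on each weight space yields exactness by the same Leibniz mechanism.

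The main obstacle is the explicit verification in the two-step cases $(\SU,\Sp,F_{4(-20)})$: one must check that the off-diagonal blocks of $\Omega$ (arising from brackets of the form $[\u_1,\v_2]\subset\v_1$) and the traceless $K_P$-Levi parts of the diagonal blocks contribute, after symmetrization into $c_{2k}$, only terms that are individually exact or cancel by $K_P$-symmetry. This requires a case-by-case analysis using the root-vector identities recorded in Sections~\ref{sec:trCRlie}, \ref{sec:trSplie} and~\ref{sec:f4}, paralleling Heitsch's computation for transversely projective foliations in~\cite[Section~4]{Heitsch1986}.
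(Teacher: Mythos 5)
Your reduction via Theorem~\ref{thm:factrization} is sound in principle, and your treatment of $(\SO(n+1,1),S^{n}_{\infty})$ is essentially complete: there $\v$ is abelian, $\Omega=\omega I_{n}$ with $\omega=-da^{\vee}$ exact in $(\bigwedge\g^{*})_{K_{P}}$, and the Leibniz identity makes every Chern form, hence every Pontryagin monomial $c_{J}$, exact with a basic primitive. But for the three remaining cases the argument stops exactly where the work begins. The diagonal blocks $\Omega_{ii}$ are \emph{not} of the form ``scalar part plus a part annihilated by $\hat{d}$'': in the $\SO$ case $[\u,\v]$ lands in the center $\RR a$ of the Levi part, which is why $\hat{d}A^{\vee}_{kh}=0$ and $\Omega$ is scalar, whereas in the $\SU$, $\Sp$ and $F_{4(-20)}$ cases $[\u,\v]$ hits the full Levi part (e.g.\ $[E_{1k},E_{j1}]=\delta_{kj}E_{11}-E_{jk}$ in $\sl(n+2;\CC)$), so the non-scalar Levi curvature enters $c_{2k}(\Omega)$ genuinely for $k\geq 1$. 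You offer no argument that these contributions are exact in the basic complex; you explicitly label this verification ``the main obstacle'' and do not perform it. Note also that what your strategy requires --- vanishing in $H^{\bt}(\g,K_{P})\cong H^{\bt}(K_{\Gc}/K_{P};\RR)$ --- is strictly stronger than the lemma, since this ring is much larger than $H^{\bt}(G/K_{P};\RR)\cong H^{\bt}(S^{q};\RR)$, so the topological vanishing of $p(\phi_{P}^{*}TS^{q})$ over $G/K_{P}$ does not settle it. As written, the proof is therefore incomplete for $(\SU(n+1,1),S^{2n+1}_{\infty})$, $(\Sp(n+1,1),S^{4n+3}_{\infty})$ and $(F_{4(-20)},S^{15}_{\infty})$.

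For comparison, the paper's proof avoids all curvature computations: it identifies $G/K_{P}$ with the unit tangent sphere bundle of $G/K_{G}$, observes that $\nu\F_{P}\oplus\RR_{\gamma}$ is the restriction of the vertical bundle $\ker\rho_{*}$ of $T_{0}(G/K_{G})\to G/K_{G}$, which carries a $G$-invariant flat connection, pulls this splitting back to $M$ through the equivariant map $\widehat{\dev}$ of Proposition~\ref{prop:h2}, and concludes that $\nu\F$ is stably isomorphic to a flat bundle over $M$, whence $p(\nu\F)=0$ by the product formula. If you wish to keep your route, the cleanest repair is not to hunt for explicit primitives block by block but to import this stable-flatness observation into the basic complex, or to use the Weyl-trick isomorphism to identify $\Delta_{\F_{P}}(c_{J})$ with the real Pontryagin classes of the corresponding vertical bundle over the compact dual $K_{\Gc}/K_{G}$.
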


\begin{proof}
Let $T_{0} (G/K_{G})$ be the complement of the zero section of the total space of the tangent bundle of $G/K_{G}$. Since $G/K_{P}$ is $G$-equivariantly diffeomorphic to the total space of the unit tangent bundle of $G/K_{G}$ in these cases, as mentioned in Section~\ref{sec:example}, we identify $G/K_{P}$ to a submanifold of $T_{0} (G/K_{G})$. We have a $G$-equivariant contraction $\gamma : T_{0} (G/K_{G}) \to G/K_{P}$. Let $\rho : T_{0} (G/K_{G}) \to G/K_{G}$ be the projection. Consider the vector bundle $\ker \rho_{*}$ over $T_{0} (G/K_{G})$ consisting of vertical vectors. Let $E= (\ker \rho_{*})|_{G/K_{P}}$. We have $\nu \F_{P} \oplus \RR_{\gamma} = E$, where $\RR_{\gamma}$ is the trivial vector bundle of rank one over $G/K_{P}$ spanned by vectors tangent to the fibers of $\gamma$. Here, $E$ has a $G$-invariant flat connection $\nabla'$ induced by the vector bundle structure of $\ker \rho_{*}$. Thus the total Pontryagin form $p(E,\nabla')$ of $(E,\nabla')$ is zero. 

Let $\widetilde{M}$ be the universal cover of $M$ and $\widehat{\dev} : \widetilde{M} \to G/K_{P}$ be a $\pi_{1}M$-equivariant map such that $\widetilde{\F} = \widehat{\dev}^{*}\F_{P}$, where $\widetilde{\F}$ is the lift of $\F$ to $\widetilde{M}$ (see Proposition~\ref{prop:h2}). By the $\pi_{1}M$-equivariance of $\widehat{\dev}$, the vector bundles $\widehat{\dev}^{*}\RR_{\gamma}$ and $\widehat{\dev}^{*}E$ over $\widetilde{M}$ descend to vector bundles over $M$, which are denoted by $\RR_{M}$ and $E_{M}$, respectively. Since $E_{M}$ admits a flat connection by construction, the total Pontryagin class $p(E_{M})$ of $E_{M}$ is $0$. Since $\nu \F \oplus \RR_{M} = E_{M}$ and by the product formula of total Pontryagin classes, we get $p(\nu \F) = p(E_{M})=0$. 
\end{proof}

Proposition~\ref{prop:onlyGV3} is a consequence of Lemmas~\ref{lem:onlyGV} and~\ref{lem:onlyGV2} and Theorem~\ref{thm:factrization}.

\subsection{Proof of Bott-Thurston-Heitsch type formulas}\label{sec:BT}

\subsubsection{The volume of flat $G/K_{G}$-bundles}\label{sec:vol}

Here, we recall the definition of the characteristic classes of $G/K_{G}$-bundles with flat $G$-connections. For a $G/K_{G}$-bundle $p_{Q} : Q \to N$ with a flat $G$-connection whose holonomy homomorphism is $h : \pi_{1}N \to G$, we have the Chern-Weil homomorphism $H^{\bt}(\g,K_{G}) \to H^{\bt}(Q;\RR)$. The sections $s$ of $p_{Q}$ are unique up to isotopy because of the contractibility of $G/K_{G}$. By composing the pull-back by $s$ with the Chern-Weil homomorphism, we get a map $H^{\bt}(\g,K_{G}) \to H^{\bt}(N;\RR)$. Since this map depends only on $h$, we denote it by $\Xi_{h}$. 

We fix an orientation on $G/K_{G}$. Let $\omega_{G/K_{G}}$ be the corresponding left invariant volume form on $G/K_{G}$ of norm $1$ with respect to the metric induced by the Killing form of $\g$. Let $\vol_{G/K_{G}} = [\omega_{G/K_{G}}]$ and $\vol(h) = \Xi_{h}\left( {\vol}_{G/K_{G}} \right) \in H^{m}(N;\RR)$, where $m = \dim G/K_{G}$. The class $\vol(h)$ is called the {\em volume} of $Q$ or of the holonomy presentation $h$. 

\begin{ex}\label{ex:volumeG} For the case where $N=\G \backslash G/K_{G}$ for a torsion-free uniform lattice $\G$ of $G$, the volume of $\G \hookrightarrow G$ is denoted by $\vol(\G)$, which is represented by the volume form on $N$ induced from $\omega_{G/K_{G}}$.
\end{ex}

\begin{rem}
$\Xi_{h}$ is called the {\em Borel regulator map\/} by algebraic geometers. For the importance of the volume in algebraic geometry, see~\cite{Reznikov1996} and the references therein.
\end{rem}

\subsubsection{Bott-Thurston-Heitsch type formulas for homogeneous foliations}\label{sec:BTHhom}

We apply the computation of the last section to calculate the Godbillon-Vey classes of homogeneous foliations $\F_{\G}$ in Example~\ref{ex:mob}. We consider the $K_{G}/K_{P}$-bundle $\phi_{K_{G}} : \G \backslash G/K_{P} \to \G \backslash G/K_{G}$. We may identify $\G \backslash G/K_{P}$ to the unit tangent sphere bundle of $\G \backslash G/K_{G}$. Fix an orientation of $\G \backslash G/K_{G}$ and let $\omega_{G/K_{G}}$ be the oriented volume form of norm one. We have an induced orientation on $\G \backslash G/K_{P}$ as the boundary of the unit tangent disk bundle of $\G \backslash G/K_{G}$. Fix a compatible orientation on the fibers of $\phi_{K_{G}}$.

\begin{prop}\label{prop:volGV}
Let $(G,G/P)$ be one of $(\SO_{0}(n+1,1),S^{n}_{\infty})$, $(\SU(n+1,1),S^{2n+1}_{\infty})$, $(\Sp(n+1,1),S^{4n+3}_{\infty})$ or $(F_{4(-20)},S^{15}_{\infty})$. Let $q=\dim G/P$ {\rm(}the codimension of $\F_{\G}${\rm)}. We have 
\begin{align}\label{eq:lat}
\frac{1}{(2\pi)^{q+1}}\fint_{\phi_{K_{G}}} \GV(\F_{\G}) = c_{G}\, \omega_{G/K_{G}}
\end{align}
in $\Omega^{q+1}(\G \backslash G/K_{G})$, where $c_{G}$ is a nonzero constant depending on $(G,G/P)$ given by Table~\ref{table: c_G} in the case of classical groups, where $\vol(S^{q})$ is the volume of the unit sphere in $\RR^{q+1}$, given by $\vol(S^{q}) = \frac{(2\pi)^{(q+1)/2}}{2 \cdot 4 \cdots (q-1)}$ for odd $q$ and $\vol(S^{q}) = \frac{2(2\pi)^{q/2}}{1 \cdot 3 \cdots (q-1)}$ for even $q$.
\end{prop}

\renewcommand{\arraystretch}{2.5}
\begin{table}[h]
\[
\begin{array}{| c | c |} 
\hline
(G,G/P) & c_{G} \\ 
\hline
(\SO_{0}(n+1,1),S^{n}_{\infty}) & \displaystyle (-1)^{\frac{n(n+1)}{2}}\frac{n^{\frac{n+1}{2}}\vol (S^{n})}{2^{\frac{3n+3}{2}}\pi^{n+1}} \\
(\SU(n+1,1),S^{2n+1}_{\infty}) & \displaystyle (-1)^{n+1}\frac{2(n+1)^{2n+2} (2n+1)!\vol(S^{2n+1})}{(2\pi)^{2n+2}(n+2)^{n+1}} \\
(\Sp(n+1,1),S^{4n+3}_{\infty}) & \displaystyle \frac{(2n+3)^{4n+4}(4n+3)! \vol(S^{4n+3})}{2^{3n+1}(2\pi)^{4n+4}(n+3)^{2n+2}} \\ 
\hline
\end{array}
\]
\caption{The constant $c_G$}
\label{table: c_G}
\end{table}
\renewcommand{\arraystretch}{1}

\begin{proof}
As explained in Section~\ref{sec:example}, we can identify $G/K_{P}$ to the unit tangent sphere bundle of $G/K_{G}$. Let $X=\G \backslash G/K_{G}$ and $Y=\G \backslash G/K_{P}$. Since the $\G$-action on $G/K_{G}$ is isometric, we can identify $Y$ to the unit tangent sphere bundle of $X$. Here $\GV(\F_{P})$ is $\G$-invariant and induces $\GV(\F_{\G})$ on $Y$. Then the proposition in the case where $G=F_{4(-20)}$ directly follows from the computation~\eqref{eq:F4cg}. In the other cases, we can determine the constant $c_{G}$ up to sign by \eqref{eq:inttt},~\eqref{eq:SUcg} and~\eqref{eq:Spcg}. Thus, to compute $c_{G}$, it is sufficient to compute the sign of $c_{G}$. Let $\nu$ be a $q$-form on $Y$ whose restriction to each fiber of the sphere bundle $\phi_{K_{G}} : Y \to X$ is an oriented volume form of volume one. Then we have
\[
\GV(\F_{\G}) = c_{G}\, \phi_{K_{G}}^{*}\omega_{G/K_{G}} \wedge \nu
\]
Let $D$ be the tangent unit disk bundle of $X$ with the induced orientation from $X$. We extend $\nu$ to a $q$-form $\tilde\nu$ on $D$ so that the restriction of $d\tilde\nu$ to each fiber of the projection $D \to X$ is an oriented volume form. For instance, take $\tilde\nu=\frac{1}{q}\sum_{i=1}^q(-1)^{i-1}x_i\,dx_1\wedge\dots\wedge\widehat{dx_i}\wedge\dots\wedge dx_n$, using the fiberwise coordinates $(x_1,\dots,x_n)$ given by normal charts depending smoothly on their center points locally in $X$ (they are defined by a local section of the corresponding bundle of oriented orthonormal frames). Then we have
\[
\int_{Y} \GV(\F_{\G}) = \int_{D}d\GV(\F_{\G}) = (-1)^{q+1} c_{G} \int_{D} \phi_{K_{G}}^{*}\omega_{G/K_{G}} \wedge d\tilde\nu\;.
\]
We see that $\int_{D} \phi_{K_{G}}^{*}\omega_{G/K_{G}} \wedge d\tilde\nu$ is positive. Then, to compute the sign of $c_{G}$, it suffices to compute the sign of $\int_{Y} \GV(\F_{\G})$. By the description of $\Delta_{\F_{P}}(h_{1})$ in \eqref{eq:SOh1}, \eqref{eq:SUh1} and \eqref{eq:h1Sp}, we see that $-\Delta_{\F_{\G}}(h_{1})$ is equal, up to the product of a positive constant, to the standard contact form whose Reeb flow is the geodesic flow on $X$ (for a description of geodesic flows on $G/K_{G}$, see \cite{Quint2006} for example). Let $\eta=-\Delta_{\F_{\G}}(h_{1})$. Then $\GV(\F_{\G})$ is equal to $-\eta \wedge (-d\eta)^{q}$ up to the product of a positive constant. With $D$ as before, let $\tilde\eta$ be the $1$-form on $D$ which is the pull back of the Liouville $1$-form on $T^{*}X$ by the mapping $\flat : D \to T^{*}X; v \mapsto (w \mapsto g_{X}(v, w))$, where $g_{X}$ is the metric on $X$ induced from the Killing form on $\Lie(G)$. It is known that $\tilde\eta|_{Y} = \eta$ (see \cite[Lemma~1.37]{Paternain1999}). Since $Y=\partial D$, we have
\begin{align*}
\int_{Y} -\eta \wedge (-d\eta)^{q}= (-1)^{q+1} \int_{D} (d\tilde\eta)^{q+1}\;.
\end{align*}
We fix a point $x$ in $X$. Take a point $y \in \operatorname{pr}^{-1}(x)$, where $\operatorname{pr} : D \to X$ is the projection. Here, by using the Levi-Civita connection on $TX$, we see that $T_{y}D$ is splitted into the horizontal part $\mathcal{H}$ and the vertical part $\mathcal{V}=\ker \operatorname{pr}_{*}$. Here $\mathcal{H}$ is identified to $T_{x}X$ by $\operatorname{pr}_{*}$, and $\mathcal{V}$ is naturally identified to $T_{x}X$ by using the affine structure of $T_{x}X$. Let $\{v_{1}, \ldots, v_{q+1}\}$ be a basis of $T_{x}X$. Take a basis $\{a_{1}, \ldots, a_{q+1}\}$ of $\mathcal{H}$ and a basis of $\{b_{1}, \ldots, b_{q+1}\}$ of $\mathcal{V}$ so that both $a_{i}$ and $b_{i}$ are identified to $v_{i}$. Then, it is well known that $d\tilde\eta$ is equal to $\sum_{i=1}^{q+1} (a_{i} \wedge b_{i})$ up to the product of a positive constant (see \cite[Section~1.3.2]{Paternain1999}). Here the oriented volume form on $D$ induced from $X$ is equal to $a_{1} \wedge \ldots \wedge a_{q+1} \wedge b_{1} \wedge \ldots \wedge b_{q+1}$. Therefore, by~\eqref{eq:sign}, we have that the sign of $\int_{Y} \GV(\F_{\G})$ is equal to $(-1)^{\frac{(q+1)(q+2)}{2}}$, which implies that the sign of $c_{G}$ is equal to $(-1)^{\frac{q(q+1)}{2}}$. Thus we obtain the constants $c_{G}$ in Table~\ref{table: c_G}.
\end{proof}

The same computation gives the following relation of the Godbillon-Vey class and the volume in the level of Lie algebra cohomology.

\begin{prop}\label{prop:volGV2}
Let $(G,G/P)$ be one of $(\SO_{0}(n+1,1),S^{n}_{\infty})$, $(\SU(n+1,1),S^{2n+1}_{\infty})$, $(\Sp(n+1,1),S^{4n+3}_{\infty})$ or $(F_{4(-20)},S^{15}_{\infty})$. Let $q=\dim G/P$ {\rm(}the codimension of $\F_{P}${\rm)}. We have
\begin{equation}
\frac{1}{(2\pi)^{q+1}}\fint_{\phi_{K_{G}}} \GV(\F_{P})= c_{G}\, \omega_{G/K_{G}}
\end{equation}
in $\big(\bigwedge^{q+1}\g^{*}\big)_{K_{G}}$ for some orientations of $G/K_{G}$ and the fibers of $\phi_{K_{G}} : G/K_{P} \to G/K_{G}$, where $c_{G}$ is a nonzero constant depending on $(G,G/P)$ given in Table~\ref{table: c_G} in the case of classical groups.
\end{prop}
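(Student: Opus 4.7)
The plan is to observe that Proposition~\ref{prop:volGV2} is essentially the pre-descent version of Proposition~\ref{prop:volGV}: the case-by-case calculations already carried out for Proposition~\ref{prop:volGV} were entirely performed in $\bigl(\bigwedge\g^{*}\bigr)_{K_{P}}$, and the descent to the quotient $\G\backslash G/K_{P}$ intervened only at the very end via the canonical identification of left-invariant forms on $G/K_{P}$ with forms on the biquotient. Thus the proof of Proposition~\ref{prop:volGV2} is a direct transcription of those computations, stopping one step earlier.

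Concretely, in each of the four cases, the representatives~\eqref{eq:GVtrconf3}, \eqref{eq:GVSU3}, \eqref{eq:GVSp3} and~\eqref{eq:GVF43} express $\Delta_{\F_{P}}(h_{1}c_{1}^{q})$ in $\bigl(\bigwedge\g^{*}\bigr)_{K_{P}}$ as a wedge product of two left-invariant pieces. First I would recall that the ``symmetric'' piece, built from a Cartan direction wedged with sums of dual root vectors of the form ${}^{t}z^{\vee}+z^{\vee}$, is $K_{G}$-basic and therefore coincides with a computable constant times the pullback of $\omega_{G/K_{G}}$ along $G/K_P\to G/K_G$; the relevant constant was extracted in the proof of Proposition~\ref{prop:volGV} from norms with respect to the Killing form of $\gc$. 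Next, the ``antisymmetric'' piece, built from the combinations $z^{\vee}-{}^{t}z^{\vee}$, becomes under the identification of $\m$ with the tangent space of $K_{G}/K_{P}\cong S^{q}$ at a base point a left-invariant volume form on the fiber whose integral is a prescribed multiple of $\vol(S^{q})$. Multiplying the two constants reproduces exactly $c_{G}$ as tabulated in Proposition~\ref{prop:volGV}.

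Finally, fiber integration along the compact fibers of $G/K_{P}\to G/K_{G}$ sends left-invariant forms to left-invariant forms, so the output automatically lies in $\bigl(\bigwedge^{q+1}\g^{*}\bigr)_{K_{G}}$ as claimed. I do not anticipate any serious obstacle: the only work is the case-by-case identification of the symmetric and antisymmetric parts of the Godbillon-Vey representative, which has already been done in Section~\ref{sec:Liecoh} and reused inside the proof of Proposition~\ref{prop:volGV}. The one point warranting explicit care is that the orientation choices appearing in Proposition~\ref{prop:volGV2} are the same ones used in Proposition~\ref{prop:volGV}, being dictated by the sign of the symmetric piece relative to $\omega_{G/K_{G}}$ and of the antisymmetric piece relative to the round volume on $S^{q}$; both are already fixed by that earlier argument.
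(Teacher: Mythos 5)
Your proposal is correct and matches the paper exactly: the paper's proof of Proposition~\ref{prop:volGV2} is literally the remark that ``the same computation'' as in Proposition~\ref{prop:volGV} applies, since that computation was already carried out with left-invariant forms in $\bigl(\bigwedge\g^{*}\bigr)_{K_{P}}$ and only descended to $\G\backslash G/K_{P}$ at the final step. Your identification of the symmetric ($K_{G}$-basic) and antisymmetric (fiberwise volume) pieces and of the orientation conventions is precisely the argument already given there.
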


\begin{rem}
By~\cite[Theorem~7.83]{KamberTondeur1975b}, the following diagram commutes: 
\begin{equation}
\xymatrix{H^{\bt}(\g,K_{P}) \ar[r] \ar[d]_{\fint} & H^{\bt}(\G \backslash G /K_{P}) \ar[d]^{\fint} \\
H^{\bt}(\g,K_{G}) \ar[r]_<<<<<{\kappa} & H^{\bt}(\G \backslash G /K_{G})\;.}
\end{equation}
The homomorphism $\kappa$ is well known to be injective. The commutativity describes the relation between Propositions~\ref{prop:volGV} and~\ref{prop:volGV2}.
\end{rem}

Proposition~\ref{prop:volGV} or~\ref{prop:volGV2} implies the following.
\begin{cor}\label{cor:FGGV}
Under the assumption and with the notation of Proposition~\ref{prop:volGV}, we have
\[
\frac{1}{(2\pi)^{q+1}} \int_{\G \backslash G /K_{P}} \GV(\F_{P}) = c_{G}\, \vol(\G \backslash G /K_{G})\;,
\]
where the volume $\vol(\G \backslash G /K_{G})$ is considered with respect to the metric induced by the Killing form of $\g$.
\end{cor}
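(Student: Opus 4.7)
The plan is to derive the corollary as a direct integration of the local formula of Proposition~\ref{prop:volGV}. First I would apply the standard normalization $\GV(\F_{\G}) = (2\pi)^{q+1}[\Delta_{\F_{\G}}(h_{1}c_{1}^{q})]$ of~\cite[Theorem~7.20]{KamberTondeur1975b} to Proposition~\ref{prop:volGV} to rewrite it as the identity of forms
\[
\frac{1}{(2\pi)^{q+1}} \fint_{\phi_{K_{G}}} \GV(\F_{\G}) = c_{G}\,\omega_{G/K_{G}}
\]
in $\Omega^{q+1}(\G \backslash G/K_{G})$.

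Next, I would integrate both sides over the base $\G \backslash G/K_{G}$ of the oriented fiber bundle $\phi_{K_{G}}\colon \G \backslash G/K_{P}\to \G \backslash G/K_{G}$. Using the basic Fubini-type formula $\int_{B}\fint_{p}\alpha=\int_{E}\alpha$ relating fiber integration to integration over the total space of an oriented bundle $p\colon E\to B$, the left-hand side transforms into $(2\pi)^{-(q+1)}\int_{\G\backslash G/K_{P}}\GV(\F_{\G})$, which is precisely the left side of the corollary.

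For the right side, I would identify $\int_{\G\backslash G/K_{G}} \omega_{G/K_{G}}$ with $\vol(\G \backslash G/K_{P})$ under the orientations chosen in the proof of Proposition~\ref{prop:volGV}: namely, the base form $\omega_{G/K_{G}}$ is the Killing-induced volume form on $G/K_{G}$, and the orientation of the fiber $K_{G}/K_{P}$ was selected so that the product decomposition compatible with $\phi_{K_{G}}$ recovers the Killing volume on $G/K_{P}$ (the normalizing factor from the standard sphere volume $\vol(S^{q})$ is already visible inside the constant $c_{G}$ in each row of the table).

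The only real point to check is therefore this orientation and normalization bookkeeping at the level of volumes; the substantive content — the computation of the proportionality constant $c_{G}$ between $\fint_{\phi_{K_{G}}}\Delta_{\F_{\G}}(h_{1}c_{1}^{q})$ and $\omega_{G/K_{G}}$ — is already contained in Proposition~\ref{prop:volGV}, so the corollary amounts to restating that pointwise identity as an integrated one on the closed manifold $\G\backslash G/K_{P}$.
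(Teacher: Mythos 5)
Your route is the same as the paper's own (one‑line) derivation: rewrite Proposition~\ref{prop:volGV} using $\GV(\F_{P})=(2\pi)^{q+1}[\Delta_{\F_{P}}(h_{1}c_{1}^{q})]$ and then integrate the resulting identity of $(q+1)$-forms over the base of $\phi_{K_{G}}$, invoking the Fubini property $\int_{B}\fint_{p}\alpha=\int_{E}\alpha$ of fiber integration. Up to that point the argument is correct.

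The gap is in your final identification. Integrating $c_{G}\,\omega_{G/K_{G}}$ over $\G\backslash G/K_{G}$ yields $c_{G}\,\vol(\G\backslash G/K_{G})$, the Killing-metric volume of the $(q+1)$-dimensional base locally symmetric space, and \emph{not} $c_{G}\,\vol(\G\backslash G/K_{P})$, the volume of the $(2q+1)$-dimensional total space. The two differ by the Killing-metric volume of the fiber $K_{G}/K_{P}$, which is not $1$: in the proof of Proposition~\ref{prop:volGV} the fiber orientation and the factor $\vol(S^{q})$ inside $c_{G}$ are normalized against the \emph{standard unit-sphere} metric, not against the Killing metric restricted to the fiber, so there is no cancellation of the kind you describe. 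Your attempt to absorb the discrepancy into $c_{G}$ is circular, since $c_{G}$ is the fixed constant of Proposition~\ref{prop:volGV} and appears unchanged on both sides of the corollary; it cannot simultaneously convert a base volume into a total-space volume. What your computation actually establishes is $\frac{1}{(2\pi)^{q+1}}\int_{\G\backslash G/K_{P}}\GV(\F_{P})=c_{G}\,\vol(\G\backslash G/K_{G})$, which is the version consistent with~\eqref{eq:GV12} of Lemma~\ref{lem:r2} specialized to $\F=\F_{\G}$ (where the right-hand side is $c_{G}\int_{N}\vol(\hol(\F))$ with $N=\G\backslash G/K_{G}$). To obtain the statement with the total-space volume literally as printed you would have to compute the Killing volume of the fiber and show it equals $1$, which it does not in general; the printed right-hand side appears to intend the base $\G\backslash G/K_{G}$, and your proof should say so explicitly rather than assert the identification.
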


\subsubsection{Bott-Thurston-Heitsch type formulas for suspension foliations}

The homogeneous foliations are suspension foliations over locally symmetric spaces whose holonomy homomorphisms are the canonical embeddings of lattices. We will show Bott-Thurston-Heitsch type formulas (Theorem~\ref{thm:BT}) that can be applied to more general suspension foliations.

The suspension foliations $\F$ in the statement of Theorem~\ref{thm:BT} are $(G,G/P)$-foliations on the total spaces of $G/P$-bundles over manifolds $N$ which are transverse to the $G/P$-fibers by construction. In the case where $\dim G/P > 1$, it is easy to see that, conversely, any $(G,G/P)$-foliation on the total space of a $G/P$-bundle over a manifold $N$ which is transverse to the $G/P$-fibers is a suspension foliation like in the statement of Theorem~\ref{thm:BT}. In this section, we prove Theorem~\ref{thm:BT} for $(G,G/P)$-foliations on the total spaces of $G/P$-bundles over manifolds $N$ which are transverse to the $G/P$-fibers. Part of the argument will be used later in a more general situation.

Let $(G,G/P)$ be one of $(\SO_{0}(n+1,1),S^{n}_{\infty})$, $(\SU(n+1,1), S^{2n+1}_{\infty})$, $(\Sp(n+1,1),S^{4n+3}_{\infty})$ or $(F_{4(-20)},S^{15}_{\infty})$. Let $q = \dim G/P$ (the codimension of $(G,G/P)$-foliations). Consider the case of codimension $q > 1$; namely, all cases except $(\SO_{0}(2,1),S^{1}_{\infty})$ and $(\SU(1,1), S^{1}_{\infty})$. Let $N$ be a smooth manifold, and $p_{M} : M \to N$ an $S^{q}$-bundle over $N$. Let $\F$ be a $(G,G/P)$-foliation of $M$ which is transverse to the fibers of $p_{M}$. Since $G$ preserves an orientation of $G/P$, it follows that $p_{M}$ is orientable.

We have two $G$-equivariant fibrations on $G/K_{P}$:
\[
\xymatrix{ G/P  && G/K_{P} \ar[ll]_<<<<<<<<<{\phi_{P}} \ar[d]^{\phi_{K_{G}}}  \\
                      && G/K_{G}\;. }
\]
Now, it is easy to see that the fibers of $\phi_{P}$ and $\phi_{K_{G}}$ are of complementary dimension and transverse to each other. This observation implies the following.
\begin{lemma}\label{lem:lift}
Let $\dev : \wt{M} \to G/P$ be the developing map of $\F$. For any $\pi_{1}M$-equivariant map $s : \wt{M} \to G/K_{G}$, there exists a unique map $\widehat{\dev} : \wt{M} \to G/K_{P}$ which is $\pi_{1}M$-equivariant, satisfies $\widehat{\F} = \widehat{\dev}^{*}\F_{P}$ and makes the following diagram commutative: 
\[
\xymatrix{G/P  && G/K_{P} \ar[ll]_<<<<<<<<<{\phi_{P}} \ar[d]^{\phi_{K_{G}}} \\
         \wt{M} \ar[u]^{\dev} \ar[rr]_<<<<<<<<<<{s} \ar[rru]^<<<<<<<<<<<<{\widehat{\dev}} && G/K_{G}\;.  }
\]
Moreover, if $s$ is regular at a point $x\in \widetilde{M}$, then $\widehat{\dev}$ is regular at $x$.
\end{lemma}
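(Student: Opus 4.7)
The heart of the proof is the following geometric observation. Consider the $G$-equivariant map
\[
\Phi=(\phi_{P},\phi_{K_{G}})\colon G/K_{P}\longrightarrow G/P\times G/K_{G}\;.
\]
My plan is to show that $\Phi$ is a diffeomorphism, and then simply set $\widehat{\dev}=\Phi^{-1}\circ(\dev,s)$. The whole content of the lemma will be extracted from properties of $\Phi$.

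To prove $\Phi$ is a diffeomorphism, I first check that the two sides have equal dimension: $\dim G/K_{G}=q+1$ and $\dim K_{G}/K_{P}=q$, so $\dim G/K_{P}=2q+1=\dim(G/P\times G/K_{G})$. Injectivity is straightforward: if $\Phi(g_{1}K_{P})=\Phi(g_{2}K_{P})$, then $g_{1}^{-1}g_{2}\in P\cap K_{G}=K_{P}$. For surjectivity, the key input is that $K_{G}$ acts transitively on $G/P$ (indeed $K_{G}/K_{P}\approx G/P$), so $K_{G}P=G$, and any pair $(g_{1}P,g_{2}K_{G})$ is in the image. Finally, $\Phi$ is a local diffeomorphism: the statement immediately before the lemma tells us that the fibers of $\phi_{P}$ and of $\phi_{K_{G}}$ in $G/K_{P}$ are transverse and of complementary dimension, so $\ker d\phi_{P}\oplus\ker d\phi_{K_{G}}=T(G/K_{P})$, and $d\Phi$ identifies this direct sum with the product tangent space. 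By $G$-equivariance the local diffeomorphism property holds everywhere, and combined with bijectivity we conclude that $\Phi$ is a diffeomorphism.

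With $\Phi$ in hand, set $\widehat{\dev}:=\Phi^{-1}\circ(\dev,s)$. Uniqueness is automatic since $\Phi$ is a bijection and the commutative diagram forces $\Phi\circ\widehat{\dev}=(\dev,s)$. The map $\widehat{\dev}$ is $\pi_{1}M$-equivariant because $\Phi^{-1}$ is $G$-equivariant and both $\dev$ and $s$ are $\pi_{1}M$-equivariant (for the $\hol(\F)$-action on $G/P$ and $G/K_{G}$). The identity $\widehat{\F}=\widehat{\dev}^{*}\F_{P}$ then follows by tracking leaves: the leaves of $\F_{P}$ are the fibers of $\phi_{P}$, so the leaves of $\widehat{\dev}^{*}\F_{P}$ are the fibers of $\phi_{P}\circ\widehat{\dev}=\dev$, which are exactly the leaves of $\widetilde{\F}$ by the characterization of $\dev$ in Proposition~\ref{prop:h1}.

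For the submersivity claim, I use the identification of $d\widehat{\dev}_{x}$, via the isomorphism $d\Phi_{\widehat{\dev}(x)}$, with the pair $(d\dev_{x},ds_{x})\colon T_{x}\widetilde{M}\to T_{\dev(x)}G/P\oplus T_{s(x)}G/K_{G}$. Since $\dev$ is a submersion everywhere, $d\dev_{x}$ is surjective, so standard linear algebra reduces the claim to showing that $ds_{x}$ restricted to $\ker d\dev_{x}=T_{x}\widetilde{\F}$ is surjective onto $T_{s(x)}G/K_{G}$. The main obstacle, and the step needing the most care, is to deduce this surjectivity from the hypotheses. This is where I will invoke the hypothesis that $\F$ is transverse to the fibers of $p_{M}$, which gives $T_{x}\widetilde{M}=T_{x}\widetilde{\F}\oplus\ker d\widetilde{p}_{M,x}$, together with the fact that the restriction of $s$ to a $p_{M}$-fiber (of dimension $q$) cannot surject onto $T(G/K_{G})$ (of dimension $q+1$), forcing the $T\widetilde{\F}$-direction to contribute the missing rank; combined with the $\pi_{1}M$-equivariance of $s$ and the homogeneity of the target, this yields the required surjectivity.
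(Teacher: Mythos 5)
Your construction of $\widehat{\dev}$ via the $G$-equivariant map $\Phi=(\phi_{P},\phi_{K_{G}})\colon G/K_{P}\to G/P\times G/K_{G}$ is correct, and it is exactly a fleshed-out version of what the paper does: the paper's entire justification is the remark that the fibers of $\phi_{P}$ and $\phi_{K_{G}}$ are transverse and of complementary dimension, which is precisely what makes your $\Phi$ a local (hence, with your bijectivity check using $K_{G}P=G$ and $P\cap K_{G}=K_{P}$, a global) diffeomorphism. Existence, uniqueness, equivariance, and $\wt{\F}=\widehat{\dev}^{*}\F_{P}$ are all handled correctly and in the same spirit as the paper.

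The gap is in your last paragraph. Having correctly reduced submersivity of $\widehat{\dev}$ at $x$ to surjectivity of $ds_{x}|_{T_{x}\wt{\F}}$ onto $T_{s(x)}(G/K_{G})$, you try to deduce this from $T_{x}\wt{M}=T_{x}\wt{\F}\oplus\ker d p_{\wt{M},x}$ together with the observation that $ds_{x}(\ker dp_{\wt{M},x})$ has dimension at most $q<q+1$. That only shows the two images $ds_{x}(T_{x}\wt{\F})$ and $ds_{x}(\ker dp_{\wt{M},x})$ jointly span $T_{s(x)}(G/K_{G})$, hence that $ds_{x}(T_{x}\wt{\F})\neq 0$; it does not force $ds_{x}(T_{x}\wt{\F})$ to be all of $T_{s(x)}(G/K_{G})$, since the fiber direction could still account for up to $q$ of the $q+1$ dimensions. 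The appeal to $\pi_{1}M$-equivariance and homogeneity of the target cannot repair a pointwise rank computation. What actually closes the argument in every application the paper makes (Proposition~\ref{prop:r1} and Lemma~\ref{lem:r2}) is that $s$ factors as $\overline{s}\circ p_{\wt{M}}$, so that $ds_{x}$ annihilates the entire fiber direction and therefore $ds_{x}(T_{x}\wt{\F})=ds_{x}(T_{x}\wt{M})=T_{s(x)}(G/K_{G})$ by submersivity of $s$ at $x$; you should either add this hypothesis (or, equivalently, $\ker ds_{x}\supseteq\ker dp_{\wt{M},x}$) or verify directly that $\ker ds_{x}\cap T_{x}\wt{\F}=0$. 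For what it is worth, the paper's own one-line justification of this part (``note that $\dev$ is a submersion'') is no more complete: surjectivity of $d\phi_{P}\circ d\widehat{\dev}_{x}$ and of $d\phi_{K_{G}}\circ d\widehat{\dev}_{x}$ does not by itself imply surjectivity of $d\widehat{\dev}_{x}$, as a $2$-plane in $\RR^{3}$ surjecting onto both factors of a transverse $(2+1)$-splitting shows.
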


\begin{proof}
The equality $\widehat{\F} = \widehat{\dev}^{*}\F_{P}$ is a trivial consequence of the construction like in Proposition~\ref{prop:h2}. To prove the latter part of Lemma~\ref{lem:lift}, note that $\dev$ is a submersion.
\end{proof}

Regard $\hol(\F)$ as a homomorphism $\pi_{1}N \cong \pi_{1}M \to G$. Given an orientation of $G/K_{G}$, the volume $\vol(\hol(\F))$ of $\hol(\F)$ is defined in $H^{q+1}(N;\RR)$ as mentioned in Section~\ref{sec:vol}. 

\begin{prop}\label{prop:r1}
We orient $G/K_{G}$ and the fibers of $\phi_{K_{G}}$ like in Proposition~\ref{prop:volGV}. Then
\begin{equation}\label{eq:GV1}
\frac{1}{(2\pi)^{q+1}} \fint_{p_{M}}  \GV(\F) = c_{G} \vol(\textstyle{\hol_{\F}})
\end{equation}
in $H^{q+1}(N;\RR)$ for an orientation of the fibers of $p_{M}$, where $c_{G}$ is the function of $(G,G/P)$ given in Table~\ref{table: c_G}.
\end{prop}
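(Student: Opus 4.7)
My plan is to realize both sides of~\eqref{eq:GV1} as push-forwards of differential forms coming from the homogeneous space $G/K_{P}$, and then apply Proposition~\ref{prop:volGV2} fiberwise. Fix a $\pi_{1}N$-equivariant map $s_{0}:\wt{N}\to G/K_{G}$ with respect to $\hol(\F):\pi_{1}N\to G$; such a map exists by the contractibility of $G/K_{G}$, and the descended section $\bar{s}:N\to\X_{G}(\F)/K_{G}$ represents the volume class via $\vol(\hol(\F))=[\bar{s}^{*}\omega_{G/K_{G}}]$ on the nose.

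Since $q>1$, the fibers $G/P=S^{q}$ of $p_{M}$ are simply connected, so $\pi_{1}M\cong\pi_{1}N$ and $\wt{M}=\wt{N}\times G/P$, with developing map $\dev(\tilde{y},v)=v$. Extend $s_{0}$ to a $\pi_{1}M$-equivariant map $s:\wt{M}\to G/K_{G}$ by $s(\tilde{y},v)=s_{0}(\tilde{y})$, and apply Lemma~\ref{lem:lift} to obtain $\widehat{\dev}:\wt{M}\to G/K_{P}$ satisfying $\phi_{K_{G}}\circ\widehat{\dev}=s$ and $\phi_{P}\circ\widehat{\dev}=\dev$. Because the fibers of $\phi_{P}$ and $\phi_{K_{G}}$ are of complementary dimension and transverse, $\phi_{P}$ restricted to any fiber of $\phi_{K_{G}}$ is a diffeomorphism onto $G/P$; consequently, for each $\tilde{y}\in\wt{N}$, the map $\widehat{\dev}(\tilde{y},\cdot):G/P\to\phi_{K_{G}}^{-1}(s_{0}(\tilde{y}))$ is a diffeomorphism onto the $K_{G}/K_{P}$-fiber over $s_{0}(\tilde{y})$. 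Transporting the $G$-invariant orientation of the $\phi_{K_{G}}$-fibers fixed in Proposition~\ref{prop:volGV} through these identifications, and using the connectedness of $G$, equips the fibers of $p_{M}$ with a well-defined orientation.

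The form $\widehat{\dev}^{*}\Delta_{\F_{P}}(h_{1}c_{1}^{q})$ is $\pi_{1}M$-invariant and descends to a representative of $\Delta_{\F}(h_{1}c_{1}^{q})=(2\pi)^{-(q+1)}\,\GV(\F)$ on $M$: this follows from the factorization of the characteristic homomorphism through Lie algebra cohomology (Theorem~\ref{thm:factrization}) applied via the enlarged Haefliger structure of Proposition~\ref{prop:h2}, since the Bott connection on $\nu\F$ pulls back from the left invariant Bott connection on $\nu\F_{P}$ through $\widehat{\dev}$. On the universal covers the diagram
\[
\xymatrix{\wt{M}\ar[r]^{\widehat{\dev}}\ar[d]_{p_{M}} & G/K_{P}\ar[d]^{\phi_{K_{G}}} \\ \wt{N}\ar[r]_{s_{0}} & G/K_{G}}
\]
is fiberwise a diffeomorphism, so a base-change argument for oriented fiber integration yields
\[
\fint_{p_{M}} \widehat{\dev}^{*}\Delta_{\F_{P}}(h_{1}c_{1}^{q}) \;=\; s_{0}^{*}\fint_{\phi_{K_{G}}}\Delta_{\F_{P}}(h_{1}c_{1}^{q})\;=\;c_{G}\,s_{0}^{*}\omega_{G/K_{G}}
\]
by Proposition~\ref{prop:volGV2}. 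Descending to $N$ and rescaling by $(2\pi)^{q+1}$ gives~\eqref{eq:GV1}.

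The main obstacle is orientation bookkeeping: one must confirm that the conventional $G$-invariant orientations of $G/K_{G}$ and of the $K_{G}/K_{P}$-fibers chosen in Proposition~\ref{prop:volGV} transport coherently through the fiberwise diffeomorphisms $\widehat{\dev}(\tilde{y},\cdot)$ to a globally defined orientation on the fibers of $p_{M}$ matching the one implicit in the statement. Once this orientation check is settled, the rest is an immediate consequence of the naturality of fiber integration together with Proposition~\ref{prop:volGV2}.
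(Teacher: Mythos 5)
Your proposal is correct and follows essentially the same route as the paper: lift a $\pi_{1}N$-equivariant map $\wt{N}\to G/K_{G}$ to $\widehat{\dev}:\wt{M}\to G/K_{P}$ via Lemma~\ref{lem:lift}, observe that $\widehat{\dev}$ is a fiberwise diffeomorphism (the paper deduces this from the covering-map property of $\widehat{\dev}$ on fibers plus simple connectivity of $S^{q}$ for $q>1$, you from transversality of the $\phi_{P}$- and $\phi_{K_{G}}$-fibers), and then apply base change for fiber integration together with Proposition~\ref{prop:volGV2}. The orientation issue you flag as the main obstacle is handled in the paper exactly as you suggest, by declaring the orientation of the $p_{M}$-fibers to be the one pulled back through $\widehat{\dev}$, which is all the statement requires.
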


\begin{proof}
Take a $\pi_{1}N$-equivariant map $\overline{s} : \wt{N} \to G/K_{G}$. We get a $\pi_{1}M$-equivariant map $s = \overline{s} \circ p_{\wt{M}} : \wt{M} \to G/K_{G}$, where $p_{\wt{M}} : \wt{M} \to \wt{N}$ is the canonical projection. By Lemma~\ref{lem:lift}, we get a $\pi_{1}M$-equivariant map $\widehat{\dev} : \wt{M} \to G/K_{P}$ which makes the following diagram commutative:
\begin{equation}\label{eq:pullback}
\xymatrix{  \wt{M} \ar[r]^<<<<<{\widehat{\dev}} \ar[d]_{p_{\wt{M}}}    & G/K_{P} \ar[d]^{\phi_{K_{G}}} \\
               \wt{N} \ar[r]_<<<<<{\overline{s}} & G/K_{G}\;. }
\end{equation}
Since $\F$ is transverse to the fibers of $p_{M}$, the restriction of $\widehat{\dev}$ to each fiber of $p_{\wt{M}}$ is a covering map onto a fiber of $\phi_{K_{G}}$. Since $p_{\wt{M}}$ and $\phi_{K_{G}}$ are $S^{q}$-bundles and $q>1$, the restriction of $\widehat{\dev}$ to each fiber of $p_{\wt{M}}$ is a diffeomorphism. Thus the diagram~\eqref{eq:pullback} is the pull-back of fiber bundles. We fix an orientation of the fibers of $p_{M}$ so that it is compatible with the orientation of the fibers of $\phi_{K_{G}}$ under $\widehat{\dev}^{*}$. Then $\fint_{p_{\wt{M}}} \widehat{\dev}^{*} \beta = \overline{s}^{*} \fint_{\phi_{K_{G}}} \beta$ for any $\beta\in\Omega^{\bt}(G/K_{P})$. We have $\frac{1}{(2\pi)^{q+1}}\fint_{\phi_{K_{G}}} \GV(\F_{P})  = c_{G}\, \omega_{G/K_{G}}$ in $(\bigwedge^{q+1}\g^{*})_{K_{P}}$ by Proposition~\ref{prop:volGV2}. Let $\wt{\F}$ be the lift of $\F$ to $\wt{M}$. Since $\wt{\F}=\widehat{\dev}^{*}\F_{P}$ by Lemma~\ref{lem:lift}, we have
\[
\frac{1}{(2\pi)^{q+1}}\fint_{p_{\wt{M}}} \GV(\wt{\F}) = \frac{1}{(2\pi)^{q+1}}\fint_{p_{\wt{M}}} \widehat{\dev}^{*} \GV(\F_{P}) = \overline{s}^{*} \fint_{\phi_{K_{G}}} \GV(\F_{P}) = c_{G}\, \overline{s}^{*} \omega_{G/K_{G}}
\]
in $\Omega^{\bt}(\wt{N})^{\pi_{1}N}$, which implies \eqref{eq:GV1}.
\end{proof}

\begin{proof}[Proof of Theorem~\ref{thm:BT}]
The theorem in the case where $G=F_{4(-20)}$ directly follows from Propositions~\eqref{eq:F4cg} and~\ref{prop:r1}. Consider the other cases. Since the sign of both sides of~\eqref{eq:BT} change when the orientation of the fibers of $p_{M}$ changes, it suffices to prove~\eqref{eq:BT} for any fixed orientation of the fibers of $p_{M}$.

We fix an orientation of $G/K_{G}$. As explained in Section~\ref{sec:example}, we may identify $G/K_{P}$ to the unit tangent sphere bundle of $G/K_{G}$. Consider the induced orientation of $G/K_{P}$ as the boundary of the unit tangent disk bundle of $G/K_{G}$. Fix the compatible orientation of  the fibers of $\phi_{K_{G}}$. By assumption, $G/K_{G}$ is of even dimension $q+1$. Since $G/K_{G}$ has a $G$-invariant metric, the Euler form $e$ of the oriented tangent bundle of $G/K_{G}$ is a left invariant volume form on $G/K_{G}$. Thus there exists a constant $\mu_{G}$ such that 
\begin{equation}\label{eq:mu}
e = {\mu}_{G}\,{\vol}_{G/K_{G}}\;,
\end{equation}
where $\vol_{G/K_{G}}$ is the left invariant oriented volume form of norm $1$ with respect to the Killing form on $\g$. Let $\vol_{\G}$ and $e_{\G}$ be the volume forms on $\G \backslash G / K_{G}$ such that $p_{N}^{*}\vol_{\G} = \vol_{G/K_{G}}$ and $p^{*}_{N}e_{\G} = e$, where $p_{N} : G/K_{G} \to N$ is the universal covering of $N$. By the Hirzebruch proportionality principle~\cite[Theorem~3.3]{CahnGilkeyWolf1976} (see also~\cite{KobayashiOno1990}), we can compute the constant $\mu_{G}$ by using the compact dual $K_{\Gc}/K_{G}$ of $G/K_{G}$ as follows:
\[
\mu_{G} = \frac{\int_{\G \backslash G / K_{G}} e_{\G}}{\int_{\G \backslash G / K_{G}}\vol_{\G}}  = (-1)^{(q+1)/2}\,\frac{e(K_{\Gc}/K_{G})}{\vol(K_{\Gc}/K_{G})}\;,
\]
where $e(K_{\Gc}/K_{G})$ is the Euler number of $K_{\Gc}/K_{G}$, and the volume $\vol(K_{\Gc}/K_{G})$ is considered with respect to the metric induced by the Killing form on $\gc$. The volume $\vol(K_{\Gc}/K_{G})$ was computed in~\cite{AbeYokota1997}, obtaining Table~\ref{table: e and vol}, where we also indicate the Euler number $e(K_{\Gc}/K_{G})$ of $K_{\Gc}/K_{G}$. Thus, with this computation of $\mu_{G}$, Theorem~\ref{thm:BT} follows from~\eqref{eq:mu} and Proposition~\ref{prop:r1}, where the constant $r_{G}$ in~\eqref{eq:BT} is $r_{G} = \mu_{G}^{-1} c_{G}$ with $c_{G}$ given in Table~\ref{table: c_G}. Since the sign of $c_{G}$ is equal to $(-1)^{q(q+1)/2}$ as we saw in the proof of Proposition~\ref{prop:volGV}, the sign of $r_{G}$ is equal to $(-1)^{(q+1)^{2}/2}$, which implies that $r_{G}$ is positive.
\end{proof}

\renewcommand{\arraystretch}{2.5}
\begin{table}[h]
\[
\begin{array}{|c|c|c|}\hline
K_{\Gc}/K_{G}  & e & \vol \\ \hline
\RR P^{n+1} & 1 & \displaystyle 2^{\frac{n-1}{2}}n^{\frac{n+1}{2}}\vol(S^{n+1}) \\
\CC P^{n+1} & n+2 & \displaystyle \frac{2^{n+1}(n+2)^{n+1}\pi^{n+1}}{(n+1)!} \\
\HH P^{n+1} & n+2 & \displaystyle  \frac{2^{6(n+1)}(n+3)^{2(n+1)}\pi^{2(n+1)}}{(2n+3)!} \\ \hline
\end{array}
\]
\caption{The Euler number and the volume of $K_{\Gc}/K_{G}$.}
\label{table: e and vol}
\end{table}
\renewcommand{\arraystretch}{1}

\section{The case where $G/P = S^{q}$ for even $q$}\label{sec:even}

\subsection{Integration along the fibers of Haefliger structures}

For transversely projective foliations, the integration of secondary invariants along the fibers of the Haefliger structures was computed by Brooks-Goldman~\cite[Lemma~2]{BrooksGoldman1984} and Heitsch~\cite[Lemma in Section~5]{Heitsch1986} to prove Proposition~\ref{prop:01}, which is an essential part of the proof of Theorem~\ref{thm:BGH}. In this section, we will see that such computation is reduced to a computation in Lie algebra cohomology in the case where $G/P$ is a sphere. This observation enables us to state a sufficient condition, that implies Proposition~\ref{prop:01}, in terms of Lie algebra cohomology. We will also see that Proposition~\ref{prop:01} is not true for transversely conformally flat foliations of even codimensions. In this section, the coefficient ring of cohomology is $\CC$.

Let $G$ be a connected algebraic linear semisimple Lie group. Let $P$ be a parabolic subgroup of $G$ such that $G/P$ is diffeomorphic to a sphere. Let $\X_{G}(\F)$ be the principal $G$-bundle over $M$ associated to $\F$. Consider the diagram of bundle maps between fiber bundles over $M$,
\begin{equation}\label{eq:fiberbundles}
\xymatrix{ \X_{\Gc}(\F)/K_{P} \ar[d] & \X_{G}(\F)/K_{P} \ar[d] \ar[l] \\
 \X_{\Gc}(\F)/K_{G} & \X_{G}(\F)/K_{G}\;, \ar[l] } 
\end{equation}
where the horizontal maps are inclusions defined by fiberwise complexification and the vertical maps are canonical projections. Let $\H^{\bt}(\Gc/K_{P})$, $\H^{\bt}(G/K_{P})$ and $\H^{\bt}(\Gc/K_{G})$ be the local systems over $M$ associated to the fiber bundles $\X_{\Gc}(\F)/K_{P}$, $\X_{G}(\F)/K_{P}$ and $\X_{\Gc}(\F)/K_{G}$, respectively. Note that the local system associated to $\X_{G}(\F)/K_{G}$ is trivial because the fiber $G/K_{G}$ is contractible. By using integration along fibers of the vertical maps of~\eqref{eq:fiberbundles}, we get the commutative diagram
\begin{equation}\label{eq:fib2}
\xymatrix{ H^{\bt}(M;\H^{\bt}(\Gc/K_{P})) \ar[d]_{\fint} \ar[r] & H^{\bt}(M;\H^{\bt}(G/K_{P})) \ar[d]^{\fint} \\
 H^{\bt}(M;\H^{\bt}(\Gc/K_{G})) \ar[r] & H^{\bt}(M)\;.} 
\end{equation}
Observe that we have natural isomorphisms
\begin{gather}
H^{\bt}(\g,K_{P}) \otimes \CC \cong H^{\bt}(\k_{\Gc},K_{P}) \otimes \CC \cong H^{\bt}(K_{\Gc}/K_{P}) \cong H^{\bt}(\Gc/K_{P})\;, \label{eq:Weyl1}\\
H^{\bt}(\g,K_{G}) \otimes \CC \cong H^{\bt}(\k_{\Gc}, K_{G}) \otimes \CC \cong H^{\bt}(K_{\Gc}/K_{G}) \cong H^{\bt}(\Gc/K_{G})\;, \label{eq:Weyl2}
\end{gather}
where the first isomorphisms in the two equations are the well known isomorphism in the Weyl's trick ~\cite[Section~3]{KobayashiOno1990}. We get the commutative diagram
\begin{equation}\label{eq:fib3}
\xymatrix{ H^{\bt}(\g, K_{P}) \otimes \CC  \ar[d]_{\fint} \ar[r] & H^{\bt}(\Gc/K_{P}) \ar[d]_{\fint} \ar[r] & H^{\bt}(M;\H^{\bt}(\Gc/K_{P})) \ar[d]_{\fint} \\
H^{\bt}(\g,K_{G}) \otimes \CC \ar[r] & H^{\bt}(\Gc/K_{G}) \ar[r] & H^{\bt}(M;\H^{\bt}(\Gc/K_{G}))\;.} 
\end{equation}

Recall that $\X_{\Gc}(\F)/K_{P}$ has a $(G,G/P)$-foliation $p^{*}\E_{\hol(\F)}$, which is obtained by pulling back the foliation $\E_{\hol(\F)}$ on $\X_{\Gc}(\F)/P$ defined by the flat $G$-connection by the canonical projection $p : \X_{\Gc}(\F)/K_{P} \to \X_{\Gc}(\F)/P$. By combining Theorem~\ref{thm:factrization}, diagrams~\eqref{eq:fib2} and~\eqref{eq:fib3}, and the definition of the characteristic homomorphisms, we get the following.

\begin{prop}\label{prop:fibint}
The following diagram is commutative: 
\begin{equation}\label{eq:bigdiag}
\xymatrix{ H^{\bt}(WO_{q}) \ar[d]_{\Delta_{\F_{P}}} \ar[drr]^{\Delta'_{p^{*}\E_{\hol(\F)}}} & & \\
H^{\bt}(\g,K_{P}) \otimes \CC \ar[d]_{\fint} \ar[r] & H^{\bt}(M;\H^{\bt}(\Gc/K_{P})) \ar[d]_{\fint} \ar[r] & H^{\bt}(M;\H^{\bt}(G/K_{P})) \ar[d]_{\fint} \\
H^{\bt}(\g,K_{G}) \otimes \CC \ar[r] \ar @/_2pc/ [rr]_{\Xi_{\hol (\F)}} & H^{\bt}(M;\H^{\bt}(\Gc/K_{G})) \ar[r] & H^{\bt}(M)\;,} 
\end{equation}
where $\Delta'_{p^{*}\E_{\hol(\F)}}$ is the map induced by the characteristic homomorphism $\Delta_{p^{*}\E_{\hol(\F)}}$ of $p^{*}\E_{\hol(\F)}$, and $\Xi_{\hol (\F)} : H^{\bt}(\g,K_{G}) \to H^{\bt}(M)$ is the characteristic homomorphism of the flat $G/K_{G}$-bundle $\X_{G}(\F)/K_{G} \to M$ mentioned in Section~\ref{sec:vol}.
\end{prop}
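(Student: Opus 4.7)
I will verify the commutativity by decomposing the diagram into three pieces. First, the top triangle should follow from Theorem~\ref{thm:factrization} (Benson-Ellis) applied to the $(G,G/P)$-foliation $p^{*}\E_{\hol(\F)}$ on $\X_{G}(\F)/K_{P}$. By Proposition~\ref{prop:h2}, this foliation has developing map equal to the inclusion of each fiber into $G/K_{P}$, so its characteristic homomorphism factorizes through $H^{\bt}(\g,K_{P})$ precisely as $\Delta_{\F_{P}}$. Since the Pittie construction of Section~\ref{sec:root} represents these classes by left-$G$-invariant forms on $G/K_{P}$, their pullbacks to $\X_{G}(\F)/K_{P}$ are parallel sections of the local system $\H^{\bt}(G/K_{P})$; this identification is what the top row of horizontal arrows captures.

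Second, the middle and right $\fint$-square commute by~\eqref{eq:fib2}, which is already established. The left $\fint$-square commutes because, under the Weyl-trick isomorphisms~\eqref{eq:Weyl1} and~\eqref{eq:Weyl2}, the fiber integration $H^{\bt}(\g,K_{P})\otimes\CC \to H^{\bt}(\g,K_{G})\otimes\CC$ corresponds to the integration of $\Gc$-invariant forms along the compact fiber $K_{G}/K_{P}$ of $\Gc/K_{P}\to\Gc/K_{G}$, and this integration is manifestly compatible with its bundle-level counterpart induced by the vertical maps of~\eqref{eq:fiberbundles}.

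Third, for the bottom arc, contractibility of $G/K_{G}$ yields a section $s : M\to\X_{G}(\F)/K_{G}$, unique up to isotopy. Tracing a class $[\omega] \in H^{\bt}(\g,K_{G})\otimes\CC$ along the bottom composite yields $s^{*}$ of the left-invariant representative $\omega$ viewed as a form on $\X_{\Gc}(\F)/K_{G}$ and restricted along $\X_{G}(\F)/K_{G}\hookrightarrow\X_{\Gc}(\F)/K_{G}$. This is exactly the Chern-Weil construction defining $\Xi_{\hol(\F)}$ in Section~\ref{sec:vol}.

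The main obstacle will be the careful verification of the top triangle: one must ensure that Pittie's representatives pull back to genuinely $G$-invariant forms on $\X_{G}(\F)/K_{P}$ (so that they descend to horizontal sections of $\H^{\bt}(G/K_{P})$) and that the resulting edge lift $H^{0}(M;\H^{\bt}(G/K_{P}))\hookrightarrow H^{\bt}(M;\H^{\bt}(G/K_{P}))$ agrees with $\Delta'_{p^{*}\E_{\hol(\F)}}$ as defined in the statement. This uses both the left-invariance of the Pittie Bott connection and the equivariance of the developing map provided by Proposition~\ref{prop:h2}.
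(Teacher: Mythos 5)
Your proposal is correct and follows essentially the same route as the paper, which proves the proposition in one line by combining Theorem~\ref{thm:factrization} for the top triangle, the fiber-integration diagrams~\eqref{eq:fib2} and~\eqref{eq:fib3} for the squares, and the definition of $\Xi_{\hol(\F)}$ for the bottom arc. Your fleshed-out verification of the top triangle (left-invariance of the Bott connection forms giving parallel sections of the local system, hence classes in the $E_{2}$-term) is exactly the point the paper records immediately after the proposition when it identifies the upper horizontal composite with the map induced on $E_{2}$-terms of the Leray--Hirsch spectral sequence.
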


By Propositions~\ref{prop:eachsigma} and~\ref{prop:fibint}, and the following fact, we will get a sufficient condition for the finiteness of secondary characteristic classes in terms of Lie algebra cohomology (Proposition~\ref{prop:suff}). 
 
\begin{lemma}\label{lem:Gysin}
Let $\sigma$ be a cohomology class of $\X_{G}(\F)/K_{P}$. Then $\sigma$ belongs to the image of $\pi^{*}_{G/K_{P}} : H^{\bt}(M) \to H^{\bt}(\X_{G}(\F)/K_{P})$ if and only if $\fint \sigma = 0$.
\end{lemma}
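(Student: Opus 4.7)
The strategy is to recognize $\pi_{G/K_{P}}$ as homotopy equivalent, over $M$, to a genuine sphere bundle, and then to invoke the Gysin sequence. Since the fibers $P/K_{P}$ of the projection $p : \X_{G}(\F)/K_{P} \to \X_{G}(\F)/P$ are contractible, this projection admits a section, unique up to homotopy, and so is a fiber homotopy equivalence of bundles over $M$. Composition with $p$ identifies the Leray-Serre setup of $\pi_{G/K_{P}}$ with that of the $G/P$-bundle $\pi_{G/P} : \X_{G}(\F)/P \to M$, whose fiber is the sphere $G/P = S^{q}$. Under this identification, $\pi_{G/K_{P}}^{*}$ becomes $\pi_{G/P}^{*}$ and the fiberwise integration $\fint$ of the lemma becomes the standard fiber integration along the sphere bundle (up to the sign coming from the orientation of the contractible fiber $P/K_{P}$, which is immaterial for the statement to be proved).

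The easy direction, that $\sigma \in \Image(\pi_{G/K_{P}}^{*})$ implies $\fint \sigma = 0$, follows from the projection formula: writing $\sigma = \pi_{G/K_{P}}^{*}\alpha$, one has $\fint \sigma = \alpha \wedge \fint 1$, and $\fint 1 \in H^{-q}(M;\CC) = 0$ since $q > 0$.

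For the converse, I invoke the Gysin sequence of the oriented sphere bundle $\pi_{G/P}$:
\begin{equation*}
\cdots \to H^{k-q-1}(M) \xrightarrow{\wedge e} H^{k}(M) \xrightarrow{\pi_{G/P}^{*}} H^{k}(\X_{G}(\F)/P) \xrightarrow{\fint} H^{k-q}(M) \to \cdots
\end{equation*}
Exactness at the middle term gives $\ker \fint = \Image(\pi_{G/P}^{*})$, which under the identification above is the desired statement for $\pi_{G/K_{P}}$.

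The main obstacle, or point requiring care rather than genuine difficulty, is the orientability of the sphere bundle $\pi_{G/P}$, which underlies the existence of the Gysin sequence with untwisted $\CC$-coefficients: one needs the holonomy of the flat principal $G$-bundle $\X_{G}(\F) \to M$ to act on $G/P = S^{q}$ by orientation-preserving diffeomorphisms. This is automatic when $G$ is connected (as with $\SO_{0}(n+1,1)$), and in the ambient setting of Theorem~\ref{thm:infinite} one may always pass to this case by a finite cover; otherwise the argument runs identically with the Gysin sequence for the orientation local system, since the projection-formula direction and the exactness direction are unaffected.
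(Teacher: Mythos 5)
Your proof is correct and follows essentially the same route as the paper: both identify $\X_{G}(\F)/K_{P}$ up to fiber homotopy equivalence over $M$ with the sphere bundle $\X_{G}(\F)/P$ and then read off $\ker\fint=\Image(\pi_{G/K_{P}}^{*})$ from the Gysin sequence. The only cosmetic difference is that the paper additionally invokes the tautological section of $\X_{G}(\F)/P\to M$ to split the Gysin sequence into short exact pieces, whereas you use only exactness at the middle term, which already suffices for the stated equivalence.
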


\begin{proof}
Note that $\X_{G}(\F)/K_{P}$ is homotopy equivalent to a sphere bundle $\X_{G}(\F)/P$ over $M$. Since $\X_{G}(\F)/P$ has a section, the Gysin sequence splits to give the exact sequence
\[
\xymatrix{0 \ar[r] & H^{\bt}(M) \ar[r]^<<<<<{\pi^{*}_{G/K_{P}}} & H^{\bt}(\X_{G}(\F)/K_{P}) \ar[r]^<<<<<{\fint} & H^{\bt}(M) \ar[r] & 0\;,
} 
\]
and the result follows.
\end{proof}

The composite of the upper horizontal maps of~\eqref{eq:bigdiag} is induced on the $E_{2}$-terms of the Leray-Hirsch spectral sequence of $\X_{G}(\F)/K_{P} \to M$ by the characteristic homomorphism $H^{\bt}(\g,K_{P}) \to H^{\bt}(\X_{G}(\F)/K_{P})$ of the $(G,G/P)$-foliation $p^{*}\E_{\hol (\F)}$ on $\X_{G}(\F)/K_{P}$ mentioned in Proposition~\ref{prop:h2}. Thus, as a consequence of Propositions~\ref{prop:eachsigma} and~\ref{prop:fibint}, and Lemma~\ref{lem:Gysin}, we get the following.

\begin{prop}\label{prop:suff}
If $\fint \Delta_{\F_{P}}(\sigma) = 0$ for $\sigma\in H^{\bt}(WO_{q})$, then 
\[
\# \{\, \Delta_{\F}(\sigma) \in H^{\bt}(M;\RR) \mid \F \in \Fol(G,G/P) \,\} < \infty\;.
\]
\end{prop}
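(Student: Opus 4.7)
The plan is to assemble the three preceding results in the order suggested by the final paragraph of Section~\ref{sec:even}. By Proposition~\ref{prop:eachsigma}, the desired finiteness follows at once if, for every $(G,G/P)$-foliation $\F$ on $M$, the class $\Delta_{\widehat{\E}}(\sigma)\in H^{\bt}(\X_{G}(\F)/K_{P};\RR)$ lies in the image of $\pi_{G/K_{P}}^{*}:H^{\bt}(M;\RR)\to H^{\bt}(\X_{G}(\F)/K_{P};\RR)$. Since $G/P=S^{q}$, the bundle $\X_{G}(\F)/K_{P}\to M$ is homotopy equivalent to the sphere bundle $\X_{G}(\F)/P\to M$, which carries a section coming from the Haefliger structure of $\F$ (Section~\ref{sec:Haefliger}); hence the hypothesis of Lemma~\ref{lem:Gysin} is met, and the goal reduces to the single equation
\[
\fint \Delta_{\widehat{\E}}(\sigma)=0\quad\text{in }H^{\bt}(M;\RR).
\]

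To verify this vanishing, I would extend coefficients to $\CC$ and chase the commutative diagram of Proposition~\ref{prop:fibint}. Starting at the top-left corner with $\sigma$, travelling down the left column and then across the bottom yields the image of $\fint\Delta_{\F_{P}}(\sigma)\in H^{\bt}(\g,K_{G})\otimes\CC$ in $H^{\bt}(M;\CC)$, which is already zero at the second step by hypothesis. Travelling first across the top to $H^{\bt}(M;\H^{\bt}(G/K_{P}))$ and then down by fiber integration lands in the same slot, so commutativity forces $\fint\Delta'_{p^{*}\E_{\hol(\F)}}(\sigma)=0$ in $H^{\bt}(M;\CC)$. The compatibility statement following Proposition~\ref{prop:fibint} identifies $\Delta'_{p^{*}\E_{\hol(\F)}}$ with the class on the $E_{2}$-page of the Leray--Hirsch spectral sequence represented by the genuine characteristic homomorphism $\Delta_{p^{*}\E_{\hol(\F)}}=\Delta_{\widehat{\E}}$ on $H^{\bt}(\X_{G}(\F)/K_{P};\CC)$, and that spectral sequence degenerates because $\pi_{G/K_{P}}$ admits a section. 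Therefore $\fint \Delta_{\widehat{\E}}(\sigma)=0$ in $H^{\bt}(M;\CC)$, and the realness of $\Delta_{\widehat{\E}}(\sigma)$ upgrades this to $H^{\bt}(M;\RR)$, which is exactly what Lemma~\ref{lem:Gysin} requires.

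The point deserving the most care is precisely this final identification: one must verify that the composite of the upper horizontal maps followed by the right-column $\fint$ in diagram~\eqref{eq:bigdiag} genuinely computes the fiber integral of $\Delta_{\widehat{\E}}(\sigma)$ as a cohomology class on $\X_{G}(\F)/K_{P}$, and not merely its image in some $E_{\infty}$-quotient. The collapse of the Leray--Hirsch spectral sequence, guaranteed by the section of $\X_{G}(\F)/P\to M$ constructed in Section~\ref{sec:Haefliger}, is what legitimises the identification. Once that compatibility is granted, the proof is a one-line diagram chase followed by appeals to Lemma~\ref{lem:Gysin} and Proposition~\ref{prop:eachsigma}.
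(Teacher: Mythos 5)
Your proposal is correct and follows exactly the paper's route: the paper derives Proposition~\ref{prop:suff} as "a consequence of Propositions~\ref{prop:eachsigma} and~\ref{prop:fibint}, and Lemma~\ref{lem:Gysin}," which is precisely your chain of reductions (finiteness via Proposition~\ref{prop:eachsigma}, membership in the image of $\pi_{G/K_{P}}^{*}$ via Lemma~\ref{lem:Gysin}, and vanishing of the fiber integral via the diagram chase in Proposition~\ref{prop:fibint}). The compatibility point you flag is the same one the paper addresses in the sentence immediately preceding the proposition, identifying the composite of the upper horizontal maps of~\eqref{eq:bigdiag} with the characteristic homomorphism of $p^{*}\E_{\hol(\F)}$ on the $E_{2}$-terms.
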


This proposition reduces the latter condition to the former condition, which involves only Lie algebra cohomology. Thus the following proposition gives an alternative proof of a consequence of the residue formulas of Heitsch.

\begin{prop}[{Heitsch~\cite[Theorem~4.2]{Heitsch1978} and~\cite[Theorem~2.3]{Heitsch1983}}]\label{prop:Heitsch}
In the case where $(G,G/P) = (\SL(q+1;\RR),S^{q})$ for even $q$, we have $\fint \Delta_{\F_{P}}(\sigma) = 0$ for any $\sigma\in H^{\bt}(WO_{q})$.
\end{prop}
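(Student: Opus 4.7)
The plan is to invoke Pittie's Theorem~\ref{thm:Pittie}, which writes $\Delta_{\F_P}(H^{\bt}(WO_q))$ as the linear span of the Pontryagin classes $\Delta_{\F_P}(c_J)$ (with every $j\in J$ even) together with the exotic classes $\Delta_{\F_P}(h_1h_Ic_1^q)$ for $I\subseteq\{3,5,\dots,[q]\}$, and to verify $\fint\Delta_{\F_P}(\sigma)=0$ separately on each type.

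For the Pontryagin classes, I would exploit the fact that the $K_P=\SO(q)$-representation $\g/\p\cong\RR^q$ is the restriction to $K_P$ of the $K_G=\SO(q+1)$-representation $\RR^{q+1}=\RR e_1\oplus\RR^q$, in which the first summand is trivial. Consequently $\nu\F_P$ stably agrees with the pullback along the canonical projection $G/K_P\to G/K_G$ of the $K_G$-vector bundle $G\times_{K_G}\RR^{q+1}$, and by stability of Pontryagin classes under trivial summands, $\Delta_{\F_P}(c_J)$ lies in the image of the pullback $H^{\bt}(\g,K_G)\otimes\CC\to H^{\bt}(\g,K_P)\otimes\CC$. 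By the Gysin sequence of the compact dual sphere bundle $S^q\to K_\Gc/K_P\to K_\Gc/K_G$, classes pulled back from the base have vanishing fiber integration, so $\fint\Delta_{\F_P}(c_J)=0$.

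For the exotic classes, I would apply formulas~\eqref{eq:GVtrproj1},~\eqref{eq:GVtrproj2} together with the identity $E_{1k}^\vee\wedge E_{k1}^\vee=-\tfrac12(E_{1k}^\vee+E_{k1}^\vee)\wedge(E_{1k}^\vee-E_{k1}^\vee)$ to rewrite $\Delta_{\F_P}(h_1h_Ic_1^q)$, up to a nonzero scalar, as
\[
E_{11}^\vee\wedge\bigwedge_{k=2}^{q+1}(E_{1k}^\vee+E_{k1}^\vee)\wedge\bigwedge_{k=2}^{q+1}(E_{1k}^\vee-E_{k1}^\vee)\wedge h_I(\Theta_{MC}).
\]
The second wedge is, up to scalar, the $K_G$-invariant volume form on the fiber $K_G/K_P=S^q$, while $h_I(\Theta_{MC})$, being a $K_P$-invariant polynomial in entries $E_{ij}^\vee$ with $2\le i,j\le q+1$, has no component in the fiber direction $(\k_G/\k_P)^*$. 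Fiber integration therefore strips off the fiber volume form and produces, in $H^{\bt}(\g,K_G)\otimes\CC\cong H^{\bt}(Y_{q+1};\CC)$ with $Y_{q+1}=\SU(q+1)/\SO(q+1)$, the class represented by $E_{11}^\vee\wedge\bigwedge_{k=2}^{q+1}(E_{1k}^\vee+E_{k1}^\vee)\wedge h_I(\Theta_{MC})$.

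The main obstacle will be to show this class vanishes in $H^{\bt}(Y_{q+1};\CC)$. By Borel's computation, this cohomology is an exterior algebra with generators in degrees $5,9,\dots,2q+1$, so for $q=2$ the class lies in $H^{\le 3}(Y_3;\CC)=0$ and the vanishing is automatic. For general even $q\ge 4$, my plan is to construct an explicit primitive in the Chevalley--Eilenberg complex of $(\g,K_G)$, using the Maurer--Cartan relation $dE_{11}^\vee=-\sum_{l=2}^{q+1}E_{1l}^\vee\wedge E_{l1}^\vee$ arising from the trace constraint on $\sl(q+1,\RR)$ together with the transgression identities defining the $h_i$, so as to write the offending form as $d\xi$ for a $K_G$-basic cochain $\xi$. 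This final step, which effectively recasts Heitsch's residue calculation in Lie-algebra terms, is expected to be the hardest part of the argument.
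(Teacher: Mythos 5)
Your overall architecture---Pittie's Theorem~\ref{thm:Pittie}, the formulas~\eqref{eq:GVtrproj1} and~\eqref{eq:GVtrproj2}, and the rewriting $E^{\vee}_{1k}\wedge E^{\vee}_{k1}=-\frac{1}{2}(E^{\vee}_{1k}+E^{\vee}_{k1})\wedge(E^{\vee}_{1k}-E^{\vee}_{k1})$---coincides with the paper's, and your treatment of the Pontryagin classes is sound (the paper leaves that case implicit). The gap is in the exotic case, at exactly the step you defer. The reduction ``fiber integration strips off the fiber volume form'' is false: the remaining factor $E^{\vee}_{11}\wedge\bigwedge_{k=2}^{q+1}(E^{\vee}_{1k}+E^{\vee}_{k1})$ is horizontal for $G/K_{P}\to G/K_{G}$ but not $\Ad(K_{G})$-invariant (an $\SO(q+1)$-rotation mixing $e_{1}$ and $e_{2}$ mixes $E^{\vee}_{11}$ with $E^{\vee}_{22}$ and $E^{\vee}_{12}+E^{\vee}_{21}$), so it is not a pullback from the base, it does not define an element of $H^{\bt}(\g,K_{G})$, and the projection formula $\fint(\pi^{*}\alpha\wedge\beta)=\alpha\wedge\fint\beta$ cannot be applied to it. What $\fint$ actually returns is the average of this coefficient over the fiber $S^{q}=K_{G}/K_{P}$, and evaluating that average is the entire content of the proposition.

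That evaluation is the one idea your proposal lacks, and neither of your fallbacks supplies it. The paper's argument is elementary: along the fiber $S^{q}$ the coefficient $E^{\vee}_{11}\wedge\bigwedge_{k=2}^{q+1}(E^{\vee}_{1k}+E^{\vee}_{k1})$ is odd under the antipodal map when $q$ is even, so its integral over $S^{q}$ vanishes; since $h_{I}(\Theta_{MC})$ is $K_{G}$-basic it factors out of $\fint$, and all the classes $h_{1}h_{I}c_{1}^{q}$ are killed at once. Your degree count only covers $q=2$: already for $q=4$ one has $H^{5}(\SU(5)/\SO(5);\CC)\neq 0$, since the generators of $H^{\bt}(Y_{q+1})$ sit in degrees $5,9,\dots,2q+1$. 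And the ``explicit primitive in the Chevalley--Eilenberg complex'' is announced rather than constructed---and would in any case be a primitive for a form that does not even lie in $(\bigwedge\g^{*})_{K_{G}}$. Replacing the stripping step by the fiberwise average together with the antipodal-parity observation closes the proof.
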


\begin{proof} We will use the notation of Example~\ref{sec:trproj}. First, we show $\fint \GV(\F_{P}) = 0$. By~\eqref{eq:GVtrproj1} and~\eqref{eq:sign}, we get
\begin{align*}
 \GV(\F_{P}) & = -(q+1)^{q+1}\, (q+1)!\, E^{\vee}_{11} \wedge \bigwedge_{k=2}^{q+1} E^{\vee}_{1k} \wedge E^{\vee}_{k1} \\
& = \frac{(-1)^{\frac{q(q-1)}{2}+1} (q+1)^{q+1}\, (q+1)!}{2^{q}}\, E^{\vee}_{11} \wedge \bigwedge_{k=2}^{q+1} (E^{\vee}_{1k} + E^{\vee}_{k1}) \wedge \bigwedge_{k=2}^{q+1} (E^{\vee}_{1k} - E^{\vee}_{k1})\;.
\end{align*}
Here, $\bigwedge_{k=2}^{q+1} (E^{\vee}_{1k} - E^{\vee}_{k1})$ is a volume form of $\SO(q+1)/\SO(q) \approx S^{q}$. Thus $\fint  \GV(\F_{P})$ is obtained by integrating $E^{\vee}_{11} \wedge \bigwedge_{k=2}^{q+1} (E^{\vee}_{1k} + E^{\vee}_{k1})$ over $S^{q}$. But, since $q$ is even, $E^{\vee}_{11} \wedge \bigwedge_{k=2}^{q+1} (E^{\vee}_{1k} + E^{\vee}_{k1})$ is an odd function on $S^{q}$; namely, we have
\[
s^{*}\Big(E^{\vee}_{11} \wedge \bigwedge_{k=2}^{q+1} (E^{\vee}_{1k} + E^{\vee}_{k1}) \Big) = - E^{\vee}_{11} \wedge \bigwedge_{k=2}^{q+1} (E^{\vee}_{1k} + E^{\vee}_{k1})\;,
\]
where $s$ is the antipodal map of $S^{q}$. So the integration of $E^{\vee}_{11} \wedge \bigwedge_{k=2}^{q+1} (E^{\vee}_{1k} + E^{\vee}_{k1})$ over $S^{q}$ is zero. This implies that $\fint \GV(\F_{P})=0$.

Note that $h_{I}(\Theta_{MC})$ is $K_{G}$-basic; namely, $h_{I}(\Theta_{MC})$ is the pull-back of a differential form on $\Gc/K_{G}$. Thus, by~\eqref{eq:GVtrproj2}, 
\[
\fint \Delta_{\F_{P}}(h_{1}h_{I}c_{1}^{q}) = \frac{1}{(2\pi)^{q+1}}h_{I}(\Theta_{MC}) \fint \GV(\F_{P})= 0\;.
\]
Since other secondary characteristic classes are generated by the classes of the form $h_{1}h_{I}c_{1}^{q}$ by Theorem~\ref{thm:Pittie}, the result follows.
\end{proof}

\begin{rem}\label{rem:simpler}
Heitsch~\cite{Heitsch1986} applied consequences of his residue formulas, Theorem~\ref{thm:BT1} and Proposition~\ref{prop:Heitsch}, to prove our Proposition~\ref{prop:01} for the case where $(G,G/P)=(\SL(q+1;\RR),S^{q})$ for any $q$, and therefore Theorem~\ref{thm:BGH}. For even $q$, our proof of Proposition~\ref{prop:Heitsch} is slightly simpler than the original proof of Heitsch~\cite{Heitsch1986}. It is because we directly computed the map $H^{\bt}(\g,K_{P}) \to H^{\bt}(\g,K_{G})$ in Section~\ref{sec:ex}, while Heitsch applied his residue formulas~\cite[Theorem~4.2]{Heitsch1978},~\cite[Theorem~2.3]{Heitsch1983}. Thus we obtained a slightly simpler proof of Theorem~\ref{thm:BGH} for even $q$. Note that we already gave an alternative proof of Theorem~\ref{thm:BGH} for odd $q$ in Section~\ref{sec:oddtrproj} by using Theorem~\ref{thm:finite}.
\end{rem}

In the case where $(G,G/P)$ is $(\SO_{0}(n+1,1),S^{n}_{\infty})$ for odd $n$, $(\SU(n+1,1),S^{2n+1}_{\infty})$, $(\Sp(n+1,1),S^{4n+3}_{\infty})$ or $\left(F_{4(-20)},S^{15}_{\infty}\right)$, our Bott-Thurston-Heitsch type formulas (Theorem~\ref{thm:BT}) imply that the integration of $\GV(\F_{P})$ along the fibers of the sphere bundle $G/K_{P} \to G/K_{G}$ is nonzero, but it is a constant multiple of the Euler class of the tangent sphere bundle of $G/K_{G}$. So we cannot apply Proposition~\ref{prop:suff} in this case to show the finiteness of the secondary characteristic classes. Nevertheless we get the following. Let $\varphi : \X_{G}(\F)/K_{P} \to \X_{G}(\F)/K_{G}$ be the canonical projection.

\begin{prop}
In the case where $(G,G/P)$ is equal to one of $(\SO_{0}(n+1,1),S^{n}_{\infty})$ for odd $n$, $(\SU(n+1,1),S^{2n+1}_{\infty})$, $(\Sp(n+1,1),S^{4n+3}_{\infty})$ or $\left(F_{4(-20)},S^{15}_{\infty}\right)$, we have $\fint_{\varphi} \GV(p^{*}\E_{\hol (\F)})=0$ in $H^{\bt}(M)$ for any $(G,G/P)$-foliation $\F$ of $M$.
\end{prop}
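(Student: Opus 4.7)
The plan is to combine Proposition~\ref{prop:ext3} with Lemma~\ref{lem:Gysin}. For each of the four cases listed, the hypothesis of Proposition~\ref{prop:ext3}---that $H^{\bt}(\Gc/P;\RR)\to H^{\bt}(G/P;\RR)$ is trivial on positive degrees---has already been verified in Section~\ref{sec:example}: via Proposition~\ref{prop:cri} the task reduces to showing that the Euler class of the sphere bundle $\Gc/P\to\Gc/G$ is nonzero, and this is carried out case by case by identifying the bundle (up to homotopy) with a nonzero multiple of the unit tangent sphere bundle of the corresponding compact dual projective space $\RR P^{n+1}$, $\CC P^{n+1}$, $\HH P^{n+1}$, or $\OO P^{2}$, each of which has nonzero Euler characteristic in the relevant range. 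Applying Proposition~\ref{prop:ext3} therefore gives
\[
\Delta_{\widehat{\E}}\bigl(H^{\bt}(WO_{q})\bigr)\subseteq\pi_{G/K_{P}}^{*}\,H^{\bt}(M;\RR)\;,
\]
with $\widehat{\E}=p^{*}\E_{\hol(\F)}$.

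Specializing to $\sigma=h_{1}c_{1}^{q}$ and using the normalization $\GV(p^{*}\E_{\hol(\F)})=(2\pi)^{q+1}\Delta_{\widehat{\E}}(h_{1}c_{1}^{q})$, the Godbillon-Vey class of $p^{*}\E_{\hol(\F)}$ lies in the image of $\pi_{G/K_{P}}^{*}$. By Lemma~\ref{lem:Gysin}, membership in this image is equivalent to the vanishing of the Gysin fiber integration of $\GV(p^{*}\E_{\hol(\F)})$ along the sphere bundle structure on $\X_{G}(\F)/K_{P}\to M$ obtained through the homotopy equivalence $\X_{G}(\F)/K_{P}\simeq\X_{G}(\F)/P$ coming from the contractibility of $P/K_{P}$.

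The remaining step is to match this Gysin integration with $\fint_{\varphi}$. The map $\varphi:\X_{G}(\F)/K_{P}\to\X_{G}(\F)/K_{G}$ is an $S^{q}$-bundle with fiber $K_{G}/K_{P}\cong S^{q}$, and the subsequent projection $\X_{G}(\F)/K_{G}\to M$ is a homotopy equivalence because $G/K_{G}$ is contractible; hence $\fint_{\varphi}$ is naturally a degree $-q$ map $H^{\bt}(\X_{G}(\F)/K_{P};\RR)\to H^{\bt-q}(M;\RR)$. Since $H^{q}(G/K_{P};\RR)\cong\RR$ is generated by the sphere class and both factorizations $\X_{G}(\F)/K_{P}\to\X_{G}(\F)/P\to M$ and $\X_{G}(\F)/K_{P}\to\X_{G}(\F)/K_{G}\to M$ exhibit the same map to $M$ as an $S^{q}$-fibration up to homotopy, the two fiber integrations agree up to a choice of orientation. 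Vanishing being orientation-independent, we conclude $\fint_{\varphi}\GV(p^{*}\E_{\hol(\F)})=0$ in $H^{q+1}(M;\RR)$.

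The main subtlety is the last identification: one must check that the two sphere bundle factorizations of $\X_{G}(\F)/K_{P}\to M$ pick out the same (up to sign) generator of $H^{q}(G/K_{P};\RR)\cong\RR$, so that the vanishing statement provided by Lemma~\ref{lem:Gysin} transfers to $\fint_{\varphi}$. This contrasts with the situation of Proposition~\ref{prop:volGV2}, where the same underlying integration is nonzero at the level of forms but collapses in $H^{\bt}(M)$ precisely because the characteristic classes of $\widehat{\E}$ factor through the base, as secured by Proposition~\ref{prop:ext3}.
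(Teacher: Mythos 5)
Your argument is correct, but it runs in the opposite logical direction from the paper's. The paper proves this proposition by viewing $p^{*}\E_{\hol(\F)}$ as a suspension foliation transverse to the fibers of the $S^{q}$-bundle $\varphi$ and invoking the Bott--Thurston--Heitsch formula of Theorem~\ref{thm:BT}, which gives $\fint_{\varphi}\GV(p^{*}\E_{\hol(\F)})=(2\pi)^{q+1}r_{G}\,e(\varphi)$; since $\varphi$ is fiber-homotopy equivalent to the Haefliger structure $\X_{G}(\F)/P\to M$, which has the tautological section, $e(\varphi)=0$ and the claim follows. You instead deduce the vanishing from Proposition~\ref{prop:ext3} together with the Gysin sequence, which is a valid proof of the statement in isolation --- and your final orientation-matching step can even be bypassed: since $\pi_{G/K_{P}}=\pi'\circ\varphi$ with $\pi':\X_{G}(\F)/K_{G}\to M$ a homotopy equivalence, exactness of the Gysin sequence of $\varphi$ gives $\ker\fint_{\varphi}=\operatorname{Image}\varphi^{*}=\operatorname{Image}\pi_{G/K_{P}}^{*}$ directly, with no comparison of generators of $H^{q}(G/K_{P};\RR)$ needed. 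The trade-off is structural: the point of this proposition in the paper (see Remark~\ref{rem:alt}) is that, combined with Lemma~\ref{lem:Gysin} and Proposition~\ref{prop:onlyGV3}, it yields an \emph{alternative} proof of Proposition~\ref{prop:01} and hence of Theorem~\ref{thm:finite} that avoids the complexification machinery; deriving the proposition from Proposition~\ref{prop:ext3} makes that alternative circular. So your route is shorter and avoids the Lie-algebra computations behind Theorem~\ref{thm:BT}, but it renders the proposition a formal corollary of results it was meant to reprove independently, whereas the paper's route keeps the two chains of reasoning genuinely separate.
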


\begin{proof}
The sphere bundle $\varphi$ has a section because it is homotopic to the Haefliger structure $\X_{G}(\F)/P \to M$, which has a section (see Section~\ref{sec:Haefliger}). Thus its Euler class $e(\varphi)$ is zero. Since $\varphi$ is a sphere bundle with a $(G,G/P)$-foliation transverse to fibers, we get $\fint \GV(p^{*}\E_{\hol (\F)}) = r_{G}\, e(\varphi) = 0$ by the Bott-Thurston-Heitsch type formulas in Theorem~\ref{thm:BT}.
\end{proof}

\begin{rem}\label{rem:alt}
Note that the Godbillon-Vey class is essentially the unique nontrivial secondary class in this case by Proposition~\ref{prop:onlyGV3}. Thus Lemma~\ref{lem:Gysin} gives us another proof of Proposition~\ref{prop:01} for these $(G,G/P)$, and therefore another proof of Theorem~\ref{thm:finite}.
\end{rem}

On the other hand, the situation is different for transversely conformally flat foliations of even codimension. Let $(G,G/P)=(\SO_{0}(n+1,1),S^{n}_{\infty})$ for even $n$. Consider an $S^{n}$-bundle $M \to N$ and a $(G,G/P)$-foliation $\F$ of $M$ transverse to the fibers with a nontrivial volume $\vol(\hol (\F))$. For example, we can take the fiber bundle $\G \backslash G/K_{P} \to \G \backslash G/K_{G}$ foliated by the homogeneous foliation for a torsion-free uniform lattice $\G$ of $G$. Recall that $\varphi$ is the $S^{q}$-bundle $\X_{G}(\F)/K_{P} \to \X_{G}(\F)/K_{G}$ associated to $\F$ with the $(G,G/P)$-foliation $p^{*}\E_{\hol (\F)}$ transverse to the fibers. We get the following.

\begin{prop}
$\fint_{\varphi} \GV(p^{*}\E_{\hol (\F)})$ is nonzero.
\end{prop}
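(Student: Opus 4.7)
The plan is to apply the generalized Bott--Thurston--Heitsch formula of Proposition~\ref{prop:r1} to the sphere bundle $\varphi \colon \X_{G}(\F)/K_{P} \to \X_{G}(\F)/K_{G}$ itself, which by construction carries the $(G,G/P)$-foliation $p^{*}\E_{\hol(\F)}$ transverse to its fibers. This will reduce the nonvanishing of the fiber integral to the nonvanishing of a volume class, which is precisely the standing hypothesis on $\F$.

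To set up the application, I would first use the fact that the fibers of the projection $\X_{G}(\F)/K_{G} \to M$ are diffeomorphic to the contractible space $G/K_{G}$, so this projection is a homotopy equivalence, identifying $H^{q+1}(\X_{G}(\F)/K_{G};\RR) \cong H^{q+1}(M;\RR)$ and $\pi_{1}(\X_{G}(\F)/K_{G}) \cong \pi_{1}M$. Under these identifications the holonomy representation of $p^{*}\E_{\hol(\F)}$ coincides with $\hol(\F)$: this is immediate from the construction of $\E_{\hol(\F)}$ as the foliation of $\X_{G}(\F)/P$ defined by the flat $G$-connection (whose holonomy is $\hol(\F)$ by definition) together with the fact that the map $p \colon \X_{G}(\F)/K_{P} \to \X_{G}(\F)/P$ only enlarges the fibers of the Haefliger structure without altering the underlying flat $G$-structure. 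Consequently $\vol(\hol(p^{*}\E_{\hol(\F)})) = \vol(\hol(\F))$ in $H^{q+1}(M;\RR)$.

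Applying Proposition~\ref{prop:r1} with $(N, p_{M}, \F)$ replaced by $(\X_{G}(\F)/K_{G},\,\varphi,\,p^{*}\E_{\hol(\F)})$---legitimate because $q = n \geq 2$ in the even, nondegenerate range---then yields
\[
\frac{1}{(2\pi)^{q+1}}\,\fint_{\varphi} \GV(p^{*}\E_{\hol(\F)}) = c_{G}\,\vol(\hol(\F))
\]
in $H^{q+1}(\X_{G}(\F)/K_{G};\RR) \cong H^{q+1}(M;\RR)$, with $c_{G}$ the nonzero constant given by Proposition~\ref{prop:volGV} for $(G,G/P) = (\SO(n+1,1),S^{n}_{\infty})$. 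Since by hypothesis $\vol(\hol(\F))$ is nontrivial and $c_{G}\neq 0$, the right-hand side is nonzero, and so is the fiber integral.

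The only genuinely substantive point is the naturality check identifying the two holonomy representations; this traces through the Haefliger descriptions of Propositions~\ref{prop:h1} and~\ref{prop:h2} and the construction of the flat principal $G$-bundle $\X_{G}(\F)$, and does not require any new computation. Everything else is a direct invocation of machinery already established in Section~\ref{sec:ex}.
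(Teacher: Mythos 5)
Your proof is correct and follows exactly the paper's own argument: apply Proposition~\ref{prop:r1} to the sphere bundle $\varphi$ with its fiber-transverse foliation $p^{*}\E_{\hol(\F)}$, identify the volume of its holonomy with $\vol(\hol(\F))$ via the homotopy equivalence $\X_{G}(\F)/K_{G}\to M$ (the paper phrases this as the equality with $p_{K_{G}}^{*}\vol(\hol(\F))$), and conclude from the nontriviality of the volume and $c_{G}\neq 0$. No gaps.
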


\begin{proof}
The volume of $p^{*}\E_{\hol (\F)}$ is equal to $p_{K_{G}}^{*}\vol(\hol (\F))$, which is nontrivial by assumption. On the other hand, $\fint_{\varphi} \GV(p^{*}\E_{\hol (\F)})$ is a nonzero constant multiple of the volume $p_{K_{G}}^{*}\vol(\hol (\F))$ by Proposition~\ref{prop:r1}.
\end{proof}

\subsection{Finiteness with fixed Euler class}

Consider the case where $G/P=S^{q}$ for even $q$. In this section, we will show Theorem~\ref{thm:even}. In this section, the coefficient ring of cohomology is $\RR$. Since the Euler classes of even dimensional sphere bundles are trivial with real coefficients, the assumption of Theorem~\ref{thm:finite} is never satisfied by Proposition~\ref{prop:cri}. Thus the Gysin sequence of the sphere bundle $\phi^{\CC} : \Gc/K_{P} \to \Gc/K_{G}$ splits to give the exact sequence
\begin{equation}\label{eq:kn}
\xymatrix{0 \ar[r] & H^{\bt}(\Gc/K_{G}) \ar[r]^{(\phi^{\CC})^{*}} & H^{\bt}(\Gc/K_{P}) \ar[r]^<<<<<{\fint_{\phi^{\CC}}} &  H^{\bt-q}(\Gc/K_{G}) \ar[r] & 0\;.}
\end{equation}
Let $\chi (\nu \F_{P})$ be the Euler class of the normal bundle of the $P/K_{P}$-coset foliation $\F_{P}$ on $G/K_{P}$, which is of degree $q$.

\begin{prop}\label{prop:Euler}
$\fint_{\phi^{\CC}} \chi (\nu \F_{P}) = 2$.
\end{prop}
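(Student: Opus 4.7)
The plan is to identify the bundle $\nu\F_{P}$ on $\Gc/K_{P}$ with the vertical tangent bundle of $\phi^{\CC}$, after which the identity $\fint_{\phi^{\CC}}\chi(\nu\F_{P}) = \chi(\mathrm{fiber}) = 2$ follows from the standard formula for the fiber integral of the Euler class of a vertical tangent bundle.

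First, I identify the fiber of $\phi^{\CC}\colon\Gc/K_{P}\to\Gc/K_{G}$ as $K_{G}/K_{P}$, which in the case $G/P = S^{q}$ with $q$ even (so $(G,G/P) = (\SO_{0}(n+1,1),S^{n}_{\infty})$ with $n = q$) was shown in Section~\ref{sec:example} to be diffeomorphic to $S^{q}$. The vertical tangent bundle of $\phi^{\CC}$ is then the associated bundle $\Gc\times_{K_{P}}(\k_{G}/\k_{P})$ with respect to the adjoint $K_{P}$-action on $\k_{G}/\k_{P}$. On the other hand, the normal bundle $\nu\F_{P} = G\times_{K_{P}}(\g/\p)$ of the $P/K_{P}$-coset foliation on $G/K_{P}$ extends naturally to the associated bundle $\Gc\times_{K_{P}}(\g/\p)$ on $\Gc/K_{P}$ (using that $K_{P}$ is also a closed real subgroup of $\Gc$), and I still denote this extension by $\nu\F_{P}$.

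The key step is the $K_{P}$-equivariant isomorphism $\g/\p\cong\k_{G}/\k_{P}$, which I prove as follows. Restrict the canonical projection $\g\to\g/\p$ to $\k_{G}$. Its kernel is $\k_{G}\cap\p = \k_{P}$ by the very definition of $K_{P}$, and the restriction is surjective because the transitive action of $K_{G}$ on $G/P$ yields $G = K_{G}P$, whence $\g = \k_{G} + \p$. Both sides are $K_{P}$-modules under the adjoint action and the constructed map is $K_{P}$-equivariant, so it induces an isomorphism of associated vector bundles $\nu\F_{P}\cong T^{\mathrm{vert}}\phi^{\CC}$ on $\Gc/K_{P}$.

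Consequently,
\[
\fint_{\phi^{\CC}}\chi(\nu\F_{P}) \;=\; \fint_{\phi^{\CC}}\chi(T^{\mathrm{vert}}\phi^{\CC}) \;=\; \chi(K_{G}/K_{P}) \;=\; \chi(S^{q}) \;=\; 2,
\]
the penultimate equality being the standard fact that the fiber integral of the Euler class of the vertical tangent bundle equals the Euler characteristic of the fiber, and the last using that $q$ is even. The only nontrivial step is the algebraic identification $\g/\p\cong\k_{G}/\k_{P}$ as $K_{P}$-modules; once this is in place, the computation is immediate and no serious obstacle remains.
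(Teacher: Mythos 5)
Your proof is correct and is essentially the paper's argument in infinitesimal form: the paper restricts $\nu\F_{P}=\phi_{P}^{*}TS^{q}$ to the fiber $K_{G}/K_{P}$, which maps diffeomorphically onto $G/P=S^{q}$ precisely because $G=K_{G}P$ and $K_{G}\cap P=K_{P}$ --- the global counterpart of your $K_{P}$-equivariant isomorphism $\g/\p\cong\k_{G}/\k_{P}$. Both arguments reduce the fiber integral to $\int_{S^{q}}\chi(TS^{q})=\chi(S^{q})=2$.
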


\begin{proof}
Let $\phi_{P} : G/K_{P} \to G/P = S^{q}$ be the canonical projection. Consider the composite
  \[
    \xymatrix{K_{G}/K_{P} \ar[r] & G/K_{P} \ar[r]^<<<<<{\phi_{P}} & G/P}\;.
  \]
Since $\phi^{*}_{P}TS^{q} = \nu \F_{P}$, we get
\[
\int_{K_{G}/K_{P}} \chi (\nu \F) = \int_{S^{q}} \chi (TS^{q}) = 2\;,
\]
which implies the equality of the statement.
\end{proof}

From~\eqref{eq:Weyl1},~\eqref{eq:Weyl2},~\eqref{eq:kn} and Proposition~\ref{prop:Euler}, we get the following.

\begin{prop}\label{prop:Z}
We have
\[
H^{\bt}(\g,K_{P}) \cong H^{\bt}(\g,K_{G}) \otimes \RR [\chi]/(\chi^{2})
\]
as an $H^{\bt}(\g,K_{G})$-module, where $\chi$ is the Euler class of the normal bundle of $\F$.
\end{prop}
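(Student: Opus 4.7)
The plan is to combine the Weyl-trick identification of Lie algebra cohomology with de Rham cohomology of the compact dual, together with the split Gysin sequence \eqref{eq:kn} and Proposition~\ref{prop:Euler}. First, I would use the Weyl-trick isomorphisms \eqref{eq:Weyl1} and \eqref{eq:Weyl2} to identify $H^{\bt}(\g,K_P)\cong H^{\bt}(\Gc/K_P)$ and $H^{\bt}(\g,K_G)\cong H^{\bt}(\Gc/K_G)$ in a manner compatible with the module structure. Under these identifications the class $\chi\in H^q(\g,K_P)$ corresponds to the Euler class of $\nu\F_P$ in $H^q(\Gc/K_P)$, which we continue to denote by $\chi$. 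Thus it suffices to prove
\[
H^{\bt}(\Gc/K_P)\cong H^{\bt}(\Gc/K_G)\otimes\RR[\chi]/(\chi^2)
\]
as graded $H^{\bt}(\Gc/K_G)$-modules.

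Next I would invoke the split short exact Gysin sequence \eqref{eq:kn},
\[
0\longrightarrow H^{\bt}(\Gc/K_G)\xrightarrow{(\phi^{\CC})^{*}}H^{\bt}(\Gc/K_P)\xrightarrow{\fint_{\phi^{\CC}}}H^{\bt-q}(\Gc/K_G)\longrightarrow 0\;,
\]
which is a short exact sequence of $H^{\bt}(\Gc/K_G)$-modules. A splitting is provided by the class $\chi$ itself. Indeed, by Proposition~\ref{prop:Euler} and the projection formula for fiber integration,
\[
\fint_{\phi^{\CC}}\bigl((\phi^{\CC})^{*}\alpha\cdot\tfrac{1}{2}\chi\bigr)=\alpha\cdot\tfrac{1}{2}\fint_{\phi^{\CC}}\chi=\alpha
\]
for every $\alpha\in H^{\bt-q}(\Gc/K_G)$, so $\alpha\mapsto\tfrac{1}{2}(\phi^{\CC})^{*}\alpha\cdot\chi$ is an $H^{\bt}(\Gc/K_G)$-linear section of $\fint_{\phi^{\CC}}$.

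Combining the pullback $(\phi^{\CC})^{*}$ with this section, I obtain the $H^{\bt}(\Gc/K_G)$-module homomorphism
\[
H^{\bt}(\Gc/K_G)\otimes\RR[\chi]/(\chi^2)\longrightarrow H^{\bt}(\Gc/K_P)\;,\qquad\alpha\otimes 1+\beta\otimes\chi\longmapsto(\phi^{\CC})^{*}\alpha+(\phi^{\CC})^{*}\beta\cdot\chi\;,
\]
which is an isomorphism by the split Gysin sequence. The essential input is Proposition~\ref{prop:Euler}: without $\fint_{\phi^{\CC}}\chi\neq 0$, the class $\chi$ could not split the sequence on its own. Since we claim only a module isomorphism, the actual value of $\chi^{2}\in H^{2q}(\Gc/K_P)$ plays no role: the factor $\RR[\chi]/(\chi^{2})$ merely encodes a rank-two free graded $\RR$-module with generators in degrees $0$ and $q$, which is exactly what the two summands of the split sequence produce.
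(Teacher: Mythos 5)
Your proof is correct and follows essentially the same route as the paper, which derives Proposition~\ref{prop:Z} directly from the Weyl-trick isomorphisms \eqref{eq:Weyl1}--\eqref{eq:Weyl2}, the split Gysin sequence \eqref{eq:kn} and Proposition~\ref{prop:Euler}. You merely make explicit the detail the paper leaves implicit, namely that $\alpha\mapsto\tfrac{1}{2}(\phi^{\CC})^{*}\alpha\cdot\chi$ is the $H^{\bt}(\Gc/K_{G})$-linear splitting furnished by $\fint_{\phi^{\CC}}\chi=2$ and the projection formula.
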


Consider the characteristic homomorphism $\Xi_{\hol (\F)} : H^{\bt}(\g,K_{G}) \to H^{\bt}(M)$, which depends only on $\hol (\F) : \pi_{1}M \to G$ (Section~\ref{sec:vol}).

\begin{prop}\label{prop:fixedEulerclass}
Let $\F_{0}$ and $\F_{1}$ be two $(G,G/P)$-foliations of $M$ with the same holonomy homomorphism. If $\chi (\nu \F_{0}) = \chi (\nu \F_{1})$, then $\Delta_{\F_{0}}(\sigma) = \Delta_{\F_{1}}(\sigma)$ for any $\sigma\in H^{\bt}(WO_{q})$.
\end{prop}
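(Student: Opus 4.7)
The plan is to combine the factorization provided by Theorem~\ref{thm:factrization} with the structural decomposition of $H^{\bt}(\g,K_{P})$ obtained in Proposition~\ref{prop:Z}. First I would observe that, by Theorem~\ref{thm:factrization}, for each $i\in\{0,1\}$ the characteristic homomorphism decomposes as
\[
\Delta_{\F_{i}}:H^{\bt}(WO_{q})\xrightarrow{\Delta_{\F_{P}}}H^{\bt}(\g,K_{P})\xrightarrow{\Psi_{\F_{i}}}H^{\bt}(M;\RR),
\]
where the first arrow is independent of $\F_{i}$, and the second arrow $\Psi_{\F_{i}}$ is the Chern--Weil pull-back map obtained from the enlarged Haefliger structure $\widehat{\dev}_{\F_{i}}:\wt{M}\to G/K_{P}$ of Proposition~\ref{prop:h2}. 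So it suffices to show $\Psi_{\F_{0}}=\Psi_{\F_{1}}$ as maps from $H^{\bt}(\g,K_{P})$ to $H^{\bt}(M;\RR)$ under the stated hypotheses.

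Next, Proposition~\ref{prop:Z} lets me write any class $\tau\in H^{\bt}(\g,K_{P})$ uniquely as $\tau=\alpha+\beta\cdot\chi$ with $\alpha,\beta\in H^{\bt}(\g,K_{G})$, where $\chi\in H^{q}(\g,K_{P})$ is the class represented by the Bott--Chern--Weil Euler form of $\nu\F_{P}$. I would then identify $\Psi_{\F_{i}}$ on each summand. On the subring $H^{\bt}(\g,K_{G})\subset H^{\bt}(\g,K_{P})$, the map $\Psi_{\F_{i}}$ factors through the flat $G/K_{G}$-bundle $\X_{G}(\F_{i})/K_{G}\to M$ (by projecting $\widehat{\dev}_{\F_{i}}$ down along $\phi_{K_{G}}:G/K_{P}\to G/K_{G}$), so it coincides with the Borel regulator $\Xi_{\hol(\F_{i})}:H^{\bt}(\g,K_{G})\to H^{\bt}(M;\RR)$ of Section~\ref{sec:vol}. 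Since $\hol(\F_{0})=\hol(\F_{1})$, we get $\Psi_{\F_{0}}=\Psi_{\F_{1}}$ on $H^{\bt}(\g,K_{G})$. On the $\chi$-summand, by naturality of Chern--Weil theory applied to the reduction $\widehat{\dev}_{\F_{i}}$, the class $\chi$ pulls back to $\chi(\nu\F_{i})$, since $\nu\F_{i}$ is obtained from $\nu\F_{P}$ by pulling back via $\widehat{\dev}_{\F_{i}}$ (and descending by $\pi_{1}M$-equivariance).

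Putting these identifications together gives, for any $\tau=\alpha+\beta\cdot\chi\in H^{\bt}(\g,K_{P})$,
\[
\Psi_{\F_{i}}(\tau)=\Xi_{\hol(\F_{i})}(\alpha)+\Xi_{\hol(\F_{i})}(\beta)\cdot\chi(\nu\F_{i}),
\]
which is completely determined by $\hol(\F_{i})$ and $\chi(\nu\F_{i})$. Under the hypotheses of the proposition these two pieces of data agree for $i=0,1$, so $\Psi_{\F_{0}}=\Psi_{\F_{1}}$, and hence $\Delta_{\F_{0}}(\sigma)=\Delta_{\F_{1}}(\sigma)$ for every $\sigma\in H^{\bt}(WO_{q})$.

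The main obstacle I expect is the precise verification of the $\chi$-summand identification: one must check that the abstract generator $\chi\in H^{q}(\g,K_{P})$ produced by the Gysin-splitting argument behind Proposition~\ref{prop:Z} agrees, under $\Psi_{\F_{i}}$, with the Euler class $\chi(\nu\F_{i})$ of the normal bundle. This is a Chern--Weil compatibility that ultimately reduces to the fact that the invariant Euler form for the $K_{P}$-representation on $\g/\p$ represents the Euler class of the associated vector bundle $\nu\F_{P}$ on $G/K_{P}$, and that $\nu\F_{i}$ is $\widehat{\dev}_{\F_{i}}^{*}\nu\F_{P}$ at the level of $\pi_{1}M$-equivariant bundles on $\wt{M}$.
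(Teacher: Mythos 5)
Your proposal is correct and follows essentially the same route as the paper: factor $\Delta_{\F_{i}}$ through $H^{\bt}(\g,K_{P})$ via Theorem~\ref{thm:factrization}, use the module decomposition of Proposition~\ref{prop:Z}, note that the $H^{\bt}(\g,K_{G})$-part is determined by the holonomy alone (the paper cites Proposition~\ref{prop:fibint} for exactly the factorization through the flat $G/K_{G}$-bundle that you describe), and use the hypothesis $\chi(\nu\F_{0})=\chi(\nu\F_{1})$ on the $\chi$-summand. The Chern--Weil compatibility you flag as a potential obstacle is already built into the paper's statement of Proposition~\ref{prop:Z}, where $\chi$ is by construction the Euler class of the normal bundle.
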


\begin{proof}
By Theorem~\ref{thm:factrization}, it is sufficient to prove that $\Delta_{\F_{0}}(\sigma) = \Delta_{\F_{1}}(\sigma)$ for any $\sigma\in H^{\bt}(\g,K_{P})$. For $\sigma\in H^{\bt}(\g,K_{G})$, we get $\Delta_{\F_{0}}(\sigma) =\Delta_{\F_{1}}(\sigma)$ because $\Delta_{\F_{i}}(\sigma)$ is determined only by the holonomy homomorphism according to Proposition~\ref{prop:fibint}. Since $H^{\bt}(\g,K_{P})$ is generated by $\chi$ and $1$ as an $H^{\bt}(\g,K_{G})$-module, we get $\Delta_{\F_{0}}(\sigma) =\Delta_{\F_{1}}(\sigma)$ for any $\sigma\in H^{\bt}(\g,K_{P})$.
\end{proof}

Since $\pi_{0}(\Hom(\pi_{1}M,G))$ is finite (see Remark~\ref{rem:fincomp}), Theorem~\ref{thm:rigid} and Proposition~\ref{prop:fixedEulerclass} imply Theorem~\ref{thm:even}.

\subsection{Infiniteness of classes divisible by the Euler class}

In this section, we will show Theorem~\ref{thm:infinite}, an infiniteness result.

Let $G$ be a connected semisimple Lie group and let $P$ be a parabolic subgroup of $G$ such that $G/P \cong S^{q}$, where $q$ is even. Any $\sigma\in H^{\bt}(WO_{q})$ is said to be {\em divisible} by the Euler class $\chi$ if there exists some $\tau\in H^{\bt}(\g,K_{P})$ such that $\Delta_{\F_{P}}(\sigma) = \tau \cdot \chi$. Note that such $\tau$ belongs to $H^{\bt}(\g,K_{G})$ for any nontrivial divisible class $\sigma$ by Proposition~\ref{prop:Z}. Proposition~\ref{prop:Z} also implies that, if $\sigma\in H^{\bt}(WO_{q})$ is not divisible by the Euler class, then $\fint \sigma = 0$. So, by Proposition~\ref{prop:suff}, we get the finiteness of the possible values of $\Delta_{\F}(\sigma)$. On the other hand, we will show that divisible classes may take infinitely many different values.

\begin{thm}\label{thm:inf}
Let $G$ be a connected semisimple Lie group and let $P$ be a parabolic subgroup of $G$ such that $G/P \cong S^{q}$, where $q$ is even. Let $\sigma \in H^{\bt}(WO_{q})$ be a class divisible by the Euler class. Assume that the restriction map $H^{\bt}(\g) \to H^{\bt}(\k_{G})$ is surjective. Then there exists a connected manifold $X$ with finitely presented fundamental group and an infinite family $\{\F_{m}\}_{m \in \ZZ}$ of $(G,G/P)$-foliations on $X$ such that $\Delta_{\F_{m}}(\sigma) \neq \Delta_{\F_{m'}}(\sigma)$ if $m \neq m'$.
\end{thm}

To prove Theorem~\ref{thm:inf}, we note the following fact.

\begin{lemma}\label{lem:inf}
Let $X \to Y$ be an $S^{q}$-bundle with a section. Then, for any $m\in \ZZ$, there exists a smooth bundle map $f_{m} : X \to X$ whose restriction to each $S^{q}$-fiber is of degree $m$.
\end{lemma}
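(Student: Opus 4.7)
Plan. We equip $X$ with a fibrewise round Riemannian metric and use the section $s$ to realise $X$ as a fibrewise unreduced suspension of a sphere bundle; the map $f_m$ is then built as the fibrewise suspension of a degree $m$ self-map of that lower-dimensional sphere bundle.

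First, by a partition-of-unity argument applied to local trivialisations and the $O(q+1)$-invariant round metric on $S^q$, I would choose a smooth fibrewise Riemannian metric on $X$ making every fibre a round sphere of diameter $\pi$. The fibrewise antipode of $s(y)$ is then a well-defined smooth second section $s^-\colon Y\to X$, disjoint from $s$, and the fibrewise exponential map based at $s$ yields an identification of $X$ with the unreduced fibrewise suspension of the unit sphere bundle $S(E)\to Y$ of the normal bundle $E=s^{*}T^{v}X$:
\[
X\;\cong\;S(E)\times_{Y}[0,\pi]\big/\!\sim,
\]
where $\sim$ collapses $S(E)_{y}\times\{0\}$ to $s(y)$ and $S(E)_{y}\times\{\pi\}$ to $s^{-}(y)$.

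Next, I would observe that given any smooth bundle self-map $\psi_{m}\colon S(E)\to S(E)$ over $\id_{Y}$ of fibrewise degree $m$, the formula $[(\theta,r)]\mapsto[(\psi_{m}(\theta),r)]$ descends to a smooth bundle self-map $f_{m}\colon X\to X$ fixing both $s$ and $s^{-}$. Its fibrewise degree is $m$, since the unreduced suspension of a degree $m$ self-map of $S^{q-1}$ is a degree $m$ self-map of $S^{q}$. Thus the problem reduces to producing such a $\psi_{m}$.

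To construct $\psi_{m}$, take a sufficiently fine open cover $\{U_{\alpha}\}$ of $Y$ trivialising $E$, with a subordinate partition of unity $\{\rho_{\alpha}\}$, fix once and for all a smooth degree $m$ self-map $\mu_{m}\colon S^{q-1}\to S^{q-1}$, and set $\psi_{m}^{\alpha}=\id_{U_{\alpha}}\times\mu_{m}$ over each trivialisation. The local maps $\psi_{m}^{\alpha}$ are then glued into a global smooth map by taking fibrewise Karcher means of the values $\psi_{m}^{\alpha}(\theta)$ on the round spheres, weighted by $\rho_{\alpha}$. For a sufficiently fine cover, the local maps lie in a common geodesic convex ball in each target fibre, so the Karcher mean is well-defined and smooth, and the result is fibrewise homotopic to each $\psi_{m}^{\alpha}$; hence of fibrewise degree $m$. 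The main obstacle will be controlling this patching step: one must verify that refining the cover really does bring the local maps close enough on overlaps for the Karcher mean to be defined and for the fibrewise degree, a homotopy invariant, to be preserved.
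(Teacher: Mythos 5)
Your reduction to the equatorial sphere bundle fails at the patching step, and — more seriously — it reduces the lemma to a problem that can have no solution even when the lemma itself holds. For the patching: at a point $y\in U_{\alpha}\cap U_{\beta}$ and $\theta\in S(E)_{y}$, your two candidate values are $\psi^{\alpha}_{m}(\theta)=\phi_{\alpha}^{-1}\mu_{m}(\phi_{\alpha}\theta)$ and $\psi^{\beta}_{m}(\theta)=\phi_{\alpha}^{-1}g^{-1}\mu_{m}(g\,\phi_{\alpha}\theta)$ with $g=g_{\beta\alpha}(y)$, so their distance is $d\bigl(\mu_{m}(v),\,g^{-1}\mu_{m}(gv)\bigr)$. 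This measures the failure of $\mu_{m}$ to commute with the structure group and is insensitive to the fineness of the cover: refining the cover makes each $g_{\beta\alpha}$ nearly constant on each small overlap, but cannot bring those constants near the identity unless the bundle is trivial. (Nor can you sidestep this by choosing $\mu_{m}$ equivariantly: an $\SO(q)$-equivariant self-map of the homogeneous sphere $S^{q-1}$ is forced to be $\pm\id$.) So the local maps do not lie in a common geodesically convex ball, and the Karcher mean is neither defined nor degree-preserving.

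The deeper obstruction is that $S(E)\to Y$ is an $S^{q-1}$-bundle with no section, and fiberwise degree-$m$ self-maps of such bundles genuinely need not exist. For an oriented rank-two $E$, the space of degree-$m$ self-maps of $S^{1}$ deformation retracts $\SO(2)$-equivariantly onto $\{z\mapsto wz^{m}\}$, and tracking the coefficient $w$ through the transition functions shows that a fiberwise degree-$m$ self-map of $S(E)$ over $\id_{Y}$ exists if and only if $(1-m)\,e(E)=0$. Now take $X=S(TS^{2}\oplus\RR)\to S^{2}$ with the section $s=(0,1)$: then $E=s^{*}T^{v}X\cong TS^{2}$ has Euler class $2$, so $S(E)$ admits no fiberwise degree-$2$ self-map; yet $TS^{2}\oplus\RR$ is trivial, so $X\cong S^{2}\times S^{2}$ admits fiberwise self-maps of every degree. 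Hence for this $X$ a correct $f_{2}$ cannot preserve the latitude foliation determined by $s$, and no fiberwise-suspension argument can produce it. By contrast, the paper writes $f_{m}$ down in closed form in each fiber — multiplication of the arc-length parameter by $m$ along the great circles through $s(y)$ — which is global and smooth with no patching; but be warned that this formula is also delicate for even $q$: computing its fiberwise degree in your suspension coordinates (the passage across the antipode of $s(y)$ composes with the antipodal map of the equatorial $S^{q-1}$, of degree $(-1)^{q}$) gives $m$ for odd $q$ but only $0$ or $1$ for even $q$, so the even-codimension case requires a construction that uses more than the local $\O(q)$-structure.
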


\begin{proof}
We fix a smooth fiberwise metric on $X \to Y$ so that each $S^{q}$-fiber is the standard round sphere. Let $L$ be the image of a section of $X \to Y$. We can assume that $L$ is a smooth submanifold of $X$.  For $x\in X$, let $F_{x}$ be the $S^{q}$-fiber of $X \to Y$ containing $x$, let $\{x_{0}\}=F_{x} \cap L$, and let $c_{x}$ be a great circle of $F_{x}$ through $x$ and $x_{0}$. Under the identity $c_{x}\equiv\RR/2\pi\ZZ$ with $x_{0}\equiv0$ given by the length parametrization, let $f_{m}(x)=mx$ for $m\in \ZZ$. This defines a smooth map $f_{m} : X \to X$ whose restriction to each fiber is of degree $m$.
\end{proof}

\begin{proof}[Proof of Theorem~\ref{thm:inf}]
Let $\G$ be a torsion-free uniform lattice of $G$. Note that $\G$ is finitely presented because it is the fundamental group of the closed manifold $\G \backslash G/K_{P}$. Since $q$ is even, the Euler class of the $S^{q}$-bundle $\G \backslash G/K_{P} \to \G \backslash G/K_{G}$ is zero. Hence it has a section. Then, by Lemma~\ref{lem:inf}, we take a smooth map $f_{m} : \G \backslash G/K_{P} \to \G \backslash G/K_{P}$ of degree $m$ for any $m\in \mathbb{Z}$. Let $\wt{f}_{m} : G/K_{P} \to G/K_{P}$ be the lift of $f_{m}$ to the universal cover. Define $\Phi_{m} : G \times G/K_{P} \to G/K_{P}$ by $\Phi_{m}(g,x) = g \wt{f}_{m}(x)$. Since $\wt{f}_{m}$ is $\G$-equivariant, we get 
\[
\Phi_{m}(g_{1}g_{2},x) = g_{1}g_{2}\wt{f}_{m}(x) = g_{1} \wt{f}_{m}(g_{2}x) = \Phi_{m}(g_{1},g_{2}x)
\]
for $g_{1}\in G$, $g_{2}\in \G$ and $x\in G/K_{P}$. Then $\Phi_{m}$ induces a smooth map $\Psi_{m} : X \to \G \backslash G/K_{P}$, where $X$ is the quotient of $G \times G/K_{P}$ by the $\G$-action given by $g_{2} \cdot (g_{1},x) = (g_{1}g_{2}^{-1},g_{2}x)$. This $\Psi_{m}$ is a flat principal $G$-bundle over $\G \backslash G/K_{P}$ by construction. Since $\pi_{1}G$ is a finite group, $\pi_{1}X$ is also finitely presented.

Let $\ch_{m} : H^{\bt}(\g) \to H^{\bt}(X)$ be the characteristic homomorphism of $\Psi_{m}$ as a flat principal $G$-bundle over $\G \backslash G/K_{P}$. Let $F$ be a fiber of $\Psi_{m}$, which is homotopy equivalent to $K_G$. By the assumption, the composite of 
\[
\xymatrix{H^{\bt}(\g) \ar[r]^{\ch_{m}} & H^{\bt}(X) \ar[r] & H^{\bt}(F) \cong H^{\bt}(\k_{G})}
\]
is surjective, where the second arrow is the restriction map to $F$. Thus $\Psi_{m}^* : H^{\bt}(\G \backslash G/K_{P}) \to H^{\bt}(X)$ is injective by the Leray-Hirsch theorem.

Consider the $(G,G/P)$-foliation $\F_{m} = \Psi_{m}^{*}\F_{\G}$ on $X$, where $\F_{\G}$ is the foliation of $\G \backslash G/K_{P}$ whose lift to the universal cover $G/K_{P}$ is the $P/K_{P}$-coset foliation $\F_{P}$. By assumption, there exists some $\tau\in H^{\bt}(\g,K_{G})$ such that
\begin{equation}\label{eq:div}
\Delta_{\F_{P}}(\sigma) = \Xi_{\hol (\F_{P})}(\tau) \cdot \chi(\nu \F_{P})\;.
\end{equation}
Since the map $\pi_{1}X \to \pi_{1}(\G \backslash G/K_{P})$ induced by $\Psi_{m}$ is independent of $m$, we get
\begin{equation}\label{eq:psi1}
\Xi_{\hol (\F_{m})}(\tau) = \Xi_{\hol (\F_{1})}(\tau)\;
\end{equation}
for any $m$. On the other hand, since $\chi(\nu \F_{\G})$ is represented by the Poincar\'{e} dual of any $S^{q}$-fiber of $\G \backslash G/K_{P} \to \G \backslash G/K_{G}$, we get
\begin{equation}\label{eq:psi2}
\chi(\nu \F_{m}) = \Psi_{m}^{*}\chi(\nu \F_{\G}) = m\Psi_{1}^{*}\chi(\nu \F_{\G}) = m\chi(\nu \F_{1})
\end{equation}
by construction. By~\eqref{eq:div},~\eqref{eq:psi1} and~\eqref{eq:psi2}, we get $\Delta_{\F_{m}}(\sigma) = m\Delta_{\F_{1}}(\sigma)$. By the injectivity of $\Psi_{1}^{*}$, $\Delta_{\F_{1}}(\sigma)$ is nontrivial of infinite order. Hence we get $\Delta_{\F_{m}}(\sigma) \neq \Delta_{\F_{m'}}(\sigma)$ for $m \neq m'$. 
\end{proof}

Note that the manifolds $X$ are noncompact in our construction. We get Theorem~\ref{thm:infinite} as a corollary of Theorem~\ref{thm:inf} as follows.

\begin{proof}[Proof of Theorem~\ref{thm:infinite}]
By Propositions~\ref{prop:volGV},~\ref{prop:fibint} and~\ref{prop:Z}, there exists a constant $c$ such that $\GV(\F) = c\, \chi(\nu\F) \vol(\hol (\F))$ for transversely conformally flat foliations $\F$ of even codimension. So the Godbillon-Vey class is divisible in this case. Moreover, the surjectivity of the restriction map $H^{\bt}(\so(n+1,1)) \to H^{\bt}(\so(n+1))$ follows from $H^{\bt}(\so(n+1,1)) \otimes \CC \cong H^{\bt}(\so(n+2);\CC)$ and the surjectivity of $H^{\bt}(\so(n+2)) \to H^{\bt}(\so(n+1))$ (see, for example,~\cite[Theorems~VI and~VII in Section~6.23]{GreubHalperinVanstone1976}). Thus the assumption of Theorem~\ref{thm:inf} is satisfied, which implies Theorem~\ref{thm:infinite}.
\end{proof}

\section{Rigidity of foliations on homogeneous spaces}\label{sec:rigidity}

\subsection{Generalization of Bott-Thurston-Heitsch type formulas}\label{sec:genBTH}

Let $(G,G/P)$ be one of $(\SO_{0}(n+1,1),S^{n}_{\infty})$, $(\SU(n+1,1), S^{2n+1}_{\infty})$, $(\Sp(n+1,1),S^{4n+3}_{\infty})$ or $(F_{4(-20)},S^{15}_{\infty})$. Let $q = \dim G/P$. Consider the case of codimension $q > 1$; namely, all cases except $(\SO_{0}(2,1),S^{1}_{\infty})$ and $(\SU(1,1), S^{1}_{\infty})$. Let $M = \G \backslash G / K_{P}$ and $N = \G \backslash G / K_{G}$. Let $\F$ be a $(G,G/P)$-foliation of $\G \backslash G / K_{P}$ whose holonomy homomorphism is $\hol (\F) : \pi_{1}M \to G$. Since $\pi_{1}M \cong \pi_{1}N$, we regard $\hol (\F): \pi_{1}N \to G$. We orient $M$ and $N$ with the orientation of $G/K_{P}$ and the fibers of $\phi_{K_{G}} : G/K_{P} \to G/K_{G}$ in Proposition~\ref{prop:volGV}. The volume $\vol(\hol (\F))$ is defined in $H^{q+1}(N;\RR)$ with the orientation of $G/K_{G}$ as mentioned in Section~\ref{sec:vol}.

\begin{lemma}\label{lem:r2}
If $(G,G/P)$ is $(\SO_{0}(n+1,1),S^{n}_{\infty})$ for odd $n > 1$, $(\SU(n+1,1), S^{2n+1}_{\infty})$ for $n>0$, $(\Sp(n+1,1),S^{4n+3}_{\infty})$ or $(F_{4(-20)},S^{15}_{\infty})$, then
\begin{align}\label{eq:GV12}
\frac{1}{(2\pi)^{q+1}}\int_{M} \GV(\F) & = c_{G}\, \int_{N} \vol(\hol (\F)) \\
& = r_{G}\, \int_{N} e(p_{M})\;, \label{eq:GV123}
\end{align}
where $e(p_{M})$ is the Euler class of $p_{M} : M \to N$, and $r_{G}$ and $c_{G}$ are the functions of $(G,G/P)$ mentioned in Theorem~\ref{thm:BT} and Proposition~\ref{prop:volGV}, respectively. If $(G,G/P)$ is $(\SO_{0}(n+1,1),S^{n}_{\infty})$ for $n$ even, then~\eqref{eq:GV12} is true.
\end{lemma}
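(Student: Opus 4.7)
The plan is to extend the argument of Proposition~\ref{prop:r1} to the case where $\mathcal{F}$ need not be transverse to the fibers of $p_{M}$; this yields \eqref{eq:GV12}, after which \eqref{eq:GV123} follows by combining \eqref{eq:GV12} with the Hirzebruch proportionality computation from the proof of Theorem~\ref{thm:BT} and the volume rigidity available for the listed groups.

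For \eqref{eq:GV12}, I would use the enlarged Haefliger description (Proposition~\ref{prop:h2}) to realize $\mathcal{F}$ as a $\pi_{1}M$-equivariant map $\widehat{\dev}\colon\widetilde M\to G/K_{P}$, so that $\Delta_{\mathcal{F}}(h_{1}c_{1}^{q})$ is represented on $M$ by the descent of $\widehat{\dev}^{*}\Delta_{\mathcal{F}_{P}}(h_{1}c_{1}^{q})$. The central step is to establish the commutative diagram
\[
\xymatrix{
H^{\bullet}(\g,K_{P}) \ar[r]^{\Delta_{\mathcal{F}}} \ar[d]_{\fint_{\phi_{K_{G}}}} & H^{\bullet}(M;\RR) \ar[d]^{\fint_{p_{M}}} \\
H^{\bullet}(\g,K_{G}) \ar[r]_{\Xi_{\hol(\mathcal{F})}} & H^{\bullet}(N;\RR).
}
\]
Because $\pi_{1}M\cong\pi_{1}N$, the flat $G/K_{P}$-bundle $\X_{G}(\mathcal{F})/K_{P}\to M$ is the pullback by $p_{M}$ of a flat $G/K_{P}$-bundle $\mathcal{Y}\to N$ with the same holonomy; composing the Haefliger section $s_{\mathcal{F}}$ with the canonical map $\X_{G}(\mathcal{F})/K_{P}\to\mathcal{Y}$ yields a lift $s_{N}\colon M\to\mathcal{Y}$ of $p_{M}$. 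Projecting further to the associated flat $G/K_{G}$-bundle $\mathcal{Z}\to N$ gives a map that, by contractibility of $G/K_{G}$, is equivariantly homotopic to $\hat s^{N}\circ p_{M}$ for some section $\hat s^{N}\colon N\to\mathcal{Z}$; consequently each fiber $p_{M}^{-1}(n)$ is sent by $s_{N}$ into a single $K_{G}/K_{P}$-fiber of $\mathcal{Y}\to\mathcal{Z}$ as a map of degree one. Naturality of Chern-Weil under fiber integration then identifies $\fint_{p_{M}}\circ\Delta_{\mathcal{F}}$ with $\Xi_{\hol(\mathcal{F})}\circ\fint_{\phi_{K_{G}}}$, and applying the diagram to $[\Delta_{\mathcal{F}_{P}}(h_{1}c_{1}^{q})]$ together with Proposition~\ref{prop:volGV2} gives $\fint_{p_{M}}\Delta_{\mathcal{F}}(h_{1}c_{1}^{q})=c_{G}\,\vol(\hol(\mathcal{F}))$ in $H^{q+1}(N;\RR)$; integrating over $N$ and multiplying by $(2\pi)^{q+1}$ yields \eqref{eq:GV12}.

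For \eqref{eq:GV123}, note that $p_{M}$ is the unit tangent sphere bundle of $N$. Gauss-Bonnet, or equivalently the Hirzebruch proportionality principle applied to the compact dual $K_{\Gc}/K_{G}$, gives $\int_{N}e(p_{M})=\mu\int_{N}\vol(\Gamma)$ with $\mu=(-1)^{(q+1)/2}e(K_{\Gc}/K_{G})/\vol(K_{\Gc}/K_{G})$; the identity $r_{G}=(-1)^{(q+1)/2}\mu^{-1}c_{G}$ was already established in the proof of Theorem~\ref{thm:BT}. Given \eqref{eq:GV12}, formula \eqref{eq:GV123} therefore reduces to $\int_{N}\vol(\hol(\mathcal{F}))=(-1)^{(q+1)/2}\int_{N}\vol(\Gamma)$, which is the volume rigidity for the listed $(G,G/P)$: Corlette's superrigidity~\cite{Corlette1992} for $\Sp(n+1,1)$ and $F_{4(-20)}$, and generalized Mostow rigidity~\cite{Corlette1991,Dunfield1999,FrancavigliaKlaff2006} for $\SO_{0}(n+1,1)$ and $\SU(n+1,1)$, applied to the representation $\hol(\mathcal{F})$.

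The most delicate step is the commutativity of the diagram at the heart of \eqref{eq:GV12}: in Proposition~\ref{prop:r1}, transversality made $\widehat{\dev}$ a genuine bundle map and the two fiber integrations matched pointwise, whereas here $s_{\mathcal{F}}$ need not respect the two $S^{q}$-fibrations, so the identification must be made cohomologically and the fiber degree must be controlled via contractibility of $G/K_{G}$. A secondary but nontrivial point is verifying that $\hol(\mathcal{F})$ satisfies the hypotheses of the rigidity theorems invoked in the proof of \eqref{eq:GV123}.
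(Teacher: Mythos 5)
There are two genuine gaps.

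First, in your argument for~\eqref{eq:GV12} the crucial degree claim is unsupported. Contractibility of $G/K_{G}$ lets you homotope the composite $M\to\mathcal{Y}\to\mathcal{Z}$ to $\hat s^{N}\circ p_{M}$ and (via the homotopy lifting property of $\mathcal{Y}\to\mathcal{Z}$) replace $s_{N}$ by a map sending each $p_{M}$-fiber into a single $K_{G}/K_{P}$-fiber; but it gives no control whatsoever on the degree of the resulting map $S^{q}\to S^{q}$. If that degree is some integer $d$, your diagram only yields $\fint_{p_{M}}\Delta_{\F}(h_{1}c_{1}^{q})=d\,c_{G}\,\vol(\hol(\F))$, and there is no a priori reason $d=1$ for a foliation not transverse to the fibers. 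This is exactly where the paper invests its effort: it chooses $\overline{s}\colon\wt N\to G/K_{G}$ submersive at a point $x$, uses Lemma~\ref{lem:lift} to make the comparison map $\psi\colon M\to\mathcal{Z}$ a submersion on $p_{M}^{-1}(x)$, hence a covering map between $q$-spheres, and then uses $\pi_{1}(S^{q})=1$ for $q>1$ to conclude $\deg\psi=1$. Your write-up names "the fiber degree must be controlled via contractibility of $G/K_{G}$" as the delicate point, but contractibility cannot control the degree; the submersivity/covering argument is the missing idea.

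Second, your derivation of~\eqref{eq:GV123} is logically backwards. You reduce it to $\int_{N}\vol(\hol(\F))=\pm\int_{N}\vol(\G)$ and invoke Mostow/Corlette rigidity, but Theorem~\ref{thm:Mostow} takes the volume equality as a \emph{hypothesis} and concludes conjugacy, while Corlette's superrigidity requires Zariski density of the image, which is not available at this stage (it is deduced in Proposition~\ref{prop:GV} \emph{from}~\eqref{eq:GV12} and nontriviality of $\GV(\F)$). Worse, the whole point of Theorem~\ref{thm:superrigidity} is to prove that $\hol(\F)$ has maximal volume and is conjugate to the inclusion \emph{as a consequence of} Lemma~\ref{lem:r2} and Corollary~\ref{cor:equality}, so your route is circular relative to the intended application. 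The paper proves~\eqref{eq:GV123} with no rigidity input at all: it runs the identical fiber-integration argument, replacing Proposition~\ref{prop:volGV} by Theorem~\ref{thm:BT} applied to the transverse foliation $\F_{\mathcal{Z}}$ on $\mathcal{Z}\to N$, and uses $e(p_{\mathcal{Z}})=e(p_{M})$, which again follows from $\psi$ being a fiberwise degree-one bundle map.
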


\begin{proof}
First, we will prove~\eqref{eq:GV12} for all cases of $(G,G/P)$. The first part of this proof is like the proof of Proposition~\ref{prop:r1}. Take a $\pi_{1}N$-equivariant map $\overline{s} : \wt{N} \to G/K_{G}$ so that $\overline{s}$ is regular at a point $x$. Let $p_{\wt{M}} : \wt{M} \to \wt{N}$ denote the canonical projection $\wt{M} \to \wt{N}$. We get a $\pi_{1}M$-equivariant map $s = \overline{s} \circ p_{\wt{M}} : \wt{M} \to G/K_{G}$. By Lemma~\ref{lem:lift}, we obtain a $\pi_{1}M$-equivariant map $\widehat{\dev} : \wt{M} \to G/K_{P}$ which is regular on $p_{\wt{M}}^{-1}(x)$ and makes the following diagram commutative:
\[
\xymatrix{ \wt{M} \ar[r]^<<<<<{\widehat{\dev}} \ar[d]_{p_{\wt{M}}}    & G/K_{P} \ar[d]^{\phi_{K_{G}}} \\
              \wt{N} \ar[r]_<<<<<{\overline{s}} & G/K_{G}\;, }
\]
where $\phi_{K_{G}} : G/K_{P} \to G/K_{G}$ is the canonical projection. Let $p_{\wt{\mathcal{Z}}} : \wt{\mathcal{Z}} \to N$ be the pull-back of the fiber bundle $\phi_{K_{G}} : G/K_{P} \to G/K_{G}$ by $\overline{s}$. We get the commutative diagram:
\begin{equation}\label{eq:fibs}
\xymatrix{ \wt{M} \ar[r]^{\wt{\psi}} \ar[rd]_{p_{\wt{M}}} &  \wt{\mathcal{Z}} \ar[rr]^<<<<<<<<<<{\xi_{\wt{\mathcal{Z}}}} \ar[d]^{p_{\wt{\mathcal{Z}}}}  && G/K_{P} \ar[d]^{\phi_{K_{G}}} \\
              & \wt{N} \ar[rr]_<<<<<<<<<<{\overline{s}}  && G/K_{G}\;, }
\end{equation}
where $\xi_{\wt{\mathcal{Z}}}$ is the canonical map and $\wt{\psi}$ is the map induced by the universality of the pull-back. By taking the quotient of the left triangle of~\eqref{eq:fibs} by $\Gamma$, we get the following diagram:
\begin{equation}\label{eq:fibs2}
\xymatrix{ M \ar[r]^{\psi} \ar[rd]_{p_{M}} & \mathcal{Z} \ar[d]^{p_{\mathcal{Z}}} \\
              & N\;, }
\end{equation}
where $\mathcal{Z}$ is the quotient of $\wt{\mathcal{Z}}$ by the induced $\G$-action and $\psi$ is the map induced by $\wt{\psi}$.

Let $\F_{\mathcal{Z}}$ be the foliation on $\mathcal{Z}$ whose lift to the universal cover $\wt{\mathcal{Z}}$ is $\xi_{\wt{\mathcal{Z}}}^{*}\F_{P}$. By applying Proposition~\ref{prop:volGV} like in the proof of Proposition~\ref{prop:r1}, we get
\begin{equation}\label{eq:intapp}
\frac{1}{(2\pi)^{q+1}} \fint_{p_{\mathcal{Z}}} \GV(\F_{\mathcal{Z}}) = c_{G}\, \vol(\hol (\F))
\end{equation}
in $H^{q+1}(N;\RR)$. Since $\F = \psi^{*} \F_{\mathcal{Z}}$, we obtain $\GV(\F) = \psi^{*} \GV(\F_{\mathcal{Z}})$. Hence 
\begin{equation}\label{eq:GV2}
\frac{1}{(2\pi)^{q+1}} \int_{M} \GV(\F) = \frac{\deg \psi}{(2\pi)^{q+1}} \int_{\mathcal{Z}} \psi^{*} \GV(\F_{\mathcal{Z}})  \\
= c_{G}\, \deg \psi \int_{N} \vol(\hol (\F))\;,
\end{equation}
where $\deg \psi$ is the degree of $\psi$ as a continuous map. Since $\psi$ is a bundle map that covers the identity map on $N$, we get
\begin{equation}\label{eq:deg1}
\deg \psi = \deg\left(\psi|_{p_{M}^{-1}(x)}\right)\;.
\end{equation}
Here, $\psi|_{p_{M}^{-1}(x)} : p_{M}^{-1}(x) \to p_{\mathcal{Z}}^{-1}(x)$ is a covering map because $\psi$ is regular on $p_{M}^{-1}(x)$. Since $\pi_{1}(p_{M}^{-1}(x)) \cong \pi_{1}(p_{\mathcal{Z}}^{-1}(x)) \cong \pi_{1}(S^{q}) = 1$ because $q > 1$, we obtain
\begin{equation}\label{eq:deg2}
\deg\left(\psi|_{p_{M}^{-1}(x)}\right) = 1\;.
\end{equation}
By~\eqref{eq:GV2},~\eqref{eq:deg1} and~\eqref{eq:deg2}, we get~\eqref{eq:GV12}. 

We get~\eqref{eq:GV123} by using Theorem~\ref{thm:BT} at~\eqref{eq:intapp} instead of Proposition~\ref{prop:volGV}. Note that $e(p_{M}) = e(p_{\mathcal{Z}})$, because $\psi$ is a bundle map of degree one on each fiber.
\end{proof}

We obtain the following direct consequences.

\begin{cor}\label{cor:equality}
\begin{enumerate}

\item\label{i: GV(F) = GV(F_G)} If $(G,G/P)$ is equal to $(\SO_{0}(n+1,1),S^{n}_{\infty})$ for $n$ odd, $(\SU(n+1,1), S^{2n+1}_{\infty})$, $(\Sp(n+1,1),S^{4n+3}_{\infty})$ or $(F_{4(-20)},S^{15}_{\infty})$, then any $(G,G/P)$-foliation $\F$ of $M$ satisfies $\GV(\F) = \GV(\F_{\G})$ and $\hol(\F) = \hol(\F_{\G})$.

\item\label{i: vol(hol(F)) = vol(G)} If $(G,G/P)$ is $(\SO_{0}(n+1,1),S^{n}_{\infty})$ for $n$ even, then $\GV(\F) = \GV(\F_{\G})$ if and only if $\vol(\hol (\F)) = \vol(\G)$, where $\vol(\G)$ is the volume of $\Gamma \hookrightarrow G$ {\rm(}see Example~\ref{ex:volumeG}{\rm)}. 

\end{enumerate}
\end{cor}

Combining Lemma~\ref{lem:r2} with well known properties of the volume, we get the following consequences.

\begin{prop}\label{prop:GV}
If $\GV(\F)$ is nontrivial, then the image of the holonomy homomorphism $\pi_{1}M \to G$ is Zariski dense in $G$.
\end{prop}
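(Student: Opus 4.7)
The plan is to combine Lemma~\ref{lem:r2} with the classical vanishing of volumes of representations whose images are not Zariski dense in a simple Lie group of real rank one. Since $\dim M=q+1=\deg\GV(\F)$ and $M$ is closed and oriented, $\GV(\F)\neq0$ is equivalent to $\int_{M}\GV(\F)\neq0$. Formula~\eqref{eq:GV12} of Lemma~\ref{lem:r2} (which holds in all the rank-one cases considered) then gives $\int_{N}\vol(\hol(\F))\neq0$, so the volume class $\vol(\hol(\F))\in H^{q+1}(N;\RR)$ is nontrivial.

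It remains to show that any homomorphism $\rho:\pi_{1}N\to G$ whose image has Zariski closure contained in a proper real algebraic subgroup $H$ of $G$ satisfies $\vol(\rho)=0$. As recalled in Section~\ref{sec:vol}, and via the diagram of Proposition~\ref{prop:fibint}, $\vol(\rho)$ is the image of $\vol_{G/K_{G}}\in H^{q+1}(\g,K_{G})$ under $\Xi_{\rho}$, and the factorization $\rho:\pi_{1}N\to H\hookrightarrow G$ makes $\Xi_{\rho}$ factor through the restriction $H^{\bt}(\g,K_{G})\to H^{\bt}(\h,K_{H})$ induced by the Lie algebra inclusion $\h\hookrightarrow\g$, where $K_{H}$ is a maximal compact subgroup of $H$ chosen so that $K_{H}\subset K_{G}$. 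Thus it suffices to show that this restriction sends $\vol_{G/K_{G}}$ to $0$ in $H^{\bt}(N;\RR)$.

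For $G$ simple of real rank one, any proper real algebraic subgroup $H$ is contained, up to finite index, in a maximal one: either a parabolic subgroup (hence amenable), or a proper reductive subgroup $L\subset G$ whose symmetric space $L/K_{L}$ embeds as a proper totally geodesic submanifold of $G/K_{G}$ of strictly smaller real dimension. In the reductive case, $\omega_{G/K_{G}}$ is a top-degree form on $G/K_{G}$, so its pullback to $L/K_{L}$ vanishes by dimension. In the amenable case the pullback to $\h/\k_{H}$ need not vanish as a form, but the cohomological image does: by a theorem of Dupont for $\SO_{0}(n+1,1)$, extended to the remaining rank-one cases by Bucher--Burger--Iozzi and subsequent authors, $\vol_{G/K_{G}}$ admits a bounded representative in continuous bounded cohomology, while the continuous bounded cohomology of an amenable Lie group vanishes in positive degree by Gromov; hence $\vol_{G/K_{G}}$ maps to $0$ in $H^{q+1}_{cb}(H)$, and a fortiori to $0$ in $H^{q+1}(N;\RR)$. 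The main obstacle is this amenable case, where the elementary dimension count on Lie algebra cochains fails and one must invoke boundedness of the volume class.
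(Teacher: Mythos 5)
Your first step is exactly the paper's: Lemma~\ref{lem:r2}, via~\eqref{eq:GV12}, converts the nontriviality of $\GV(\F)$ into the nontriviality of $\vol(\hol(\F))$. (Small slip here: $\deg\GV(\F)=2q+1=\dim M$, not $q+1$; the equivalence $\GV(\F)\neq0\Leftrightarrow\int_M\GV(\F)\neq0$ still holds for exactly that reason.) Where you genuinely diverge is in the implication ``$\vol(\hol(\F))\neq0\Rightarrow$ Zariski dense'': the paper disposes of it in one line by citing Corlette's Proposition~2.1, whereas you reprove it from scratch via the Borel--Tits dichotomy for the Zariski closure $H$, a dimension count on totally geodesic orbits when $H$ is reductive, and boundedness of the volume class plus Gromov's vanishing for amenable groups when $H$ lies in a proper parabolic. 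Your argument is correct and has the virtue of being self-contained relative to the rigidity literature, but it is much heavier than the citation it replaces: the reductive case tacitly uses Mostow's theorem that a reductive algebraic subgroup may be conjugated to a $\theta$-stable one (so that $H\cdot o$ is totally geodesic of dimension $\dim(\h\cap\p)<\dim G/K_{G}$, where $\h=\Lie(H)$ and $\g=\k_{G}\oplus\p$ is the Cartan decomposition), and the parabolic case invokes the boundedness of the top-degree volume class, which for $\Sp(n+1,1)$ and $F_{4(-20)}$ rests on Lafont--Schmidt's positivity of simplicial volume --- a result the paper only touches in a remark.

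One correction of substance: your closing claim that in the amenable case ``one must invoke boundedness of the volume class'' is not right. A proper parabolic $P=MAN$ of a rank-one group acts on $\Lambda^{q+1}(\p/\k_{P})$ through the nontrivial character $e^{2\rho}$, so $(\p,K_{P})$ is not unimodular and $H^{q+1}(\p,K_{P})=0$. Hence the $P$-invariant top form $\omega_{G/K_{G}}$ on $G/K_{G}\cong P/K_{P}$ equals $d\beta$ for a $P$-invariant $q$-form $\beta$; pulling back by a $\hol(\F)$-equivariant section then gives $\Xi_{\hol(\F)}(\vol_{G/K_{G}})=[d(s^{*}\beta)]=0$ already at the level of invariant forms (for $\g=\sl(2;\RR)$ this is the classical exactness of the area form of $\mathbf{H}_{\RR}^{2}$ as an $\Aff(1;\RR)$-invariant form underlying the Milnor--Wood inequality). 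This elementary Lie-algebraic vanishing would let you dispense with bounded cohomology entirely and keep the whole proof at the level of the tools already used in Sections~\ref{sec:ex} and~\ref{sec:even}.
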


\begin{proof}
If $\GV(\F)$ is nontrivial, then $\vol(\hol(\F))$ is also nontrivial by~\eqref{eq:GV12}. Then the image of $\hol(\F)$ is Zariski dense in $G$ by \cite[Proposition~2.1]{Corlette1991}.
\end{proof}

\begin{prop}\label{prop:MW}
If $(G,G/P) = (\SO_{0}(n+1,1),S^{n}_{\infty})$ for even $n$, then $\int_{M} \GV(\F) \leq \int_{M} \GV(\F_{\G})$.
\end{prop}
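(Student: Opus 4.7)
The plan is to combine equation~\eqref{eq:GV12} of Lemma~\ref{lem:r2} with a generalized Milnor--Wood type inequality for the volume of representations of uniform lattices of $G=\SO_{0}(n+1,1)$.

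First I would apply~\eqref{eq:GV12} to both $\F$ and $\F_{\G}$, obtaining
\[
\frac{1}{(2\pi)^{n+1}}\int_{M} \GV(\F) = c_{G}\int_{N} \vol(\hol(\F)), \qquad \frac{1}{(2\pi)^{n+1}}\int_{M} \GV(\F_{\G}) = c_{G}\int_{N} \vol(\G).
\]
The orientation of $M$ was fixed so that $\int_{M}\GV(\F_{\G})>0$; combined with the orientation conventions of Proposition~\ref{prop:volGV}, this forces a compatible orientation on $N$ under which $\int_{N}\vol(\G)$ equals the hyperbolic volume $\vol(N)>0$ and the product $c_{G}\int_{N}\vol(\G)$ is positive. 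Hence the stated inequality reduces to showing
\[
\int_{N} \vol(\hol(\F)) \leq \vol(N).
\]

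Next I would invoke the generalized Milnor--Wood inequality for $\SO_{0}(n+1,1)$: for any representation $\rho:\pi_{1}N\to G$ of the fundamental group of the closed hyperbolic manifold $N=\G\backslash G/K_{G}$, one has $\bigl|\int_{N}\vol(\rho)\bigr|\leq \vol(N)$. Since $n$ is even and positive, $\dim N= n+1\geq 3$, and this bound is a consequence of Gromov's theorem comparing the volume of a representation with the simplicial volume of $N$, together with the Gromov--Thurston computation of simplicial volume for hyperbolic manifolds; the inequality is also stated in forms suited to lattice homomorphisms in~\cite{Corlette1991}. Applying this bound to $\rho=\hol(\F)$ yields $\int_{N}\vol(\hol(\F))\leq \vol(N)$, concluding the proof.

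The substantive external input is therefore the generalized Milnor--Wood inequality, which is well-documented. The only real care required in the argument is the sign bookkeeping: the absolute-value bound for $\int_{N}\vol(\hol(\F))$ upgrades to the signed inequality because $\vol(N)>0$ and the two-sided bound applies, and one must verify that the sign of $c_{G}$ and the chosen orientation of $N$ are compatible with the orientation of $M$ making $\int_{M}\GV(\F_{\G})>0$. Both compatibilities are built into the orientation conventions introduced in Proposition~\ref{prop:volGV} and Lemma~\ref{lem:r2}, so this amounts to a routine check rather than a genuine obstacle.
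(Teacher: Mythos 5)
Your proposal is correct and follows essentially the same route as the paper: combine equation~\eqref{eq:GV12} of Lemma~\ref{lem:r2} with the generalized Milnor--Wood inequality $\int_{N}\vol(h)\leq\int_{N}\vol(\G)$, which the paper cites from Francaviglia--Klaff (and, in the remark following the proposition, also attributes to the simplicial-volume argument you invoke). Your extra attention to the orientation and sign conventions is a reasonable elaboration of what the paper leaves implicit, not a different argument.
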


\begin{proof}
This is a consequence of~\eqref{eq:GV12} and the following generalized version of the Milnor-Wood inequality (see~\cite[Theorem~1.1]{FrancavigliaKlaff2006}): We have
\begin{equation}\label{eq:MW}
\int_{N} \vol(h) \leq \int_{N} \vol(\G)
\end{equation}
for any homomorphism $h : \G \to G$.
\end{proof}

\begin{rem}
The inequality~\eqref{eq:MW} is true also for any other simple Lie group $G$. In fact, it is a consequence of the positivity of the simplicial volume of locally symmetric spaces due to Lafont-Schmidt~\cite{LafontSchmidt2006} (one applies the Hahn-Banach theorem~\cite[Corollary in page~225]{Gromov1982} with~\cite[Corollary~7]{Bucher2008}). But here we need only the case of $(G,G/P) = (\SO_{0}(n+1,1),S^{n}_{\infty})$ for even $n$, where Corollary~\ref{cor:equality}-(i)  does not work.
\end{rem}

\subsection{Rigidity of $(G,G/P)$-foliations of $\G \backslash G/K_{P}$ of higher codimensions}

To prove Theorem~\ref{thm:superrigidity}-\eqref{i: F is smoothly conjugate to F_Gamma}, we will apply the following generalized version of Mostow rigidity.

\begin{thm}[Goldman~\cite{Goldman1988} for the case where $G=\PSO(2,1)$, Dunfield~\cite{Dunfield1999} for $G = \PSO(n+1,1)$, and Corlette~\cite{Corlette1991} for $G = \PSU(n+1,1)$]\label{thm:Mostow}
Let $G$ denote $\PSO(n+1,1)$ or $\PSU(n+1,1)$, and $\G$ a torsion-free uniform lattice of $G$. Any homomorphism $h : \G \to G$ with $\vol(h) = \vol(\G)$ is conjugate to the canonical inclusion $\G \to G$ by an inner automorphism of $G$.
\end{thm}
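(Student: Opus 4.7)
The plan is to follow the standard volume-rigidity strategy via equivariant maps between rank-one symmetric spaces. Set $X = G/K_{G}$, with fixed orientation and the volume form $\omega_{X}$ of norm one with respect to the Killing metric as in Section~\ref{sec:vol}, and put $N = \G \backslash X$. The volume $\vol(h)\in H^{m}(N;\RR)$ of a representation $h:\G\to G$ is computed, after lifting, as the class of $f_{h}^{*}\omega_{X}$ for any $h$-equivariant smooth map $f_{h}:\widetilde{N}=X\to X$; such a map exists by contractibility of $X$, and if one first replaces $h$ by a reductive representation the natural constructions below produce a canonical $f_{h}$.

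The first step is to show that equality $\vol(h)=\vol(\G)$ forces the Zariski closure of $h(\G)$ to be all of $G$. Otherwise the image factors through a proper reductive subgroup $H\subsetneq G$, and the corresponding $h$-equivariant map can be taken to land in the totally geodesic submanifold $H/(H\cap K_{G})\subsetneq X$, whose image in $N$ carries zero top-dimensional form, contradicting maximality (this is a version of~\cite[Proposition~2.1]{Corlette1991} used already in Proposition~\ref{prop:GV}). The second step is to produce a canonical $h$-equivariant $f_{h}$ and establish the \emph{pointwise} Jacobian estimate
\[
f_{h}^{*}\omega_{X}\,\leq\,\omega_{X}\quad\text{at every point of }X,
\]
with equality characterizing isometries. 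In the real hyperbolic case $G=\PSO(n+1,1)$, take $f_{h}$ to be the Besson--Courtois--Gallot natural/barycentric map; the pointwise estimate and its sharp equality case are exactly their entropy-rigidity inequality. In the complex hyperbolic case $G=\PSU(n+1,1)$, take $f_{h}$ to be the harmonic $h$-equivariant map produced by Corlette's existence theorem (available because $h$ is reductive); the pointwise inequality and its equality case come from Siu's Bochner formula applied to harmonic maps into negatively curved K\"ahler locally symmetric targets, which forces the differential of $f_{h}$ to be $\CC$-linear or $\CC$-antilinear fiberwise.

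The third step is to integrate the pointwise inequality over the compact quotient $N$ and use the hypothesis $\int_{N}\vol(h)=\int_{N}\vol(\G)$ to conclude that $f_{h}^{*}\omega_{X}=\omega_{X}$ pointwise. By the equality case of Step~2, $f_{h}$ is then a local isometry of $X$, hence (since $X$ is simply connected and complete) an isometry. Combined with the $h$-equivariance $f_{h}(\gamma\cdot x)=h(\gamma)\cdot f_{h}(x)$, the isometry $f_{h}$ is the left translation by some element $g\in G$, and equivariance becomes the relation $h(\gamma)=g\gamma g^{-1}$ for all $\gamma\in\G$, giving the desired inner conjugacy.

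The hard part is the sharp pointwise Jacobian inequality with equality characterization in Step~2, specifically in the complex hyperbolic case: the BCG barycentric technique does not directly yield the required estimate, and one must use Siu's Bochner identity together with a careful analysis of the zero set of $\bar{\partial}f_{h}$ to rule out non-isometric equality cases. The reduction from an arbitrary homomorphism $h$ to a reductive one (needed for existence of the harmonic $f_{h}$) is the other technical point, and in the real hyperbolic setting this is bypassed by using the BCG natural map, which is defined for every homomorphism of nonelementary image.
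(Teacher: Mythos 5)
The paper does not prove this statement: it is imported as a known theorem, attributed to Goldman for $\PSO(2,1)$, Dunfield for $\PSO(n+1,1)$ and Corlette for $\PSU(n+1,1)$, and then used as a black box in Section~\ref{sec:rigidity} (proofs of Theorems~\ref{thm:superrigidity} and~\ref{thm:cod1}). So there is no internal proof to compare against, and your task here is really to reconstruct the content of the cited references. Your sketch is a recognizable and essentially correct outline of the standard volume-rigidity arguments: nonvanishing of $\vol(h)$ forces Zariski density (the same fact the paper invokes in Proposition~\ref{prop:GV}); an $h$-equivariant map with a sharp pointwise Jacobian bound; integration over the compact quotient; and the equality case forcing a global isometry, whose equivariance yields the inner conjugacy. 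Three caveats. First, what you flag as ``the hard part''---the sharp pointwise inequality with its equality characterization---\emph{is} the entire content of the cited theorems (the Besson--Courtois--Gallot real Schwarz lemma in the real case, Corlette's harmonic-map Bochner argument of Siu--Sampson type in the complex case), so the proposal is an architecture that defers all the substance to exactly the references the paper cites. Second, in the complex hyperbolic case the quantity one actually controls for a harmonic map is the pullback of the K\"ahler form as a $(1,1)$-form (equivalently the characteristic number obtained by pairing with its top power), not a direct top-degree bound $f_h^{*}\omega_X\le\omega_X$; extracting the rigidity from the equality case of Siu's identity, including the analysis where $df_h$ degenerates, is nontrivial and is only named in your sketch. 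Third, Dunfield's actual proof proceeds via pseudo-developing maps and Gromov--Thurston straightening of simplices rather than the BCG natural map (the BCG route is a legitimate alternative), and the reduction of a general $h$ to a reductive one---needed for the existence of the equivariant harmonic map---should be justified by noting that semisimplification does not change the volume class. None of these is a wrong step; they mark where a genuine proof would have to do real work that your outline delegates to the literature.
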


\begin{rem}
	\begin{enumerate}
	
		\item Francaviglia-Klaff~\cite{FrancavigliaKlaff2006} and Bucher-Burger-Iozzi~\cite{BucherBurgerIozzi2012} generalized the definition of the volume of representations of uniform lattices to non-uniform lattices. (These two definitions do not agree in the non-uniform case.) It allows them to prove Theorem~\ref{thm:Mostow} in a way similar to~\cite{Dunfield1999}, including the case where $\G$ is a non-uniform lattice of $\SO(n+1,1)$. 

		\item Note that the assumption of the above theorem of Goldman is the equality $e(h) = e(\G)$ for the Euler classes. But, because of the proportionality of the Euler class and the volume, it is equivalent to the equality on the volume.

		\item To prove Theorem~\ref{thm:superrigidity} for the case where $G$ is $\Sp(n+1,1)$ or $F_{4(-20)}$, we will apply the superrigidity theorem of Corlette~\cite{Corlette1992}, which asserts that any homomorphism $\G \to G$ from a uniform lattice $\G$ of $G$ is conjugate to the canonical inclusion if its image is Zariski dense. This rigidity is stronger than the case of Theorem~\ref{thm:Mostow}, so we do not need the equality on the volumes.
		
	\end{enumerate}
\end{rem}

\begin{proof}[Proof of Theorem~\ref{thm:superrigidity}-\eqref{i: F is smoothly conjugate to F_Gamma}]
If $(G,G/P)$ is one of $(\SO_{0}(n+1,1),S^{n}_{\infty})$ for $n$ odd or $(\SU(n+1,1), S^{2n+1}_{\infty})$, Corollary~\ref{cor:equality}-\eqref{i: GV(F) = GV(F_G)} implies $\vol(\hol (\F)) = \vol(\G)$. If $(G,G/P)$ is $(\SO_{0}(n+1,1),S^{n}_{\infty})$ for $n$ even, then we get $\vol(\hol (\F)) = \vol(\G)$ by the assumption and Corollary~\ref{cor:equality}-\eqref{i: vol(hol(F)) = vol(G)}. Thus Theorem~\ref{thm:Mostow} implies that $\hol (\F) : \pi_{1}N \to G$ is conjugate to $\pi_{1}N = \G \hookrightarrow G$ by an inner automorphism of $G$. Hence the standard map $\phi_{K_{G}} : G/K_{P} \to G/K_{G}$ is conjugate to a $\pi_{1}M$-equivariant map $s : G/K_{P} \to G/K_{G}$, which is a submersion. Then we get a $\pi_{1}M$-equivariant submersion $\widehat{\dev} : G/K_{P} \to G/K_{P}$ by Lemma~\ref{lem:lift}. It induces a covering map $\overline{\dev} : \G \backslash G/K_{P} \to \G \backslash G/K_{P}$, which must be a diffeomorphism because $\GV(\F) = \GV(\F_{\G})$.
\end{proof}

\begin{proof}[Proof of Theorem~\ref{thm:superrigidity}-\eqref{i: int_M GV(F) leq int_M GV(F_Gamma}]
Corollary~\ref{cor:equality}-\eqref{i: GV(F) = GV(F_G)} and Proposition~\ref{prop:GV} imply that the image of $\hol (\F) : \pi_{1}M \to G$ is Zariski dense in $G$. Thus Corlette's superrigidity theorem~\cite{Corlette1992} for uniform lattices in $\Sp(n+1,1)$ or $F_{4(-20)}$ implies that $\hol (\F) : \pi_{1}N \to G$ is conjugate to $\pi_{1}N = \G \hookrightarrow G$. The rest of the proof is the same as in the case~(i).
\end{proof}

\subsection{Codimension one case}

In the case where $(G,G/P)$ is $(\SO_{0}(2,1), S^{1}_{\infty})$ or $(\SU(1,1), S^{1}_{\infty})$, Lemma~\ref{lem:r2} is not true in general because $\pi_{1}S^{1} \cong \ZZ$. But the theory of codimension one foliations, due to Thurston and Levitt, resolves this problem. Note that, in this case, $K_{G}$ is isomorphic to $\SO(2)$ or $\U(1)$, $P$ is isomorphic to $\Aff_{+}(1;\RR)$ or $\Aff(1;\RR)$, and $K_{P}$ is trivial or $\{\pm 1\}$. Let $\F$ be a $(G,G/P)$-foliation on $M=\G \backslash G/K_{P}$. Here, $N=\G \backslash G /K_{G}$ is a closed Riemann surface and the projection $p : \G \backslash G/K_{P} \to \G \backslash G /K_{G}$ is a principal $S^{1}$-bundle. 

Theorem~\ref{thm:cod1} will be deduced from the following two results:
\begin{thm}[Chihi-ben Ramdane~\cite{ChihiRamdane2008}]\label{thm:CR}
If $\GV(\F)$ is nontrivial, then the image of the holonomy homomorphism of $\F$ is a uniform lattice or a dense subgroup of $G$. In particular, $\F$ is minimal.
\end{thm}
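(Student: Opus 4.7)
The plan is to consider the closure $H := \overline{\hol(\F)(\pi_{1}M)}$ in the three-dimensional group $G$ and eliminate, case by case, every possibility for $H$ except $H = G$ and $H$ a uniform lattice, under the hypothesis $\GV(\F)\neq 0$. Since $G$ is locally isomorphic to $\PSL(2;\RR)$, the classification of closed subgroups is short: the identity component $H_{0}$ is either trivial, a conjugate of $\SO(2)$, a hyperbolic or parabolic one-parameter subgroup, a conjugate of the Borel subgroup $\Aff_{+}(1;\RR)$, or all of $G$.

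The positive-dimensional intermediate cases yield $\GV(\F) = 0$. If $H_{0}$ is conjugate to $\SO(2)$, then $H$ normalizes $H_{0}$ and hence acts by isometries for the round metric on $S^{1} = G/P$, so $\F$ is transversely Riemannian and $\GV(\F) = 0$ by~\cite[Theorem~4.52]{KamberTondeur1975b}. In all other positive-dimensional proper cases $H$ sits, up to finite index, inside a Borel subgroup, so after passing to a finite cover of $M$ the foliation $\F$ admits a transversely affine refinement; a short Lie algebra cohomology computation on the solvable Lie algebra $\mathfrak{aff}(1;\RR)$ (analogous to the one in Section~\ref{sec:trproj} for $q=1$) shows that the image of $h_{1}c_{1}$ vanishes, and Theorem~\ref{thm:factrization} then yields $\GV(\F) = 0$.

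The remaining case is $H$ discrete. Elementary discrete subgroups of $G$ are virtually cyclic and hence sit inside a one-parameter subgroup, reducing to the cases above. The principal obstacle is to rule out non-elementary discrete subgroups of infinite covolume. For such an $H$ the limit set $\Lambda(H) \subsetneq S^{1}$ is a proper closed $H$-invariant Cantor set, and on the saturated open subset of $M$ lying over $S^{1} \setminus \Lambda(H)$ the foliation is transversely affine; combining this with a Duminy-type localization principle (that $\GV$ of a codimension-one foliation is concentrated on its resilient, non-wandering part) gives $\GV(\F) = 0$. Once the dichotomy is established, the final minimality claim is immediate: both a dense subgroup of $G$ and a uniform lattice of $G$ act minimally on $S^{1} = G/P$ -- in the second case because the limit set of a cocompact Fuchsian group is all of $S^{1}$ -- so every leaf of $\F$ is dense.
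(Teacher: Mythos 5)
First, a point of comparison: the paper does not prove this statement at all --- it is quoted as an external result of Chihi--ben Ramdane~\cite{ChihiRamdane2008} --- so there is no internal proof to measure yours against, and your attempt has to stand on its own. Your overall strategy (classify the closure $H$ of the holonomy image among closed subgroups of a group locally isomorphic to $\PSL(2;\RR)$ and kill $\GV(\F)$ in every intermediate case) is the natural one, and the elliptic case is handled correctly. But two of your reductions do not go through as written. In the Borel case, the ``transversely affine refinement'' is not automatic: the Borel subgroup acts on $S^{1}$ with a fixed point $p_{0}$, and the developing map may meet $\dev^{-1}(p_{0})$; only over $S^{1}\setminus\{p_{0}\}$ do you get an $(\Aff(1;\RR),\RR)$-structure. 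Moreover Theorem~\ref{thm:factrization} (Benson--Ellis) is stated for semisimple $G$, so it cannot be invoked for a computation on the solvable algebra $\mathfrak{aff}(1;\RR)$. (When $\dev$ avoids $p_{0}$ the vanishing is immediate anyway from the existence of a closed $\eta$ with $d\omega=\eta\wedge\omega$; when it does not, you are already in the ``localization'' situation of your last case, with nothing proved.)

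The more serious gap is in the discrete case. A finitely generated non-elementary Fuchsian group is a uniform lattice, a non-uniform lattice, or of the second kind with Cantor limit set. You treat only the last possibility; non-uniform lattices are discrete, are neither uniform lattices nor dense, and have limit set all of $S^{1}$, so your ``transversely affine off the limit set'' reduction is vacuous for them --- yet the theorem asserts precisely that they cannot occur when $\GV(\F)\neq 0$, so this case cannot be omitted. In addition, the ``Duminy-type localization principle'' you invoke for the Cantor-limit-set case is not a routine citation: the concentration of the Godbillon--Vey class on the resilient part of a codimension-one foliation is the hard analytic content here (and comes with regularity hypotheses), so leaving it as an unstated black box leaves the crux of the theorem unaccounted for. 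The final deduction of minimality from minimality of the $H$-action on $S^{1}$ is fine.
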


\begin{thm}[Thurston~\cite{Thurston1972a} and Levitt~\cite{Levitt1978}]\label{thm:TL}
A codimension one foliation $\F$ on $M$ without compact leaves is isotopic to a foliation transverse to the fibers of $p$.
\end{thm}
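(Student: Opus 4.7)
The plan is to follow the Thurston--Levitt strategy, exploiting the principal $S^{1}$-bundle structure $p\colon M\to N$. Let $V$ denote the vertical vector field generating the free $S^{1}$-action whose orbits are the fibers of $p$. The conclusion is equivalent to constructing an isotopy of $\F$ through $(G,G/P)$-foliations to a foliation $\F'$ with $V$ everywhere transverse to $\F'$, so I would introduce the \emph{tangency locus} $T(\F)=\{x\in M : V(x)\in T_{x}\F\}$ and reformulate the goal as producing an isotopy that empties $T$.

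First I would collect structural consequences of the hypothesis. Since $\dim M=3$ and the fibers of $p$ are circles, no fiber can itself be a leaf of $\F$ for dimensional reasons, but some leaves may contain fiber arcs, precisely along $T$. The assumption that $\F$ has no compact leaves rules out Reeb components (whose boundary tori would be compact leaves), so by Novikov's theorem every leaf of $\F$ is non-compact and $\F$ is taut; in particular the holonomy of $\F$ along fibers is well-defined and can be analyzed via the first return to a transversal. A further preliminary reduction is to put $\F$ in a generic position so that $T$ is a stratified subset whose lowest-dimensional strata are the generic fiber-tangencies, handled by standard jet-transversality.

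The heart of the argument is a local-to-global modification procedure: near a tangency $x\in T$, the $S^{1}$-action supplies a natural one-parameter family of perturbations, and rotating a foliation chart along the fiber through $x$ by a small element of $S^{1}$ removes the tangency in a neighborhood of $x$. One then pastes these local perturbations together using a partition of unity subordinate to a cover by fiber-saturated flow boxes, producing a global isotopy that strictly decreases the size of $T$. The main obstacle, and the technical heart of Levitt's proof, is showing that this iterative process terminates with $T=\emptyset$: if some persistent tangency survives every isotopy, a compactness/limiting argument produces an $S^{1}$-invariant closed subset of $M$ whose $\F$-saturation is a closed surface, hence a compact leaf, contradicting the hypothesis. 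Carrying out this limiting argument cleanly---while keeping the isotopy smooth and controlling the holonomy of leaves that meet the successive tangency loci---is where the $3$-dimensionality, the circle-bundle structure, and the absence of compact leaves are all used in an essential way.
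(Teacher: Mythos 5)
The paper does not prove this statement at all: Theorem~\ref{thm:TL} is quoted as a black box from Thurston and Levitt and is only \emph{used} (in the proof of Theorem~\ref{thm:cod1}), so there is no internal argument to compare yours against. You are therefore effectively offering a from-scratch proof of the Thurston--Levitt theorem, and as such the sketch has a genuine gap at its core, namely the local-to-global tangency-removal scheme. First, a technical point: you cannot paste local perturbations of a foliation with a partition of unity, because integrability is not a convex condition (a convex combination of one-forms each satisfying $d\omega=\eta\wedge\omega$ need not satisfy it), so the ``global isotopy'' produced by averaging rotated charts is not even guaranteed to yield a foliation. More seriously, tangencies of $\F$ with the fiber field $V$ are not removable by local moves: in generic position the tangency locus is a union of curves carrying index data, and summing these indices over a leaf (Poincar\'e--Hopf applied to the singular line field $T\F\cap\ker p_{*}$, or globally, comparing $e(T\F)$ with the Euler class of $p$) yields obstructions that no local perturbation can change. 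Your procedure invokes the no-compact-leaves hypothesis only in the final ``limiting argument,'' which is asserted rather than proved; as written, the iteration would apply verbatim to an arbitrary codimension one foliation of a circle bundle, for instance to the Reeb foliation of $S^{3}$ viewed over the Hopf fibration, and would conclude that it is isotopic to a foliation transverse to the Hopf fibers --- which is impossible, since the Hopf bundle has nonzero Euler number over a simply connected base. So the hypothesis must enter structurally, not merely as a way to rule out a limit.

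What the actual proofs do, and what is missing from your outline, is precisely this structural use of the hypothesis: after putting the tangency locus in general position, one analyzes the closed tangency curves leaf by leaf; circles of tangency that bound discs in leaves are eliminated using the absence of vanishing cycles and Reeb components (Novikov), and the remaining tangency circles are cancelled in pairs along annuli, guided by the structure theory of minimal sets of foliations without compact leaves (this is where Levitt's and Thurston's work lives, and where the $3$-dimensionality and the circle-bundle structure are genuinely exploited). Without that machinery your iteration has no termination mechanism and no mechanism at all for decreasing the index-weighted count of tangencies, so the proposal does not constitute a proof.
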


\begin{proof}[Proof of Theorem~\ref{thm:cod1}]
Assume that $\GV(\F)$ is nontrivial. Then $\F$ is minimal by Theorem~\ref{thm:CR}. By Theorem~\ref{thm:TL}, we can isotope $\F$ to a foliation transverse to the fibers of $p$. Since the Euler number of $p$ is equal to the Euler number of $N$ by construction and the Euler class is propotional to the volume, we get $\vol(\hol(\F)) = \vol(\G)$, where $\hol(\F)$ is the holonomy homomorphism of $\F$. According to Theorem~\ref{thm:Mostow}, $\hol(\F)$ is conjugate to $\hol(\F_{\G})$, which is the inclusion map $\G \hookrightarrow G$. Since the conjugation class of suspension foliations are determined by the conjugation class of the holonomy homomorphisms, the proof is concluded.
\end{proof}



\providecommand{\bysame}{\leavevmode\hbox to3em{\hrulefill}\thinspace}
\providecommand{\MR}{\relax\ifhmode\unskip\space\fi MR }
\providecommand{\MRhref}[2]{%
  \href{http://www.ams.org/mathscinet-getitem?mr=#1}{#2}
}
\providecommand{\href}[2]{#2}

\end{document}